\documentclass[a4paper, 12pt, twoside]{article}

\usepackage{enumerate}
\usepackage{enumitem}

\usepackage[section]{placeins}	
\usepackage{epigraph}
\usepackage[font={small}]{caption}
\usepackage{appendix}

\usepackage{amsmath,amssymb,mathrsfs}
\usepackage{amsfonts}			
\usepackage{nicefrac}			

\usepackage{bbm} 

\usepackage{stmaryrd}
\usepackage{calc,ifthen,xspace}

\usepackage{graphicx}			

\usepackage{tikz}
\usepackage{tikz-cd}

\usepackage{xspace}
\usepackage{textcomp}
\usepackage{array}
\usepackage{hyphenat}

\usepackage{subfigure}
\usepackage[subfigure]{tocloft}

\usepackage{etoolbox} 
\patchcmd{\thebibliography}
  {\settowidth}
  {\setlength{\itemsep}{-2pt plus 0.1pt}\settowidth}
  {}{}
\apptocmd{\thebibliography}
  {\small}
  {}{}

\usepackage[pdftex,pdfborder={0 0 0},
			colorlinks=true,
			linkcolor=blue,
			citecolor=red,
			pagebackref=true,
			]{hyperref}	

\usepackage[top=2.1cm, bottom=2cm, left=2.5cm, right=2cm,
			headheight=15pt]{geometry}
\usepackage{cleveref}
\crefformat{section}{\S#2#1#3} 
\crefformat{subsection}{\S#2#1#3}
\crefformat{subsubsection}{\S#2#1#3}

\AtBeginDocument{}

\renewcommand{\leq}{\ensuremath{\leqslant}} 
\renewcommand{\geq}{\ensuremath{\geqslant}}

\newcommand{\Z}{\mathbb Z}

\DeclareMathOperator{\ima}{Im}

\DeclareMathOperator{\Hom}{Hom}
\DeclareMathOperator{\Homeo}{Homeo}
\DeclareMathOperator{\Aut}{Aut}

\DeclareMathOperator{\Inn}{Inn}

\DeclareMathOperator{\End}{End}

\DeclareMathOperator{\qd}{\triangleright}

\DeclareMathOperator{\Stab}{Stab}

\newcommand*{\qdn}[1]{\overset{\scalebox{0.6}{$#1$}}{\triangleright}}

\usepackage{amsthm}

\makeatletter
\newtheorem*{rep@theorem}{\rep@title}
\newcommand{\newreptheorem}[2]{%
\newenvironment{rep#1}[1]{%
 \def\rep@title{#2 \ref{##1}}%
 \begin{rep@theorem}}%
 {\end{rep@theorem}}}
\makeatother

\theoremstyle{plain}
\newtheorem{theo}{Theorem}
\newreptheorem{theo}{Theorem}
\newtheorem{introtheo}{Theorem}

\newtheorem{lem}[theo]{Lemma}
\newtheorem{prop}[theo]{Proposition}
\newreptheorem{prop}{Proposition}

\newtheorem{cor}[theo]{Corollary}
\newreptheorem{cor}{Corollary}
\newtheorem{fact}[theo]{Fact}
\newcounter{CJ}
\newtheorem{conj}[CJ]{Conjecture}

\newreptheorem{pb}{Problem}

\numberwithin{equation}{theo}

\theoremstyle{definition}
\newtheorem{defi}[theo]{Definition}
\newtheorem{nota}[theo]{Notation}
\newtheorem{conv}[theo]{Convention}
\newtheorem{ex}[theo]{Example}
\newtheorem{rmq}[theo]{Remark}

\newcommand{\ev}{\mathrm{ev}}

\let\oldpagenumbering\pagenumbering
\renewcommand{\pagenumbering}[1]{%
	\cleardoublepage
	\oldpagenumbering{#1}
}

\author{Jacques \scshape{Darn\'e}}
\title{Profinite rigidity and quandle colourings}
\date{October 1st, 2026}

\hypersetup{
    pdfauthor={Jacques Darn\'e},
    pdfsubject={Preprint},
    pdftitle={Profinite rigidity and quandle colourings},
    pdfkeywords={algebraic topology, quandles}
}

\begin{document}

\maketitle

\begin{abstract}
We show that the numbers of colourings of a link by all finite quandles is equivalent, as an invariant of links, to the profinite completion of the peripheral system. This allows us to reformulate the classical question of the completeness of the former invariant in terms of profinite rigidity. We also develop some profinite quandle theory, and we show that a topological quandle is profinite if and only if it is compact and totally discontinuous and its group of inner automorphism is relatively compact in its homeomorphism group.
\end{abstract}

\numberwithin{theo}{section}

\section*{Introduction}

The present paper aims at building a bridge between the theory of colourings by finite quandles and the world of profinite rigidity. The possibility of such a construction is not surprising, and it was already known to experts (see for instance~\cite{Agol}). After all, the fundamental quandle $Q_L$ of a link $L$ was introduced to encode nicely the fundamental group $G_L$ of the complement of $L$ in $\mathbb S^3$, together with the so-called \emph{peripheral structure} \cite{Joyce, Matveev}. Thus, the fact that colourings by finite quandles, which are morphisms from $Q_L$ to finite quandles, are linked to finite quotients of $G_L$, encoded in the profinite completion $\widehat G_L$, is to be expected. However, the precise bridge was yet to be built, and its construction turned out to require some non-trivial work, mainly algebraic in nature.

\paragraph{Main results}
The main goal of this paper is to reformulate the following conjecture, stated for knots in~\cite{CESY} (see also~\cite{CSV}) in the language of profinite rigidity:
\begin{conj}
If two links cannot be obtained from one another by taking reversed mirror images of split components, then there is a finite quandle $Q$ such that the number of colourings by elements of $Q$ is different for the two links.
\end{conj}
We show that it is equivalent to:
\begin{conj}
If two links cannot be obtained from one another by taking reversed mirror images of split components, then the profinite completions of their peripheral systems are not equivalent.
\end{conj}
Recall that the \emph{peripheral system} $P_L = (G_L, (P_i, m_i))$ of a link $L$ is the data of the fundamental group $G_L$ of the complement $\mathbb S^3 - L$, together with a subgroup $P_i$ and an element $m_i$ for each component of the link. Precisely, if $N_i$ is a tubular neighbourhood of the $i$-th component $L_i$ of $L$, then $\partial N_i$ is a torus, and $P_i$ is the image of $\pi_1(\partial N_i) \cong \Z^2$ in $G_L$. Then $m_i \in P_i$ is a \emph{meridian}, which is a loop turning once around $L_i$. Up to equivalence (whose definition will be recalled later on), the peripheral system is an invariant of links, known to detect them up to taking reversed mirror images of split components~\cite[Cor.~5.3]{FR_racks_and_links}.

The \emph{profinite completion} $\widehat P_L$ of the peripheral system $P_L$ is defined in the obvious way: replace $G_L$ by its profinite completion $\widehat G_L$, each $m_i$ by its image therein, and each $P_i$ by the closure of its image in $\widehat G_L$. This completion process may result in a weaker invariant: information may have been lost along the way. The above conjecture says that it should not be the case.

The main theorem of the paper is:
\begin{introtheo}\label{main_alg_thm}
For a finitely generated quandles $Q$ and $R$, the following are equivalent:
\begin{enumerate}[itemsep=-0.3em, topsep = 4pt]
    \item For every finite quandle $X$, $\# \Hom(Q, X) = \# \Hom(R, X)$.
    \item The profinite completions of $Q$ and $R$ are (continuously) isomorphic.
    \item The profinite completions of the peripheral systems associated to $Q$ and $R$ are equivalent.
\end{enumerate}
\end{introtheo}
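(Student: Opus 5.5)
I will prove the cycle (1) $\Leftrightarrow$ (2) and (2) $\Leftrightarrow$ (3), with the profinite completion $\widehat Q$ defined as the inverse limit of the finite quotients of $Q$.

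\textbf{(1) $\Leftrightarrow$ (2).} I use the classical counting argument for finitely generated groups (Dixon--Formanek--Poland--Ribes), adapted to quandles. Since $Q$ is finitely generated, continuous morphisms $\widehat Q \to X$ with $X$ finite correspond bijectively to morphisms $Q\to X$. So (2) $\Rightarrow$ (1) is immediate.

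For (1) $\Rightarrow$ (2), I first show that the number of \emph{surjective} morphisms onto a finite $X$ is determined by the numbers $\#\Hom(-,Y)$. Every morphism $Q\to X$ surjects onto a unique subquandle of $X$, so
\[
\#\Hom(Q,X)=\sum_{Y\subseteq X}\#\mathrm{Epi}(Q,Y),
\]
and Möbius inversion over the subquandles of $X$ gives the claim. Dividing by $\#\Aut(Y)$, the number of quotients of $Q$ isomorphic to a given finite $Y$ is determined by the invariants in (1). This number is finite, since $Q$ is finitely generated.

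Hence $Q$ and $R$ have the same finite quotients up to isomorphism. The standard compactness argument then produces an isomorphism. For each $n$, let $Q_n$ be the intersection of all congruences of $Q$ of index $\leq n$; these are finitely many, so $Q_n$ is again a finite quotient. Each $Q_n$ is a quotient of $R$, and symmetrically, so the orders force $Q_n\cong R_n$. The sets of isomorphisms $Q_n\to R_n$ are finite, nonempty, and compatible with the projections. An inverse limit of nonempty finite sets is nonempty, which yields $\widehat Q\cong\widehat R$.

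One point needs care: the intersection of two finite-index congruences must again have finite index. This holds because $Q/(\sim_1\cap\sim_2)$ embeds in $Q/\sim_1\times Q/\sim_2$.

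\textbf{(2) $\Leftrightarrow$ (3).} To a quandle $Q$ I attach its peripheral system: the enveloping group $G_Q$, together with, for each orbit (component), the image of a representative $m_i$ and its stabiliser-type subgroup $P_i$. The expected key step is that profinite completion commutes with these constructions. Concretely, for finite $Q$-quotients $X$, the group $\mathrm{As}(X)$ or $\Inn(X)$ together with the stabilisers of points gives finite quotients of the peripheral system. Conversely, a finite quotient $G_Q\to F$ yields a finite quandle: the union of conjugacy classes of the images of the $m_i$, with $F$ acting via cosets $F/\bar P_i$, in the spirit of Joyce's coset presentation $Q\cong\bigsqcup_i P_i\backslash G$.

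I would then prove that $\widehat Q$ is isomorphic to the coset quandle $\bigsqcup_i \overline{P_i}\backslash\widehat G$, equipped with the analogous operation, and that conversely $\widehat G$ with its peripheral data is recovered from $\widehat Q$ as a profinite enveloping group. With these two reconstructions, an isomorphism in (2) yields an equivalence in (3), and vice versa.

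\textbf{Main obstacle.} The hardest step is this identification $\widehat Q\cong\bigsqcup_i \overline{P_i}\backslash\widehat G$. It requires that finite quotients of $Q$ are cofinal with those coming from finite quotients of $G_Q$. The delicate direction is showing that the closure of $P_i$ in $\widehat G$ is exactly the stabiliser, that is, that stabilisers are separable. I would attack it by mapping $G_Q$ to $\Inn(X)$ for finite quotients $X$ of $Q$, and checking that the kernels of these maps, together with the point stabilisers, are cofinal. This is plausible because $\Inn(X)$ is finite whenever $X$ is finite.
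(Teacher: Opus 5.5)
The gap is in (2)$\Leftrightarrow$(3); your (1)$\Leftrightarrow$(2) is essentially the paper's argument, apart from one compressed step noted at the end.

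Everything in (2)$\Leftrightarrow$(3) rests on the identification $\widehat Q\cong\bigsqcup_i\widehat G/\overline{P_i}$. You only call it plausible, and you mis-state what it requires. Separability of $P_i$ in $G_Q$ (that is, $P_i=\bigcap_N P_iN$) amounts to the orbit of $m_i$ injecting into $\widehat Q$. That is a residual finiteness property, which fails for general finitely generated quandles and is not needed. What is needed is $\Stab_{\widehat G}(m_i)=\overline{P_i}$ for a continuous action of $\widehat G$ on $\widehat Q$.

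You never construct that action. Nor do you justify that $\widehat G\cong\widehat G(\widehat Q)$, with an action intrinsic to and natural in $\widehat Q$. Without this, the completed peripheral system is not yet shown to be an invariant of $\widehat Q$, so (2)$\Rightarrow$(3) is not established. The paper supplies these in Proposition~\ref{profinite_adj_action}, Corollary~\ref{profinite_completion_of_env_groups} and Lemma~\ref{prof_equivariance}.

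The missing idea for the stabilizer is as follows. Take $N\trianglelefteq G_Q$ of finite index. The quandle $Q/N$ of $N$-orbits (Lemma~\ref{quotient_by_group}) is a \emph{finite} quotient of $Q$, because $\pi_0(Q)$ is finite. The stabilizer of $Nm_i$ in $G_Q/N$ is exactly $P_iN/N$ (Lemma~\ref{Stab(Nx)}). So any $g\in\widehat G$ fixing $m_i$ maps into $P_iN/N$ for every $N$, whence $g\in\overline{P_i}$. The paper then obtains $\widehat Q\cong\mathcal Q\bigl(\widehat{\mathcal P}(\widehat Q)\bigr)$ by a compactness argument (Proposition~\ref{QP=1_prof}).

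This $Q/N$ is the coset quandle $\bigsqcup_i F/\bar P_i$ with $F=G_Q/N$. It is not the union of the conjugacy classes of the $\bar m_i$ in $F$, whose stabilizers are the possibly larger centralizers.

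Your $\Inn(X)$ hint can be completed into a more direct route than the paper's. Each finite quotient $X$ of $Q$ factors through $Q/K_X$, where $K_X=\ker(G_Q\to\Inn(X))$ has finite index. So the quotients $Q/N\cong\bigsqcup_i G_Q/P_iN$ are cofinal among finite quotients, and $\widehat Q\cong\varprojlim_N\bigsqcup_i G_Q/P_iN\cong\bigsqcup_i\widehat G/\overline{P_i}$ equivariantly. The stabilizer statement then comes for free. As written, however, none of this is carried out.

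The smaller compression is in (1)$\Rightarrow$(2). Knowing that $Q/\mathcal R_n$ is a quotient of $R$ does not let the orders conclude. You need that $Q/\mathcal R_n$ embeds in a product of quandles of order $\leq n$. Then the kernel of $R\twoheadrightarrow Q/\mathcal R_n$ contains $\mathcal R_n(R)$, so $Q/\mathcal R_n$ is a quotient of $R/\mathcal R_n$, and symmetrically; only then do the cardinalities give $Q/\mathcal R_n\cong R/\mathcal R_n$.
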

The equivalence of the above two conjectures is a direct application of this theorem to fundamental quandles of links. Indeed, if $Q_L$ is the fundamental quandle of a link $L$, then the number of colourings of $L$ by a finite quandle $X$ is $\# \Hom(Q_L, X)$, whereas the peripheral system associated to $Q_L$ is the peripheral system of $L$. 

\paragraph{Profinite rigidity} 
The profinite completion $\widehat P_L$ of the peripheral system is of course a stronger invariant than the the profinite completion $\widehat G_L$ of the fundamental group of the complement, together with the number of components of the link. So if we know that $\widehat G_L$ distinguishes a knot $K$ from other knots, then $\widehat P_L$ distinguishes it from other links.

Such profinite rigidity has been conjectured for some time for prime knots \cite[Q.~1.3]{Boileau-Friedl}, and in particular for hyperbolic knots. It is known for the figure-eight knot and torus knots \cite[Cor.~1.4 and Thm.~1.5]{Boileau-Friedl} (see also \cite[Th.~A]{Bridson-Reid} for a stronger rigidity statement, of the complement of the figure-eight knot amongst $3$-manifolds). Recently, it has been shown for $2$-bridge knots and $3$-tangle Montesinos links~\cite{Xu3}. Let us also mention the striking result of~\cite[Th.~D]{Xu1} that says every link $L$ is determined by $\widehat G_L$ up to a finite number of possibilities. 

\begin{rmq}
From a quandle-theoretic perspective, the irrelevance of the peripheral structure on $\widehat G_L$ for \emph{prime} links would not be surprising, since a link $L$ is prime exactly when its fundamental quandle $Q_L$ (an invariant equivalent to $P_L$) injects into the fundamental group  $G_L$~\cite{Ryder}.
\end{rmq}

The whole of $\widehat P_L$ (which arises naturally from quandle theory) has appeared in the literature on profinite rigidity of non-necessarily prime links, in particular in Wilkes' PhD thesis (see for instance \cite[Th.~2.7]{Wilkes2}), and in more recent work by Xu. In particular, building on Wilkes' proof of the rigidity (with respect to $\widehat G_L$) of the complement of graph knots (whose complements are graph manifolds)~\cite[Th.~4.1]{Wilkes3}, Xu shows that graph knots are rigid with respect to $\widehat P_L$~\cite[Lem.~10.5]{Xu2}. Profinite peripheral data is also an important character in~\cite[Th.~A]{Xu1}, but also in~\cite{Xu3}: although the latter shows a rigidity result in the classical sense (with respect to $\widehat G_L$), the peripheral data plays a prominent role in this work.

\paragraph{Profinite quandles} Profinite quandles are at the core of the bridge that we are building between the two conjectures. These have appear before, but the literature on this topic is scarce. A closely related topic is the study of \emph{residually finite} quandles, which were studied in~\cite{BSS_knots, BSS_links}. Their main result is the fact that \emph{link quandles are always residually finite}, an important theorem deduced from previous results in the theory of $3$-manifold groups. As for profinite quandles, they were studied notably in relation to Galois groups of number fields~\cite{Takahashi, Szymik2}. They were also studied for themselves in~\cite{Singh, Yetter_et_al}. We recover and improve most of the results of these last two papers, discussing their contributions to the theory, but also their shortcomings; see Remark~\ref{rk_Singh_finite_nb_of_cofinite}, Remark~\ref{rk_Yetter_Pph} and the two paragraphs at the end of~\cref{sec_inner_aut}.

\paragraph{Outline of the paper} 
As the statement of our theorem~\ref{main_alg_thm} suggests, the profinite completion $\widehat Q$ of a quandle $Q$ (defined as the projective limit of its finite quotients) will serve as an intermediate object between morphisms to finite quandles and profinite peripheral system. Precisely, the proof is divided into two distinct parts. 

The first part (\cref{sec_algebras} and \cref{sec_prof_alg}) consists in proving that the function which associates the number $\# \Hom(Q, F)$ to a finite quandle $F$ determines $\widehat Q$ up to isomorphism (the fact that it is determined by it follows directly from the definitions). We do this in the larger context of (topological) algebras over a Lawvere theory $\mathbb T$, by generalizing the main theorem of~\cite{DFPR}, which was the case of groups. We introduce this context in \cref{sec_algebras}, where we prove an essential ingredient in the study of profinite completions: a (topologically) finitely generated algebra has a finite number of (open) congruences of index $n$, for every integer $n \geq 1$ (see~\cref{subsec_finite_nb_of_cong}). 
Profinite completions are then introduced in \cref{subsec_prof_comp_fg}. There, we show that for a topologically finitely generated $\mathbb T$-algebra $X$, the numbers $\# \Hom(X, F)$ (for $F$ ranging over finite discrete $\mathbb T$-algebras) determine the set $\Phi(X)$ of isomorphism classes of finite quotients of $X$ (\cref{par_nb_of_morphisms}). 
We then go on to show that $\Phi(X)$ determines $\widehat X$ up to continuous isomorphism, using the general reasoning from~\cite{DFPR}  (\cref{par_prof_comp_and_morphisms}). 

The second part (\cref{sec_quandles}, \cref{sec_ProQdl} and \cref{sec_ProPph}) consists in proving that the data of $\widehat Q$ up to isomorphism is equivalent to the data of the profinite completion of the peripheral system $\mathcal P(Q)$ associated to $Q$, up to equivalence. In order to do that, we first need to develop some topological quandle theory (\cref{sec_quandles}) and profinite quandle theory (\cref{sec_ProQdl}). In particular, we need to adapt the theory of the enveloping group $G(Q)$ of a quandle $Q$ to both these contexts. 
In \cref{sec_ProPph}, we use these tools to construct the \emph{profinite peripheral system} $\widehat{\mathcal P}(R)$ associated to a profinite quandle $R$, which is closely linked with the action of the profinite enveloping group of $R$ on $R$. This allows us to extend to the profinite world Joyce's construction of the correspondence between quandles are peripheral systems: we show that giving a profinite quandle $R$ (whose set of orbits is finite discrete -- a technical condition needed for the construction, but satisfied for instance by $\widehat Q$ if the (discrete) quandle $Q$ is finitely generated) is equivalent to giving the profinite peripheral system $\widehat{\mathcal P}(R)$. Then we show that for a quandle $Q$, the profinite peripheral system $\widehat{\mathcal P}(\widehat Q)$ is in fact the profinite completion of $\mathcal P(Q)$, which gives the desired conclusion.

In the course of the proof that we just sketched, we need to develop quite a bit of profinite quandle theory. We continue on this course in the third part of the paper (\cref{sec_inner_aut} and \cref{sec_criterion}). In~\cref{sec_inner_aut}, we develop the theory of inner automorphisms for topological and profinite quandles. There, we introduce and study of the closure $\overline\Inn(Q)$ of the usual inner automorphism group of a topological quandle $Q$ in $\Homeo(Q)$. In particular, we show that it is a profinite group if $Q$ is profinite. In fact, we can make this into a criterion for profiniteness of quandles, which is the topic of~\cref{sec_criterion}. There, we show that a topological quandle is profinite if and only if it is compact and totally discontinuous and $\Inn(Q)$ is relatively compact in $\Homeo(Q)$. Thus, contrary to the case of groups, profiniteness is not a topological property of topological quandles, but we still have a nice criterion for it.

\paragraph{Disclaimer} Although IA has been used punctually for side questions and bibliographical research, it has not been used for the main research done here, and not a single word of this paper has been written by it.

\clearpage

\setcounter{tocdepth}{2}
\tableofcontents
\clearpage

\section{Algebras and topological algebras}\label{sec_algebras}

Our first main goal is the equivalence, as invariants of links, between the numbers of colorings by finite quandles and the profinite completion of the fundamental quandle. This will be a consequence (Cor.~\ref{cor_col_and_prof}) of a general result about (topological) algebras over a (discrete) Lawvere theory. Namely, we show that the profinite completion of a topologically finitely generated algebra encodes exactly the numbers of morphisms to finite discrete algebras (Thm.~\ref{theo_col_and_prof}). This is a generalization of a group-theoretic result from~\cite{DFPR}.

We begin by a general introduction to topological algebras and their quotients, leading up to a crucial ingredient in the proof of Theorem~\ref{theo_col_and_prof}: for every integer $n \geq 1$, the number of open congruences of index $n$ on a topologically finitely generated algebra is finite (Prop.~\ref{congruences_of_fixed_index}). 

\subsection{Algebras}

Let $\mathbb T$ be a Lawvere theory. Recall that it consists, for every $n$, of a set $\mathbb T_n$ of $n$-ary operation, and of a set of equations between such operations. A \emph{$\mathbb T$-algebra} is a set $X$ with a map $\omega_X : X^n \rightarrow X$ for each $\omega \in \mathbb T_n$, such that the equations are satisfied. A \emph{morphism} $f: X \rightarrow Y$ between $\mathbb T$-algebras $X$ and $Y$ is a map from $X$ to $Y$ such that for every $\omega \in \mathbb T_n$, we have $f \circ \omega_X = \omega_Y \circ f^{\times n}$.

A theory can be described by a \emph{presentation by generators and relations}, by taking a \emph{generating set} of operations (such that every operation is a composite of such generating operations) and a \emph{generating set} of equations (such that every equation true in every algebra is a consequence of these generating equations). This point of view is useful to describe theories in a compact way, but it can also be useful to consider all the operations and all the true equations at once.

\begin{defi}
A theory is called \emph{finitely generated} if it admits a finite set of generating operations.
\end{defi}

\begin{ex}\label{sets_as_T-alg}
Sets are algebras over the trivial theory, described by the empty presentation.
\end{ex}

\begin{ex}\label{groups_as_T-alg}
Groups are algebras over the theory having three generating operations: a binary one (for the product), a unary one (for the inverse) and a constant (a $0$-ary operation, for the neutral element). The usual group axioms are generating equations for the theory of groups.
\end{ex}

\begin{ex}\label{qdls_as_T-alg}
In order to see quandles as algebras over a theory, we need to write the second axiom of Definition~\ref{def_qdl} as a set of equations. This is possible only if we include $\qdn{-1}$ as a generating operation. Then quandles are algebras over the theory having two generating operations (for $\qdn{\pm 1}$) and the quandle axioms as generating equations.
\end{ex}

\begin{defi}\label{def_top_alg}
 A \emph{topological $\mathbb T$-algebra} is a $\mathbb T$-algebra in the category of topological spaces. In other words, it is a topological space endowed with a $\mathbb T$-algebra structure where all the $\omega_X$ are continuous. Morphisms between topological $\mathbb T$-algebra are $\mathbb T$-algebra morphisms that are continuous.
 \end{defi}

 \begin{ex}
For the three examples above (Ex.~\ref{sets_as_T-alg}, \ref{groups_as_T-alg} and \ref{qdls_as_T-alg}), topological $\mathbb T$-algebras are respectively topological spaces, topological groups and topological quandles.
\end{ex}

\subsection{Congruences}\label{sec_congruences}

Quotients of algebras are dealt with using the general notion of congruence:

\begin{defi}\label{def_congruence}
A \emph{congruence} $\mathcal R$ on a $\mathbb T$-algebra $X$ is an equivalence relation on $X$ which is compatible with the operations: for all $n \geq 0$ and all $\omega \in \mathbb T_n$, if $(x_i), (y_i) \in X^n$ satisfy $x_i \mathcal R y_i$ for all $i$, then $\omega_X(x_1, \ldots, x_n)\mathcal R \hspace{2pt}\omega_X(y_1, \ldots, y_n)$.
\end{defi}

An equivalence relation $\mathcal R$ on a $\mathbb T$-algebra $X$ is a congruence if and only if there is a (clearly unique) $\mathbb T$-algebra structure on the set $X/\mathcal R$ making the projection $X \twoheadrightarrow X/\mathcal R$ into a morphism of $\mathbb T$-algebras.

\begin{rmq}
An equivalence relation $\mathcal R$ on a $\mathbb T$-algebra $X$ is a congruence if and only if $\mathcal R$ is a sub-$\mathbb T$-algebra of $X^2$. In other words, congruences are exactly internal equivalence relations in the category of $\mathbb T$-algebra.
\end{rmq}

\begin{defi}\label{def_ker}
Let $f: X \rightarrow Y$ be a morphism of $\mathbb T$-algebras. Its \emph{kernel} is the congruence $\mathcal R_f$ on $X$ defined by: $x \mathcal R_f y$ if and only if $f(x) = f(y)$.
\end{defi}

By construction, we have a canonical isomorphism $\ima(f) \cong X/\mathcal R_f$. In particular, every surjective morphism can be identified canonically with the quotient by a congruence relation.

\begin{defi}\label{def_index}
A congruence $\mathcal R$ on a $\mathbb T$-algebra $X$ is \emph{of finite index} if $X/\mathcal R$ is finite. Its \emph{index}, denoted by $[X: \mathcal R]$, is then the cardinal of $X/\mathcal R$.
\end{defi}

\paragraph{About the topological setting}

Let $X$ be a topological $\mathbb T$-algebra, and let $\mathcal R$ be a congruence on $X$. Then $X/\mathcal R$ is canonically a $\mathbb T$-algebra, and a topological space (with the quotient topology). But it is not always a topological $\mathbb T$-algebra. The problem is that $(X/\mathcal R)^n$, endowed with the product topology, is not in general a quotient of $X^n$, so the operations $(X/\mathcal R)^n \rightarrow X/\mathcal R$ need not be continuous. We have, however, the following:
\begin{lem}\label{quotient_of_top_algebras}
Let $X$ be a topological $\mathbb T$-algebra endowed with a congruence $\mathcal R$. If $X/\mathcal R$ is discrete, or if $X$ and $X/\mathcal R$ are locally compact (for instance if $X$ is compact), then $X/\mathcal R$ is canonically a topological $\mathbb T$-algebra.
\end{lem}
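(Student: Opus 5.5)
The plan is to reduce everything to one topological fact: that the map $\pi^{\times n} : X^n \rightarrow (X/\mathcal R)^n$ is a quotient map, where $\pi: X \twoheadrightarrow X/\mathcal R$ is the projection. Once this is known, continuity of the induced operations follows formally. For $\omega \in \mathbb T_n$, the induced operation $\bar\omega$ on $X/\mathcal R$ is characterised by $\bar\omega \circ \pi^{\times n} = \pi \circ \omega_X$. The right-hand side is continuous, so by the universal property of quotient maps $\bar\omega$ is continuous. The algebraic structure is already known from the discussion after Definition~\ref{def_congruence}: the equations hold and $\pi$ is a morphism. So $X/\mathcal R$ is a topological $\mathbb T$-algebra, and canonicity is clear because the structure is unique.

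In the discrete case, the product $(X/\mathcal R)^n$ of finitely many discrete spaces is discrete. Hence any map out of it is continuous, in particular $\bar\omega$, and nothing more is needed. For $n = 0$ there is nothing to check in either case.

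In the locally compact case, I will invoke the classical theorem (Whitehead's lemma) that if $p: A \rightarrow B$ is a quotient map and $C$ is locally compact Hausdorff, then $p \times \mathrm{id}_C$ is a quotient map. I will then factor
\[ \pi^{\times n} = (\pi \times \mathrm{id}) \circ \cdots \circ (\mathrm{id} \times \pi), \]
changing one coordinate at a time. Each factor is $\pi$ times the identity of a finite product of copies of $X$ and $X/\mathcal R$, which is locally compact. A composite of quotient maps is a quotient map, which concludes. This is where both $X$ and $X/\mathcal R$ need to be locally compact.

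The main obstacle is the Hausdorff issue in Whitehead's lemma. The paper's "locally compact" presumably includes Hausdorff, or means "every point has a neighbourhood basis of compact sets", and in the latter form the lemma holds without separation. I would state the convention explicitly, with the neighbourhood-basis definition as the safe choice.

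For the parenthetical case where $X$ is compact, the quotient $X/\mathcal R$ is compact. However, it is locally compact in the neighbourhood-basis sense only if it is reasonably separated. So I would either restrict the remark to closed congruences on compact Hausdorff $X$, where the quotient is compact Hausdorff, or note that in the intended applications (open congruences) the quotient is discrete anyway.
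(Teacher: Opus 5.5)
Your proof is correct and follows the paper's route: the paper also reduces the claim to $(X/\mathcal R)^n$ being a quotient of $X^n$, and obtains this (Proposition~\ref{prod_of_quotient_by_loc_cpct} and Corollary~\ref{prod_of_quotients}) by changing one factor at a time, using that taking the product with a locally compact space preserves quotient maps. Your caveat about the parenthetical is well taken: the paper's argument there silently needs $X/\mathcal R$ to be locally compact in the neighbourhood-basis sense, which does hold in the paper's only non-discrete application, since that quotient is compact Hausdorff by Proposition~\ref{Quotient_of_Stone_space}.
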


\begin{proof}
If $X/\mathcal R$ is discrete, then its algebra operations are automatically continuous. In the locally compact case, Corollary~\ref{prod_of_quotients} implies that $(X/\mathcal R)^n$ is a quotient of $X^n$, whence the conclusion.
\end{proof}

\begin{rmq}
In this paper, we consider mostly discrete quotients, so we do not need to worry about the aforementioned problems. In fact, the only non-discrete quotients of the paper will appear in~\cref{sec_proof_criterion}, and they will be quotients of compact algebras.
\end{rmq}

More subtleties appear when looking at the correspondence between congruences and surjective morphisms in the topological setting. Namely, the kernel of a morphism $f$ is defined as in the discrete case but, even when $X/\mathcal R_f$ is a topological $\mathbb T$-algebra, there is only a continuous bijection from  $X/\mathcal R_f$ to $\ima(f)$, which may not be a homeomorphism. In other words, in the topological setting, not every surjective morphism can be realized as the quotient by a congruence relation.

\subsection{Cofinite congruences and residual finiteness}\label{par_res_finite_alg}

We are interested in the relation between a topological algebra and its finite discrete quotients. In this context, an important role is played by topological algebras whose elements can be distinguished in such quotients: 

\begin{defi}\label{def_res_finite}
A topological $\mathbb T$-algebra $X$ is called \emph{residually finite} if morphisms to finite discrete $\mathbb T$-algebras separate points, that is, if for any $x, y \in X$, if $x \neq y$, then there exists a finite discrete $\mathbb T$-algebra $F$ and a continuous morphism $f : X \rightarrow F$ such that $f(x) \neq f(y)$. 
\end{defi}

Recall that a topological space $X$ is called \emph{totally disconnected} if its connected components are points; equivalently, for all $x, y \in X$, if $x \neq y$, then there is a partition $X = U \sqcup V$ into a union of two disjoint open subsets such that $x \in U$ and $y \in V$.

\begin{fact}\label{res_finite_are_tot_disconnected}
A residually finite topological $\mathbb T$-algebra $X$ is totally disconnected.
\end{fact}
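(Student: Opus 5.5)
The plan is to check the partition characterisation of total disconnectedness recalled just before the statement directly. Fix $x \neq y$ in $X$. By residual finiteness (Definition~\ref{def_res_finite}) there is a finite discrete $\mathbb T$-algebra $F$ and a continuous morphism $f : X \rightarrow F$ with $f(x) \neq f(y)$. Since $F$ is discrete, the singleton $\{f(x)\}$ is both open and closed in $F$. I then set
\[ U = f^{-1}(\{f(x)\}), \qquad V = f^{-1}(F \setminus \{f(x)\}). \]

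Continuity of $f$ makes both $U$ and $V$ open in $X$. By construction they are disjoint and cover $X$. Moreover $x \in U$, and $y \in V$ because $f(y) \neq f(x)$. This gives a partition $X = U \sqcup V$ into two disjoint open subsets separating $x$ from $y$, which is exactly the equivalent formulation of total disconnectedness recalled above.

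If one prefers the definition via connected components, the same sets work. A connected subset $C$ containing both $x$ and $y$ would meet both $U$ and $V$, so $C = (C\cap U) \sqcup (C \cap V)$ would be a separation of $C$ into nonempty relatively open pieces, which is a contradiction. Hence the component of $x$ is $\{x\}$.

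Only continuity of $f$ and discreteness of $F$ are used; finiteness of $F$ and the algebra structure play no role. There is no real obstacle here beyond noting that the two formulations of total disconnectedness agree.
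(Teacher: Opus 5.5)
Your argument is correct and is exactly the paper's proof: pull back the partition $\{f(x)\} \sqcup (F \setminus \{f(x)\})$ of the discrete space $F$ along a continuous morphism $f$ that separates $x$ and $y$. Your extra paragraph on connected components is a worthwhile addition. In general the partition property (total separatedness) is strictly stronger than having singleton components, so the two formulations do not always agree as you suggest. Your proof nonetheless covers both readings directly instead of relying on that equivalence.
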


\begin{proof}
Let $x, y \in X$, and let $f : X \rightarrow F$ be a continuous morphism to a finite discrete $\mathbb T$-algebra such that $f(x) \neq f(y)$. Then $\{x\}$ and its complement are open in the discrete space $F$, so $X = f^{-1}(x) \sqcup f^{-1}(F - \{x\})$ is a partition of $X$ into two open sets, with $x \in f^{-1}(x)$ and $y \in f^{-1}(F - \{x\})$. 
\end{proof}

A congruence $\mathcal R$ on a topological $\mathbb T$-algebra $X$ is open in $X^2$ if and only if its classes are open, which is equivalent to the quotient topology on $X/\mathcal R$ being discrete. Then the $\mathbb T$-algebra operations on $X/\mathcal R$ are automatically continuous, and the projection $X \twoheadrightarrow X/\mathcal R$ is a morphism of topological $\mathbb T$-algebra. Notice that the classes of an open congruence are also closed: they are \emph{clopen}. 

\begin{defi}
A congruence $\mathcal R$ on a topological $\mathbb T$-algebra $X$ is \emph{cofinite} if $X/\mathcal R$ is finite and discrete.
\end{defi}

In other words, a cofinite congruence is an open congruence of finite index. In particular, for congruences on discrete $\mathbb T$-algebras, being cofinite is just being \emph{of finite index} (see Definition~\ref{def_index}).

\begin{rmq}
A topological $\mathbb T$-algebra $X$ is residually finite if and only if the intersection of the cofinite congruences in $X^2$ is reduced to the diagonal.
\end{rmq}

\begin{lem}\label{intersection_finite_index}
If $\mathcal R$ and $\mathcal S$ are cofinite congruences, then so is $\mathcal R \cap \mathcal S$.
\end{lem}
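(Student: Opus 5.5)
The plan is to check the three defining properties of a cofinite congruence for $\mathcal R \cap \mathcal S$: it is a congruence, it is open, and it has finite index.

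First, $\mathcal R \cap \mathcal S$ is an equivalence relation, since reflexivity, symmetry and transitivity are preserved under intersection. It is compatible with the operations: if $x_i \,(\mathcal R \cap \mathcal S)\, y_i$ for all $i$, then $\omega_X(x_1,\ldots,x_n)$ and $\omega_X(y_1,\ldots,y_n)$ are related by both $\mathcal R$ and $\mathcal S$, hence by their intersection. Equivalently, via the Remark above, an intersection of two subalgebras of $X^2$ is again a subalgebra.

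Next, I would use the diagonal map
\[
X \longrightarrow X/\mathcal R \times X/\mathcal S .
\]
Its fibres are exactly the classes of $\mathcal R \cap \mathcal S$, so $X/(\mathcal R \cap \mathcal S)$ injects into a product of two finite sets. This gives finite index, with $[X:\mathcal R\cap\mathcal S] \leq [X:\mathcal R]\,[X:\mathcal S]$.

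Finally, for openness, note that each class of $\mathcal R \cap \mathcal S$ is the intersection of an $\mathcal R$-class with an $\mathcal S$-class. Both of these are open because $\mathcal R$ and $\mathcal S$ are open congruences, so their intersection is open. By the observation preceding the definition of cofiniteness, a congruence with open classes is open and its quotient is discrete. Alternatively, $\mathcal R \cap \mathcal S$ is directly the intersection of two open subsets of $X^2$.

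There is no real obstacle here. The only point to state carefully is that openness of a congruence is equivalent to openness of its classes, and this was already noted in the text.
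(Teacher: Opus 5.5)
Your proof is correct and follows essentially the same route as the paper: both use the diagonal map $X \to X/\mathcal R \times X/\mathcal S$, whose fibres are exactly the $(\mathcal R \cap \mathcal S)$-classes. The only difference is in how discreteness is obtained. The paper notes that the induced map on $X/(\mathcal R \cap \mathcal S)$ is a continuous injection into a finite discrete space, while you check directly that each class $[x]_{\mathcal R} \cap [x]_{\mathcal S}$ is open; the two arguments are equivalent.
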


\begin{proof}
Let $\mathcal R$ and $\mathcal S$ be cofinite congruences on the topological $\mathbb T$-algebra $X$. The intersection $\mathcal R \cap \mathcal S$ is clearly a congruence. In order to show that it is cofinite, consider the continuous map $\pi : x \mapsto (\overline x, \overline x)$ from $X$ to $X/\mathcal R \times X/\mathcal S$. We have $\pi(x) = \pi(y)$ if and only if $x \mathcal R y$ and $x \mathcal S y$, that is, if and only if $x (\mathcal R \cap \mathcal S) y$, so $\pi$ induces a continuous injection of $X/(\mathcal R \cap \mathcal S)$ into the finite discrete $\mathbb T$-algebra $X/\mathcal R \times X/\mathcal S$, whence the conclusion.
\end{proof}

\subsection{A finiteness result}\label{subsec_finite_nb_of_cong}

There is an obvious notion of a sub-$\mathbb T$-algebra of a $\mathbb T$-algebra $X$ \emph{generated} by a subset $\Gamma$ of $X$. It is both the intersection of sub-$\mathbb T$-algebras of $X$ containing $\Gamma$, and the subset of element obtained by applying operations to elements of $\Gamma$. 

\begin{defi}\label{def_topo_fg}
A $\mathbb T$-algebra is called \emph{finitely generated} if it has a finite set of generators. A topological $\mathbb T$-algebra is called \emph{topologically finitely generated} if it has a dense finitely generated sub-$\mathbb T$-algebra.
\end{defi}

\begin{rmq}
A discrete $\mathbb T$-algebra is topologically finitely generated if and only if it is finitely generated.
\end{rmq}

The following observation is a straightforward generalization of the following fact: the number of group structures on $\{1, ..., n\}$ is finite, bounded by the number of possible Cayley tables of size~$n \times n$.

\begin{prop}\label{algebras_of_fixed_cardinality}
Let $\mathbb T$ be a finitely generated Lawvere theory. For every integer $n \in \mathbb N$, there are finitely many isomorphism classes of $\mathbb T$-algebras of cardinality $n$. 
\end{prop}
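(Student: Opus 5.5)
We fix a finite set $\Omega = \{\omega^{(1)}, \ldots, \omega^{(k)}\}$ of generating operations of $\mathbb T$, with $\omega^{(j)} \in \mathbb T_{n_j}$. The plan is to show that a $\mathbb T$-algebra structure on a given set of cardinality $n$ is entirely determined by the maps $\omega^{(j)}_X$, $1 \leq j \leq k$. Since every algebra of cardinality $n$ is isomorphic to one whose underlying set is $\{1, \ldots, n\}$, this reduces the problem to counting structures on that fixed set, and there are only finitely many candidate tuples of maps.

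The key step is the following claim: if $X$ is a set and two $\mathbb T$-algebra structures on $X$ agree on each $\omega^{(j)}$, then they agree on every operation $\omega \in \mathbb T_m$. To prove it, I would use that, by definition of a generating set, every operation $\omega$ is a composite of generating operations (together with projections). That is, $\omega$ is obtained by finitely many substitutions of the form $\omega^{(j)} \circ (\tau_1, \ldots, \tau_{n_j})$ starting from the projections $X^m \rightarrow X$. The maps $\omega_X$ are computed compatibly with this composition, which is what it means for $X$ to be a model of the theory: the functor $\mathbb T \rightarrow \mathrm{Set}$ preserves composition and products. So an induction on the length of such an expression shows that $\omega_X$ is determined by the $\omega^{(j)}_X$. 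I expect this to be the only point needing care, and it is mostly a matter of unwinding the definition of a Lawvere theory and its algebras.

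Consequently, the map sending a $\mathbb T$-algebra structure on $\{1, \ldots, n\}$ to the tuple $(\omega^{(j)}_X)_{j}$ is injective into
\[
\prod_{j=1}^{k} \mathrm{Map}\bigl(\{1,\ldots,n\}^{n_j}, \{1,\ldots,n\}\bigr),
\]
a finite set of cardinality $\prod_j n^{n^{n_j}}$. This generalizes the Cayley table bound for groups. Hence there are finitely many $\mathbb T$-algebra structures on $\{1, \ldots, n\}$, and a fortiori finitely many isomorphism classes of $\mathbb T$-algebras of cardinality $n$. In this count we use only the constraint on operations; the equations of the theory only cut the set down further and are not needed for the bound.
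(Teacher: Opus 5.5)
Your proof is correct and follows the paper's argument: reduce to $\mathbb T$-algebra structures on $\{1,\ldots,n\}$, observe that such a structure is determined by the interpretations of the finitely many generating operations, and bound the number of possible choices. The only addition is that you spell out the induction on composites showing why the generating operations determine every operation, a step the paper takes for granted.
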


\begin{proof}
Let $\omega_1, ..., \omega_N$ be generating operations for $\mathbb T$, with $\omega_i \in \mathbb T_{k_i}$ (for $i = 1, ..., N$). Every $\mathbb T$-algebra of cardinality $n$ is isomorphic to $ X=\{1, ..., n\}$ endowed with some $\mathbb T$-algebra structure. But a $\mathbb T$-algebra structure on $X$ is uniquely determined by the finitely many maps $\omega_{i,X} : X^{k_i} \rightarrow X$, for which there are finitely many possible choices.
\end{proof}

\begin{prop}\label{congruences_of_fixed_index}
Let $\mathbb T$ be a finitely generated Lawvere theory. Let $X$ be a topologically finitely generated $\mathbb T$-algebra and $n \geq 1$ be an integer. There are finitely many open congruences of index $n$ on $X$.
\end{prop}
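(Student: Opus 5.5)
We will show that an open congruence of index $n$ on $X$ is completely determined by a finite amount of data, namely a finite quotient algebra together with the images of finitely many generators. Fix a finite set $\Gamma = \{\gamma_1, \ldots, \gamma_k\}$ generating a dense sub-$\mathbb T$-algebra $D$ of $X$, which exists by Definition~\ref{def_topo_fg}. To an open congruence $\mathcal R$ of index $n$ we associate the following data: the quotient $X/\mathcal R$, which is a finite discrete $\mathbb T$-algebra of cardinality $n$, and the tuple $(\pi(\gamma_1), \ldots, \pi(\gamma_k))$, where $\pi : X \twoheadrightarrow X/\mathcal R$ is the projection. As noted before the definition of cofinite congruences, $\pi$ is a continuous morphism of topological $\mathbb T$-algebras. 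By Proposition~\ref{algebras_of_fixed_cardinality}, up to isomorphism there are finitely many choices for $X/\mathcal R$. Once an identification with a fixed representative $F$ is chosen, there are at most $n^k$ choices for the images of the generators.

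The key step is to prove that $\mathcal R$ is recovered from this data, that is, from the morphism $\pi$ viewed up to post-composition with isomorphisms. Precisely, suppose $f, g : X \to F$ are continuous morphisms to a finite discrete $\mathbb T$-algebra with $f(\gamma_i) = g(\gamma_i)$ for all $i$. We want $f = g$. First, $f$ and $g$ agree on $D$, because every element of $D$ is obtained by applying operations of $\mathbb T$ to the $\gamma_i$, and both maps are morphisms. Second, the equalizer $\{x \in X : f(x) = g(x)\}$ is closed: it is the preimage of the diagonal of $F \times F$ under the continuous map $(f,g)$, and that diagonal is closed because $F$ is discrete. Since this closed set contains the dense subset $D$, it is all of $X$. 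The kernel of $\pi$ is $\mathcal R$, so $\mathcal R$ is determined by $\pi$, hence by the data above; replacing $\pi$ by its composite with an isomorphism does not change the kernel.

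It follows that the map from the set of open congruences of index $n$ to the finite set of pairs (representative algebra $F$ of cardinality $n$, tuple in $F^k$) is well defined up to the choice of identification, and injective. To make this airtight, I would choose for each $\mathcal R$ one isomorphism $X/\mathcal R \cong F$ onto a fixed representative $F$. Then $\mathcal R$ is the kernel of the resulting morphism $X \to F$, and that morphism is determined by its values on $\Gamma$. This bounds the number of such congruences by $\sum_F n^k$, where $F$ runs over the finitely many representatives. I do not expect a real obstacle. The only point needing care is the density and continuity argument, and it uses only the discreteness of the quotient, which is exactly why the statement is restricted to open congruences.
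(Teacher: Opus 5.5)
Your proposal is correct and follows the paper's proof exactly: both recover $\mathcal R$ as the kernel of a continuous morphism onto one of the finitely many representative algebras of cardinality $n$, note that this morphism is determined by the images of finitely many topological generators, and obtain the bound $\sum_F n^k$. Your closed-equalizer argument (the equalizer of $f$ and $g$ is closed and contains the dense subalgebra $D$) makes explicit a step that the paper states without justification.
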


\begin{proof}
Let $Y_1, ..., Y_r$ be representatives of the isomorphism classes of $\mathbb T$-algebras of cardinality $n$, whose number is finite by Proposition~\ref{algebras_of_fixed_cardinality}. If $\mathcal R$ is an open congruence of index $n$ on $X$, then the finite discrete $\mathbb T$-algebra $X/\mathcal R$ is isomorphic to some $Y_i$, and $\mathcal R$ can be recovered as the kernel of the corresponding surjection $f: X \twoheadrightarrow X/\mathcal R \cong Y_i$, which is continuous for the discrete topology on $Y_i$. Let $x_1, ..., x_s$ be a finite set of topological generators for $X$. Then a continuous morphism $f: X \rightarrow  Y_i$ is determined by the element $(f(x_1), ..., f(x_s))$ of the finite set $Y_i^s$. Hence, the number of cofinite congruences of index $n$ on $X$ is bounded by $|Y_1|^s + \cdots + |Y_r|^s$, so it is finite.
\end{proof}

\begin{rmq}\label{rk_Singh_finite_nb_of_cofinite}
For quandles, this is also~\cite[Cor.~5.2]{Singh}.
\end{rmq}

\section{Profinite structures}\label{sec_prof_alg}

This section is devoted to showing the equivalence, for a topologically finitely generated algebra $X$ over a Lawvere theory, between the data of the number of morphisms from $X$ to every finite discrete algebras and the data of the profinite completion of $X$, up to continuous isomorphism (Theorem~\ref{theo_col_and_prof_gen}).

\subsection{Profinite algebras}\label{par_prof_alg}

We recall here some classical material about profinite spaces, groups and, more generally, algebras over a Lawvere theory (which is the framework of~\cite{Schneider}).

\subsubsection{Definition and first properties}

Let $\mathbb T$ be a Lawvere theory.

\begin{defi}\label{def_prof_alg}
A topological $\mathbb T$-algebra $X$ is called \emph{profinite} if it is a projective limit of finite discrete $\mathbb T$-algebras.
\end{defi}

Let $X$ be the projective limit of a directed system $(X_i, \psi_{ij})$ of finite (discrete) $\mathbb T$-algebras, indexed by a poset $I$, where $\psi_{ij}$ is morphism from $X_i$ to $X_j$ if $i \geq j$. Recall that:
\[X := \varprojlim X_i = \{(x_i)\ |\ \psi_{ij}(x_i) = x_j\ \text{if}\ i \geq j\} \subseteq \prod\limits_{i \in I} X_i.\]
It is a sub-$\mathbb T$-algebra of the product (whose operations are defined pointwise). This structure is the unique one on the set $X$ making all the canonical maps $\psi_i: X \rightarrow X_i$ into $\mathbb T$-algebra morphisms. 

The topology of the projective limit is defined as the restriction of the product topology; equivalently, it is the initial topology with respect to the canonical maps $\psi_i: X \rightarrow X_i$, which is generated by the $\psi_i^{-1}(t)$, for $i \in I$ and $t \in X_i$. Notice that the operations on are continuous for this topology (as restrictions of the pointwise operations on the product), so $X$ is a \emph{topological $\mathbb T$-algebra}. 

The product is compact by Tychonoff's theorem, and the equations $\psi_{ij}(x_i) = x_j$ define closed subsets of the product, whose intersection is a closed subset, so $X$ is compact. Moreover, the canonical maps $\psi_i: X \rightarrow X_i$ are continuous morphisms to finite discrete $\mathbb T$-algebras separating the points of $X$, so $X$ is residually finite (and, in particular, totally disconnected -- see Fact~\ref{res_finite_are_tot_disconnected}).

\begin{prop}\label{profinite_iff_cpct_and_res_finite}
A topological $\mathbb T$-algebra is profinite if and only if it is compact and residually finite.
\end{prop}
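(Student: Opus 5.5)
One direction is already done in the paragraph preceding the statement: a projective limit of finite discrete $\mathbb T$-algebras is compact (closed in a product of finite spaces, by Tychonoff) and residually finite (the canonical maps $\psi_i$ separate points). So only the converse needs an argument. Let $X$ be a compact, residually finite topological $\mathbb T$-algebra.

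\textbf{The system.} Let $I$ be the set of cofinite congruences on $X$, ordered by reverse inclusion. By Lemma~\ref{intersection_finite_index}, $I$ is directed. For $\mathcal R \subseteq \mathcal S$ there is an induced morphism $X/\mathcal R \to X/\mathcal S$, so the finite discrete algebras $X/\mathcal R$ form a directed system. Let $L = \varprojlim_{\mathcal R \in I} X/\mathcal R$. The projections $X \to X/\mathcal R$ are continuous morphisms and compatible, so they induce a continuous morphism $\phi : X \to L$, $x \mapsto (\overline x)_{\mathcal R}$.

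\textbf{Injectivity.} If $x \neq y$, residual finiteness gives a continuous morphism $f : X \to F$ to a finite discrete algebra with $f(x) \neq f(y)$. Its kernel $\mathcal R_f$ has clopen classes (preimages of points of $F$) and finite index, since $X/\mathcal R_f \cong \ima(f) \subseteq F$. So $\mathcal R_f \in I$ and $\phi(x) \neq \phi(y)$.

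\textbf{Surjectivity (the main point).} This uses compactness. Take $(t_{\mathcal R}) \in L$. Each class $C_{\mathcal R} \subseteq X$ corresponding to $t_{\mathcal R}$ is closed, being clopen, and nonempty. Given finitely many $\mathcal R_1, \dots, \mathcal R_k$, set $\mathcal S = \bigcap_j \mathcal R_j \in I$. Compatibility of the family gives $C_{\mathcal S} \subseteq \bigcap_j C_{\mathcal R_j}$, so the family $(C_{\mathcal R})$ has the finite intersection property. By compactness its total intersection is nonempty, and any $x$ in it satisfies $\phi(x) = (t_{\mathcal R})$.

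\textbf{Homeomorphism.} The map $\phi$ is a continuous bijection from a compact space to a Hausdorff space $L$ (a subspace of a product of discrete spaces), hence a homeomorphism. It is also an algebra isomorphism, so $X \cong L$ as topological $\mathbb T$-algebras. Thus $X$ is profinite.

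The only subtle step is surjectivity. It needs the classes to be closed and the index set to be directed, which is exactly what Lemma~\ref{intersection_finite_index} provides.
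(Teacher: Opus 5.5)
Your proof is correct and follows the paper's argument. You use the same canonical map $X \to \varprojlim X/\mathcal R$ over cofinite congruences, which is injective by residual finiteness and surjective by compactness; the only cosmetic difference is that you get surjectivity directly from the finite intersection property of the classes $C_{\mathcal R}$ (making the use of Lemma~\ref{intersection_finite_index} explicit), whereas the paper argues that the image is compact, hence closed, and also dense.
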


\begin{proof}
We have already proved one direction; let us prove the converse. Let $X$ be a topological $\mathbb T$-algebra that is compact and residually finite. Consider the canonical morphism $\iota$ from $X$ to $\widehat X := \varprojlim X/\mathcal R$, where the limit is taken over all cofinite congruences on $X$ (that is, over all finite discrete quotients of $X$). It is continuous (by construction), and injective, because of residual finiteness. Since $X$ is compact, and the limit is Hausdorff, $\iota$ is a homeomorphism onto its image, which is then compact, hence closed. But the maps $X \rightarrow X/\mathcal R$ are surjective. Given the above description of the topology of the projective limit, this means that $\iota$ has dense image. Hence it is a homeomorphism.
\end{proof}

For quandles, this is essentially~\cite[Th.~4.8]{Singh}. As an immediate corollary, we get a generalization to algebras of~\cite[Prop.~4.7]{Singh} (see also~\cite[Prop.~3.23]{Yetter_et_al}):

\begin{cor}
Let $Y$ be a $\mathbb T$-subalgebra of a topological $\mathbb T$-algebra $X$. Then $Y$ is profinite if and only if it is closed in $X$.
\end{cor}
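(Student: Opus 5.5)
The plan is to read the statement with $X$ profinite: this is the setting of~\cite[Prop.~4.7]{Singh}, and it is needed, since for an arbitrary topological $\mathbb T$-algebra a closed subalgebra need not be compact. Give $Y$ the subspace topology. Its operations are restrictions of the continuous operations of $X$, so $Y$ is a topological $\mathbb T$-algebra. Both directions will then reduce to Proposition~\ref{profinite_iff_cpct_and_res_finite}, by passing residual finiteness from $X$ to $Y$ and comparing compactness with closedness.

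\emph{Step 1: $Y$ is residually finite.} Let $y \neq y'$ in $Y$. Since $X$ is profinite, it is residually finite, so there is a continuous morphism $f : X \to F$ to a finite discrete $\mathbb T$-algebra with $f(y) \neq f(y')$. The restriction $f|_Y : Y \to F$ is still a continuous morphism of $\mathbb T$-algebras and still separates $y$ and $y'$.

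\emph{Step 2: closed implies profinite.} If $Y$ is closed in the compact space $X$, then $Y$ is compact. By Step~1 and Proposition~\ref{profinite_iff_cpct_and_res_finite}, $Y$ is profinite.

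\emph{Step 3: profinite implies closed.} If $Y$ is profinite, then it is compact, since projective limits of finite discrete algebras are compact, as recalled above. The space $X$ is Hausdorff: the proof of Fact~\ref{res_finite_are_tot_disconnected} separates any two distinct points by disjoint open sets $f^{-1}(x)$ and $f^{-1}(F-\{x\})$. A compact subset of a Hausdorff space is closed, so $Y$ is closed in $X$.

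No step is a real obstacle. The only points needing care are that residual finiteness is inherited by subalgebras with the induced topology, and that residual finiteness gives the Hausdorff property of $X$ used in Step~3.
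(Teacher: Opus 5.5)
Your proof is correct and is exactly the intended deduction: the paper gives no separate argument and presents the statement as an immediate corollary of Proposition~\ref{profinite_iff_cpct_and_res_finite} (profinite $\Leftrightarrow$ compact and residually finite), which is what your Steps 1--3 spell out. You are also right that the statement as written omits the hypothesis that $X$ be profinite, and that this hypothesis is needed: a closed but non-compact subalgebra of a non-compact $X$ is not profinite.
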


\subsubsection{Examples: profinite groups and profinite spaces}\label{par_profinite_groups_and_spaces}

\paragraph{Profinite spaces}

We can apply the theory of profinite $\mathbb T$-algebras to the theory having the empty presentation. Then topological $\mathbb T$-algebras are just topological spaces. 

\begin{fact}\label{res_finite_spaces}
A topological space is residually finite if and only if it is totally disconnected.
\end{fact}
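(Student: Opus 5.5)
The statement concerns the trivial theory, so a finite discrete $\mathbb T$-algebra is just a finite discrete space and a continuous morphism is just a continuous map. I will prove the two implications separately.

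\emph{Residually finite implies totally disconnected.} This direction is Fact~\ref{res_finite_are_tot_disconnected}, specialised to the trivial theory, so nothing new is needed.

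\emph{Totally disconnected implies residually finite.} Here I use the characterisation recalled just before Fact~\ref{res_finite_are_tot_disconnected}. A space is totally disconnected when any two distinct points $x \neq y$ lie in the two pieces of a partition $X = U \sqcup V$ into disjoint open subsets, with $x \in U$ and $y \in V$. Given such a partition, define $f : X \rightarrow \{0,1\}$, with $\{0,1\}$ discrete, by $f(U) = 0$ and $f(V) = 1$. The map $f$ is continuous because both $U$ and $V$ are open. For the trivial theory it is automatically a morphism, and it separates $x$ from $y$. Hence $X$ is residually finite in the sense of Definition~\ref{def_res_finite}.

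\emph{The main obstacle.} The only delicate point is which definition of ``totally disconnected'' is in force. With the definition ``connected components are points'', separating two points by a clopen partition amounts to quasi-components being points. This is genuinely stronger in general, although the two notions agree for compact Hausdorff spaces. I will rely on the equivalence that the paper itself states when it recalls the notion, namely the separation-by-clopen-partition formulation. With that formulation, both directions are immediate. If one insists on the components definition, the converse should be stated for compact Hausdorff spaces, where components coincide with quasi-components. That is the case relevant to profinite spaces, via Proposition~\ref{profinite_iff_cpct_and_res_finite}.
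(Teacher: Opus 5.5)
Your proof is correct and follows the paper's argument. The forward direction is Fact~\ref{res_finite_are_tot_disconnected} applied to the trivial theory, and the converse maps a separating clopen partition $X = U \sqcup V$ onto the discrete space $\{0,1\}$. Your caveat is accurate: the paper relies on the clopen-separation formulation, which agrees with ``components are points'' for compact Hausdorff spaces (the case relevant to profinite spaces) but not in general.
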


\begin{proof}
One implication is a particular case of Fact~\ref{res_finite_are_tot_disconnected}; let us prove the converse. Let $X$ be a totally disconnected topological space, and let $x, y \in X$ with $x \neq y$. There is a partition $X = U \sqcup V$ into a union of two disjoint open subsets such that $x \in U$ and $y \in V$. Then the application $f : X \rightarrow \{0, 1\}$ defined by $f(U) = \{1\}$ and $f(V) = \{0\}$ is a continuous map to a finite discrete space, and $f(x) \neq f(y)$. This shows that $X$ is residually finite.
\end{proof}

The general definition of profinite algebras (Definition~\ref{def_prof_alg}) becomes, in this context:
\begin{defi}\label{def_prof_space}
A \emph{profinite space} is a topological space which is a projective limit of finite discrete spaces.
\end{defi}

Then, Proposition~\ref{profinite_iff_cpct_and_res_finite}, together with Fact~\ref{res_finite_spaces}, gives the following characterisation: 
\begin{cor}[to Proposition~\ref{profinite_iff_cpct_and_res_finite}]\label{char_profinite_spaces}
A topological space is profinite if and only if it is compact and totally disconnected.
\end{cor}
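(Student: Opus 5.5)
The corollary follows by combining Proposition~\ref{profinite_iff_cpct_and_res_finite} with Fact~\ref{res_finite_spaces}, once one is careful about what those statements mean for the empty theory. So the first step is to specialise everything to the theory $\mathbb T$ with the empty presentation, as in Example~\ref{sets_as_T-alg}.

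For this theory, $\mathbb T$-algebras are sets and topological $\mathbb T$-algebras are topological spaces. Morphisms are arbitrary maps, and continuous morphisms are continuous maps. In particular, a finite discrete $\mathbb T$-algebra is just a finite discrete space, so a profinite $\mathbb T$-algebra in the sense of Definition~\ref{def_prof_alg} is exactly a profinite space in the sense of Definition~\ref{def_prof_space}. Likewise, residual finiteness (Definition~\ref{def_res_finite}) for this theory says that continuous maps to finite discrete spaces separate points.

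With these identifications, Proposition~\ref{profinite_iff_cpct_and_res_finite} says that a topological space is profinite if and only if it is compact and residually finite. Fact~\ref{res_finite_spaces} says that residually finite is equivalent to totally disconnected. Substituting the second into the first gives the statement.

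There is no real obstacle. The only point to check is that the proof of Proposition~\ref{profinite_iff_cpct_and_res_finite} does not implicitly use any structure that is absent for the empty theory. It does not: for the empty theory, cofinite congruences are simply finite partitions of $X$ into clopen sets, so the projective limit used there still makes sense. Moreover, Hausdorffness of the limit and the density argument are purely topological.
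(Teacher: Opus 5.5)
Your proposal is correct and is exactly the paper's argument: specialise Proposition~\ref{profinite_iff_cpct_and_res_finite} to the theory with the empty presentation, then replace ``residually finite'' by ``totally disconnected'' using Fact~\ref{res_finite_spaces}. Your extra check that the proof of the proposition uses no structure missing from the empty theory is harmless but unnecessary, since that proposition is already stated and proved for an arbitrary Lawvere theory.
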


Such spaces are also called \emph{Stone spaces} (which is the terminology used in~\cite{Schneider}).

\paragraph{Profinite groups}

One important feature of groups is their homogeneity, that allows one to describe a congruence $\mathcal R$ by the class $N$ of the neutral element, which is a normal subgroup (then $\mathcal R$ is congruence modulo $N$). In particular, on a topological group, open congruences correspond to open normal subgroups.

\begin{prop}[{\cite[III, \S 4, n°6, Cor.~1 to Prop.~13]{Bourbaki}}]\label{prop_Bourbaki}
In a compact totally disconnected group, open normal subgroups form a basis of neighbourhoods of $1$.
\end{prop}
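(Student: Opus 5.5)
The plan is the classical argument: in a compact group $G$ with $1$ having arbitrarily small compact open neighbourhoods, build open normal subgroups inside any given neighbourhood of $1$. I proceed in three steps.

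\textbf{Step 1: small clopen neighbourhoods of $1$.} I use the standard fact that in a compact Hausdorff space the connected component of a point equals its quasi-component, the intersection of all clopen sets containing it. Since $G$ is totally disconnected, the intersection of the clopen neighbourhoods of $1$ is $\{1\}$. Given an open neighbourhood $W$ of $1$, the sets $C\setminus W$, for $C$ clopen containing $1$, are closed in the compact space $G$ and have empty intersection. By finite intersection and Lemma~\ref{intersection_finite_index}-style closure under finite intersections, some clopen $C$ with $1\in C\subseteq W$ exists. (If the paper's spaces are not assumed Hausdorff, I would note that a totally disconnected group is $T_1$, hence Hausdorff.)

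\textbf{Step 2: an open subgroup inside $C$.} Since $C$ is compact and open, and multiplication is continuous, a tube-lemma argument gives an open symmetric neighbourhood $V$ of $1$ with $CV\subseteq C$. Concretely, for each $c\in C$ pick neighbourhoods $U_c\ni c$ and $V_c\ni 1$ with $U_cV_c\subseteq C$, extract a finite subcover of $C$, and let $V$ be the intersection of the $V_{c}$ and their inverses. Then $V^n\subseteq C$ for all $n$, by induction from $1\in C$. So the subgroup $H=\bigcup_n V^n$ is open, being a union of open sets, and is contained in $C\subseteq W$.

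\textbf{Step 3: normality.} Since $H$ is open and $G$ is compact, $H$ has finite index, because the cosets form an open cover by disjoint sets. Hence $H$ has finitely many conjugates $gHg^{-1}$, indexed by $G/H$. Their intersection $N$ is a normal subgroup, open as a finite intersection of open sets, and contained in $H\subseteq W$.

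The main obstacle is Step 1, the equality of components and quasi-components in compact Hausdorff spaces. I would either cite it or prove it by showing that the quasi-component $Q$ is connected. If $Q=A\sqcup B$ with $A,B$ closed and $1\in A$, separate $A$ and $B$ by disjoint open sets $U_A,U_B$ using normality. Compactness then yields a clopen $D\supseteq Q$ with $D\subseteq U_A\cup U_B$. Then $D\cap U_A$ is clopen and contains $1$, which forces $B=\emptyset$.
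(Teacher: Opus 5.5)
Your proof is correct and is essentially the paper's argument. Steps 1 and 2 spell out the Bourbaki open-subgroup result that the paper only cites: small clopen neighbourhoods of $1$ via components $=$ quasi-components in a compact Hausdorff space, then the open subgroup $\bigcup_n V^n$ inside a compact open $C$ with $CV \subseteq C$. Step 3 is exactly the paper's remark that the finitely many conjugates of an open subgroup, indexed by $G/H$, intersect in an open normal subgroup. The one blemish is cosmetic: the appeal to Lemma~\ref{intersection_finite_index} in Step 1 is misplaced, since all you need there is that a finite intersection of clopen sets is clopen.
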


\begin{rmq}
In \cite{Bourbaki}, this is stated for open subgroups, but we can easily show that every open subgroup contains an open normal subgroups. Namely, if $N$ is an open subgroup of a compact totally disconnected group $G$, then $G/N$ is discrete and compact, so it is finite. Moreover, the conjugate $gNg^{-1}$ depends only on $gN \in G/N$, so their intersection is a finite intersection of open subgroups, which is then an open normal subgroup contained in $N$.
\end{rmq}

In other words, compact totally disconnected groups are residually finite: for every $x,y \in G$, if $x \neq y$, then there exists an open normal subgroup $N$ not containing $x^{-1}y$, corresponding to a finite discrete quotient $G/N$ where the class $xN$ of $x$ is different from the class $yN$ of $y$. This gives a converse to Fact~\ref{res_finite_are_tot_disconnected} in this context. Combined with Proposition~\ref{profinite_iff_cpct_and_res_finite}, this entails:
\begin{cor}[to Proposition~\ref{profinite_iff_cpct_and_res_finite}]\label{profiniteness_criterion_groups}
A topological group is profinite if and only if it is compact and totally disconnected.
\end{cor}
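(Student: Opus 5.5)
The statement to prove is Corollary~\ref{profiniteness_criterion_groups}: a topological group is profinite if and only if it is compact and totally disconnected. The plan is to reduce to Proposition~\ref{profinite_iff_cpct_and_res_finite}, applied to the theory of groups (Example~\ref{groups_as_T-alg}). That proposition says a topological group is profinite exactly when it is compact and residually finite. So it suffices to show that, for compact groups, residual finiteness is equivalent to total disconnectedness.

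For the forward direction, a profinite group is compact by Proposition~\ref{profinite_iff_cpct_and_res_finite}. It is residually finite, hence totally disconnected by Fact~\ref{res_finite_are_tot_disconnected}.

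For the converse, let $G$ be compact and totally disconnected, and take $x \neq y$ in $G$.
\begin{enumerate}
\item Since $G$ is Hausdorff (which we take as part of the standing conventions on compact groups, or deduce from total disconnectedness), there is a neighbourhood of $1$ not containing $x^{-1}y$.
\item By Proposition~\ref{prop_Bourbaki} and the subsequent remark, this neighbourhood contains an open normal subgroup $N$, so $x^{-1}y \notin N$.
\item The quotient $G/N$ is discrete because $N$ is open, and compact because $G$ is, hence finite.
\item The projection $G \to G/N$ is therefore a continuous morphism to a finite discrete group, and it separates $x$ from $y$ since $xN \neq yN$.
\end{enumerate}
Thus $G$ is residually finite, and Proposition~\ref{profinite_iff_cpct_and_res_finite} gives that $G$ is profinite.

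The only substantive input is Bourbaki's result, which we are allowed to assume, together with the remark upgrading open subgroups to open normal subgroups. That remark works because $G/N$ is finite, so $N$ has finitely many conjugates, and their intersection is an open normal subgroup contained in $N$. Everything else is formal.
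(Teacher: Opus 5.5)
Your proposal is correct and follows the paper's argument. Bourbaki's result, upgraded to open normal subgroups by the remark, shows that compact totally disconnected groups are residually finite; Proposition~\ref{profinite_iff_cpct_and_res_finite} and Fact~\ref{res_finite_are_tot_disconnected} then give the two directions.
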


\subsubsection{Profinite completions}\label{sec_prof_comp}

The projective limit appearing in the proof of Proposition~\ref{profinite_iff_cpct_and_res_finite} can be considered for any topological $\mathbb T$-algebra:

\begin{defi}\label{def_prof(alg)}
The \emph{profinite completion} of a topological $\mathbb T$-algebra $X$ is the projective limit of its finite discrete quotients:
\[\widehat X = \varprojlim(X/\mathcal R),\]
where the projective limit is taken over the \emph{cofinite} congruences $\mathcal R$ on $X$ (that is, open congruences of finite index), ordered by inclusion.
\end{defi}

\begin{rmq}
It is a filtered limit, because of Lemma~\ref{intersection_finite_index}. 
\end{rmq}

\begin{rmq}
Recall that on a discrete $\mathbb T$-algebra, a cofinite congruence is just a finite-index one.
\end{rmq}

The projections $\pi_{\mathcal R} : X \twoheadrightarrow X/\mathcal R$ factor through the projective limit, giving a canonical morphism $\iota : X \rightarrow \widehat X$. Since the $\pi_{\mathcal R}$ are surjective, $\iota$ has dense image.

\begin{rmq}\label{rk_entourages}
The set of cofinite congruences on a topological $\mathbb T$-algebra $X$ is a \emph{fundamental system of entourages} for a \emph{uniform structure} on $X$. With this language, $\widehat X$ is indeed the \emph{completion} of $X$, with respect to this uniform structure \cite[II, \S 3, n°9, Prop.~18]{Bourbaki}.
\end{rmq}

The profinite completion has the expected universal property:
\begin{prop}\label{univ_prop_completion_finite}
Let $F$ be a finite $\mathbb T$-algebra, endowed with the discrete topology. Then restriction along the canonical morphism $\iota : X \rightarrow \widehat X$ induces a natural bijection:
\[\Hom(X, F) \cong \Hom( \widehat X, F)\] 
between sets of continuous morphisms.
\end{prop}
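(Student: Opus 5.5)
Restriction along $\iota$ sends a continuous morphism $g : \widehat X \rightarrow F$ to $g \circ \iota$. This is continuous, since $\iota$ is continuous as the morphism induced by the continuous projections $\pi_{\mathcal R}$. So the map $\Hom(\widehat X, F) \rightarrow \Hom(X, F)$ is well defined and natural in $F$. I would prove injectivity and surjectivity separately.

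\emph{Injectivity.} Suppose $g, g' : \widehat X \rightarrow F$ are continuous morphisms with $g \circ \iota = g' \circ \iota$. Since $F$ is discrete, hence Hausdorff, the set $\{z \in \widehat X \mid g(z) = g'(z)\}$ is closed. It contains $\iota(X)$, which is dense in $\widehat X$ because every $\pi_{\mathcal R}$ is surjective (as already noted after Definition~\ref{def_prof(alg)}). Hence $g = g'$.

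\emph{Surjectivity.} Let $f : X \rightarrow F$ be a continuous morphism, and let $\mathcal R_f$ be its kernel (Definition~\ref{def_ker}). Each class of $\mathcal R_f$ is the preimage of a point of the discrete space $F$, so it is open. The quotient $X/\mathcal R_f$ injects into $F$, so it is finite. Thus $\mathcal R_f$ is a cofinite congruence, and $f$ factors as $X \xrightarrow{\pi_{\mathcal R_f}} X/\mathcal R_f \xrightarrow{\bar f} F$, where $\bar f$ is the induced injective morphism. Set $g := \bar f \circ \psi_{\mathcal R_f}$, where $\psi_{\mathcal R_f} : \widehat X \rightarrow X/\mathcal R_f$ is the canonical projection of the limit. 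Then $g$ is a continuous morphism, since $\psi_{\mathcal R_f}$ is one by definition of the limit topology and structure, and $\bar f$ is a map between discrete spaces. Moreover $g \circ \iota = \bar f \circ \pi_{\mathcal R_f} = f$, because $\psi_{\mathcal R} \circ \iota = \pi_{\mathcal R}$ by construction of $\iota$.

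There is no real obstacle here. The only point needing care is verifying that $\mathcal R_f$ is indeed cofinite, that is, open with discrete finite quotient. This is exactly where the continuity of $f$ and the discreteness of $F$ are used, and it is what places $\mathcal R_f$ in the indexing system of the limit. Naturality in $F$ is immediate, since the map is precomposition with $\iota$.
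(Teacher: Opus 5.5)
Your proposal is correct and follows the paper's proof essentially step for step. Injectivity comes from the density of $\iota(X)$, and surjectivity comes from factoring $f$ through the cofinite congruence $\mathcal R_f$ and composing the canonical projection $\widehat X \twoheadrightarrow X/\mathcal R_f$ with the induced injection into $F$.
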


\begin{proof}
The map  $(-) \circ \iota$ is injective because of the density of $\iota(X)$ in $\widehat X$. Let us show that it is surjective. Given a continuous morphism $f : X \rightarrow F$, we can factor it into a projection $p_f: X \twoheadrightarrow X/\mathcal R_f$ followed by an injection $i_f: X/\mathcal R_f \hookrightarrow F$. Note that the continuity of $i_f$ implies that $X/\mathcal R_f$ is discrete, hence it is canonically a topological $\mathbb T$-algebra, and this is a factorisation into continuous algebra morphisms (see the discussion in~\cref{sec_congruences}); in particular, $\mathcal R_f$ is a cofinite congruence. Then define $\hat f : \widehat X \rightarrow F$ by the canonical projection $\pi_{\mathcal R_f}: \widehat X \twoheadrightarrow X/\mathcal R_f$, followed by $i_f$. It is a continuous morphism ($i_f$ being a continuous map between discrete spaces), and $\hat f \iota = i_f \pi_{\mathcal R_f} \iota = i_f p_f = f$, whence the result.
\end{proof}

\begin{rmq}
In more topological terms, this proof can be seen as a proof that any morphism from $X$ to a finite discrete $F$ is uniformly continuous for the profinite topology; then $\hat f$ is the unique factorisation of $f$ through the completion.
\end{rmq}

\begin{cor}\label{univ_prop_completion}
For every profinite $\mathbb T$-algebra $Y$ and every topological $\mathbb T$-algebra $X$, restriction along the canonical morphism $\iota : X \rightarrow \widehat X$ induces a natural bijection:
\[\Hom(X, Y) \cong \Hom( \widehat X, Y).\]
In other words, the functor $X \mapsto \widehat X$ from topological $\mathbb T$-algebras to profinite $\mathbb T$-algebras is left adjoint to the forgetful functor.
\end{cor}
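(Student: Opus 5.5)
The plan is to reduce to Proposition~\ref{univ_prop_completion_finite} by writing $Y$ as a projective limit of finite discrete algebras. Since $Y$ is profinite, Proposition~\ref{profinite_iff_cpct_and_res_finite} and its proof identify $Y$ with $\varprojlim Y/\mathcal S$, the limit over the cofinite congruences $\mathcal S$ on $Y$. Write $p_{\mathcal S} : Y \to Y/\mathcal S$ for the projections; they are continuous morphisms to finite discrete algebras. By the universal property of projective limits in topological $\mathbb T$-algebras (operations and topology are both induced from the product), a continuous morphism $Z \to Y$ is the same as a compatible family of continuous morphisms $Z \to Y/\mathcal S$. This holds for any topological $\mathbb T$-algebra $Z$.

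Next, apply this to $Z = X$ and $Z = \widehat X$. We get
\[\Hom(X, Y) \cong \varprojlim_{\mathcal S} \Hom(X, Y/\mathcal S), \qquad \Hom(\widehat X, Y) \cong \varprojlim_{\mathcal S} \Hom(\widehat X, Y/\mathcal S),\]
with limits of sets taken along the transition maps $Y/\mathcal S \to Y/\mathcal S'$ for $\mathcal S \subseteq \mathcal S'$. Proposition~\ref{univ_prop_completion_finite} gives bijections $\Hom(\widehat X, Y/\mathcal S) \cong \Hom(X, Y/\mathcal S)$ induced by $(-)\circ\iota$. These are natural in the target, because precomposition by $\iota$ commutes with postcomposition by any morphism $Y/\mathcal S \to Y/\mathcal S'$. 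They therefore assemble into a bijection of the limits. This bijection is again $(-)\circ\iota$, which gives the claimed natural bijection $\Hom(\widehat X, Y) \cong \Hom(X, Y)$. Naturality in $X$ and $Y$ is immediate since the map is precomposition by $\iota$.

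The adjunction statement then follows formally. We must check that $X \mapsto \widehat X$ is a functor: a continuous morphism $g : X \to X'$ yields $\iota' \circ g : X \to \widehat{X'}$, which extends uniquely to $\widehat X \to \widehat{X'}$ by the bijection just proved. Here $\widehat{X'}$ is profinite by definition. Uniqueness gives functoriality and naturality in $X$.

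The only genuinely delicate point is the first step: that continuous morphisms into $Y$ correspond to compatible families into its finite quotients. In particular, a compatible family must produce a continuous map into $Y$. This holds because the topology of $Y \cong \varprojlim Y/\mathcal S$ is the initial topology for the $p_{\mathcal S}$, and the algebra structure is computed pointwise. Both facts were recalled after Definition~\ref{def_prof_alg}. The identification $Y \cong \varprojlim Y/\mathcal S$ itself is exactly the content of the proof of Proposition~\ref{profinite_iff_cpct_and_res_finite}.
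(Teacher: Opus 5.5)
Your proof is correct and is essentially the paper's argument: write $Y$ as a projective limit of finite discrete algebras, identify continuous morphisms into that limit with compatible families of continuous morphisms into its terms, and apply Proposition~\ref{univ_prop_completion_finite} termwise. Using the specific presentation $Y \cong \varprojlim Y/\mathcal S$ is harmless, since any presentation supplied by the definition of profiniteness would do. Your extra check that $X \mapsto \widehat X$ is a functor fills in a point the paper leaves implicit.
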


\begin{proof}
Let us write $Y$ as the projective limit of finite (discrete) $\mathbb T$-algebras $Y_i$. Notice that the projective limit in the category of topological $\mathbb T$-algebras is the same as the projective limits in $\mathbb T$-algebras, endowed with some topology. Hence, we can write:
\[\Hom(X, \varprojlim Y_i) =  \varprojlim \Hom(X, Y_i) \cong \varprojlim \Hom(\widehat X, Y_i) = \Hom( \widehat X, \varprojlim Y_i),\]
where the middle isomorphism is the one from Proposition~\ref{univ_prop_completion_finite}.
\end{proof}

\begin{rmq}
If $X$ is a discrete algebra, then for any topological algebra $Y$, the set $\Hom(X, Y)$ is just the set of all algebra morphisms from $X$ to $Y$. In particular, if $Y$ is profinite (in particular if it is finite discrete), any algebra morphism $X \rightarrow Y$ corresponds to a unique continuous morphism $\widehat X \rightarrow Y$.
\end{rmq}

\subsection{Profinite completion of finitely generated algebras}
\label{subsec_prof_comp_fg}

We now generalize the main theorem of~\cite{DFPR} to the context of topological algebras over a Lawvere theory, which includes the case of quandles. The proof is in fact mostly the same as theirs, replacing finite index normal subgroups of discrete groups by cofinite congruences on topological algebras. In particular, Proposition~\ref{congruences_of_fixed_index} plays the role of the finiteness of the number of normal subgroups of fixed index in a finitely generated group.

\begin{rmq}\label{Uniform_qdls}
The language of uniform structures from Remark~\ref{rk_entourages} is useful when dealing with general $\mathbb T$-algebras: contrary to what happens for groups, in general, a uniform structures compatible with a $\mathbb T$-algebra structure cannot be described by neighbourhoods of one point. Although we do not use explicitly the language of uniform structures in this paper, the reader who knows about them is invited to keep this point of view in mind as a useful guide for their intuition.
\end{rmq} 

\subsubsection{Profinite completion in the finitely generated case}\label{par_prof_comp_fg}

If $X$ is a topologically finitely generated topological algebra over a finitely generated theory $\mathbb T$, one can see the projective limit defining its profinite completion (see Def.~\ref{def_prof(alg)}) as a projective limit indexed by integers, using the following trick:
\[\mathcal R_n(X) := \bigcap\limits_{\mathcal R\ \mathrm{open},\ [X: \mathcal R] \leq n} \mathcal R.\]
Proposition~\ref{congruences_of_fixed_index} ensures that this intersection is finite; then Lemma~\ref{intersection_finite_index} ensures that it is cofinite. By construction, $\mathcal R_{n+1} \subseteq \mathcal R_n$, and this sequence is cofinal in cofinite congruences on $X$. Hence:
\[\widehat X = \varprojlim\limits_n(X/\mathcal R_n).\]

\begin{rmq}
The $\mathcal R_n$ are a countable fundamental system of entourages for the uniform structure from Remark~\ref{Uniform_qdls}, so this structure comes from a generalized distance on $X$, given by: 
\[d(x,y) = 2^{- \sup \{n | x \mathcal R_n y\}}.\]
The reader unfamiliar with uniform structures may think of $\widehat X$ as the usual completion of the (generalized) metric space $X$.
\end{rmq}

\subsubsection{Number of morphisms and finite quotients}\label{par_nb_of_morphisms}

If a topological $\mathbb T$-algebra $X$ is (topologically) finitely generated and $F$ is a finite discrete $\mathbb T$-algebra, then the number $\# \Hom(X, F)$ of continuous morphisms from $X$ to $F$ is finite, since a continuous morphism is determined by the images of the generators.

\begin{nota}
We denote by $\Phi(X)$ the set of isomorphism classes of finite discrete quotients of $X$. 
\end{nota}

\begin{lem}\label{C(X)_from_colourings}
Let $X$ and $Y$ be finitely generated topological $\mathbb T$-algebras. Suppose that for every finite discrete $\mathbb T$-algebra $F$, we have $\# \Hom(X, F) = \# \Hom(Y, F)$. Then $\Phi(X) = \Phi(Y)$.
\end{lem}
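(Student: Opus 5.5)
The plan is to recover, from the function $F \mapsto \# \Hom(X, F)$, the number $\# \mathrm{Epi}(X, F)$ of continuous \emph{surjective} morphisms $X \twoheadrightarrow F$ for every finite discrete $\mathbb T$-algebra $F$. Once this is done, the conclusion is immediate. A finite discrete algebra $F$ has its isomorphism class in $\Phi(X)$ if and only if $\# \mathrm{Epi}(X,F) \neq 0$. Indeed, a continuous surjection onto a finite discrete $F$ has open kernel classes, so $F \cong X/\mathcal R_f$ with $\mathcal R_f$ cofinite. Conversely, every cofinite quotient gives such a surjection. Hence equality of the epimorphism counts for $X$ and $Y$ gives $\Phi(X) = \Phi(Y)$.

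To compute the epimorphism counts, I will decompose every continuous morphism $f : X \to F$ according to its image $\ima(f)$, which is a sub-$\mathbb T$-algebra of $F$, automatically finite and discrete. Conversely, a continuous surjection $X \twoheadrightarrow S$ onto a subalgebra $S \subseteq F$ composed with the inclusion is a continuous morphism to $F$. This gives the identity
\[\# \Hom(X, F) = \sum_{S \leq F} \# \mathrm{Epi}(X, S),\]
where the sum runs over the finitely many subalgebras $S$ of $F$. All terms are finite, because $X$ is topologically finitely generated (as noted at the start of~\cref{par_nb_of_morphisms}).

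I will then argue by induction on $|F|$ that $\# \mathrm{Epi}(X, F)$ is determined by the numbers $\# \Hom(X, S)$ for $|S| \leq |F|$. The identity rewrites as
\[\# \mathrm{Epi}(X, F) = \# \Hom(X, F) - \sum_{S < F} \# \mathrm{Epi}(X, S),\]
and every proper subalgebra $S$ has strictly smaller cardinality. The base case is automatic: the only subalgebra of cardinality at most that of the smallest candidate is $F$ itself, or the sum is empty. Each $S$ is itself a finite discrete $\mathbb T$-algebra, so the hypothesis applies to it, and by induction $\# \mathrm{Epi}(X,S) = \# \mathrm{Epi}(Y,S)$ for all proper $S$. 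It follows that $\# \mathrm{Epi}(X,F) = \# \mathrm{Epi}(Y,F)$.

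There is no serious obstacle here; this is a Möbius-inversion-style argument on the subalgebra lattice. The only point needing care is the topological one: checking that continuity of a morphism to a finite discrete algebra is equivalent to continuity of its corestriction to the image. This holds since subspaces of discrete spaces are discrete. So the image decomposition is a bijection between continuous morphisms, and the finiteness of all counts is guaranteed by topological finite generation.
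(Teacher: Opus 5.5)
Your proof is correct and follows the same idea as the paper's: recover the number of continuous surjections $X \twoheadrightarrow F$ from the counts $\#\Hom(X,S)$ for subalgebras $S \subseteq F$, then observe that this number is nonzero exactly when the class of $F$ lies in $\Phi(X)$. The only difference is in the bookkeeping: the paper writes the surjection count directly by inclusion--exclusion over intersections of the strict subalgebras of $F$, while you partition $\Hom(X,F)$ by image and invert recursively by induction on $|F|$, which amounts to the same M\"obius inversion on the subalgebra lattice.
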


\begin{proof}
Let $F$ be a finite discrete $\mathbb T$-algebra. Let $F_1, ..., F_N$ be a list of all its strict subalgebras. Then the number of surjective morphisms from $X$ to $F$ is given by the inclusion-exclusion formula:
\[\# \Hom(X, F) - \sum\limits_i \# \Hom(X, F_i) + \sum\limits_{i < j} \# \Hom(X, F_i \cap F_j) - \ldots,\]
so it is entirely determined by the numbers of morphisms from $X$ to finite $\mathbb T$-algebras. Since it is non-zero if and only if $F$ belongs to $\Phi(X)$, we conclude that $\Phi(X)$ is completely determined by these numbers, whence the result.
\end{proof}

\subsubsection{Profinite completion and morphisms to finite algebras}\label{par_prof_comp_and_morphisms}

We now prove the following theorem, which is a generalization of the main theorem of~\cite{DFPR} to algebras over a Lawvere theory. The proof is in fact exactly the same as theirs, apart from the general argument in the proof of Proposition~\ref{congruences_of_fixed_index}, replacing their group-theoretic argument.

\begin{theo}\label{theo_col_and_prof_gen}
Let $\mathbb T$ be a finitely generated Lawvere theory. Let $X$ and $Y$ be topologicaly finitely generated topological $\mathbb T$-algebras. The following are equivalent:
\begin{enumerate}[itemsep=-0.3em, topsep = 4pt]
\item For every finite discrete $\mathbb T$-algebra $F$, $\# \Hom(X, F) = \# \Hom(Y, F)$;
\item For every finite discrete $\mathbb T$-algebra $F$, $X$ surjects onto $F$ if and only if $Y$ does.
\item $\widehat X$ and $\widehat Y$ are (continuously) isomorphic.
\end{enumerate}
\end{theo}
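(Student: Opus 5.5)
We prove (1)$\Rightarrow$(2)$\Rightarrow$(3)$\Rightarrow$(1).

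\emph{Step (1)$\Rightarrow$(2).} This is Lemma~\ref{C(X)_from_colourings}. The inclusion-exclusion formula computes the number of continuous surjections $X \twoheadrightarrow F$ from the numbers $\#\Hom(X, F')$, with $F'$ ranging over subalgebras of $F$. So these numbers determine $\Phi(X)$.

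\emph{Step (3)$\Rightarrow$(1).} By Proposition~\ref{univ_prop_completion_finite}, $\#\Hom(X,F) = \#\Hom(\widehat X, F)$ for every finite discrete $F$, and similarly for $Y$. A continuous isomorphism $\widehat X \cong \widehat Y$ induces bijections $\Hom(\widehat X, F) \cong \Hom(\widehat Y, F)$, which gives (1).

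\emph{Step (2)$\Rightarrow$(3).} This is the main step, and it follows~\cite{DFPR}. We use the sequences $\mathcal R_n(X)$ and $\mathcal R_n(Y)$ from~\cref{par_prof_comp_fg}, so that $\widehat X = \varprojlim X/\mathcal R_n(X)$ and $\widehat Y = \varprojlim Y/\mathcal R_n(Y)$.

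\emph{First claim:} $X/\mathcal R_n(X) \cong Y/\mathcal R_n(Y)$ for every $n$.
\begin{itemize}
\item The finite algebra $X_n := X/\mathcal R_n(X)$ is a quotient of $X$, so by (2) there is a continuous surjection $g : Y \twoheadrightarrow X_n$.
\item Every open congruence of index $\leq n$ on $X$ contains $\mathcal R_n(X)$, so it comes from a congruence of index $\leq n$ on $X_n$. Hence the intersection of the congruences of index $\leq n$ on $X_n$ is trivial.
\item Pulling these back along $g$ gives open congruences of index $\leq n$ on $Y$ whose intersection is the kernel of $g$. This intersection contains $\mathcal R_n(Y)$, so $g$ factors through a surjection $Y/\mathcal R_n(Y) \twoheadrightarrow X_n$.
\item By symmetry, there is also a surjection $X_n \twoheadrightarrow Y/\mathcal R_n(Y)$. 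Both sets are finite, so the two surjections are bijections, and the algebras are isomorphic.
\end{itemize}

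\emph{Second claim:} the isomorphisms can be chosen compatibly with the transition maps. For each $n$, let $S_n$ be the set of surjections $Y \twoheadrightarrow X_n$. It is finite, by topological finite generation, and non-empty. Each such surjection induces an isomorphism $Y_n \cong X_n$, by the first claim. Composing with the projection $X_{n+1} \twoheadrightarrow X_n$ gives maps $S_{n+1} \to S_n$. An inverse limit of non-empty finite sets is non-empty (König's lemma), so we obtain a compatible sequence of isomorphisms $\phi_n : Y_n \to X_n$.

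For compatibility, note that the projection $Y \to Y_n$ followed by $\phi_n$ equals the projection $Y \to Y_{n+1}$ followed by $\phi_{n+1}$ and then the projection $X_{n+1} \to X_n$. Since $Y \to Y_{n+1}$ is surjective, the diagrams commute on $Y_{n+1}$. Passing to the limit, the $\phi_n$ induce a continuous isomorphism $\widehat Y \cong \widehat X$, with continuous inverse given by the inverse system $(\phi_n^{-1})$.

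The main obstacle is the first claim, namely that a surjection onto $X_n$ descends to $Y_n$. This is where Proposition~\ref{congruences_of_fixed_index} is essential: it guarantees that each $\mathcal R_n$ is cofinite, so that the $X_n$ are genuine finite objects to which (2) applies.
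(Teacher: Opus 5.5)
Your proof is correct and follows the paper's argument. Your first claim is the paper's comparison of $X/\mathcal R_n(X)$ and $Y/\mathcal R_n(Y)$: pulling back the index-$\leq n$ congruences along $g$ is the same as the paper's embedding into a product of algebras with at most $n$ elements. Your sets $S_n$ of surjections $Y \twoheadrightarrow X/\mathcal R_n(X)$ correspond canonically, and compatibly with transition maps, to the paper's sets $I_n$ of isomorphisms $X/\mathcal R_n(X) \cong Y/\mathcal R_n(Y)$ (invert and compose with the projection $Y \twoheadrightarrow Y/\mathcal R_n(Y)$). The only gain is that your maps $S_{n+1} \to S_n$ are plain composition, so you avoid the paper's check that isomorphisms respect the characteristic congruence $\mathcal R_{n-1}$.
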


\begin{proof}
We have already seen that the first statement implies the second one (this is Lemma~\ref{C(X)_from_colourings}). The fact that the third one implies the first one follows directly from Proposition~\ref{univ_prop_completion_finite}. Let us now suppose that the second statement holds, that is, that $\Phi(X) = \Phi(Y)$, and let us show that $\widehat X$ and $\widehat Y$ are continuously isomorphic. Let us consider the congruences $\mathcal R_n(X)$ and $\mathcal R_n(Y)$ from~\cref{subsec_prof_comp_fg}, that we denote only by $\mathcal R_n$ when the underlying algebra is clear. We have $X/\mathcal R_n \in \Phi(X) = \Phi(Y)$, so there is a congruence $\mathcal R$ on $Y$ such that $Y/\mathcal R \cong X/\mathcal R_n$. By definition of $\mathcal R_n$, $X/\mathcal R_n$ injects into a product of discrete $\mathbb T$-algebras having at most $n$ elements. Hence so does $Y/\mathcal R$, which means that $\mathcal R$ is an intersection of open congruences of index at most $n$ on $Y$. As a consequence, $\mathcal R \supseteq \mathcal R_n(Y)$. This implies that $X/\mathcal R_n$ is isomorphic to a quotient of $Y/\mathcal R_n$. By symmetry, $Y/\mathcal R_n$ is also a quotient of $X/\mathcal R_n$. Since these are finite discrete $\mathbb T$-algebras, we conclude that $X/\mathcal R_n \cong Y/\mathcal R_n$. In other words, the set $I_n$ of isomorphisms between $X/\mathcal R_n$ and $Y/\mathcal R_n$ is nonempty. By construction, we have a canonical isomorphism $(X/\mathcal R_n)/\mathcal R_{n-1} \cong X/\mathcal R_{n-1}$. Since any isomorphism between $X/\mathcal R_n$ and $Y/\mathcal R_n$ must send $\mathcal R_{n-1}(X/\mathcal R_n)$ to $\mathcal R_{n-1}(Y/\mathcal R_n)$, we obtain a canonical map $I_n \rightarrow I_{n-1}$. Let us consider the projective limit $I_\infty$ of the $I_n$. These sets are finite and nonempty, so $I_\infty$ is not empty (one can see it as a decreasing intersection of nonempty closed subsets of the compact $\prod I_n$, by considering subsets defined by more and more coherence conditions). An element $\varphi \in I_\infty$ is a coherent sequence of isomorphisms $\varphi_n : X/\mathcal R_n \cong Y/\mathcal R_n$ (which are continuous, seen as maps between discrete spaces), so it induces a continuous isomorphism between  $\widehat X = \varprojlim (X/\mathcal R_n)$ and $\widehat Y = \varprojlim (Y/\mathcal R_n)$.
\end{proof}

\subsection{Colourings and profinite completions of quandles}
\label{subsec_colourings_vs_completion}

We now apply Theorem~\ref{theo_col_and_prof_gen} to (discrete) quandles to show the equivalence, as link invariants, between the numbers of colourings by finite quandles, and the profinite completion of the fundamental quandle. 

Let us first spell out the application of Theorem~\ref{theo_col_and_prof_gen} to (discrete) quandles:

\begin{theo}\label{theo_col_and_prof}
For finitely generated quandles $Q$ and $R$, the following are equivalent:
\begin{enumerate}[itemsep=-0.3em, topsep = 4pt]
\item For every finite quandle $X$, $\# \Hom(Q, X) = \# \Hom(R, X)$;
\item $\Phi(Q) = \Phi(R)$;
\item $\widehat Q$ and $\widehat R$ are (continuously) isomorphic.
\end{enumerate}
\end{theo}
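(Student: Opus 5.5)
This is a direct specialisation of Theorem~\ref{theo_col_and_prof_gen}, so the plan is to check that its hypotheses hold for (discrete) quandles and then translate its three conditions into the present ones.

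First, I take $\mathbb T$ to be the theory of quandles from Example~\ref{qdls_as_T-alg}, with the two generating operations $\qdn{\pm 1}$. This theory is finitely generated in the sense of the definition in~\cref{sec_algebras}, so Proposition~\ref{algebras_of_fixed_cardinality} and Proposition~\ref{congruences_of_fixed_index} apply to it. A $\mathbb T$-algebra is the same thing as a quandle. The one point to check is that a map between quandles commuting with $\qdn{1}$ automatically commutes with $\qdn{-1}$, since $\qdn{-1}$ is determined by $\qdn{1}$ through the invertibility axiom. Hence morphisms of $\mathbb T$-algebras are exactly quandle morphisms, and the sets $\Hom(Q,X)$ mean the same thing in both languages.

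Next, I view $Q$ and $R$ as topological $\mathbb T$-algebras with the discrete topology. By the remark after Definition~\ref{def_topo_fg}, a discrete algebra is topologically finitely generated exactly when it is finitely generated, so the hypothesis of Theorem~\ref{theo_col_and_prof_gen} is satisfied. Every map out of a discrete space is continuous, so for a finite quandle $X$ (discrete), the set of continuous morphisms $Q\to X$ is the set of all quandle morphisms. Condition~1 of Theorem~\ref{theo_col_and_prof_gen} is therefore literally condition~1 here.

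For condition~2, discrete quotients of discrete quandles are just quotients by finite-index congruences. So $\Phi(Q)=\Phi(R)$ says exactly that, for every finite quandle $F$, $Q$ surjects onto $F$ if and only if $R$ does, which is condition~2 of the general theorem. Finally, $\widehat Q$ is by Definition~\ref{def_prof(alg)} the profinite completion of $Q$ as a discrete $\mathbb T$-algebra, which gives condition~3. The equivalence then follows from Theorem~\ref{theo_col_and_prof_gen}.

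There is no real obstacle here. The only step needing care is the identification of morphisms, in particular that $\qdn{-1}$ is automatically preserved.
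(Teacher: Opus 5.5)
Your proposal is correct and matches the paper, which gives no separate proof and obtains this theorem directly by applying Theorem~\ref{theo_col_and_prof_gen} to discrete quandles over the finitely generated theory of Example~\ref{qdls_as_T-alg}. Your additional check that quandle morphisms automatically preserve $\qdn{-1}$ is right; the paper leaves this detail implicit.
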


Recall that $\Phi(Q)$ denotes the set of isomorphism classes of finite quotients of a quandle $Q$, and that $\widehat Q$ is the profinite completion of $Q$, obtained by applying Definition~\ref{def_prof(alg)} to (discrete) quandles:
\[\widehat Q = \varprojlim(Q/\mathcal R),\]
where the projective limit is taken over the finite-index congruences $\mathcal R$ on $Q$, ordered by inclusion. In other words, it is the projective limit of the finite quotients of $Q$.

\paragraph{Colourings and quandle morphisms} The application of Theorem~\ref{theo_col_and_prof} to link theory uses the following classical fact: there is a one-to-one correspondence between colourings of a diagram of a link $L$ by a quandle $X$ and morphisms from the fundamental quandle $Q_L$ to $X$. One way to look at this is to see the fundamental quandle as given by the Wirtinger presentation (for any choice of diagram of $L$), seen as a quandle presentation (where conjugation by a generator $x_i$ is replaced with $x_i \qd (-)$). Recall that this presentation has the arcs of the diagrams as generators, and relations corresponding to crossings (see Figure~\ref{fig_colourings}). As a consequence, a morphism from $Q_L$ to $X$ is exactly the data of one element of $X$ for each generator (one colour for each arc), such that these elements satisfy the relations (see the example depicted in Figure~\ref{fig_colourings}). This data is exactly a colouring of (the chosen diagram of) $L$ by the quandle $X$, as claimed.

\begin{figure}[!htbp]
\centering    
\subfigure{\label{fig:a}\includegraphics[width=5em]{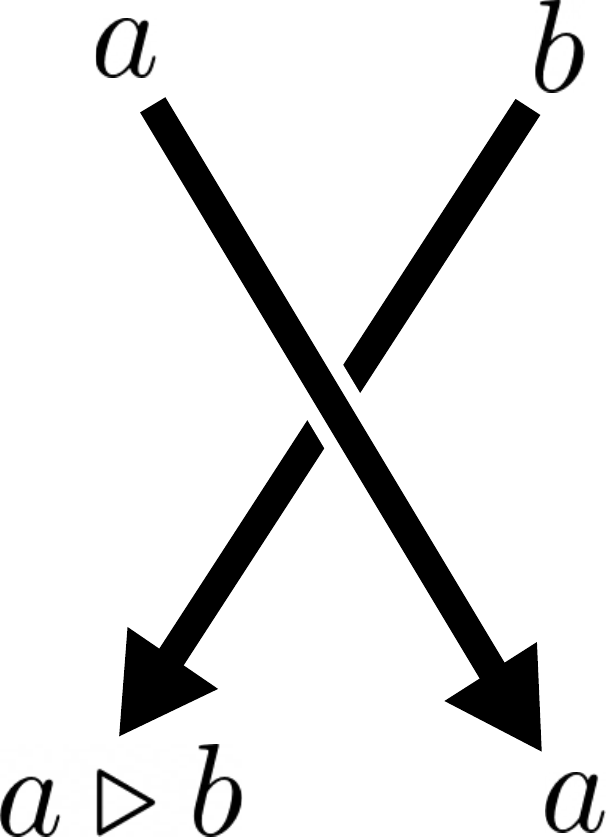}}
\hspace{3.5em}
\subfigure{\label{fig:b}\includegraphics[width=5.2em]{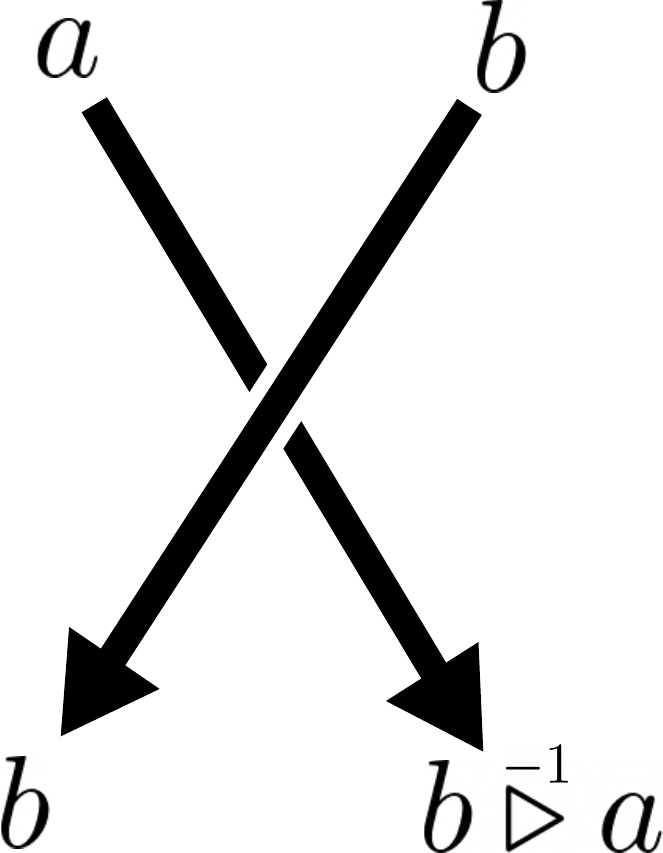}}
\hspace{2.5em}
\raisebox{0.8em}{\subfigure{\label{fig:c}\includegraphics[width=6em]{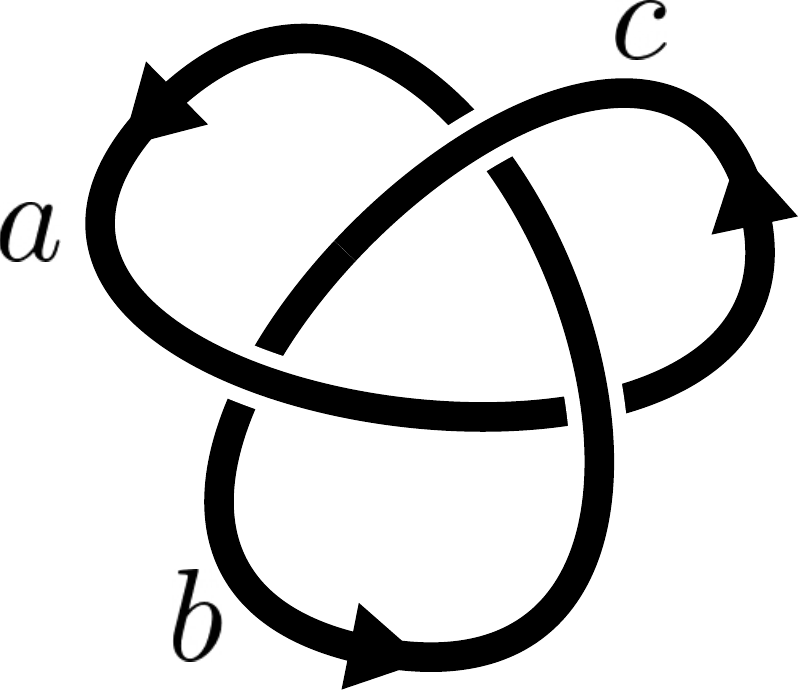}}}
\hspace{2.5em}
\raisebox{0.8em}{\subfigure{\label{fig:d}\includegraphics[width=6em]{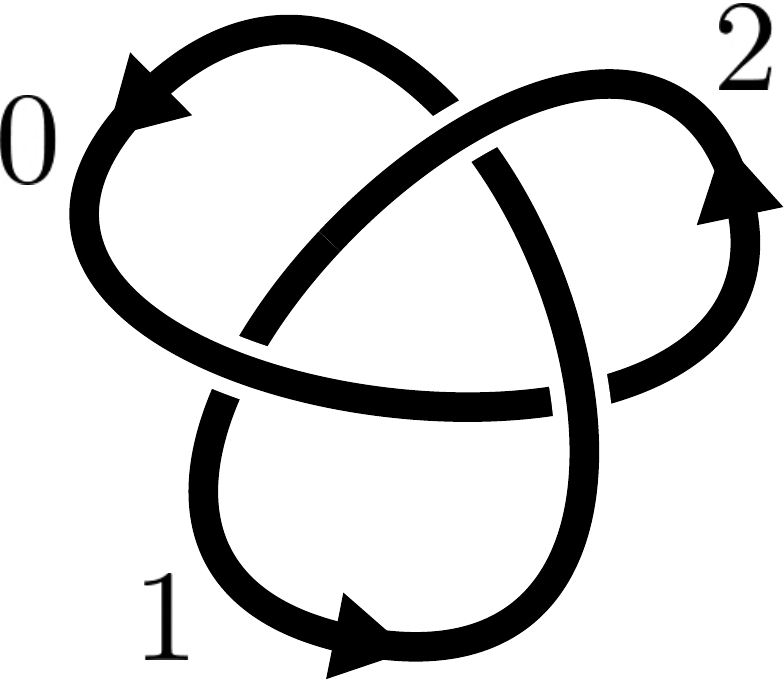}}}
\caption{If $L$ is the trefoil pictured here, with the convention pictured on the left for the Wirtinger relations, we have the presentation $Q_L = \langle a,b,c \ |\ c = b \qd a, b = a \qd c,\ a = c \qd b  \rangle$. If $X$ is the quandle $\Z/3$, endowed with the Alexander law $i \qd j = 2j -i$, then the colouring of $L$ by elements of $X$ shown on the right corresponds to the morphism $Q_L$ to $X$ defined by $a \mapsto 0$, $b \mapsto 1$ and $c \mapsto 2$ (the usual tricolouring of the trefoil knot).}
\label{fig_colourings}
\end{figure}

\begin{rmq}
Fundamental quandles of links are always finitely generated, since the number of arcs of a diagram is always finite. They are in fact always finitely presented.
\end{rmq}

As a consequence, we get:

\begin{cor}\label{cor_col_and_prof}
Two links $K$ and $L$  have the same number of colourings by every finite quandle if and only if $\widehat Q_K$ and $\widehat Q_L$ are (continuously) isomorphic.
\end{cor}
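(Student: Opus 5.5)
The plan is to deduce Corollary~\ref{cor_col_and_prof} directly from Theorem~\ref{theo_col_and_prof}, applied to the discrete quandles $Q = Q_K$ and $R = Q_L$. Two inputs are needed: first, that these fundamental quandles are finitely generated, so that the theorem applies; second, that the number of colourings of a link by a finite quandle $X$ equals $\# \Hom(Q_L, X)$.

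First, choose diagrams of $K$ and $L$ and take the Wirtinger presentations, seen as quandle presentations. Each generator corresponds to an arc, and a diagram has finitely many arcs, so $Q_K$ and $Q_L$ are finitely generated (as recorded in the remark above).

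Second, as explained in the paragraph on colourings and quandle morphisms, a morphism $Q_L \to X$ is determined by one element of $X$ per arc, subject exactly to the crossing relations. This is precisely a colouring of the chosen diagram by $X$. Hence the number of colourings of $L$ by $X$ is $\# \Hom(Q_L, X)$. This number does not depend on the diagram, since $Q_L$ is a link invariant up to isomorphism. Every morphism from a discrete quandle to a finite discrete quandle is continuous, so $\Hom$ here agrees with the continuous $\Hom$ of the theorem.

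Combining the two steps, the hypothesis ``same number of colourings by every finite quandle'' is literally condition (1) of Theorem~\ref{theo_col_and_prof} for $Q_K$ and $Q_L$. Condition (3) is the continuous isomorphism $\widehat Q_K \cong \widehat Q_L$, and the theorem gives the equivalence of (1) and (3). No real obstacle arises. The only points to state carefully are the diagram-independence of the colouring count and the fact that Theorem~\ref{theo_col_and_prof} rests on quandles forming a finitely generated Lawvere theory, which holds by Example~\ref{qdls_as_T-alg} since the generating operations are $\qdn{\pm 1}$.
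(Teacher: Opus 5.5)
Your proposal is correct and follows the paper's own route. The paper obtains the corollary by applying Theorem~\ref{theo_col_and_prof} to the finitely generated fundamental quandles $Q_K$ and $Q_L$. It uses the Wirtinger-presentation correspondence between colourings of a diagram by a finite quandle $X$ and morphisms $Q_L \rightarrow X$, which is exactly your argument.
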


\section{Topological quandles}\label{sec_quandles}

Our second main goal is to establish a precise link between the profinite completion of a quandle and the profinite completion of its associated peripheral system. This will be achieved at the end of~\cref{sec_ProPph} with Corollary~\ref{equiv_btw_prof_comp_of_periph} which, combined with the results already obtained, will entail our main theorem (Thm.~\ref{main_thm}). 

In order to reach that goal, we need to develop some profinite quandle theory. We will do so in~\cref{sec_ProQdl}, but we first need to investigate topological quandle theory, which is the subject of the current section. Namely, we extend to topological quandles the basic notions of quandle theory, including the enveloping group, the adjoint action, and the trivialization of a quandle. The reader can keep in mind that usual quandle theory can be recovered as the theory of \emph{discrete} quandles, for which all this is classical material, that can be found for instance in \cite{Joyce, Andruskiewitsch-Grana, Eisermann}.

\subsection{Basic definitions}

\begin{defi}\label{def_qdl}
A \emph{quandle} is a set $Q$ endowed with a binary operation $\qd$ satisfying the following axioms:
\begin{enumerate}[itemsep=-0.3em, topsep = 4pt]
\item $\forall x, y, z \in Q,\ x \qd (y \qd z) = (x \qd y) \qd (x \qd z),$
\item $\forall x \in Q,\ x \qd (-) : Q \rightarrow Q$ is a bijection,
\item $\forall x \in Q,\ x \qd x = x$.
\end{enumerate}
Quandles and their morphisms form a category, denoted by $\mathcal Qdl$.
\end{defi}

The first axiom (called \emph{self-distributivity}) means that the maps $\sigma_x := x \qd(-)$ are endomorphisms of $(Q, \qd)$. The second axiom says that they are in fact automorphisms.

\begin{nota}
If $x$ is an element of the quandle $Q$, we denote by $x \qdn{-1} (-)$ the inverse of $x \qd (-)$. More generally, for any $n \in \Z$, we denote by $x \qdn{n} (-)$ the $n$-th iterate of $x \qd (-)$, for any $n \in \Z$. Notice that for any $n \in \Z$, $(Q, \qdn{n})$ is again a quandle.
\end{nota}

The following classical definition is a specialization of the general notion of a topological algebra  (Definition~\ref{def_top_alg}) to quandles (see Example~\ref{qdls_as_T-alg}):
\begin{defi}
A \emph{topological quandle} is a quandle endowed with a topology such that the two maps $\qdn{\pm 1} : Q^2 \rightarrow Q$ are continuous. A \emph{morphism} of topological quandles is a quandle morphism that is also continuous. This defines a category $\mathcal{TQ}dl$ of topological quandles.
\end{defi} 

Notice that for a quandle $Q$ endowed with a topology, these requirement are in general not equivalent to $x \mapsto x \qdn{\pm 1} (-)$ being continuous maps $Q \rightarrow \Homeo(Q) \subseteq \mathcal C(Q,Q)$, unless $Q$ is locally compact. This equivalence would be true more generally for $Q$ compactly generated, provided that the product $Q^2$ in the above definition were replaced with the compactly generated product, weakening the continuity requirement (see~\cref{sec_topo_spaces}).

\begin{ex}[A counter-example]
Let $X$ be a topological space, and let $f : X \rightarrow X$ be any bijection such that $f^{-1}$ is not continuous. Let $Q $ be $X \sqcup \{t\}$, endowed with the quandle structure defined by $t \qd x = f(x)$ for all $x \in X$, and $a \qd b = b$ unless $a = t$ and $b \in X$. Then $\qd : Q^2 \rightarrow Q$ is continuous, but $\qdn{-1}$ is not, so $Q$ is not a topological quandle.
\end{ex}

\begin{ex}[Trivial topological quandles]\label{triv_topo_qdl}
Any topological space $X$ is a topological quandle, for the law defined by $x \qd y = y$ for all $x,y \in X$. Such topological quandles are called \emph{trivial}. A morphism between two trivial topological quandles is just a continuous map between the underlying topological spaces.
\end{ex}

\begin{ex}[Conjugation quandles]\label{ex_c(G)}
Any topological group $G$ is a topological quandle for the conjugation law, which is defined by $g \qd h = {}^g\!h = ghg^{-1}$ for all $g, h \in G$. The quandle $(G, \qd)$ is denoted by $c(G)$, and called the \emph{conjugation quandle} associated with $G$. Notice that a (continuous) group morphism $G \rightarrow H$ is also a (continuous) quandle morphism $c(G) \rightarrow c(H)$.
\end{ex}

\begin{conv}
Conjugation is the default quandle structure on a group $G$: it is the one implied when no other quandle structure is specified. In other words, we often denote $c(G)$ just by $G$. For instance, we may speak of morphisms from a quandle $Q$ to a group $G$, meaning quandle morphisms $Q \rightarrow c(G)$.
\end{conv}

\subsection{Quotients and congruences}

In order to deal efficiently with quotients, we use the specialization to quandles of the general notion of \emph{congruence} (see Definition~\ref{def_congruence}):
\begin{defi}
A \emph{congruence} $\mathcal R$ on a quandle $Q$ is an equivalence relation on $Q$ which is compatible with the quandle law: if $x \mathcal R x'$ and $y \mathcal R y'$, then $(x \qd  y) \mathcal R (x' \qd y')$ and $(x \qdn{- 1}  y) \mathcal R (x' \qdn{- 1} y')$
\end{defi}

If $\mathcal R$ is a congruence on $Q$, we will always consider $Q/\mathcal R$ as a quandle for the unique quandle structure making the projection $Q \twoheadrightarrow Q/\mathcal R$ into a morphism, that is, via $\overline x  \qd \overline y = \overline{x \qd y}$ (where $\overline x$ is the class of $x$ modulo $\mathcal R$).

\begin{rmq}
It is not enough to ask that $(x \qd  y) \mathcal R (x' \qd y')$ if $x \mathcal R x'$ and $y \mathcal R y'$ to get a quandle law on the quotient: the second quandle axiom may no longer be true there. For instance, take $Q_{0,1} = \Z \sqcup \{x\}$, whose law is given by $x \qd n = n + 1$ if $n \in \Z$, and $a \qd b = b$ if $a \neq x$ or $b = x$. This is a quandle, and the relation $\mathcal R$ identifying all the non-negative integers satisfies the above condition, but the quotient is no longer a quandle, since $x \qd(-)$ is no longer bijective there. 
\end{rmq}

A number of useful congruences come from group actions:
\begin{lem}\label{quotient_by_group}
Let $Q$ be a quandle and let $\alpha : G \rightarrow \Aut(Q)$ be a group action whose image is normalized by the $x \qd (-)$, for $x \in Q$. Then the equivalence relation whose classes are $G$-orbits is a congruence. The quandle of $G$-orbits will be called the \emph{quotient of $Q$ by $G$}, denoted by $Q/G$.
\end{lem}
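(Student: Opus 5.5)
We need to check that the relation $x \sim y \iff \exists g \in G,\ y = \alpha(g)(x)$ is a congruence. It is an equivalence relation because $G$ is a group acting on $Q$. It remains to check compatibility with both $\qd$ and $\qdn{-1}$. So suppose $x' = g\cdot x$ and $y' = h \cdot y$ for some $g, h \in G$, where we write $g \cdot x$ for $\alpha(g)(x)$. We must produce $k \in G$ with $x' \qdn{\pm 1} y' = k \cdot (x \qdn{\pm 1} y)$. The plan is to handle the two variables separately: first vary $y$ with $x$ fixed, then vary $x$ with $y$ fixed. Chaining the two steps then concludes by transitivity.

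\emph{Varying the second variable.} Write $\sigma_x = x \qd (-)$, which is an automorphism of $Q$. The hypothesis says that $\sigma_x$ normalizes $\alpha(G)$ inside $\Aut(Q)$, and since $\sigma_x$ normalizes $\alpha(G)$, so does $\sigma_x^{-1}$. Hence
\[x \qdn{\pm1} (h\cdot y) = \sigma_x^{\pm1}\alpha(h)\sigma_x^{\mp1}\,\sigma_x^{\pm1}(y) = \alpha(h')\,(x \qdn{\pm1} y)\]
for some $h' \in G$. So $x \qdn{\pm1} y' \sim x \qdn{\pm1} y$.

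\emph{Varying the first variable.} Since each $\alpha(g)$ is a quandle automorphism, it satisfies $\alpha(g)\sigma_x\alpha(g)^{-1} = \sigma_{g\cdot x}$. Therefore
\[(g\cdot x)\qdn{\pm1} y = \alpha(g)\,\sigma_x^{\pm1}\,\alpha(g)^{-1}(y).\]
Put $z = \alpha(g)^{-1}(y)$, which lies in the same orbit as $y$. The previous step gives $\sigma_x^{\pm1}(z) \sim \sigma_x^{\pm1}(y)$. Applying $\alpha(g)$ does not change the orbit, so $(g\cdot x)\qdn{\pm1} y \sim x\qdn{\pm1} y$.

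Combining the two steps gives
\[x'\qdn{\pm1} y' \sim x' \qdn{\pm1} y \sim x \qdn{\pm1} y,\]
so $\sim$ is a congruence. The quotient $Q/G$ is then a quandle by the discussion following the definition of congruences. The only point requiring care is the first-variable step: there we use that the $\alpha(g)$ are automorphisms, not merely bijections, together with the normalization hypothesis applied to $\sigma_x^{-1}$ as well as to $\sigma_x$.
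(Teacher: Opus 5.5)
Your proof is correct and follows essentially the paper's argument. The paper likewise uses that $\alpha(g)$ is an automorphism to write $(gx)\qd(hy) = g\,(x \qd (g^{-1}hy))$, which reduces everything to the second-variable inclusion $x \qd Gy \subseteq G(x \qd y)$; that inclusion follows because $\sigma_x^{\pm1}$ normalizes $\alpha(G)$. Your two steps chained by transitivity are just an unpacking of that one-line reduction.
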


\begin{proof}
Let $x, y \in Q$, and $g,h \in G$. We need to prove that $(gx) \qd (hy)$ belongs to the $G$-orbit of $x \qd y$. Since $g$ acts via an automorphism, we have $(gx) \qd (hy) = g (x \qd (g^{-1}hy))$, so it is enough to prove that $x \qd Gy \subseteq G (x \qd y)$. By taking $z = x \qd y$, this is equivalent to showing that $x \qd G (x \qdn{-1} z) \subseteq G z$. But $\alpha(G)$ is normalized by $x \qd(-)$, whence the result, noticing that the same is true if we replace $\qd$ with $\qdn{-1}$, and $x \qd (-)$ by its inverse $x \qdn{-1}(-)$.
\end{proof}

\paragraph*{About the topological setting}

The discussion from~\cref{sec_congruences} applies directly to quandles. In particular, if  $\mathcal R$ is a congruence on a topological quandle $Q$, the quotient $Q/\mathcal R$ is not automatically a topological quandle, unless it is discrete. However, it is a topological quandle when $Q$ is compact (Lemma~\ref{quotient_of_top_algebras}), which is the only context in which we will deal with non-discrete quotients (see~\cref{sec_proof_criterion}).

\subsection{Topological enveloping groups}\label{subsec_G(Q)}

We now describe an adjunction between topological quandles and topological groups that restricts to the usual adjunction between quandles and groups when restricted to discrete ones. We begin with:

\begin{defi}
Let $Q$ be a topological quandle. Its \emph{enveloping group} $G(Q)$ is the group defined by the following presentation:
\[G(Q) = \langle Q \mid \forall x, y \in Q,\ x \qd y = xyx^{-1} \rangle.\]
We put on $G(Q)$ the finest group topology for which the canonical map $\eta_Q : Q \rightarrow G(Q)$ is continuous.
\end{defi}

The family of group topologies on $G(Q)$ rendering $\eta_Q$ continuous is clearly non-empty (it contains the undiscrete topology), so we can take its supremum. Recall that the supremum $\tau$ of a family $(\tau_i)_{i \in I}$ of topologies is the one for which a basis of neighbourhoods of a point $x$ is given by finite intersections $V_{i_1} \cap \cdots \cap V_{i_n}$, where $V_i$ is a neighbourhood of $x$ for $\tau_i$. It is easy to check that the supremum of a family of group topologies is again a group topology ($(x,y) \mapsto xy^{-1}$ is continuous for $\tau$ if it is for all the $\tau_i$), so our definition makes sense.

\begin{ex}\label{G(discrete)}
If $Q$ is discrete, $\eta_Q$ is continuous for the discrete topology, so $G(Q)$ is discrete.
\end{ex}

\begin{ex}\label{G(triv)}
If $Q$ is a trivial topological quandle, then the relations defining $G(Q)$ are saying exactly that the generators commute with each other. Thus, in this case, $G(Q) = \Z[Q]$ is the free topological abelian group on the topological space $Q$.
\end{ex}

The canonical map from $Q$ to $G(Q)$ is in fact (by definition of $G(Q)$) a continuous quandle morphism $\eta_Q: Q \rightarrow c(G(Q))$ (whose target is defined in Example~\ref{ex_c(G)}), which is the unit of the following adjunction:
\begin{prop}\label{adj_c_G}
The constructions $c(-)$ and $G(-)$ define functors between the category of topological quandle and the category of topological groups, which are  adjoint to each other:
\[ G : \begin{tikzcd}\mathcal{TQ}dl \ar[r, shift left] & \mathcal{TG}rp \ar[l, shift left] \end{tikzcd} : c.\]
\end{prop}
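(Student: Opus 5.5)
To prove Proposition~\ref{adj_c_G}, I will first check functoriality and then construct the bijection
\[\Hom_{\mathcal{TG}rp}(G(Q), H) \cong \Hom_{\mathcal{TQ}dl}(Q, c(H)),\qquad \varphi \mapsto c(\varphi)\circ \eta_Q,\]
and verify that it is natural in $Q$ and $H$.

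For $c$, functoriality is already given in Example~\ref{ex_c(G)}: a continuous group morphism is a continuous morphism of conjugation quandles. For $G$, let $f : Q \to Q'$ be a continuous quandle morphism. The composite $\eta_{Q'} \circ f$ respects the defining relations of $G(Q)$, so it induces a group morphism $G(f) : G(Q) \to G(Q')$. To see that $G(f)$ is continuous, let $\tau'$ be the topology of $G(Q')$ and consider the pullback topology $G(f)^{-1}\tau'$ on $G(Q)$. It is a group topology, since an initial topology along a group morphism is always a group topology. It also makes $\eta_Q$ continuous, because $G(f)\circ\eta_Q = \eta_{Q'}\circ f$ is continuous. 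Since the topology of $G(Q)$ is the finest group topology making $\eta_Q$ continuous, it is finer than $G(f)^{-1}\tau'$, and hence $G(f)$ is continuous. Compatibility with identities and composites follows from the discrete case, as the underlying maps agree.

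For the bijection, the map $\varphi\mapsto c(\varphi)\circ\eta_Q$ lands in continuous quandle morphisms, because $\eta_Q$ is a continuous quandle morphism $Q \to c(G(Q))$. It is injective because $\eta_Q(Q)$ generates $G(Q)$ as a group. For surjectivity, let $f : Q \to c(H)$ be a continuous quandle morphism. The relation $f(x \qd y) = f(x)f(y)f(x)^{-1}$ shows that $f$ induces a group morphism $\tilde f : G(Q) \to H$ with $\tilde f\circ\eta_Q = f$. Its continuity follows exactly as before: $\tilde f^{-1}(\tau_H)$ is a group topology on $G(Q)$ making $\eta_Q$ continuous, so it is coarser than the topology of $G(Q)$. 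Naturality reduces to the discrete adjunction, since all the maps involved are the classical ones.

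The only non-formal point, used twice, is that the initial topology $\varphi^{-1}\tau$ along a group morphism $\varphi : K \to H$ is a group topology. This is immediate: the map $(x,y)\mapsto xy^{-1}$ on $K$ composed with $\varphi$ equals $(a,b)\mapsto ab^{-1}$ on $H$ composed with $\varphi\times\varphi$, which is continuous for $\varphi^{-1}\tau \times \varphi^{-1}\tau$. Beyond this observation, the proof is a routine check of the universal property.
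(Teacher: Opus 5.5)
Your proof is correct and follows essentially the same route as the paper: in both, the key step is that $\tilde f^{-1}(\tau_H)$ is a group topology on $G(Q)$ making $\eta_Q$ continuous, so it is coarser than the topology of $G(Q)$ by maximality. Your separate check that $G$ is functorial on morphisms is the same argument applied to $\eta_{Q'}\circ f$; the paper leaves this implicit, since it follows formally from the universal property.
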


\begin{proof}
Let $Q$ be a topological quandle and $G$ be a topological group. We need to show that for every continuous quandle morphism $f: Q \rightarrow c(G)$, there is a unique continuous group morphism $\tilde f: G(Q) \rightarrow G$ such that $\tilde f \circ \eta_Q = f$. We already know (from the classical case, which is just an application of the universal property of group presentations) that there is a unique group morphism $\tilde f$ satisfying this condition. We need to show that it is continuous. For $X \in \{ Q, G(Q), G\}$, let us denote by $\tau_X$ the topology of $X$. The inverse image $\tilde f^{-1}(\tau_G)$ is a topology on $G(Q)$, compatible with the group structure, since $\tilde f$ is a group morphism. Moreover, the continuity of $f$ means that $f^{-1}(\tau_G) \subseteq \tau_Q$. Since $\tilde f \eta_Q = f$, we have $\eta_Q^{-1}(\tilde f^{-1}(\tau_G))\subseteq \tau_Q$, hence $\tilde f^{-1}(\tau_G)$ renders $\eta_Q$ continuous. By definition of $\tau_{G(Q)}$, this implies $\tilde f^{-1}(\tau_G) \subseteq \tau_{G(Q)}$, which means that $\tilde f$ is continuous.
\end{proof}

Notice that $\eta_Q$ does not need to be injective in general. In fact, it is often not injective (see for instance Example~\ref{Q12} below). And even if it is, it does not need to be a homeomorphism with its image. 

\begin{ex}\label{Q12}
Let $Q_{1,2}$ be the quandle consisting of $3$ elements $a,b,c$, with $a \qd(-)$ exchanging $b$ and $c$, and $b \qd(-) = c \qd(-) = id$ (this is the one from~\cite[\S~6]{Joyce}). Then $\eta$ is not injective. Indeed, since $b \qd a = a$, $\eta(a)$ commutes with $\eta(b)$. However, $a \qd b = c$, so $\eta(a)\eta(b)\eta(a)^{-1} = \eta(c)$, whence $\eta(a)$ and $\eta(b) = \eta(c)$ in $G(Q_{1,2})$.
\end{ex}

\begin{ex}
The topology of $G(Q)$ is a group topology, so it always comes from a uniform structure: it is \emph{uniformizable}. As a consequence, the topology of its subspace $\eta(Q)$ is also uniformizable. Let $X$ be a topological space which is not uniformizable (which is in fact equivalent to asking that $X$ be not completely regular). If we see it as a trivial quandle, then $G(X)$ identifies (algebraically) with the free abelian group $\Z[X]$ on $X$, so $\eta : X \rightarrow G(X)$ is injective. But $\eta$ cannot be a homeomorphism onto its image $\eta(X)$, since $X$ is not uniformizable, whereas $\eta(X)$ is.
\end{ex}

\begin{rmq}
The injectivity of $\eta_Q$ is easily shown to be equivalent to the existence of an injective morphism of $Q$ into $c(G)$ for some group~$G$. This carries over to topological quandles: $\eta_Q$ is a homeomorphism with its image if and only if there is a topological group $G$ and a morphism from $Q$ to $c(G)$ which is a homeomorphism with its image.
\end{rmq}

\subsection{The canonical action on the quandle}

Let us recall how the set-theoretic version of the adjoint action works, before discussing it continuity.

\begin{defi}\label{def_action}
An \emph{action} of a quandle $Q$ on another quandle $L$ is a morphism from $Q$ to $c(\Aut(L))$. Using the above adjunction, it can also be seen as an action of the group $G(Q)$ on $L$ by automorphisms (that is, a morphism from $G(Q)$ to $\Aut(L)$).
\end{defi}

\begin{ex}[Adjoint action]\label{adjoint_action}
Any quandle $Q$ acts on itself by $x \mapsto x \qd (-)$. Indeed, this defines a morphism from $Q$ to the conjugation quandle of $\Aut(Q)$, since:
\[(x \qd y) \qd z = (x \qd y) \qd (x \qd( x \overset{-1}{\qd}  z)) = x \qd (y \qd ( x \overset{-1}{\qd}  z)).\]
This action (or the corresponding action of $G(Q)$ on $Q$) is called the \emph{adjoint action}. In the sequel, the adjoint action of $g \in G(Q)$ on $Q$ is simply denoted by $q \mapsto g \cdot q$.
\end{ex}

\begin{lem}\label{equivariance}
Let $f: Q \rightarrow R$ be a quandle morphism. Then $f$ is equivariant with respect to adjoint actions. Precisely, if $f_*: G(Q) \rightarrow G(R)$ is the morphism induced by~$f$, then:
\[\forall g \in G,\ \forall x \in Q,\  f(g \cdot x) = f_*(g) \cdot f(x). \]
\end{lem}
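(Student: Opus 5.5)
The plan is to reduce the claim to the generators of $G(Q)$ and then extend by multiplicativity. By construction, $G(Q)$ is generated as a group by the elements $\eta_Q(q)$ for $q \in Q$. Consider the set
\[H = \{ g \in G(Q) \mid \forall x \in Q,\ f(g \cdot x) = f_*(g) \cdot f(x) \}.\]
It suffices to show that $H$ contains these generators and is a subgroup of $G(Q)$; then $H = G(Q)$, which is the statement.

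For the generators, take $g = \eta_Q(q)$. The adjoint action gives $g \cdot x = q \qd x$. The morphism $f_*$ is the one induced by $f$ through the adjunction of Proposition~\ref{adj_c_G}, so $f_*(\eta_Q(q)) = \eta_R(f(q))$, and this element acts on $R$ by $f(q) \qd (-)$. The required identity therefore reads $f(q \qd x) = f(q) \qd f(x)$, which holds because $f$ is a quandle morphism. The same computation with $\qdn{-1}$, which $f$ also preserves, handles the inverses $\eta_Q(q)^{-1}$. Alternatively, closure under inverses follows formally: if $g \in H$, apply the identity to $g^{-1}\cdot x$ to get $f(x) = f_*(g)\cdot f(g^{-1}\cdot x)$, and hence $f(g^{-1}\cdot x) = f_*(g)^{-1}\cdot f(x) = f_*(g^{-1})\cdot f(x)$.

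For closure under products, let $g, h \in H$. Since both actions are group actions and $f_*$ is a group morphism,
\[f(gh \cdot x) = f(g \cdot (h \cdot x)) = f_*(g)\cdot f(h\cdot x) = f_*(g)\cdot (f_*(h)\cdot f(x)) = f_*(gh)\cdot f(x).\]
The neutral element clearly lies in $H$, so $H$ is a subgroup.

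There is no real obstacle. The only point needing care is to identify correctly how $f_*$ acts on generators, namely $f_*\eta_Q = \eta_R f$, and that the adjoint action of $\eta(q)$ is $q\qd(-)$. Both follow directly from the definitions in Example~\ref{adjoint_action} and the adjunction. No topology is involved, since the statement is purely algebraic.
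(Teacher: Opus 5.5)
Your proposal is correct and follows the same route as the paper: check the identity on the generators $\eta_Q(y)$, where it reduces to $f(y \qd x) = f(y) \qd f(x)$ using $f_*\eta_Q = \eta_R f$. The only difference is that you spell out why the set of elements satisfying the identity is a subgroup (closed under products and inverses), which the paper leaves implicit in saying it is enough to check generators.
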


\begin{proof}
Recall that we denote by $\eta : Q \rightarrow G(Q)$ the canonical map. It is enough to check equivariance with respect to the generators $g = \eta(y)$ ($y \in Q$) of $G(Q)$. By definition of the adjoint action,
$f(\eta(y) \cdot x) = f(y \qd x) = f(y) \qd f(x) = f_*(\eta(y)) \cdot f(x),$ whence our claim.
\end{proof}

Let us reformulate this lemma in terms of the canonical action map $a_Q : G(Q) \times Q \rightarrow Q$ (where $Q$ is a quandle). If $f : Q \rightarrow R$ is a quandle morphism, equivariance says that the following square is commutative:
\begin{equation}\label{square_of_actions}
\begin{tikzcd}
G(Q) \times Q \ar[r,"a_Q"] \ar[d, swap, shift left = 9pt, "G(f) \times f"] 
&Q \ar[d, "f"]\\
G(R) \times R \ar[r,"a_R"] &R.
\end{tikzcd}
\end{equation}

For topological quandles, the adjoint action is continuous, in the following sense:

\begin{fact}\label{continuous_action_of_G(Q)}
If $Q$ is a topological quandle, the canonical map $G(Q) \rightarrow \mathcal \Aut_{\mathcal C}(Q)$ is continuous. Hence so is its adjoint $G(Q) \times Q \rightarrow Q$ if $Q$ is locally compact.
\end{fact}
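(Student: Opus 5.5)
The canonical map here is $\rho : G(Q) \rightarrow \Aut_{\mathcal C}(Q)$, induced by the adjoint action. I read $\Aut_{\mathcal C}(Q)$ as the group of quandle automorphisms that are homeomorphisms of $Q$. I give it the compact-open topology, restricted from $\mathcal C(Q,Q)$, possibly together with the topology making inversion continuous; see \cref{sec_topo_spaces}. I would use the definition of the topology of $G(Q)$ as the \emph{finest} group topology making $\eta_Q$ continuous, so that the universal property (Proposition~\ref{adj_c_G}) does most of the work. Concretely, I would show that $\Aut_{\mathcal C}(Q)$ is a topological group, or at least carries a group topology, and that $Q \rightarrow c(\Aut_{\mathcal C}(Q))$, $x \mapsto x \qd (-)$, is a continuous quandle morphism. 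Proposition~\ref{adj_c_G} then gives continuity of the induced group morphism, which is exactly $\rho$, since it agrees with the adjoint action on the generators $\eta(x)$.

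\textbf{Step 1: the map $\sigma : x \mapsto \sigma_x = x \qd (-)$ is continuous $Q \rightarrow \mathcal C(Q,Q)$.} This is the exponential law in the easy direction. If $\qd : Q \times Q \rightarrow Q$ is continuous, then its curried form $Q \rightarrow \mathcal C(Q,Q)$ is continuous for the compact-open topology, with no hypothesis on $Q$. The standard tube-lemma argument works for a subbasic open set $\{g : g(K) \subseteq U\}$ with $K$ compact. The same applies to $x \mapsto \sigma_x^{-1} = x \qdn{-1}(-)$, using continuity of $\qdn{-1}$. Hence $\sigma$ is continuous into $\Homeo(Q)$ with the topology for which both $g$ and $g^{-1}$ vary continuously in the compact-open topology. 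By Example~\ref{adjoint_action}, $\sigma$ is a quandle morphism to $c(\Aut(Q))$, and its image lies in $\Aut_{\mathcal C}(Q)$.

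\textbf{Step 2: the group topology.} This is the main obstacle. The compact-open topology on $\Homeo(Q)$ need not be a group topology when $Q$ is not locally compact: composition may fail to be jointly continuous. I would avoid proving that any specific topology is a group topology. Instead, consider on $G(Q)$ the family of all group topologies making $\eta_Q$ continuous. The pullback $\rho^{-1}(\tau)$ of the compact-open topology is a topology on $G(Q)$, but not obviously a group topology, so Proposition~\ref{adj_c_G} does not apply to it verbatim. So the argument must rest on one of the following two routes.

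\emph{Route (a).} Use the Arens / Bourbaki fact: for any space $Q$, the topology on $\Homeo(Q)$ generated by the compact-open subbasis applied to both $g$ and $g^{-1}$ is a group topology whenever $Q$ is locally compact. Route (a) only covers the locally compact case.

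\emph{Route (b).} Replace $\tau$ by the finest group topology coarser than the compact-open-type topology, for which $\sigma$ is still continuous. Alternatively, check directly that $\rho$ is continuous at $1$ by writing elements near $1$ as words in generators near given ones. I would first try to prove that the product and inverse on $\Aut_{\mathcal C}(Q)$ are continuous for the $\mathcal C$ topology used in \cref{sec_topo_spaces}, presumably the compactly generated one, where this holds in general. Then $\Aut_{\mathcal C}(Q)$ is a topological group and the adjunction finishes.

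\textbf{Step 3: the adjoint map.} For the second sentence, assume $Q$ is locally compact. Then the evaluation map $\mathcal C(Q,Q) \times Q \rightarrow Q$ is continuous. Composing it with $\rho \times \mathrm{id}$ gives continuity of $a_Q : G(Q) \times Q \rightarrow Q$. Local compactness also makes $\Homeo(Q)$ with the Arens topology a topological group, which settles Step 2 in that case.
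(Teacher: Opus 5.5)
Your Steps 1 and 3 together with Route (a) are sound. The maps $x\mapsto\sigma_x^{\pm1}$ are continuous into $\mathcal C(Q,Q)$ by Proposition~\ref{prop_adj_direct}, so $\sigma$ is continuous into $\Homeo(Q)$ with the two-sided (Arens) topology. For locally compact $Q$ this is a group topology finer than the compact-open topology $\tau_{\mathrm{co}}$. Proposition~\ref{adj_c_G} then applies, and evaluation (Proposition~\ref{prop_adj_converse}) gives the second sentence.

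The gap is the first sentence for non-locally compact $Q$, and nothing in Route (b) closes it.
\begin{itemize}
\item The finest group topology $\tau'\subseteq\tau_{\mathrm{co}}$ does exist (it is the supremum of all group topologies contained in $\tau_{\mathrm{co}}$), and $\sigma$ is continuous for it. But the adjunction then only gives continuity of $\rho$ into $\tau'$, which is weaker than continuity into $\tau_{\mathrm{co}}$, not stronger.
\item The check ``by words'' fails for a similar reason. The maps $(x_1,\dots,x_n)\mapsto\sigma_{x_1}^{\pm1}\cdots\sigma_{x_n}^{\pm1}$ are indeed continuous $Q^n\to\mathcal C(Q,Q)$. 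However, this proves continuity of $\rho$ only for the final topology of the word maps $Q^n\to G(Q)$. That topology can be strictly finer than the finest group topology on $G(Q)$; this is well known for free topological groups over non-locally compact spaces such as $\mathbb Q$.
\item The compactly generated setting is not the one of the statement. There, $\Aut_{\mathcal C}(Q)$ carries the compact-open topology and $G(Q)$ the finest group topology among ordinary spaces. Changing category changes the claim rather than proving it.
\end{itemize}

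For comparison, the paper's proof is exactly your Steps 1 and 3, with Step 2 replaced by a direct appeal to Proposition~\ref{adj_c_G} with target $\Homeo(Q)\subseteq\mathcal C(Q,Q)$. That proposition's proof pulls the target topology back to a group topology on $G(Q)$, so it needs the target to be a topological group. The paper itself notes in Section~\ref{sec_inner_aut} that this fails in general; it holds for compact Hausdorff $Q$ by Theorem~\ref{Arens}. So the obstacle you isolated is genuine, and the paper's one-line argument does not remove it either.

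What either argument rigorously establishes is the locally compact case, and there your version is the more careful one. For merely locally compact $Q$, the bare compact-open topology on $\Homeo(Q)$ need not make inversion continuous (e.g.\ for the Cantor set minus a point), and your two-sided topology takes care of this. That case covers every later use of the Fact, since profinite quandles are compact Hausdorff. Proving the first sentence for arbitrary $Q$ would need a new ingredient, e.g.\ a group topology finer than $\tau_{\mathrm{co}}$ on a group containing $\Inn(Q)$ for which $\sigma$ is continuous. Neither your routes nor the paper supply one.
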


\begin{proof}
By the universal property of $G(Q)$ (Prop.~\ref{adj_c_G}), the continuity of the canonical map $G(Q) \rightarrow \mathcal \Homeo(Q) \subseteq \mathcal C(Q,Q)$ is equivalent to the continuity of $Q \rightarrow \mathcal C(Q,Q)$. The latter is deduced from the continuity of its adjoint $\qd : Q^2 \rightarrow Q$, using Proposition~\ref{prop_adj_direct}. The rest of the statement follows from Proposition~\ref{prop_adj_converse}.
\end{proof}

\subsection{Trivialisation of topological quandles}\label{par_trivialisation}

The category $\mathcal T$ of topological spaces is a full subcategory of the category $\mathcal{TQ}dl$ of topological quandle, via the functor endowing every topological space with the trivial law (Example~\ref{triv_topo_qdl}). It is in fact a \emph{reflexive} subcategory: if $Q$ is a topological quandle, then its trivialisation $\pi_0(Q) = Q/G(Q)$, endowed with the quotient topology, is the universal trivial quotient of $Q$. Indeed, let us first note that the canonical projection $\pi : Q \twoheadrightarrow \pi_0(Q)$ is a morphism of topological quandles, for the trivial law on the target: if $x,y \in Q$, then $\pi(x \qd y) = \pi(\eta(x) \cdot y) = \pi(y)$, which is equal to $\pi(x) \qd \pi(y)$ if the target is endowed with the trivial law. Then:
\begin{lem}\label{UP_pi_0(top)}
For every morphism $f: Q \rightarrow X$ from a topological quandle to a trivial quandle $X$, there exists a unique continuous map $\overline f : \pi_0(Q) \rightarrow X$ such that $\overline f \circ \pi = f$.
\end{lem}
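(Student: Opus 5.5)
The plan is to obtain $\overline f$ first as a map of sets, then show it is continuous because $\pi_0(Q)$ carries the quotient topology; uniqueness will come from surjectivity of $\pi$.

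\emph{Well-definedness.} We need $f$ to be constant on $G(Q)$-orbits. It suffices to check that $f(g \cdot x) = f(x)$ for $g$ in a generating set of $G(Q)$. By Lemma~\ref{equivariance}, applied to $f$ viewed as a quandle morphism $Q \rightarrow X$, we have
\[f(g \cdot x) = f_*(g) \cdot f(x),\]
where $f_* : G(Q) \rightarrow G(X)$ is the induced morphism. Since $X$ is trivial, every generator $\eta_X(y)$ of $G(X)$ acts on $X$ by $y \qd (-) = \mathrm{id}$. Hence the whole of $G(X)$ acts trivially on $X$, and so $f(g \cdot x) = f(x)$. Alternatively, and more directly, one can check on the generators $\eta_Q(y)$: we get $f(y \qd x) = f(y) \qd f(x) = f(x)$, and likewise $f(y \qdn{-1} x) = f(x)$, so $f$ is constant on orbits.

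\emph{Existence of $\overline f$.} Since $f$ is constant on orbits, it factors uniquely, as a map of sets, through the projection $\pi : Q \twoheadrightarrow Q/G(Q) = \pi_0(Q)$. This gives $\overline f$ with $\overline f \circ \pi = f$.

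\emph{Continuity.} Since $\pi_0(Q)$ carries the quotient topology, a map out of it is continuous if and only if its composite with $\pi$ is continuous. Here $\overline f \circ \pi = f$ is continuous, so $\overline f$ is continuous. Note that $\overline f$ is then automatically a quandle morphism, since both $\pi_0(Q)$ and $X$ are trivial and any map between trivial quandles preserves the law.

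\emph{Uniqueness.} This follows from the surjectivity of $\pi$.

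The only point needing any care is well-definedness, namely that orbits of the full group $G(Q)$, not just single generators, are preserved. Because $G(Q)$ is generated by $\eta_Q(Q)$ as an abstract group, checking the generators and their inverses suffices. No topological subtlety about the group topology on $G(Q)$ arises, since we only use the algebraic orbit relation together with the quotient topology on $\pi_0(Q)$.
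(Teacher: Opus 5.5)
Your proposal is correct and follows essentially the same route as the paper. Both use Lemma~\ref{equivariance} together with the triviality of the $G(X)$-action on $X$ to show that $f$ is constant on $G(Q)$-orbits, and then get continuity of $\overline f$ from the quotient topology on $\pi_0(Q)$; your explicit uniqueness argument via surjectivity of $\pi$ fills in the one step the paper leaves implicit.
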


\begin{proof}
Since $G(X)$ acts trivially on $X$, Lemma~\ref{equivariance} says that  for every $w \in G(Q)$ and every $q \in Q$, $f(w \cdot q) = G(f)(w) \cdot f(q) = f(q)$. Thus $f$ factors through the quotient $Q/G(Q)$, and the resulting  $\overline f : \pi_0(Q) \rightarrow X$ is continuous for the quotient topology.
\end{proof}

The abelianisation $G^{\mathrm{ab}}$ of a topological group $G$ is endowed with the quotient topology; this makes $(-)^{\mathrm{ab}}$ into a left adjoint to the inclusion  $\mathcal{TA}b \hookrightarrow \mathcal{TG}rp$ of topological abelian groups into topological groups. Similarly, we get a linearisation functor $\Z[-]$ adjoint to the forgetful functor $\mathcal{TA}b \rightarrow \mathcal T$ from topological abelian groups to topological spaces, by endowing the linearisation $\Z[X]$ of a topological space $X$ with the finest group topology rendering $X \rightarrow \Z[X]$ continuous. We thus get a commutative square of adjunctions:
\[\begin{tikzcd}[column sep=40, row sep=40]
\mathcal{TG}rp   \ar[r, shift right, hook'] \ar[d, shift right, swap, "{(-)^{\mathrm{ab}}}"]
& \mathcal{TQ}dl \ar[l, shift right, swap, "G(-)"] \ar[d, shift right, swap, "\pi_0(-)"] \\
\mathcal{TA}b   \ar[r, shift right, hook'] \ar[u, shift right, hook']  
& \mathcal T. \ar[l, shift right, swap, "{\Z[-]}"] \ar[u, shift right, hook']  
\end{tikzcd}\]
The square of right adjoints obviously commutes, so the square of left adjoints does too. Hence we have proved:
\begin{prop}\label{G(Q)^ab}
There is a canonical isomorphism $G(Q)^{\mathrm{ab}} \cong \Z[\pi_0 Q]$  of topological abelian groups, which is natural in the  topological quandle $Q$.
\end{prop}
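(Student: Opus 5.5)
The plan is to deduce the isomorphism from the commutativity of the square of adjunctions, by uniqueness of left adjoints up to natural isomorphism. This is the argument just sketched before the statement; we only need to make each step precise.

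First we check that the four functors are well defined and that each pair really is an adjunction in the topological setting.
\begin{enumerate}[itemsep=-0.3em, topsep = 4pt]
\item $G \dashv c$ is Proposition~\ref{adj_c_G}.
\item $\pi_0$ is left adjoint to the inclusion of $\mathcal T$ as trivial quandles. This is Lemma~\ref{UP_pi_0(top)}, together with the observation that $\pi : Q \to \pi_0(Q)$ is a continuous quandle morphism; both unit and universal property are then in hand.
\item $(-)^{\mathrm{ab}}$ is left adjoint to $\mathcal{TA}b \hookrightarrow \mathcal{TG}rp$. A continuous morphism $G \to A$ with $A$ abelian factors algebraically through $G^{\mathrm{ab}}$, and continuously because $G^{\mathrm{ab}}$ carries the quotient topology. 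One should also note that $G^{\mathrm{ab}}$ is a topological group: quotients of topological groups by normal subgroups are always topological groups, since the projection is open.
\item $\Z[-]$ is left adjoint to the forgetful functor $\mathcal{TA}b \to \mathcal T$. The proof is the same as for Proposition~\ref{adj_c_G}: given a continuous map $X \to A$, the induced group morphism $\Z[X] \to A$ pulls back the topology of $A$ to a group topology rendering $X \to \Z[X]$ continuous. That pullback is therefore coarser than the chosen finest such topology, so the morphism is continuous.
\end{enumerate}

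Next we verify that the square of right adjoints commutes strictly. An abelian topological group $A$ can be sent to $\mathcal{TQ}dl$ in two ways:
\begin{itemize}[itemsep=-0.3em, topsep = 4pt]
\item via $\mathcal{TG}rp$, giving $c(A)$;
\item via $\mathcal T$, giving the underlying space of $A$ with the trivial law.
\end{itemize}
Conjugation in an abelian group is trivial, $ghg^{-1} = h$, so both composites give the same topological quandle.

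Composites of left adjoints are left adjoint to the composites of right adjoints. Hence $Q \mapsto G(Q)^{\mathrm{ab}}$ and $Q \mapsto \Z[\pi_0 Q]$ are both left adjoint to the same functor $\mathcal{TA}b \to \mathcal{TQ}dl$. By uniqueness of adjoints, this yields a natural isomorphism of functors $\mathcal{TQ}dl \to \mathcal{TA}b$, i.e.\ a canonical, natural, continuous isomorphism with continuous inverse. Concretely, it is the map sending the class of $\eta_Q(x)$ to $[\pi(x)]$. The Yoneda argument guarantees that this map and its inverse are continuous, which is the only point that could otherwise fail.

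The main obstacle is item 4 above, together with the analogous check for $G$. One must make sure that the finest group topology rendering the generating map continuous exists. As for $G(Q)$, the supremum of a family of group topologies is a group topology, and the indiscrete topology shows the family is non-empty. Once the four adjunctions are secured, the rest is formal.
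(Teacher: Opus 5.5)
Your proposal is correct and follows the paper's own argument. The paper also observes that the square of right adjoints commutes, since $c(A)$ is the trivial quandle on $A$ for abelian $A$, and concludes that the composite left adjoints $(-)^{\mathrm{ab}}\circ G$ and $\Z[-]\circ\pi_0$ are naturally isomorphic; your explicit checks of the four topological adjunctions and your description of the isomorphism on the classes of $\eta_Q(x)$ simply fill in details the paper leaves implicit.
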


\section{Profinite quandles}
\label{sec_ProQdl}

This section is devoted to the study of profinite quandle. In particular, we introduce the tools necessary to the proof of our main theorem in~\cref{sec_ProPph}. As was done for topological quandles in~\cref{sec_quandles}, we notably investigate the enveloping group, the adjoint action, and the trivialization of a quandle, this time in the profinite context.

\subsection{Definition and first properties}

We can apply the terminology and results of~\cref{par_res_finite_alg} and~\cref{par_prof_alg} to quandles. In particular, Definition~\ref{def_prof_alg} becomes:
\begin{defi}
A \emph{profinite quandle} is a topological quandle which is continuously isomorphic to a projective limit of finite  discrete quandles. 
\end{defi} 

\begin{ex}\label{triv_prof}
Trivial topological quandles are projective limits of finite discrete trivial quandles, hence just profinite spaces, that is, compact and totally continuous spaces (see~\cref{par_profinite_groups_and_spaces}), endowed with the trivial quandle law.
\end{ex}

\begin{ex}\label{ex_c(prof)}
If $G$ is a profinite group, then the associated conjugation quandle $c(G)$ is a profinite quandle. Indeed, we always have $c(\varprojlim G_i) = \varprojlim c(G_i)$.
\end{ex}

We can also apply Proposition~\ref{profinite_iff_cpct_and_res_finite} to quandles:
\begin{prop}\label{qdl_prof_iff_cpct_and_res_finite}
A topological quandle is profinite if and only if it is compact and residually finite.
\end{prop}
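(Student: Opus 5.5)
This is the specialization of Proposition~\ref{profinite_iff_cpct_and_res_finite} to the Lawvere theory of quandles from Example~\ref{qdls_as_T-alg}. The plan is to check that the notions involved coincide in the two settings, and then invoke that proposition. Under Example~\ref{qdls_as_T-alg}, topological quandles are exactly topological algebras over this theory. This uses the fact that the definition of a topological quandle requires both $\qdn{1}$ and $\qdn{-1}$ to be continuous. Morphisms also agree: a continuous map preserving $\qd$ automatically preserves $\qdn{-1}$, since $x \qdn{-1} y$ is the unique $z$ with $x \qd z = y$.

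Two definitions then need to be compared. First, a profinite quandle in the present sense, namely a projective limit of finite discrete quandles, is the same as a profinite $\mathbb T$-algebra in the sense of Definition~\ref{def_prof_alg}. Second, residual finiteness in the sense of Definition~\ref{def_res_finite} means separation of points by continuous morphisms to finite discrete quandles. Both comparisons follow from the identification of morphisms above.

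For the forward direction, suppose $Q$ is profinite. The discussion following Definition~\ref{def_prof_alg} shows that $Q$ is compact, by Tychonoff's theorem and closedness of the limit. It also shows that $Q$ is residually finite, because the projections to the finite stages separate points. For the converse, let $Q$ be compact and residually finite. The proof of Proposition~\ref{profinite_iff_cpct_and_res_finite} applies verbatim. The map $\iota : Q \to \widehat Q$ is continuous, and it is injective by residual finiteness. It is a homeomorphism onto its image, since $Q$ is compact and $\widehat Q$ is Hausdorff. Its image is dense, since the projections are surjective, and closed, since it is compact; hence $\iota$ is onto. Therefore $Q \cong \widehat Q$ is a projective limit of finite discrete quandles.

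There is no real obstacle here. The only point needing care is that finite quotients of $Q$ by cofinite congruences are genuine quandles. That is ensured by the definition of a quandle congruence, which requires compatibility with $\qdn{-1}$ as well as $\qd$. Discreteness of these quotients makes their operations automatically continuous.
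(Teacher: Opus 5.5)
Your proposal is correct and follows the paper's approach. The paper obtains this statement simply by applying Proposition~\ref{profinite_iff_cpct_and_res_finite} to the Lawvere theory of quandles from Example~\ref{qdls_as_T-alg}, and your recap of the argument via $\iota : Q \to \widehat Q$ (injective by residual finiteness, a homeomorphism onto a dense compact image) is the proof given there for general $\mathbb T$-algebras.
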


By Fact~\ref{res_finite_are_tot_disconnected} (and Corollary~\ref{char_profinite_spaces}), this entails:
\begin{cor}\label{qdl_prof_cpct_and_discontinuous}
A profinite topological quandle is compact and totally disconnected. In other words, it is profinite as a topological space.
\end{cor}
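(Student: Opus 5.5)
The plan is to chain two results already available and then forget the algebraic structure. Let $Q$ be a profinite topological quandle. By Proposition~\ref{qdl_prof_iff_cpct_and_res_finite}, $Q$ is compact and residually finite as a topological quandle.

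The key point is that residual finiteness as a quandle implies residual finiteness as a space. Take $x \neq y$ in $Q$. Residual finiteness gives a finite discrete quandle $F$ and a continuous quandle morphism $f: Q \rightarrow F$ with $f(x) \neq f(y)$. Forgetting the quandle structure, $f$ is still a continuous map to a finite discrete space separating $x$ and $y$. Alternatively, one may apply Fact~\ref{res_finite_are_tot_disconnected} directly to the quandle $Q$, since that fact was proved for an arbitrary Lawvere theory $\mathbb T$. Either way, $Q$ is totally disconnected.

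Finally, $Q$ is compact and totally disconnected, so Corollary~\ref{char_profinite_spaces} shows that its underlying space is a profinite space, which is the ``in other words'' part of the statement. There is an even more direct route: writing $Q \cong \varprojlim Q_i$ with finite discrete quandles $Q_i$, the forgetful functor to spaces commutes with projective limits, so the underlying space is $\varprojlim$ of the finite discrete spaces $Q_i$, which is profinite by Definition~\ref{def_prof_space}.

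No step is a real obstacle. The only care needed is to note that the topology on $Q$ is the same in the algebraic and the purely topological limits. This holds because both limits carry the subspace topology of the product.
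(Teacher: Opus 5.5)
Your proposal is correct and follows essentially the paper's own argument: apply Proposition~\ref{qdl_prof_iff_cpct_and_res_finite} to get compactness and residual finiteness, use Fact~\ref{res_finite_are_tot_disconnected} for total disconnectedness, and conclude with Corollary~\ref{char_profinite_spaces}. Your alternative route, which views the underlying space directly as a projective limit of finite discrete spaces, is also valid. It rests on the paper's remark that the limit topology is the subspace topology of the product.
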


\begin{rmq}
The object of~\cref{sec_criterion} will be to investigate the converse of the latter statement. We will see that, contrary to what happens for groups (Corollary~\ref{profiniteness_criterion_groups}), it does not hold, which means that profinite quandles do not have a characterisation purely in terms of the underlying topological space. However, we will still be able to show a criterion for profiniteness which looks like a partial converse to Corollary~\ref{qdl_prof_cpct_and_discontinuous} (see~Theorem~\ref{profiniteness_criterion}).  
\end{rmq}

The theory of profinite completions (\cref{sec_prof_comp}) can be applied readily to quandles. In this context, Corollary~\ref{univ_prop_completion} becomes:
\begin{cor}\label{univ_prop_completion_qdl}
For every profinite quandle $X$ and every topological quandle $Q$, restriction along the canonical morphism $\iota : Q \rightarrow \widehat Q$ induces a natural bijection:
\[\Hom(Q, X) \cong \Hom( \widehat Q, X).\]
In other words, the functor $Q \mapsto \widehat Q$ from topological quandles to profinite quandles is left adjoint to the forgetful functor.
\end{cor}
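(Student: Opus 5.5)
This is the specialization of Corollary~\ref{univ_prop_completion} to the Lawvere theory of quandles from Example~\ref{qdls_as_T-alg}. The plan is simply to check that the dictionary between $\mathbb T$-algebras and quandles matches all the notions involved, and then to invoke that corollary.

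First, topological $\mathbb T$-algebras for this theory are exactly topological quandles. Both $\qd$ and $\qdn{-1}$ are generating operations, so continuity of all operations amounts to continuity of $\qdn{\pm 1}$. Second, morphisms of topological $\mathbb T$-algebras are exactly continuous quandle morphisms. These preserve $\qdn{-1}$ automatically, since a quandle morphism $f$ satisfies $f(x \qdn{-1} y) = f(x) \qdn{-1} f(y)$. Third, profinite $\mathbb T$-algebras are exactly profinite quandles as just defined, because finite discrete $\mathbb T$-algebras are finite discrete quandles.

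Next, I would check that $\widehat Q$ is computed in the same way in both settings. Congruences in the sense of Definition~\ref{def_congruence} for this theory are exactly quandle congruences, since compatibility with both $\qd$ and $\qdn{-1}$ is required in each. Hence cofinite congruences coincide, and Definition~\ref{def_prof(alg)} produces the same projective limit $\widehat Q$, with the same canonical map $\iota$.

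With this dictionary in place, Corollary~\ref{univ_prop_completion} gives the natural bijection $\Hom(Q,X)\cong\Hom(\widehat Q,X)$ induced by $(-)\circ\iota$. Underlying it, Proposition~\ref{univ_prop_completion_finite} handles finite discrete targets, and one then passes to projective limits, as in the proof of Corollary~\ref{univ_prop_completion}. Naturality in $X$ and $Q$ comes from composition. The adjunction statement follows because $\widehat Q$ is profinite, being a projective limit of finite discrete quandles. The only step needing any care, and it is a minor one, is the point about $\qdn{-1}$: including it as an operation changes neither morphisms nor congruences, and the latter follows directly from the definition of quandle congruence adopted in the paper.
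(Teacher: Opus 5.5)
Your proposal is correct and matches the paper, which obtains this corollary directly by specializing Corollary~\ref{univ_prop_completion} to the Lawvere theory of quandles from Example~\ref{qdls_as_T-alg}. Your dictionary check makes explicit what the paper leaves implicit: including $\qdn{-1}$ as an operation changes neither morphisms, congruences, nor the notion of profiniteness.
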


\subsection{Profinite enveloping groups}\label{par_prof_G(Q)}

If $Q$ is a nonempty topological quandle, the constant map $q \mapsto 1$ from $Q$ to the discrete group $\Z$ factors through a continuous surjection $G(Q) \twoheadrightarrow \Z$. In particular, the topological group $G(Q)$ is never compact, hence never a profinite group. So in order to stay in the profinite world, we pose:
\begin{defi}
Let $Q$ be a topological quandle. Its \emph{profinite enveloping group} $\widehat G(Q)$ is the profinite completion of the topological group $G(Q)$.
\end{defi}

Recall that taking the associated conjugation quandle defines a functor $c(-)$ from profinite groups to profinite quandles (see Example~\ref{ex_c(prof)}). The following result is an adaptation to the profinite world of Proposition~\ref{adj_c_G}:
\begin{prop}\label{G(-)_adj_to_c(-)}
The functor $\widehat G$ is left adjoint to the functor $c(-)$ from profinite groups to profinite quandles.
\end{prop}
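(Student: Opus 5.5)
We prove the adjunction by composing two adjunctions already available, and then check that the composite is indeed $\widehat G$ restricted to profinite quandles. For a profinite quandle $Q$ and a profinite group $H$, we want a natural bijection
\[\Hom_{\mathrm{ProGrp}}(\widehat G(Q), H) \cong \Hom_{\mathrm{ProQdl}}(Q, c(H)).\]
Here $c(H)$ is a profinite quandle by Example~\ref{ex_c(prof)}. Since both profinite groups and profinite quandles are full subcategories of the corresponding topological categories, morphisms on each side are just continuous morphisms.

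The chain of bijections will be as follows. First, Corollary~\ref{univ_prop_completion}, applied to groups (the Lawvere theory of Example~\ref{groups_as_T-alg}) with $X = G(Q)$ and $Y = H$, gives $\Hom(\widehat G(Q), H) \cong \Hom(G(Q), H)$ by restriction along $\iota : G(Q) \to \widehat G(Q)$. Second, Proposition~\ref{adj_c_G} gives $\Hom_{\mathcal{TG}rp}(G(Q), H) \cong \Hom_{\mathcal{TQ}dl}(Q, c(H))$ by restriction along $\eta_Q$. Composing, a continuous quandle morphism $f : Q \to c(H)$ corresponds to the unique continuous group morphism $\hat f : \widehat G(Q) \to H$ with $\hat f \circ \iota \circ \eta_Q = f$. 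Thus the unit of the adjunction is $\hat\eta_Q = \iota \circ \eta_Q : Q \to c(\widehat G(Q))$, which is a continuous quandle morphism because $\iota$ is a continuous group morphism and hence also a continuous morphism of conjugation quandles.

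Naturality in both variables follows because each of the two bijections is natural: the first by Corollary~\ref{univ_prop_completion}, the second by Proposition~\ref{adj_c_G}. Alternatively, one can check naturality directly from the uniqueness characterisation $\hat f \circ \hat\eta_Q = f$. It also follows that $\widehat G$ is a functor on profinite quandles, with $\widehat G(g)$ induced from $\hat\eta_{Q'} \circ g$ by the universal property.

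The main obstacle is making sure that no topological subtlety breaks the composition. In particular, the finest group topology on $G(Q)$ must be the one used both in defining $\widehat G(Q)$ and in Proposition~\ref{adj_c_G}, and it is, since $\widehat G(Q)$ is defined as the completion of that topological group. Note also that Corollary~\ref{univ_prop_completion} holds for an arbitrary topological algebra $X$, so we never need $G(Q)$ to be compact or finitely generated. With these checks the argument is purely formal, and in fact it holds for any topological quandle $Q$, not only profinite ones.
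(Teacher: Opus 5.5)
Your proposal is correct and is essentially the paper's proof. It uses the same chain of natural bijections $\Hom(\widehat G(Q), H) \cong \Hom(G(Q), H) \cong \Hom(Q, c(H))$ via Corollary~\ref{univ_prop_completion} and Proposition~\ref{adj_c_G}, together with the observation that profinite quandles form a full subcategory of topological quandles. Your extra remark that the argument works for any topological quandle $Q$ is exactly the form in which the paper then uses this adjunction in Corollary~\ref{profinite_completion_of_env_groups}.
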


\begin{proof}
Using the usual universal property of the profinite completion of groups (Proposition~\ref{univ_prop_completion}) and Proposition~\ref{adj_c_G}, we can write, the following sequence of canonical isomorphisms, natural in the profinite quandle $Q$ and the profinite group $G$:
\[\Hom(\widehat{G(Q)}, G) \cong \Hom(G(Q), G) \cong \Hom(Q, c(G)).\]
Moreover, $\Hom(Q, c(G))$ is the set of morphisms from $Q$ to $c(G)$ in the category of profinite quandles, considered as a full subcategory of topological quandles. Whence the conclusion.
\end{proof}

This implies that $\widehat G(-)$, applied to the profinite completion of any topological quandle~$Q$, gives the profinite completion of the topological group $G(Q)$:
\begin{cor}\label{profinite_completion_of_env_groups}
For every topological quandle $Q$, we have $\widehat G (\widehat Q) = \widehat{G(Q)}$.
\end{cor}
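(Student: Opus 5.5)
The plan is a Yoneda argument: show that the two profinite groups $\widehat G(\widehat Q)$ and $\widehat{G(Q)}$ corepresent the same functor on the category of profinite groups. Since both are profinite groups, the Yoneda lemma (applied inside the category of profinite groups) then yields a canonical isomorphism between them. I will also check that this isomorphism is the one induced by $G(\iota)$, where $\iota : Q \rightarrow \widehat Q$ is the canonical morphism.

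The key computation is a chain of natural bijections. Fix a profinite group $H$. By Proposition~\ref{G(-)_adj_to_c(-)} applied to the profinite quandle $\widehat Q$, we have $\Hom(\widehat G(\widehat Q), H) \cong \Hom(\widehat Q, c(H))$. Next, $c(H)$ is a profinite quandle (Example~\ref{ex_c(prof)}), so Corollary~\ref{univ_prop_completion_qdl} gives $\Hom(\widehat Q, c(H)) \cong \Hom(Q, c(H))$ via restriction along $\iota$. The topological adjunction of Proposition~\ref{adj_c_G} then gives $\Hom(Q, c(H)) \cong \Hom(G(Q), H)$. Finally, the universal property of profinite completion for topological groups (Corollary~\ref{univ_prop_completion}, with $\mathbb T$ the theory of groups) gives $\Hom(G(Q), H) \cong \Hom(\widehat{G(Q)}, H)$.

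Each bijection is natural in $H$, since each comes from an adjunction or a universal property. Their composite is therefore a natural isomorphism of corepresented functors on profinite groups, and Yoneda produces the isomorphism $\widehat G(\widehat Q) \cong \widehat{G(Q)}$ of profinite groups, continuous in both directions.

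The only point needing care is identifying the isomorphism concretely. I would do this by tracing the identity of $\widehat{G(Q)}$ back through the chain: it corresponds to the map $\widehat G(\widehat Q) \to \widehat{G(Q)}$ extending $Q \to G(Q) \to \widehat{G(Q)}$ along $\iota$. Equivalently, the isomorphism is the completion of $G(\iota) : G(Q) \to G(\widehat Q)$, composed with the canonical identification. This is routine but worth stating, so that the equality in the statement is understood as this canonical isomorphism.
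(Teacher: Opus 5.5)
Your proof is correct and is essentially the paper's argument: the paper runs the same chain of natural bijections. It passes from $\Hom(Q,c(H))$ to $\Hom(\widehat G(\widehat Q),H)$ via the completion of quandles and the adjunction $G \dashv c$, and to $\Hom(\widehat{G(Q)},H)$ via $G \dashv c$ and the completion of groups, then concludes from naturality. One small point about your last paragraph: the completion of $G(\iota)$ is the isomorphism in the direction $\widehat{G(Q)} \to \widehat G(\widehat Q)$, i.e. the inverse of the map you obtained by tracing the identity of $\widehat{G(Q)}$.
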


\begin{proof}
Both sides have the same universal properties. Namely, as functors on topological quandles, they are both left adjoint to $G \mapsto c(G)$, seen as a functor from profinite groups to topological quandles. Precisely, for any profinite group $G$, we have, on the one hand:
\[\Hom(Q, c(G)) = \Hom(\widehat Q, c(G)) = \Hom(G(\widehat Q), G) = \Hom(\widehat G(\widehat Q), G),\]
where the middle isomorphism comes from Proposition~\ref{G(-)_adj_to_c(-)}, and the other ones from Proposition~\ref{univ_prop_completion} (using Example~\ref{ex_c(prof)} for the first one).
On the other hand:
\[\Hom(Q, c(G)) = \Hom(G(Q), G) = \Hom(\widehat{G(Q)}, G).\]
These isomorphisms are easily seen to be natural in $G$ and $Q$, whence the canonical isomorphism of the statement.
\end{proof}

Recall that $\widehat G(Q)$ it the projective limit of finite discrete quotients of $G(Q)$. Such a quotient can be obtained in two steps: first choose a cofinite congruence $\mathcal R$ on $Q$, to get the discrete quotient $G(Q) \twoheadrightarrow G(Q/\mathcal R)$. Notice that $G(Q/\mathcal R)$ is discrete (since $Q/\mathcal R$ is), but not finite: its abelianisation is a non-trivial free abelian group. However, a finite discrete quotient $G_{\mathcal R, N}$ of $G(Q)$ can be obtained by quotienting further by any finite-index normal subgroup $N$ of $G(Q/\mathcal R)$. In fact:

\begin{lem}\label{explicit_Prof_G(-)}
Every finite discrete quotient of $G(Q)$ is of the form $G_{\mathcal R, N} = G(Q/\mathcal R)/N$, where $\mathcal R$ is a cofinite congruence on the topological quandle $Q$, and $N$ is a finite-index normal subgroup of $G(Q/\mathcal R)$.
\end{lem}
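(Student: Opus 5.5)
Let $p : G(Q) \twoheadrightarrow F$ be a continuous surjective group morphism onto a finite discrete group $F$. The plan is to factor $p$ through some $G(Q/\mathcal R)$ by producing the congruence $\mathcal R$ directly from $p$. The composite $f = p \circ \eta_Q : Q \rightarrow c(F)$ is a continuous quandle morphism to a finite discrete quandle. I therefore take $\mathcal R := \mathcal R_f$, its kernel (Definition~\ref{def_ker}). As in the proof of Proposition~\ref{univ_prop_completion_finite}, continuity of $f$ into a discrete target makes the classes of $\mathcal R_f$ open, and finiteness of $F$ makes the index finite, so $\mathcal R$ is cofinite. Moreover, $f$ factors as $Q \twoheadrightarrow Q/\mathcal R \xrightarrow{\bar f} c(F)$, where $\bar f$ is an injective quandle morphism between discrete quandles.

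Next I apply the adjunction of Proposition~\ref{adj_c_G} (or just its discrete version, Example~\ref{G(discrete)}). The morphism $\bar f$ induces a group morphism $\tilde{\bar f} : G(Q/\mathcal R) \rightarrow F$. Let $\pi_* : G(Q) \rightarrow G(Q/\mathcal R)$ be the morphism induced by the projection $\pi : Q \twoheadrightarrow Q/\mathcal R$. Both $\tilde{\bar f} \circ \pi_*$ and $p$ restrict to $f$ on $\eta_Q(Q)$, which generates $G(Q)$. By the uniqueness part of the universal property, they are equal, so $p = \tilde{\bar f}\circ \pi_*$.

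Finally I set $N := \ker \tilde{\bar f}$. It remains to check two things. First, $\tilde{\bar f}$ is surjective: its image contains $p(G(Q)) = F$. Hence $N$ is a normal subgroup of finite index $|F|$, and $G(Q/\mathcal R)/N \cong F$. Second, $\pi_*$ is surjective, because $G(Q/\mathcal R)$ is generated by the image of $Q/\mathcal R$, which is the image of $\eta_Q(Q)$ under $\pi_*$. Consequently the quotient map $G(Q)\twoheadrightarrow G(Q/\mathcal R)/N = G_{\mathcal R,N}$ is identified with $p$, meaning both have the same kernel. The converse, that each $G_{\mathcal R,N}$ is indeed a finite discrete quotient of $G(Q)$, is clear: $\pi_*$ is continuous by Proposition~\ref{adj_c_G}, since $Q\to Q/\mathcal R$ is continuous for the discrete topology on the quotient.

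The only point needing care is the identification ``of the form'', which I interpret as equality of kernels, that is, an isomorphism compatible with the projections from $G(Q)$. The main, though mild, obstacle is the continuity and openness bookkeeping showing that $\mathcal R_f$ is cofinite. This is handled exactly as in the proof of Proposition~\ref{univ_prop_completion_finite}.
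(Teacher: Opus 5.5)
Your proposal is correct and follows the same argument as the paper. The paper also takes $\mathcal R$ to be the kernel of $p\circ\eta_Q$, extends the induced injection $Q/\mathcal R \hookrightarrow F$ to a morphism $G(Q/\mathcal R)\to F$, and sets $N$ to be its kernel; your extra checks (that $p$ factors through $\pi_*$, the two surjectivities, and the converse) simply make explicit what the paper leaves implicit.
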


\begin{proof}
Let $p : G(Q) \twoheadrightarrow G$ be a continuous surjection onto a finite discrete group $G$. Let $\mathcal R$ be the kernel of the quandle morphism $p \eta$, where $\eta : Q \rightarrow G(Q)$ is the canonical map. Then $p \eta$ induces a continuous injection $p_0: Q/\mathcal R \hookrightarrow G$, so $\mathcal R$ is cofinite ($Q/\mathcal R$ is finite discrete). Furthermore, $p_0$ extends uniquely to a group morphism $\tilde p_0 : G(Q/\mathcal R) \twoheadrightarrow G$. By taking $N := \ker(\tilde p_0)$, we finally get an isomorphism $G(Q/\mathcal R)/N \cong G$.
\end{proof}

It is not hard to see that $G_{\mathcal R, N}$ is a larger quotient than  $G_{\mathcal R', N'}$ if and only if $\mathcal R \subseteq \mathcal R'$ and the image of $N$ by the canonical surjection $G(Q/\mathcal R) \twoheadrightarrow G(Q/\mathcal R')$ is contained in $N'$. This gives a quite useful description of the projective system of which $\widehat G(Q)$ is the projective limit. We denote by $I_Q$ the corresponding poset.

\subsection{The canonical action on the quandle}

The goal of the present section is the following:
\begin{prop}\label{profinite_adj_action}
Let $Q$ be a profinite quandle. The canonical action of $G(Q)$ on $Q$ extends uniquely to a continuous action of $\widehat G(Q)$ on $Q$.
\end{prop}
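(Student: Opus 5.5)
Write $Q = \varprojlim_{\mathcal R} Q/\mathcal R$, with $\mathcal R$ ranging over the cofinite congruences of $Q$; this is legitimate by Proposition~\ref{qdl_prof_iff_cpct_and_res_finite} and its proof. The plan is to build the action level by level on the finite quotients and then pass to the limit. Uniqueness is the easy half. The image of $G(Q)$ in $\widehat G(Q)$ is dense. Two continuous actions $\widehat G(Q) \times Q \rightarrow Q$ that agree on $G(Q) \times Q$ therefore agree everywhere, since $Q$ is Hausdorff.

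For existence, fix a cofinite congruence $\mathcal R$ on $Q$. The finite quotient $Q/\mathcal R$ carries its adjoint action, that is, a group morphism $\alpha_{\mathcal R} : G(Q/\mathcal R) \rightarrow \Aut(Q/\mathcal R)$ with finite target. Its kernel $N_{\mathcal R}$ is a finite-index normal subgroup, so the action factors through the finite discrete quotient $G_{\mathcal R, N_{\mathcal R}}$ of $G(Q)$ described in Lemma~\ref{explicit_Prof_G(-)}. Composing the canonical continuous projection $\widehat G(Q) \twoheadrightarrow G_{\mathcal R, N_{\mathcal R}}$ with this action gives a continuous action of $\widehat G(Q)$ on the finite discrete set $Q/\mathcal R$. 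By Lemma~\ref{equivariance}, applied to the projection $Q \twoheadrightarrow Q/\mathcal R$, it agrees on $G(Q)$ with the image of the adjoint action of $Q$.

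Next I check compatibility. Take $\mathcal R \subseteq \mathcal R'$ and the projection $p : Q/\mathcal R \twoheadrightarrow Q/\mathcal R'$. By Lemma~\ref{equivariance}, i.e. the commutative square~\eqref{square_of_actions}, $p$ is equivariant for $G(Q/\mathcal R) \rightarrow G(Q/\mathcal R')$. Both induced actions of $\widehat G(Q)$ factor through maps compatible with the projective system $I_Q$. Hence $p(\gamma \cdot x) = \gamma \cdot p(x)$ for all $\gamma \in \widehat G(Q)$, first for $\gamma$ in the dense image of $G(Q)$ and then for all $\gamma$ by continuity, since the targets are discrete. Taking the limit gives a map
\[\widehat G(Q) \times Q \rightarrow \varprojlim Q/\mathcal R = Q.\]
Each of its components is continuous, being the composite of projections with the finite-level actions, so the map itself is continuous. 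The action axioms, and the fact that each $\gamma$ acts by a quandle automorphism, hold levelwise and therefore in the limit. By construction the action restricts to the canonical action on $G(Q)$.

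The main obstacle is the compatibility step. One must verify that the finite-level actions really factor through the projections $\widehat G(Q) \rightarrow G_{\mathcal R,N}$ in a way compatible with the transition maps of $I_Q$. This amounts to showing that $N_{\mathcal R}$ maps into $N_{\mathcal R'}$ under $G(Q/\mathcal R) \twoheadrightarrow G(Q/\mathcal R')$, which again follows from equivariance (Lemma~\ref{equivariance}) together with the surjectivity of $Q/\mathcal R \rightarrow Q/\mathcal R'$. A cleaner route, which I would try first, avoids this bookkeeping altogether. For each $\mathcal R$, the composite $G(Q) \rightarrow G(Q/\mathcal R) \rightarrow \Aut(Q/\mathcal R)$ is a continuous morphism to a finite discrete group, so by Proposition~\ref{univ_prop_completion_finite} it extends uniquely to $\widehat G(Q)$. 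The uniqueness in that extension then gives the compatibilities between levels for free.
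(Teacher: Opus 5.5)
Your proposal is correct and follows essentially the paper's argument: levelwise actions of the finite quotients $G(Q/\mathcal R)/N$ on $Q/\mathcal R$ are assembled into a continuous map $\widehat G(Q)\times Q \rightarrow \varprojlim Q/\mathcal R = Q$, and uniqueness follows from density of the image of $G(Q)$. The only difference is bookkeeping: the paper passes to the cofinal subposet of pairs $(\mathcal R,N)$ with $N$ inside the kernel of the action and takes the limit of a morphism of projective systems, while you use only $N=N_{\mathcal R}$ and check compatibility by density (that check is still needed in your ``cleaner'' route, or must go through the induced maps $\Inn(Q/\mathcal R)\rightarrow\Inn(Q/\mathcal R')$, since arbitrary automorphisms of $Q/\mathcal R$ need not descend to $Q/\mathcal R'$, so uniqueness alone does not give compatibility for free).
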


\begin{proof}
We use the description of $\widehat G(Q)$ as the projective limit of the $G_{\mathcal R,N} = G(Q/\mathcal R)/N$, for $(\mathcal R,N) \in I_Q$ (see~\cref{par_prof_G(Q)}). For a cofinite congruence $\mathcal R$ on $Q$, let us consider the canonical action of $G(Q/\mathcal R)$ on $Q/\mathcal R$. Its kernel $K_\mathcal R$ is the kernel of the corresponding morphism from $G(Q/\mathcal R)$ to the finite group $\Aut(Q/\mathcal R)$, so it is of finite index in $G(Q/\mathcal R)$. Let us consider the (full) sub-poset $I$ of $I_Q$ consisting of the $(\mathcal R, N)$ for which $N \subseteq K_\mathcal R$. It is clearly cofinal in $I_Q$, so $\widehat G(Q)$ is also the projective limit of the $G_{\mathcal R,N}$ for $(\mathcal R,N) \in I$. For $(\mathcal R,N) \in I$, the action of $G(Q/\mathcal R)$ on $Q/\mathcal R$ factors through quotienting by $N$, giving well-defined maps (between discrete spaces) :
\[a_{\mathcal R,N} : G_{\mathcal R,N} \times Q/\mathcal R \longrightarrow Q/\mathcal R,\ \ \text{for } (\mathcal R,N) \in I.\]
From the diagram~\eqref{square_of_actions} applied to canonical projections between quotients of $Q$, we get commutative squares saying that these maps define a morphism of projective systems indexed by $I$, which is also a projective system in the arrow category of (discrete) topological spaces. Taking the projective limit yields a continuous map:
\[\widehat{a_Q} : \widehat G(Q) \times Q = \varprojlim \left(G_{\mathcal R,N} \times Q/\mathcal R\right) \longrightarrow \varprojlim (Q/\mathcal R) = Q.\]
The fact that $\widehat a$ is a group action comes from the fact that the $a_{\mathcal R,N}$ are. 

The diagram~\eqref{square_of_actions} also gives morphisms in the arrow category (that is, commutative squares) from $a_Q$ (seen as a map between discrete spaces) to the $a_{\mathcal R, N}$, which are compatible with the transition maps. These induce a morphism $a_Q \rightarrow \widehat{a_Q}$ to the projective limit, that is, a commutative square:
\[
\begin{tikzcd}
G(Q) \times Q \ar[r,"a_Q"] \ar[d, swap, shift left = 9pt, "\iota \times id"] 
&Q \ar[d, "id"]\\
\widehat G(Q) \times Q \ar[r,"\widehat{a_Q}"] &Q,
\end{tikzcd}
\]
where $\iota : G(Q) \rightarrow \widehat G(Q)$ is the canonical morphism. This says that the action $\widehat{a_Q}$ extends $a_Q$. Unicity follows immediately from the density of the image of $\iota$, which entails the density of the image of $\iota \times id$.
\end{proof}

\begin{rmq}\label{adj(profinite_adj_action)}
The continuity of the canonical action $\widehat G(Q) \times Q \rightarrow Q$ is equivalent to the continuity of its adjoint $\widehat G(Q) \rightarrow \mathcal C(Q,Q)$, since $Q$ is profinite, hence compact (see Prop.~\ref{prop_adj_direct} and~\ref{prop_adj_converse})
\end{rmq}

\begin{rmq}
We will see later on another proof of this result using inner automorphisms instead of the enveloping group (see Corollary~\ref{action_of_prof_G(Q)}).
\end{rmq}

\begin{lem}\label{prof_equivariance}
Let $f: Q \rightarrow R$ be a continuous quandle morphism between profinite quandles. Then $f$ is equivariant with respect to completed adjoint actions. Precisely, if $\widehat G(f) : \widehat G(Q) \rightarrow \widehat G(R)$ is the morphism induced by~$f$, then:
\[\forall w \in \widehat G(Q),\ \forall x \in Q,\  f(w \cdot x) = \widehat G(f)(w) \cdot f(x). \]
\end{lem}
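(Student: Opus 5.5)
We argue by density and continuity, reducing to the discrete equivariance of Lemma~\ref{equivariance}. Consider the two maps $\widehat G(Q) \times Q \rightarrow R$ given by
\[(w,x) \mapsto f(w \cdot x) \quad\text{and}\quad (w,x) \mapsto \widehat G(f)(w) \cdot f(x).\]
Both are continuous. The first is $f \circ \widehat{a_Q}$, where $\widehat{a_Q}$ is continuous by Proposition~\ref{profinite_adj_action}. The second is $\widehat{a_R} \circ (\widehat G(f) \times f)$. Here $\widehat G(f)$ is continuous because it is the morphism obtained from $G(f)$ by functoriality of profinite completion: $G(f)$ is continuous by Proposition~\ref{adj_c_G}, and we apply Corollary~\ref{univ_prop_completion} to $\iota_R \circ G(f)$. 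Since $R$ is Hausdorff, being profinite, the set where the two maps agree is closed in $\widehat G(Q) \times Q$. So it suffices to show they agree on a dense subset.

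For the dense subset we take $\iota(G(Q)) \times Q$, which is dense because $\iota$ has dense image. Let $g \in G(Q)$ and $x \in Q$. By Proposition~\ref{profinite_adj_action}, $\iota(g) \cdot x = g \cdot x$ computed with the discrete adjoint action, so $f(\iota(g)\cdot x) = f(g \cdot x)$. By Lemma~\ref{equivariance}, applied to $f$ as a plain quandle morphism, this equals $G(f)(g) \cdot f(x)$.

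It remains to identify this with $\widehat G(f)(\iota(g)) \cdot f(x)$. By construction $\widehat G(f) \circ \iota_Q = \iota_R \circ G(f)$, since this is the naturality square defining $\widehat G(f)$. Using Proposition~\ref{profinite_adj_action} once more, now for $R$, we get $\iota_R(G(f)(g)) \cdot f(x) = G(f)(g) \cdot f(x)$. So the two maps agree on $\iota(G(Q)) \times Q$, hence everywhere.

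The only point needing care is the compatibility $\widehat G(f)\circ\iota_Q = \iota_R \circ G(f)$, together with the continuity of $\widehat G(f)$. Both come directly from the universal property in Corollary~\ref{univ_prop_completion}, so no step should be a serious obstacle.
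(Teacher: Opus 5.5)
Your proposal is correct and is essentially the paper's argument: both sides are continuous, $\iota(G(Q))$ is dense, the discrete Lemma~\ref{equivariance} applies, $\widehat G(f)\circ\iota = \iota\circ G(f)$, and the completed action extends the discrete one. The only cosmetic difference is that you work with joint continuity on $\widehat G(Q)\times Q$, whereas the paper fixes $x$ and uses continuity in $w$ alone, which already suffices.
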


\begin{proof}
Both sides of the equality are continuous with respect to $w \in \widehat G(Q)$, so the conclusion follows from Lemma~\ref{equivariance}, by density of the image of $\iota : G(Q) \rightarrow \widehat G(Q)$, using the fact that $\widehat G(f) \circ \iota = \iota \circ G(f)$, and that the action of $\widehat G(Q)$ extends that of $G(Q)$.
\end{proof}

\subsection{Trivialisation of profinite quandles}\label{par_trivialisation_profinite}

If $Q$ is a profinite quandle, then $\pi_0(Q)$ (defined in \cref{par_trivialisation}) is not, in general, a profinite space.

\begin{ex}\label{ex_comp_of_Z}
Let $Q$ be the profinite quandle $\{q_0\} \cup \widehat \Z$, where $x \qd y = y$ except if $x = q_0$ and $y \in \widehat \Z$, in which case $x \qd y = y+1$. Then the only generator of $G(Q)$ acting non-trivially is $\eta(q_0)$, acting via $(-) + 1$ on $\widehat \Z$ and fixing $q_0$. Hence $\pi_0(Q) = Q/G(Q) \cong \{q_0\} \cup \widehat \Z/\Z$ is a very ill-behaved space, whose topology is clearly not Hausdorff (every point of $\widehat \Z/\Z$ is dense).
\end{ex}

Thus, if we want to define a trivialization that stays in the profinite realm, we have to modify this construction. One way to do that is to take the quotient of $Q$ not by $G(Q)$, but by the profinite group $\widehat G(Q)$. 
\begin{defi}
Let $Q$ be a profinite quandle. Its \emph{profinite trivialization} is $\widehat \pi_0(Q) := Q/\widehat G(Q)$, endowed with the quotient topology.
\end{defi}

\begin{lem}
Let $Q$ be a profinite quandle. Then $\widehat \pi_0(Q)$ is a profinite space.
\end{lem}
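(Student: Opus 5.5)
We have to show that $\widehat \pi_0(Q)$ is compact and totally disconnected, and then invoke Corollary~\ref{char_profinite_spaces}. The main tool is a standard fact: if a compact group $K$ acts continuously on a compact Hausdorff space $X$, then the orbit space $X/K$ is compact Hausdorff. Here the action of $\widehat G(Q)$ on $Q$ is continuous by Proposition~\ref{profinite_adj_action}. For a more hands-on argument adapted to total disconnectedness, I would instead work with the finite quotients directly.

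\textbf{Compactness.} This is immediate: $\widehat \pi_0(Q)$ is the continuous image of the compact space $Q$ under the projection $\pi$.

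\textbf{Openness of $\pi$.} For $U \subseteq Q$ open, $\pi^{-1}(\pi(U)) = \bigcup_{w \in \widehat G(Q)} w \cdot U$. Each $w$ acts by a homeomorphism, so this set is open, and hence $\pi$ is an open map.

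\textbf{Separation of orbits.} Let $x, y \in Q$ have distinct orbits. The orbit $\widehat G(Q)\cdot x$ is compact, being the image of $\widehat G(Q) \times \{x\}$ under the continuous action. It is therefore closed in $Q$, and it does not contain $y$. I would like a clopen, $\widehat G(Q)$-invariant subset of $Q$ containing the orbit of $x$ but not $y$. To build one, write $Q = \varprojlim Q/\mathcal R$ over cofinite congruences $\mathcal R$. The action of $\widehat G(Q)$ on $Q/\mathcal R$ is through the finite quotients $G_{\mathcal R,N}$, as in the proof of Proposition~\ref{profinite_adj_action}, and its orbits there are exactly the $G(Q/\mathcal R)$-orbits. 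The orbit of $x$ is the intersection over $\mathcal R$ of the preimages $O_\mathcal R(x)$ of the orbit of $\bar x$ in $Q/\mathcal R$.

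This identification is the main obstacle. The inclusion $\subseteq$ is clear. For $\supseteq$, take $z$ lying in every $O_\mathcal R(x)$. The sets $\{w \in \widehat G(Q) : w\cdot x \equiv z \bmod \mathcal R\}$ are closed and nonempty, and they are directed under refinement of $\mathcal R$. By compactness of $\widehat G(Q)$ their intersection is nonempty, and any $w$ in it satisfies $w\cdot x = z$, since $Q$ is Hausdorff.

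\textbf{Conclusion.} Since $y$ does not lie in the orbit of $x$, there is some $\mathcal R$ with $y \notin O_\mathcal R(x)$. The set $O_\mathcal R(x)$ is clopen (a finite union of congruence classes) and $\widehat G(Q)$-invariant. Its image $V$ and the image of its complement form a partition of $\widehat \pi_0(Q)$ into two open sets, because $\pi$ is open and both sets are saturated. Moreover $V$ separates $\pi(x)$ from $\pi(y)$. Thus $\widehat \pi_0(Q)$ is totally disconnected, and in particular Hausdorff. Together with compactness, this gives profiniteness by Corollary~\ref{char_profinite_spaces}.
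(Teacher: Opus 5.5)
Your proof is correct, but it takes a different route from the paper's. The paper just invokes Proposition~\ref{Quotient_of_Stone_space}, which applies to any compact group $G$ acting continuously on a profinite space $X$. There, the orbits $Gx$ and $Gy$ are disjoint compact sets, Lemma~\ref{disconnecting_compact_subsets} provides some clopen $U \supseteq Gx$ missing $Gy$, and Lemma~\ref{saturating_clopen} shows that the saturation $G\cdot U$ is still clopen (it is closed as the image of the compact set $G\times U$).

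You never saturate an arbitrary clopen. Instead you use canonical saturated clopens, namely the sets $O_{\mathcal R}(x)$, which are the fibres of $Q \to \pi_0(Q/\mathcal R)$. The real work then moves into your finite-intersection argument in $\widehat G(Q)$, which shows that a profinite orbit is the intersection of these sets.

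The paper's argument is more general. It uses only compactness of the group and continuity of the action, which is why the same proposition is reused for $G/H_i$, $Q/N$ and $Q/G$ elsewhere. Yours depends on the explicit projective-system construction of the action in Proposition~\ref{profinite_adj_action}. It also implicitly uses that $\widehat G(Q) \to G_{\mathcal R,N}$ is surjective; this holds because $G(Q)$ already surjects onto $G_{\mathcal R,N}$.

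In exchange, your route proves more. It shows that the natural continuous map $\widehat\pi_0(Q) \to \varprojlim_{\mathcal R} \pi_0(Q/\mathcal R)$ is injective. Its image is compact and dense, so the map is a homeomorphism. This gives an explicit presentation of $\widehat\pi_0(Q)$ as a projective limit of finite sets, in the spirit of Corollary~\ref{prof_triv_of_prof_completion}.

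Two small remarks:
\begin{enumerate}
\item The conclusion $w\cdot x = z$ is best justified by residual finiteness: points of $Q = \varprojlim Q/\mathcal R$ are determined by their images in all the $Q/\mathcal R$. Hausdorffness alone does not give it.
\item Openness of $\pi$ is not needed. A saturated open set has open image by definition of the quotient topology.
\end{enumerate}
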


\begin{proof}
It is a quotient of the profinite space $Q$ by a continuous action of the compact group $\widehat G(Q)$, so we can apply Proposition~\ref{Quotient_of_Stone_space}. 
\end{proof}

Recall that a quandle is called~\emph{topologically finitely generated} if its admits a dense finitely generated subquandles (Definition~\ref{def_topo_fg}).

\begin{cor}\label{finite_triv_of_fg_prof}
Let $Q$ be a profinite quandle. If $Q$ is topologically finitely generated, then $\widehat \pi_0(Q)$ is finite (hence discrete).
\end{cor}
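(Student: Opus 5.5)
The plan is to show that every orbit of $\widehat G(Q)$ on $Q$ is open. Once this holds, $\widehat \pi_0(Q)$ is a compact space (as a quotient of the compact space $Q$) and it is discrete, hence finite.

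Let $x_1, \ldots, x_s$ be generators of a dense finitely generated subquandle $Q_0 \subseteq Q$. The first step is the observation that every element of $Q_0$ lies in the $G(Q)$-orbit of some generator $x_i$. Indeed, $Q_0$ is the set of elements obtained from the $x_i$ by iterating $\qdn{\pm 1}$. An element $a \qdn{\pm 1} b$ equals $\eta(a)^{\pm 1} \cdot b$, so it lies in the orbit of $b$. An induction on the length of the expression then shows that each element of $Q_0$ is of the form $g \cdot x_i$ with $g \in G(Q)$. Consequently $Q_0 \subseteq \bigcup_i \widehat G(Q) \cdot x_i$.

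The second step is to show that each orbit $\widehat G(Q)\cdot x_i$ is closed. It is the image of the compact space $\widehat G(Q)$ under the map $g \mapsto g \cdot x_i$, which is continuous by Proposition~\ref{profinite_adj_action}. It is therefore compact, hence closed in the Hausdorff space $Q$. The finite union $\bigcup_i \widehat G(Q)\cdot x_i$ is then closed and contains the dense set $Q_0$, so it equals $Q$. Thus $Q$ has at most $s$ orbits under $\widehat G(Q)$, and $\widehat \pi_0(Q)$ has at most $s$ points.

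For discreteness, I would argue that the orbits are finitely many pairwise disjoint closed sets covering $Q$. Each orbit is then the complement of a finite union of closed sets, hence open, so its image is open in the quotient topology. Alternatively, one can simply invoke the fact that $\widehat \pi_0(Q)$ is a profinite space by the preceding lemma, hence Hausdorff, and a finite Hausdorff space is discrete. The only point requiring care is the first step, which must be handled for both $\qd$ and $\qdn{-1}$; beyond that, the argument is routine.
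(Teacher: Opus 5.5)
Your proof is correct, but it obtains closedness in $Q$ rather than in the quotient, which is where the paper obtains it. The paper pushes the dense subquandle $\langle q_1,\dots,q_n\rangle$ forward along $\pi : Q \twoheadrightarrow \widehat\pi_0(Q)$. Since $\pi$ is a continuous morphism to a trivial quandle, the image of the generated subquandle is just $\{\pi(q_1),\dots,\pi(q_n)\}$; this is your first step, seen in the quotient. Surjectivity and continuity then give $\widehat\pi_0(Q) \subseteq \overline{\{\pi(q_1),\dots,\pi(q_n)\}}$. Finiteness and discreteness both follow from $\widehat\pi_0(Q)$ being Hausdorff, which is the preceding lemma and rests on Proposition~\ref{Quotient_of_Stone_space}.

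You instead get closedness from compactness of $\widehat G(Q)$ and continuity of the action (Proposition~\ref{profinite_adj_action}). The orbits $\widehat G(Q)\cdot x_i$ are compact, hence closed in $Q$, so their finite union is all of $Q$ and each orbit is clopen. Your first discreteness argument therefore needs nothing beyond Proposition~\ref{profinite_adj_action}, and never uses Hausdorffness of the quotient. It also gives slightly more: the profinite orbits are clopen in $Q$, and each contains one of the generators. The paper's version is shorter because it reuses the lemma just proved, whose proof relies on the same compactness of $\widehat G(Q)$ anyway.
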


\begin{proof}
Let $q_1, ..., q_n$ be topological generators of $Q$. This means that $Q = \overline{\langle q_1, ..., q_n\rangle}$, the closure of the subquandle generated by the $q_i$. Let $\pi : Q \twoheadrightarrow \widehat \pi_0(Q) = Q / \widehat G(Q)$ be the canonical projection. It is a continuous quandle morphism (to a trivial topological quandle), so:
\[\pi\left(\overline{\langle q_1, ..., q_n\rangle}\right) \subseteq \overline{\pi\left(\langle q_1, ..., q_n\rangle\right)} =
\overline{\langle \pi(q_1), ..., \pi(q_n)\rangle} = 
\overline{\{ \pi(q_1), ..., \pi(q_n)\}}.\]
Since $\widehat \pi_0(Q)$ is Hausdorff, its finite subsets are closed. Moreover, $\pi$ is surjective, so $\widehat \pi_0(Q) = \pi(Q) =  \{ \pi(q_1), ..., \pi(q_n)\}$ is finite, hence also discrete, since it is Hausdorff.
\end{proof}

\begin{rmq}\label{finite_triv_of_fg_completion}
This applies in particular to profinite completions of finitely generated (discrete) quandles.
\end{rmq}

\begin{ex}\label{ex_comp_of_Z_2}
Let $Q$ be as is Example~\ref{ex_comp_of_Z}. Then $\Homeo(Q)$ has $\widehat \Z$ as a subgroup, acting via translations of $\widehat \Z$ (fixing $q_0$), which is topologically generated by $q_0 \qd (-)$. The map $\widehat G(Q) \rightarrow \Homeo(Q)$ is continuous (Prop~\ref{profinite_adj_action} and Remark~\ref{adj(profinite_adj_action)}), so its image is compact, hence closed. Since it contains $q_0 \qd (-)$, it must contains $\widehat \Z$. Then one easily sees that $\widehat \pi_0(Q) \cong \{q_0\} \cup \widehat \Z/\widehat \Z$ is simply a discrete space with two points.
\end{ex}

Recall that topological spaces identify with the full subcategory of topological quandles given by trivial ones. The profinite trivialization satisfies the following universal property:
\begin{prop}\label{pi0_adj_to_incl}
The functor $\widehat \pi_0$ is left adjoint to the inclusion of profinite spaces into profinite quandles.
\end{prop}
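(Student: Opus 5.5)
We have to show that for every profinite quandle $Q$ and every profinite space $X$ (seen as a trivial quandle, Example~\ref{triv_prof}), composition with the canonical projection $\pi : Q \twoheadrightarrow \widehat \pi_0(Q)$ induces a bijection
\[\Hom(\widehat \pi_0(Q), X) \cong \Hom(Q, X),\]
natural in $Q$ and $X$. Here the left-hand side consists of continuous maps, and the right-hand side of continuous quandle morphisms. The lemma preceding the statement shows that $\widehat\pi_0(Q)$ is a profinite space, so the functor lands in the right category. On morphisms, a continuous morphism $f : Q \to R$ of profinite quandles sends $\widehat G(Q)$-orbits into $\widehat G(R)$-orbits by Lemma~\ref{prof_equivariance}. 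Hence it induces a map $\widehat\pi_0(Q) \to \widehat\pi_0(R)$, which is continuous for the quotient topologies.

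\textbf{Step 1: $\pi$ is a morphism to a trivial quandle.} For $x, y \in Q$ we have $x \qd y = \eta(x)\cdot y$, which lies in the $\widehat G(Q)$-orbit of $y$. So $\pi(x \qd y) = \pi(y) = \pi(x)\qd\pi(y)$ for the trivial law, and similarly for $\qdn{-1}$. The map $\pi$ is continuous by definition of the quotient topology. Precomposition with $\pi$ therefore sends continuous maps $\widehat \pi_0(Q) \to X$ to continuous quandle morphisms $Q \to X$. This precomposition is injective because $\pi$ is surjective.

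\textbf{Step 2: surjectivity.} This is the key step. Let $f : Q \to X$ be a continuous quandle morphism to a trivial profinite quandle. As in Lemma~\ref{UP_pi_0(top)}, Lemma~\ref{equivariance} gives $f(w\cdot q) = f(q)$ for all $w \in G(Q)$. This only says that $f$ is constant on $G(Q)$-orbits, which are smaller than $\widehat G(Q)$-orbits (compare Example~\ref{ex_comp_of_Z}). Two arguments close the gap.
\begin{itemize}
\item Use Lemma~\ref{prof_equivariance}: $f(w\cdot q) = \widehat G(f)(w)\cdot f(q)$ for $w \in \widehat G(Q)$. Since $X$ is trivial, $G(X)$ acts trivially on $X$. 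By uniqueness in Proposition~\ref{profinite_adj_action}, the extended action of $\widehat G(X)$ is then also trivial, so $f(w\cdot q) = f(q)$.
\item Alternatively, for fixed $q$, the maps $w \mapsto f(w\cdot q)$ and $w \mapsto f(q)$ are continuous on $\widehat G(Q)$, because the action is continuous. They agree on the dense image of $G(Q)$ and $X$ is Hausdorff, so they agree everywhere.
\end{itemize}
Either way, $f$ is constant on $\widehat G(Q)$-orbits and so factors as $\overline f\circ\pi$ for a unique map $\overline f$. This $\overline f$ is continuous by the universal property of the quotient topology.

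\textbf{Step 3: naturality.} Naturality in $X$ is immediate, since both sides are given by composition. Naturality in $Q$ follows from the identity $\widehat\pi_0(g)\circ\pi_Q = \pi_R\circ g$ for a morphism $g : Q \to R$, which holds by construction. The only point requiring care in the whole proof is the passage from $G(Q)$-invariance to $\widehat G(Q)$-invariance in Step 2, which relies on the continuity of the extended action and the density of $G(Q)$ in $\widehat G(Q)$.
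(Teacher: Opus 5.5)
Your proof is correct and follows essentially the same route as the paper, which also shows that $f$ is constant on $\widehat G(Q)$-orbits via Lemma~\ref{prof_equivariance}. The paper gets triviality of the $\widehat G(R)$-action on the trivial quandle $R$ from the fact that the image of $G(R)$ in $\Homeo(R)$ is the closed set $\{id\}$; this is the same density argument as your appeal to uniqueness in Proposition~\ref{profinite_adj_action}, and your extra checks (that $\pi$ is a morphism, functoriality, naturality) are details the paper leaves implicit.
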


\begin{proof}
For any profinite quandle $Q$, let us denote by $\pi : Q \twoheadrightarrow \widehat \pi_0(Q)$ the canonical projection. Let $R$ be a profinite space, seen as a trivial profinite quandle. Let $f : Q \rightarrow R$ be a continuous quandle morphism. We want to show that there exists a unique continuous map $\overline f : \widehat \pi_0(Q) \rightarrow R$ such that $\overline f \circ \pi = f$. Lemma~\ref{prof_equivariance} ensures that $f$ is equivariant with respect to $\widehat G(f) : \widehat G(Q) \rightarrow \widehat G(R)$. But the image of $G(R)$ in $\Homeo(R)$ is  $\{id\}$, which is closed in $\Homeo(R)$ (which is totally disconnected by Proposition~\ref{Homeo(X)_tot_disc}), so $\widehat G(R)$ acts trivially on $R$. Hence for every $w \in \widehat G(R)$ and every $q \in Q$, we have $f(w \cdot q) = \widehat G(f)(w) \cdot f(q) = f(q)$. The latter means that $f$ factors (obviously uniquely) through the quotient $\widehat \pi_0(Q) = Q/\widehat G(Q)$, as claimed.
\end{proof}

Combining this with the universal properties of the topological trivialisation (see Lemma~\ref{UP_pi_0(top)}) and of the profinite completion (Corollary~\ref{univ_prop_completion_qdl}), we can then adapt the proof of
Corollary~\ref{profinite_completion_of_env_groups} in a straightforward way to show the following:
\begin{cor}\label{prof_triv_of_prof_completion}
For any topological quandle $Q$, we have $\widehat\pi_0(\widehat Q) \cong \widehat{\pi_0(Q)}$.
\end{cor}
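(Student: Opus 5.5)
The plan is to argue by universal properties, as in the proof of Corollary~\ref{profinite_completion_of_env_groups}. We show that both $Q \mapsto \widehat\pi_0(\widehat Q)$ and $Q \mapsto \widehat{\pi_0(Q)}$, as functors from topological quandles to profinite spaces, are left adjoint to the inclusion of profinite spaces (viewed as trivial quandles) into topological quandles. Uniqueness of adjoints up to natural isomorphism then gives the canonical isomorphism of the statement.

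First, fix a topological quandle $Q$ and a profinite space $X$, seen as a trivial profinite quandle (Example~\ref{triv_prof}). On the one hand we have natural bijections
\[\Hom(Q, X) \cong \Hom(\widehat Q, X) \cong \Hom(\widehat\pi_0(\widehat Q), X).\]
The first comes from Corollary~\ref{univ_prop_completion_qdl}, which applies since $X$ is a profinite quandle. The second comes from Proposition~\ref{pi0_adj_to_incl}, applied to the profinite quandle $\widehat Q$; here $\widehat\pi_0(\widehat Q)$ is indeed a profinite space by the lemma preceding Corollary~\ref{finite_triv_of_fg_prof}.

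On the other hand we have natural bijections
\[\Hom(Q, X) \cong \Hom(\pi_0(Q), X) \cong \Hom(\widehat{\pi_0(Q)}, X).\]
The first is Lemma~\ref{UP_pi_0(top)}; the morphisms to the trivial quandle $X$ are continuous maps, and those from the trivial quandle $\pi_0(Q)$ are just continuous maps. The second is Corollary~\ref{univ_prop_completion} for the trivial theory (spaces), with $\widehat{\pi_0(Q)}$ the profinite completion of the topological space $\pi_0(Q)$. This completion is a profinite space, namely the limit of its finite discrete quotient spaces.

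The main point to check is that all these bijections are natural in both $Q$ and $X$, and are compatible with the unit maps. Each bijection is given by precomposition with a canonical map ($\iota$, $\pi$, or their composites), so naturality follows formally. There is one subtlety. In the first chain, morphisms $\widehat Q \to X$ are quandle morphisms to a trivial quandle, whereas the adjunction of Proposition~\ref{pi0_adj_to_incl} is phrased with $X$ inside profinite quandles. These agree because profinite spaces form a full subcategory of trivial quandles. Yoneda then yields $\widehat\pi_0(\widehat Q) \cong \widehat{\pi_0(Q)}$, natural in $Q$.

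Concretely, the isomorphism is induced by the map $\widehat Q \to \widehat{\pi_0(Q)}$ extending $Q \to \pi_0(Q) \to \widehat{\pi_0(Q)}$, which factors through $\widehat\pi_0(\widehat Q)$.
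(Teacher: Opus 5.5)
Your proof is correct and is essentially the paper's argument. The paper also obtains the isomorphism by showing that both sides are left adjoint to the inclusion of profinite spaces into topological quandles, combining Lemma~\ref{UP_pi_0(top)}, Corollary~\ref{univ_prop_completion_qdl} and Proposition~\ref{pi0_adj_to_incl} just as in the proof of Corollary~\ref{profinite_completion_of_env_groups}.
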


In particular, when $Q$ is profinite (that is, when $Q \cong \widehat Q$), its profinite trivialization $\widehat\pi_0(Q) = Q/\widehat G(Q)$ is really the profinite completion of it topological trivialization $\pi_0(Q)$.

\section{Profinite peripheral systems}
\label{sec_ProPph}

In \cite[\S~4.2]{Darne7}, Joyce's construction of a quandle from a group $G$ together with some group-theoretic data \cite[\S 7]{Joyce} was reinterpreted categorically, as a functorial correspondence between quandles and (abstract) peripheral systems. The present section is devoted to an adaptation of this machinery to the profinite realm, in order to turn it into a correspondence between the profinite completion of a finitely generated quandle and the profinite completion of the associated peripheral system. 

We first recall the main pieces of the machinery in its usual (discrete) version in~\cref{discrete_pph}. Then we adapt it to the profinite world in~\cref{par_prof_Pph}, where we show that the machinery reconstructs any profinite quandle from the canonically associated profinite peripheral system, in a functorial way (Prop.~\ref{QP=1_prof}). Finally, in~\cref{prof_compl_of_Pph}, we discuss the identification between the profinite peripheral system associated with $\widehat Q$ and the profinite completion of the peripheral system associated with $Q$, for a finitely generated (discrete) quandle $Q$. By applying this to link quandles, we are then able to prove our main theorem (Th.~\ref{main_thm}).

\subsection{Discrete quandles and peripheral systems}\label{discrete_pph}

Let us recall briefly how the construction from \cite[\S~4.2]{Darne7} works. The only thing new here is the notion of a quotient of a peripheral system by a normal subgroup (Ex.~\ref{Quotients_of_Pph} and~\ref{Q(Quotients_of_Pph)}).

\begin{defi}\label{def_Ob(Pph)}
A \emph{peripheral system} $\left(G, (H_i, z_i)_{i \in I}\right)$ is the data of a group $G$, a set $I$ and, for each $i \in I$, a subgroup $H_i$ of $G$ and an element $z_i$ in the center of~$H_i$.
\end{defi}

\begin{defi}\label{def_Mor(Pph)}
A \emph{morphism} $(\varphi, u, [g]) : \left(G, (H_i, z_i)_{i \in I}\right) \rightarrow \left(G', (H_j', z_j')_{j \in J}\right)$ of peripheral system is the data of a group morphism $\varphi : G \rightarrow G'$, a map $u: I \rightarrow J$, and the class $[g]$ of a map $g: I \rightarrow G'$ denoted by $i \mapsto g_i$, where $g_i$ is defined only up to right multiplication by $H_{u(i)}'$. This data has to satisfy, for all $i \in I$:
\begin{equation}\label{condition_morphisms_Pph}
\begin{cases}
g_i^{-1}\varphi(H_i)g_i \subseteq H_{u(i)}', \\ 
g_i^{-1}\varphi(z_i)g_i = z_{u(i)}'.
\end{cases}   
\end{equation}
\end{defi}

\begin{conv}
We often write only $g$ for $[g]$, remembering that what is meant by $g_i$ is really $g_iH_{u(i)}'$.
\end{conv}

Composition of morphisms is given by the formula:
\begin{equation}\label{composition_morphisms_Pph}
(\varphi', u', g') \circ (\varphi, u, g) = (\varphi' \circ \varphi, u' \circ u, i \mapsto \varphi'(g_i) g'_{u(i)}).
\end{equation}

\begin{defi}\label{def_Pph}
All the above defines a category $\mathcal Pph$ of peripheral systems.
\end{defi}

\begin{ex}\label{Quotients_of_Pph}
Let $P := \left(G, (H_i, z_i)_{i \in I}\right)$ be a peripheral system, and $N$ be a normal subgroup of $G$. Then $(G/N, (H_iN/N, z_iN)_{i \in I}$ is also a peripheral system, called the \emph{quotient of $P$ by $N$}, and denoted by $P/N$. It is endowed with a canonical morphism $P \twoheadrightarrow P/N$. 
\end{ex}

Given a peripheral system $\left(G, (H_i,z_i)_{i \in I} \right)$, we construct a quandle $\mathcal Q \left(G, (H_i,z_i)_{i \in I} \right)$ by endowing the set $\bigsqcup_{i \in I} G/H_i$ with the law:
\[\forall i,j \in I,\ \forall x,y \in G,\ \ xH_i\ \qd\  yH_j := x z_i x^{-1} yH_j.\]
One can check easily that this is a well-defined quandle law. Moreover, this construction is functorial: if $(\varphi, u, g)$ is a morphism between $\left(G, (H_i, z_i)_{i \in I}\right)$ and $\left(G', (H_j', z_j')_{j \in J}\right)$, then the formula
\[xH_i \mapsto \varphi(x) g_i H'_{u(i)}\] 
is easily seen to give a well-defined quandle morphism, and this construction preserves the composition of morphisms. 

\begin{ex}\label{Q(Quotients_of_Pph)}
Let $P := \left(G, (H_i, z_i)_{i \in I}\right)$ be a peripheral system. Notice that the action by left multiplication of $G$ on $\bigsqcup G/H_i$ is an action of $G$ on $\mathcal Q(P)$ by automorphisms:
\[gxH_i\ \qd\  gyH_j := gx z_i (gx)^{-1} gyH_j = gx z_i x^{-1} yH_j.\]  
Moreover, this action is normalized by inner automorphisms $xH_i\ \qd\ (-)$:
\[xH_i\ \qd\  gyH_j := x z_i x^{-1} gyH_j = x z_i x^{-1} g x z_i^{-1}x^{-1}(xH_i\ \qd\  gyH_j).\]  
As a consequence, if $N$ is a normal subgroup of $G$, we can apply Proposition~\ref{quotient_by_group} to define the quandle $Q/N$. One can check easily that it relates to the peripheral system $P/N$ from Example~\ref{Quotients_of_Pph} through a canonical isomorphism $\mathcal Q(P/N) \cong \mathcal Q(P)/N$. 
\end{ex}

Conversely, if $Q$ is a quandle, endowed with a choice of representatives $(q_i)_{i \in \pi_0(Q)}$ of its orbits, then we can construct a peripheral system, by looking at the canonical action of the enveloping group $G(Q)$ on $Q$, and taking:
\[\mathcal P (Q,(q_i)_{i \in \pi_0(Q)}) := (G(Q), (\Stab(q_i), \eta(q_i))_{i \in \pi_0(Q)}).\]
Now, if we have a quandle morphism $f : Q \rightarrow R$, and choices of orbit representatives $(q_i)_{i \in \pi_0(Q)}$ and $(r_j)_{j \in \pi_0(R)}$ (which need not be preserved in any way by $f$), then we can associate to it the morphism:
\[\mathcal P (f) := (G(f), \pi_0(f), [g]),\]
where $g_i$ is an element of $G(R)$ such that $f(q_i) = g_i \cdot r_j$, for some $j$. Notice that such an element exists, by construction of the $r_j$, and that by definition of $\pi_0(f)$, we must have $j = \pi_0(f)(i)$. Moreover, the class $[g]$ is independent of the choices made: if $g_i'$ satisfies the same equation as $g_i$, then $g_i^{-1}g_i'$ fixes $r_j$, so $g_i' \in g_i \Stab(r_j)$, and $[g'] = [g]$.
If we consider the category $\mathcal Qdl_s$ of quandles $Q$ endowed with a section of $Q \twoheadrightarrow \pi_0(Q)$, where morphisms are just quandle morphisms (not necessarily preserving the chosen representatives of orbits in any way), this construction defines a functor from $\mathcal Qdl_s$ to $\mathcal Pph$. 

Notice that for a peripheral system $\left(G, (H_i,z_i)_{i \in I} \right)$, the quandle $\mathcal Q \left(G, (H_i,z_i)_{i \in I} \right)$ comes equipped with a canonical choice of orbit representatives, given by the class of~$1$ in each $G/H_i$. This makes $\mathcal Q$ into a functor from $\mathcal Pph$ to $\mathcal Qdl_s$. In fact, $\mathcal P$ is left adjoint to $\mathcal Q$ (see \cite[\S~4.2]{Darne7}).

\medskip

Let  $\left(G, (H_i,z_i)_{i \in I} \right)$ be a peripheral system, and let $G$ act on a quandle $Q$ by quandle automorphisms. Suppose that there are elements $q_i \in Q$ such that $H_i \subseteq \Stab(q_i)$, and $z_i \cdot (-) = q_i \qd (-)$. Then the canonical map
\[f:\left\{\renewcommand{\arraystretch}{1.5}
\begin{array}{clc}
\bigsqcup\limits_{i \in I} G/H_i &\longrightarrow &Q \\
gH_i &\longmapsto & g \cdot q_i
\end{array}
\right.\]
turns out to be a quandle morphism. It is bijective if and only if the $q_i$ are representatives of $G$-orbits in $Q$, and $H_i = \Stab(q_i)$ (for every $i \in I$). These conditions are satisfied for the peripheral system $\mathcal P(Q)$, with respect to the canonical action of $G(Q)$ on $Q$. This gives back Joyce's result, in its version from~\cite[Prop.~4.30]{Darne7}:
\begin{prop}\label{QP=1}
There is a canonical isomorphism $\mathcal Q \circ \mathcal P \cong id_{\mathcal Qdl_s}$.
\end{prop}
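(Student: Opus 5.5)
The isomorphism will be the map $f$ just described, applied to the canonical action of $G(Q)$ on $Q$. Let $(Q,(q_i)_{i \in \pi_0(Q)})$ be an object of $\mathcal Qdl_s$. Then $\mathcal Q\mathcal P(Q)$ is the set $\bigsqcup_i G(Q)/\Stab(q_i)$, with law $xH_i \qd yH_j = x\eta(q_i)x^{-1}yH_j$, and with the classes of $1$ as chosen representatives. The candidate isomorphism is
\[\epsilon_Q : \bigsqcup_{i} G(Q)/\Stab(q_i) \longrightarrow Q,\qquad gH_i \longmapsto g\cdot q_i.\]

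\textbf{Step 1: well-defined and bijective.} The map is well defined because $H_i=\Stab(q_i)$. It is surjective because every element of $Q$ lies in the $G(Q)$-orbit of some $q_i$; recall that $\pi_0(Q)=Q/G(Q)$ and the $q_i$ are orbit representatives. It is injective because distinct $i$ give distinct orbits, and within one orbit $g\cdot q_i=g'\cdot q_i$ forces $g^{-1}g'\in\Stab(q_i)$.

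\textbf{Step 2: quandle morphism.} Write $z_i=\eta(q_i)$. By definition of the adjoint action, $\eta(q)\cdot(-)=q\qd(-)$, and $G(Q)$ acts by quandle automorphisms. Hence
\[\epsilon(xH_i\qd yH_j)=(xz_ix^{-1}y)\cdot q_j = x\cdot\bigl(q_i\qd (x^{-1}y\cdot q_j)\bigr)=(x\cdot q_i)\qd (y\cdot q_j).\]
This is exactly the general fact stated before the proposition, applied to the peripheral system $\mathcal P(Q)$ with its canonical action. The point $1H_i$ maps to $q_i$, so the chosen representatives correspond.

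\textbf{Step 3: naturality.} This is the only step with real content. Take a morphism $\phi:Q\to R$ in $\mathcal Qdl_s$. By construction, $\mathcal P(\phi)=(G(\phi),\pi_0(\phi),[g])$, where $\phi(q_i)=g_i\cdot r_{j}$ and $j=\pi_0(\phi)(i)$. Applying $\mathcal Q$ gives the map $xH_i\mapsto G(\phi)(x)g_iH'_j$, and composing with $\epsilon_R$ yields $G(\phi)(x)g_i\cdot r_j$. Going the other way, $\phi(\epsilon_Q(xH_i))=\phi(x\cdot q_i)$. By Lemma~\ref{equivariance}, $\phi(x\cdot q_i)=G(\phi)(x)\cdot\phi(q_i)=G(\phi)(x)g_i\cdot r_j$. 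The two expressions agree, so the square commutes. Naturality is independent of the choice of $g_i$ because $[g]$ is only defined modulo $\Stab(r_j)$, which is harmless here.

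Since each $\epsilon_Q$ is a bijective quandle morphism, it is an isomorphism in $\mathcal Qdl_s$, whose morphisms are just quandle morphisms. Together with naturality this gives $\mathcal Q\circ\mathcal P\cong id_{\mathcal Qdl_s}$. If desired, one can also check that this isomorphism is the counit of the adjunction $\mathcal P\dashv\mathcal Q$, but the statement does not require it.
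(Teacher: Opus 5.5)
Your proof is correct and follows the paper's argument. The paper uses the same canonical map $gH_i \mapsto g\cdot q_i$ and shows it is a quandle morphism whenever $H_i\subseteq \Stab(q_i)$ and $z_i\cdot(-) = q_i \qd (-)$, and that it is bijective exactly when the $q_i$ are orbit representatives and $H_i=\Stab(q_i)$, which is the case for $\mathcal P(Q)$. Your explicit naturality check via Lemma~\ref{equivariance} fills in a step the paper leaves implicit by deferring to earlier work.
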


This is the main tool for proving that the data of the fundamental quandle $Q_L$ of a link $Q$ up to isomorphism is equivalent to the data of its (weak) peripheral system up to equivalence. In fact, the relevant peripheral system is $\mathcal P(Q_L)$, and our isomorphisms of peripheral systems correspond to the classical notion of equivalence (see for instance~\cite[\S~7.6]{Darne7}).

\subsection{Adaptation to the profinite world}\label{par_prof_Pph}

We now adapt the previous correspondence between quandles and peripheral systems to the case of profinite quandles. We limit ourselves to profinite quandles having a (necessarily finite) discrete set of orbits, a hypothesis that is satisfied for profinite completions of fundamental quandles of links. We will explain briefly in an appendix (see~\cref{sec_top_Pph}) what a more general setting can look like.

\begin{defi}
A \emph{profinite peripheral system} is a peripheral system $\left(G, (H_i, [z_i])_{i \in I}\right)$, where $I$ is finite, $G$ is endowed with a topology making it into a profinite group, and the $H_i$ are closed in $G$. A \emph{morphism} of profinite peripheral systems is a morphism $(\varphi, u, [g])$ of peripheral systems, where $\varphi$ is continuous. With the composition of morphisms defined as in \cref{discrete_pph}, this defines a category $pr\mathcal Pph$ of profinite peripheral systems.
\end{defi}

The construction $\mathcal Q$ from above can be generalized in a straightforward manner: if each $G/H_i$ is endowed with the quotient topology and $\bigsqcup_{i \in I} G/H_i$ is a topological disjoint union, then the quandle $\mathcal Q\left(G, (H_i, z_i)_{i \in I}\right)$ becomes a topological quandle, and morphisms of profinite peripheral systems induce continuous quandle morphisms. 
\begin{lem}
If $ P =\left(G, (H_i, z_i)_{i \in I}\right)$ is a profinite peripheral system, then $\mathcal Q(P)$ is a profinite quandle. Precisely, $\mathcal Q(P) \cong \varprojlim \mathcal Q(P/N)$, where the limit is taken over open normal subgroups $N$ of $G$ (with notations from Example~\ref{Quotients_of_Pph}).
\end{lem}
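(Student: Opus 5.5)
The plan is to build the comparison map from $\mathcal Q(P)$ to the limit, check that it is a continuous bijection from a compact space to a Hausdorff one, and conclude by Proposition~\ref{qdl_prof_iff_cpct_and_res_finite}.

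\textbf{Setup and compactness.} For each open normal subgroup $N$ of $G$, the quotient $G/N$ is a finite discrete group. Hence $P/N$ is a peripheral system with finite group, and $\mathcal Q(P/N) = \bigsqcup_{i \in I} G/H_iN$ is a finite discrete quandle, since $I$ is finite. The canonical morphisms $P \twoheadrightarrow P/N \twoheadrightarrow P/N'$ for $N \subseteq N'$ give, by functoriality of $\mathcal Q$, a projective system of finite quandles together with a compatible family of quandle morphisms $\mathcal Q(P) \to \mathcal Q(P/N)$. Explicitly these are $xH_i \mapsto xH_iN$. Each is continuous, because $G/H_i$ carries the quotient topology and $G \to G/H_iN$ is continuous. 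So we get a continuous quandle morphism $\Psi : \mathcal Q(P) \to \varprojlim \mathcal Q(P/N)$. Moreover $\mathcal Q(P)$ is compact, being a finite disjoint union of the quotients $G/H_i$ of the compact group $G$.

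\textbf{Injectivity.} This is the main step. If $xH_i$ and $yH_j$ have the same image, then $i=j$, since the index is detected at any level. Also $x^{-1}y \in H_iN$ for every open normal $N$. By Proposition~\ref{prop_Bourbaki}, open normal subgroups form a basis of neighbourhoods of $1$ in the profinite group $G$. So $\bigcap_N H_iN$ is the closure of $H_i$, which equals $H_i$ because $H_i$ is closed. Thus $xH_i = yH_i$. The fact used is the standard one: for a closed subgroup $H$, any $g \notin H$ has a neighbourhood $gN$ disjoint from $H$, i.e. $g \notin HN$.

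\textbf{Surjectivity and conclusion.} The image of $\Psi$ is dense, because every map $\mathcal Q(P) \to \mathcal Q(P/N)$ is surjective, and the topology of the limit is generated by preimages of points. The image is also compact, hence closed in the Hausdorff limit, so $\Psi$ is surjective. A continuous bijection from a compact space to a Hausdorff space is a homeomorphism. Therefore $\Psi$ is an isomorphism of topological quandles.

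Finally, $\mathcal Q(P)$ is a topological quandle, as noted before the statement. Since it is isomorphic to a projective limit of finite discrete quandles, it is profinite by definition. Alternatively, this follows from Proposition~\ref{qdl_prof_iff_cpct_and_res_finite}, since the maps to the $\mathcal Q(P/N)$ separate points.
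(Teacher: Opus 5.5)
Your proof is correct and follows essentially the same route as the paper. Both arguments use compactness of $\mathcal Q(P)$, continuity and density of the comparison map (hence surjectivity), and injectivity from the closedness of $H_i$ together with the fact that open normal subgroups form a basis of neighbourhoods of $1$; your identity $\bigcap_N H_iN = \overline{H_i} = H_i$ is just a repackaging of the paper's choice of an $N$ with $xN \cap yH_i = \varnothing$.
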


\begin{proof}
Since $G$ is compact, the $G/H_i$ are compact (they are in fact profinite spaces by Proposition~\ref{Quotient_of_Stone_space}, and in particular Hausdorff, but we do not need this fact for the proof), so $\mathcal Q(P)$ is too, as a finite disjoint union of compact spaces. The canonical morphisms $\mathcal Q(P) \rightarrow \mathcal Q(P/N)$ are surjective, so the induced morphism from $\mathcal Q(P)$ to $\varprojlim \mathcal Q(P/N)$ has dense image. It is also continuous, because the $\mathcal Q(P/N)$, seen as finite discrete spaces, are topological quotients of $\mathcal Q(P)$. As a consequence, it is surjective, and it will be a homeomorphism if we show that it is injective. In other words, we want to show that the quotient maps $\mathcal Q(P) \rightarrow \mathcal Q(P/N)$ separate the points of $\mathcal Q(P)$.

Let $xH_i$ and $yH_j$ be distinct points of $\mathcal Q(P)$. If $i \neq j$, then they are separated by the canonical map $\mathcal Q(P) \twoheadrightarrow \mathcal Q(P/G) \cong I$ onto the finite discrete trivial quandle $I$, so we can assume that $i = j$. Recall that open normal subgroups form a basis of neighbourhoods of $1$ in the profinite group $G$ (Proposition~\ref{prop_Bourbaki}). Since $yH_i$ is closed and $x \notin yH_i$, there is such a subgroup $N$ of $G$ such that $xN \cap yH_i = \varnothing$, which means that $x \notin yH_iN$. But the latter means exactly that the image $xH_iN$ of $xH_i$ in  $\mathcal Q(P/N)$ is distinct from the image $yH_iN$ of $y H_i$, whence the conclusion.
\end{proof}

The construction of $\mathcal P$ is also easily adapted, by replacing the enveloping group with the profinite enveloping group $\widehat G(Q)$ of a profinite quandle $Q$, which acts canonically on $Q$ (see Proposition~\ref{profinite_adj_action}). Notice that since $Q$ is Hausdorff and the action is continuous, stabilizers of points of $Q$ are closed. This modified version of $\mathcal P$, denoted by $\widehat{\mathcal P}$, is defined on the category $pr\mathcal Qdl_s$, whose objects are profinite quandles having a discrete (hence finite) $\widehat \pi_0$, endowed with a choice of representatives of \emph{profinite orbits} (that is, orbits under $\widehat G(Q)$), and whose morphisms are all quandle morphisms. 

Finally, the arguments leading to Proposition~\ref{QP=1} are easily generalized to this setting, giving:
\begin{prop}\label{QP=1_prof}
There is a canonical isomorphism $\mathcal Q \circ \widehat{\mathcal P} \cong id_{pr\mathcal Qdl_s}$.
\end{prop}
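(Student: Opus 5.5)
The plan is to mimic the discrete argument leading to Proposition~\ref{QP=1}, adding a topological check. Let $Q$ be an object of $pr\mathcal Qdl_s$, with chosen representatives $(q_i)_{i \in I}$ of the profinite orbits, $I = \widehat\pi_0(Q)$ finite. Write $\widehat{\mathcal P}(Q) = (\widehat G(Q), (\Stab(q_i), \eta(q_i))_{i \in I})$. By Proposition~\ref{profinite_adj_action}, $\widehat G(Q)$ acts continuously on $Q$ by quandle automorphisms; these are automorphisms because they are limits of automorphisms of the finite quotients. Also $\eta(q_i)$ acts as $q_i \qd (-)$ and is central in $\Stab(q_i)$. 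Centrality holds because for $h \in \Stab(q_i)$ we have $h\eta(q_i)h^{-1} = \eta(h\cdot q_i)$ in $\widehat G(Q)$. This identity holds for $h \in G(Q)$ by the defining relations, and it extends to all $h$ by density together with continuity of both sides. Here $\eta$ means $\iota\circ\eta_Q$, and I would check that $\iota\circ\eta_Q : Q \to \widehat G(Q)$ is continuous for $Q$ profinite; this follows from Proposition~\ref{G(-)_adj_to_c(-)}, since it is the unit of that adjunction. So $\widehat{\mathcal P}(Q)$ is a profinite peripheral system. Stabilizers are closed since $Q$ is Hausdorff.

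Next I would define the map $f : \bigsqcup_i \widehat G(Q)/\Stab(q_i) \to Q$ by $g\Stab(q_i) \mapsto g\cdot q_i$. The discrete computation shows that it is a quandle morphism:
\[f(xH_i \qd yH_j) = (x\eta(q_i)x^{-1}y)\cdot q_j = \eta(x\cdot q_i)\cdot(y\cdot q_j) = (x\cdot q_i)\qd(y\cdot q_j).\]
This uses the identity $x\eta(q)x^{-1} = \eta(x\cdot q)$ from above and the fact that $\eta(p)$ acts by $p \qd (-)$. The map is injective by the definition of stabilizers. It is surjective because the $q_i$ represent all $\widehat G(Q)$-orbits.

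For the topology, $f$ is continuous on each piece because the action map $\widehat G(Q)\times Q \to Q$ is continuous and $\widehat G(Q)/\Stab(q_i)$ carries the quotient topology. The domain is compact and $Q$ is Hausdorff, so $f$ is a homeomorphism. This gives an isomorphism $\mathcal Q(\widehat{\mathcal P}(Q)) \cong Q$ of profinite quandles. It sends the canonical representatives $1\cdot\Stab(q_i)$ to $q_i$, and it respects orbits, since the $\widehat G$-orbits of $\mathcal Q(P)$ are its pieces.

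Finally I would check naturality. Take a continuous morphism $\phi : Q \to R$, and define $\widehat{\mathcal P}(\phi) = (\widehat G(\phi), \widehat\pi_0(\phi), [g])$ with $\phi(q_i) = g_i\cdot r_{u(i)}$. One first checks that this is a morphism of profinite peripheral systems, namely $g_i^{-1}\widehat G(\phi)(\Stab q_i) g_i \subseteq \Stab r_{u(i)}$ and $g_i^{-1}\eta(\phi(q_i))g_i = \eta(r_{u(i)})$. Both follow from the profinite equivariance of Lemma~\ref{prof_equivariance}. Naturality then reduces to the identity
\[\phi(x\cdot q_i) = \widehat G(\phi)(x)g_i\cdot r_{u(i)},\]
which again follows from Lemma~\ref{prof_equivariance}. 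The main obstacle I expect is not the algebra, which is identical to the discrete case. It is the continuity bookkeeping: the continuity of $\eta : Q \to \widehat G(Q)$, and the extension of the conjugation identity from $G(Q)$ to $\widehat G(Q)$ by density. I would handle both through the universal properties already established.
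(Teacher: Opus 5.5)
Your proposal is correct and follows the paper's proof. The paper likewise shows that the canonical map $g\Stab(q_i) \mapsto g\cdot q_i$ is a bijective quandle morphism by the discrete argument, that it is continuous by continuity of the $\widehat G(Q)$-action, and that it is a homeomorphism because its domain is a finite union of compact quotients and $Q$ is Hausdorff. You also check, correctly, two points the paper leaves implicit: centrality of $\eta(q_i)$ in $\Stab(q_i)$ via the density argument, and naturality via Lemma~\ref{prof_equivariance} together with your conjugation identity.
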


\begin{proof}
Let $P = \left(G, (H_i,z_i)_{i \in I} \right)$ be a profinite peripheral system, let $G$ act continuously on a quandle $Q$ by quandle automorphisms, and let us assume that there are elements $q_i \in Q$ such that $H_i \subseteq \Stab(q_i)$, and $z_i \cdot (-) = q_i \qd (-)$. Then we already know (from the discrete case, applied to the underlying set-theoretic structures) that the associated canonical map $f : \mathcal Q(P) \rightarrow Q$ is a quandle morphism, which is bijective if and only if the $q_i$ are representatives of $G$-orbits in $Q$ and for every $i \in I$, $H_i = \Stab(q_i)$. Moreover, it is always continuous if the action of $G$ on $Q$ is. In the profinite setting, the $G/H_i$ are compact Hausdorff (since the $H_i$ are closed, we can apply Proposition~\ref{Quotient_of_Stone_space}), and since $I$ is assumed finite, $\bigsqcup G/H_i$ is compact too, so $f$ is a homeomorphism when it is a continuous bijection.

We need to check that these conditions hold for $P = \widehat{\mathcal P}(Q)$, if $Q \in pr\mathcal Qdl_s$, for any choice of representatives of its profinite orbits. But if $P = \widehat{\mathcal P}(Q)$, then $P$ is profinite: $G = \widehat G(Q)$ is a profinite, it acts continuously on $Q$ (Proposition~\ref{profinite_adj_action}), which is Hausdorff since it is profinite, so the $H_i$, which are by definition the $\Stab(q_i)$, are closed subgroups. Moreover, by definition, the $q_i$ are representatives of $\widehat G(Q)$-orbits, so all the conditions are fulfilled, hence the canonical map $\mathcal Q\bigl(\widehat{\mathcal P}(Q)\bigr) = \mathcal Q(P) \rightarrow Q$ is an isomorphism of topological quandles.
\end{proof}

\begin{rmq}\label{rk_Yetter_Pph}
An attempt at such a construction of profinite quandle from profinite peripheral systems may be found at the end of~\cite{Yetter_et_al}, although they limit themselves to \emph{profinitely connected} profinite quandles (that is, in our language, profinite quandles $Q$ such that $\widehat \pi_0(Q) = *$), and they do not get very far.  In particular, they fall short of proving that (with notations introduced in~\cref{sec_inner_aut}) if $\widehat \pi_0(Q) = *$ and $q \in Q$, then $Q \cong \mathcal Q\left(\overline{\Inn}(Q), (\Stab(q), \sigma_q)\right)$, which is readily establish with a variation on the proof of Proposition~\ref{QP=1_prof}
\end{rmq}

\begin{cor}\label{equiv_btw_prof_periph}
Two topologically finitely generated profinite quandles $Q$ and $R$ are (continuously) isomorphic if and only if for any (hence every) choice of representatives of profinite orbits, the associated peripheral systems $\widehat{\mathcal P}(Q)$ and $\widehat{\mathcal P}(R)$ are isomorphic.
\end{cor}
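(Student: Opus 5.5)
The plan is to reduce both directions to the functoriality of $\widehat{\mathcal P}$ and $\mathcal Q$, together with the natural isomorphism $\mathcal Q \circ \widehat{\mathcal P} \cong id$ of Proposition~\ref{QP=1_prof}. First we check that the objects live in $pr\mathcal Qdl_s$. If $Q$ is topologically finitely generated, Corollary~\ref{finite_triv_of_fg_prof} shows that $\widehat \pi_0(Q)$ is finite and discrete. Hence any choice of representatives of profinite orbits makes $Q$ an object of $pr\mathcal Qdl_s$, and likewise for $R$. So $\widehat{\mathcal P}(Q)$ and $\widehat{\mathcal P}(R)$ are defined, and they are profinite peripheral systems: as in the proof of Proposition~\ref{QP=1_prof}, the index set is finite, $\widehat G(Q)$ is profinite, and the stabilizers are closed.

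\emph{Direct implication.} Suppose $f : Q \rightarrow R$ is a continuous isomorphism with inverse $f^{-1}$. Since morphisms in $pr\mathcal Qdl_s$ are all continuous quandle morphisms, with no compatibility required with the chosen representatives, $f$ and $f^{-1}$ are mutually inverse isomorphisms in $pr\mathcal Qdl_s$ for any choices of representatives on either side. Functoriality of $\widehat{\mathcal P}$ then gives mutually inverse morphisms $\widehat{\mathcal P}(f)$ and $\widehat{\mathcal P}(f^{-1})$ of profinite peripheral systems. Here we must verify that the discrete construction $\mathcal P(f) = (G(f), \pi_0(f), [g])$ adapts. We take $\widehat G(f)$, which is continuous, and $\widehat\pi_0(f)$. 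Each $g_i$ is chosen in $\widehat G(R)$ with $f(q_i) = g_i \cdot r_j$; such an element exists because the $r_j$ represent the $\widehat G(R)$-orbits. Conditions~\eqref{condition_morphisms_Pph} then follow from the profinite equivariance of Lemma~\ref{prof_equivariance}, exactly as in the discrete case. Independence of choices and compatibility with composition are the same computation as in~\cref{discrete_pph}. The same argument shows that the peripheral systems attached to two different choices of representatives on the same $Q$ are isomorphic, via $\widehat{\mathcal P}(id_Q)$. This justifies the phrase ``for any (hence every) choice''.

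\emph{Converse.} Suppose $\Phi : \widehat{\mathcal P}(Q) \rightarrow \widehat{\mathcal P}(R)$ is an isomorphism of profinite peripheral systems. The functor $\mathcal Q$ sends morphisms of profinite peripheral systems to continuous quandle morphisms, so $\mathcal Q(\Phi)$ is a continuous isomorphism with inverse $\mathcal Q(\Phi^{-1})$. Composing with the isomorphisms $\mathcal Q(\widehat{\mathcal P}(Q)) \cong Q$ and $\mathcal Q(\widehat{\mathcal P}(R)) \cong R$ from Proposition~\ref{QP=1_prof} yields a continuous isomorphism $Q \cong R$.

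\emph{Main obstacle.} The only delicate point is making sure $\widehat{\mathcal P}$ is genuinely a functor on $pr\mathcal Qdl_s$, and in particular that the $g_i$ can be taken in the profinite group $\widehat G(R)$. This works because orbits are taken under $\widehat G(R)$ rather than $G(R)$: with $G(R)$, an element $f(q_i)$ need not lie in the discrete orbit of any chosen representative, so no suitable $g_i$ would exist. Once the $g_i$ are available, the verification is the discrete one combined with Lemma~\ref{prof_equivariance}.
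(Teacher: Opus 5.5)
Your proposal is correct and follows the paper's proof. Corollary~\ref{finite_triv_of_fg_prof} puts $Q$ and $R$ in $pr\mathcal Qdl_s$, functoriality of $\widehat{\mathcal P}$ transports an isomorphism $Q\cong R$ for any choices of representatives, and $\mathcal Q$ together with $\mathcal Q\circ\widehat{\mathcal P}\cong id$ (Proposition~\ref{QP=1_prof}) gives the converse. The extra work you do fills in the functoriality that the paper treats as a straightforward adaptation: choosing the $g_i$ in $\widehat G(R)$, and checking conditions~\eqref{condition_morphisms_Pph} via Lemma~\ref{prof_equivariance} and density, which is also what gives $\eta(w\cdot r)=w\,\eta(r)\,w^{-1}$ for $w\in\widehat G(R)$.
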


\begin{proof}
Since $Q$ and $R$ are finitely generated, their profinite trivialization is finite (Corollary~\ref{finite_triv_of_fg_prof}), so we can apply the construction $\widehat{\mathcal P}$, once representatives of profinite orbits are chosen. An isomorphism between $Q$ and $R$ gives an isomorphism in $pr\mathcal Qdl_s$ for any such choice of representatives; then $\widehat{\mathcal P}$ sends it to an isomorphism between $\widehat{\mathcal P}(Q)$ and $\widehat{\mathcal P}(R)$. Conversely, an isomorphism between $\widehat{\mathcal P}(Q)$ and $\widehat{\mathcal P}(R)$ is sent by $\mathcal Q$ to an isomorphism between $\mathcal Q(\widehat{\mathcal P}(Q)) \cong Q$ and $\mathcal Q(\widehat{\mathcal P}(R)) \cong R$ in $pr\mathcal Qdl_s$, which is just an isomorphism of topological quandles. 
\end{proof}

\begin{rmq}
Here again, an isomorphism of peripheral systems is what is classically called an equivalence. 
\end{rmq}

Notice that given an isomorphism $Q \cong R$ between two quandles, the construction of an isomorphism $\mathcal P(Q) \cong \mathcal P(R)$ depends on the choice of representatives of orbits for $Q$ and $R$, that is, on how we lift $Q$ and $R$ to objects of $pr\mathcal Qdl_s$ (a category equivalent to $pr\mathcal Qdl$, but there is no canonical choice of quasi-inverse for this equivalence).

\subsection{Profinite completion of peripheral systems}\label{prof_compl_of_Pph}

We define the profinite completion of a peripheral system in the obvious way:
\begin{defi}
Let $P = (G, (H_i, z_i))$ be a peripheral system. Its \emph{profinite completion} $\widehat P$ is $(\widehat G, (\overline H_i, z_i))$, where $\widehat G$ is the profinite completion of $G$, $\overline H_i$ is the closure of the image of $H_i$ in $\widehat G$, and $z_i$ denotes the image of $z_i$ in $\widehat G$.
\end{defi}

Notice that $z_i$ is central in $\overline H_i$: since $\widehat G$ is Hausdorff, the centralizer of $z_i$ is closed in $\widehat G$, and it contains (the image of) $H_i$, so it contains $\overline H_i$. It is then easy to check that this construction defines a functor from $\mathcal Pph$ to $pr\mathcal Pph$. The final piece in our construction is the following:
\begin{prop}\label{Stab_density}
For any finitely generated $Q \in \mathcal Qdl_s$, we have $\widehat{\mathcal P}(\widehat Q) \cong \widehat{\mathcal P(Q)}$, canonically. 
\end{prop}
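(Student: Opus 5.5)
We compare the two profinite peripheral systems term by term. Let $Q$ be finitely generated with chosen orbit representatives $(q_i)_{i \in \pi_0(Q)}$, and let $\iota : Q \to \widehat Q$ be the canonical map. The enveloping groups already agree: Corollary~\ref{profinite_completion_of_env_groups} gives a canonical isomorphism $\widehat G(\widehat Q) \cong \widehat{G(Q)}$, compatible with $\iota$ and with the canonical maps $\eta$. In particular the image of $\eta(q_i)$ in $\widehat{G(Q)}$ corresponds to $\eta(\iota(q_i))$.

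Next, the orbit sets. By Corollary~\ref{prof_triv_of_prof_completion}, $\widehat\pi_0(\widehat Q) \cong \widehat{\pi_0(Q)}$. Since $Q$ is finitely generated, $\pi_0(Q)$ is finite discrete, so this is just $\pi_0(Q)$. Hence the $\iota(q_i)$ form a system of representatives of the profinite orbits of $\widehat Q$, and we use this choice on $\widehat Q$. With these identifications, the index sets and the elements $z_i$ coincide, and the only remaining claim is
\[\Stab_{\widehat G(Q)}(\iota(q_i)) = \overline{\Stab_{G(Q)}(q_i)}\]
inside $\widehat G(Q)$, where the right-hand side is the closure of the image of $\Stab_{G(Q)}(q_i)$. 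Since the identity morphism then works, the isomorphism is canonical.

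One inclusion is easy. The stabilizer on the left is closed, because the action is continuous (Proposition~\ref{profinite_adj_action}) and $\widehat Q$ is Hausdorff. It contains the image of $\Stab(q_i)$, because the completed action extends the discrete one and $\iota$ is equivariant. Hence it contains the closure.

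The reverse inclusion is the main obstacle. I would argue at finite levels, using the description of $\widehat G(Q)$ as the limit of the $G_{\mathcal R,N} = G(Q/\mathcal R)/N$ over the cofinal poset $I$ with $N \subseteq K_{\mathcal R}$, from the proof of Proposition~\ref{profinite_adj_action}. Take $w$ fixing $\iota(q_i)$ and an open neighbourhood $wU$, with $U$ the kernel of $\widehat G(Q) \to G_{\mathcal R,N}$. The image $\bar w$ of $w$ in $G_{\mathcal R,N}$ fixes $\bar q_i \in Q/\mathcal R$. Since $G(Q)$ surjects onto $G(Q/\mathcal R)$, choose $g \in G(Q)$ mapping to $\bar w$; then $g\cdot q_i \,\mathcal R\, q_i$. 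We want to correct $g$ by an element $h$ of $\ker(G(Q) \to G_{\mathcal R,N})$ with $hg \in \Stab(q_i)$, which requires $g\cdot q_i$ to lie in the orbit of $q_i$ under that kernel.

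This need not hold for the given $\mathcal R$, so I would first refine $(\mathcal R,N)$. The idea is to replace $\mathcal R$ by the congruence whose classes are the orbits of the finite-index normal subgroup $M = \ker(G(Q) \to G_{\mathcal R,N})$ acting on $Q$. This is a congruence by Lemma~\ref{quotient_by_group}, since $M$ is normal and hence normalized by inner automorphisms. It has finite index: the finitely many $G(Q)$-orbits each split into at most $[G(Q):M]$ classes of the form $g M q_j$. For this refined congruence $\mathcal R'$, the relation $g\cdot q_i \,\mathcal R'\, q_i$ means precisely $g\cdot q_i \in M q_i$. Choosing $N'$ small enough that $G_{\mathcal R',N'}$ dominates $G_{\mathcal R,N}$ and redoing the lift at level $(\mathcal R', N')$ then produces $h \in M$ with $hg \in \Stab(q_i)$, and $\iota(hg) \in wU$. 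I expect the bookkeeping to be the delicate point: one must make sure the refined level still maps onto the original one, and that the lift $g$ taken at the refined level lands in $wU$.
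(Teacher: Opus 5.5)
Your proposal is correct and is essentially the paper's proof. After matching the groups (Corollary~\ref{profinite_completion_of_env_groups}) and the orbit representatives (Corollary~\ref{prof_triv_of_prof_completion}), the paper proves that the image of $\Stab(q_i)$ is dense in $\Stab(\iota(q_i))$ exactly as you do: it compares images in each finite quotient $G(Q)/M$, using the congruence of $M$-orbits and the fact (Lemma~\ref{Stab(Nx)}) that the image of $\Stab(q_i)$ in $G(Q)/M$ is the stabilizer of $Mq_i$ in $Q/M$.

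Your bookkeeping goes through:
\begin{itemize}
\item $\mathcal R'\subseteq\mathcal R$, because $N\subseteq K_{\mathcal R}$.
\item With $\pi : G(Q/\mathcal R')\twoheadrightarrow G(Q/\mathcal R)$, the subgroup $N' := K_{\mathcal R'}\cap\pi^{-1}(N)$ gives a level $(\mathcal R',N')$ dominating $(\mathcal R,N)$.
\item Multiplying by $h\in M$ does not change the image in $G_{\mathcal R,N}$, so $\iota(hg)\in wU$.
\end{itemize}
The paper avoids the second level $(\mathcal R',N')$ by observing directly that the action of $\widehat G(Q)$ on $Q/M$ factors through $G(Q)/M$.
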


The proof uses the following easy
\begin{lem}\label{Stab(Nx)}
Let a group $G$ act on a set $X$, and let $N$ a be a normal subgroup of $G$. Then $G/N$ acts canonically on $X/N$, and the stabilizer of $Nx$ is the image in $G/N$ of the stabilizer of $x$ in $G$.
\end{lem}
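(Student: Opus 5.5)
The statement to prove is Lemma~\ref{Stab(Nx)}. The plan has three steps: construct the induced action, prove the easy inclusion between stabilizers, then prove the reverse inclusion using normality of $N$.

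\textbf{Step 1: the induced action.} For $x \in X$, write $Nx$ for its $N$-orbit. I define the action of $G/N$ on $X/N$ by
\[(gN)\cdot(Nx) := N(gx).\]
This is well defined. If $gN = g'N$ and $Nx = Nx'$, write $g' = gn$ and $x' = mx$ with $n, m \in N$. Then
\[g'x' = gnmx = (gnmg^{-1})\, gx \in N(gx),\]
because $N$ is normal. The action axioms are inherited from those of $G$ on $X$. The same normality identity also shows that $g(Nx) = N(gx)$ as subsets of $X$.

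\textbf{Step 2: the easy inclusion.} If $g \in \Stab(x)$, then $gN$ sends $Nx$ to $N(gx) = Nx$. So the image of $\Stab(x)$ in $G/N$ is contained in $\Stab(Nx)$.

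\textbf{Step 3: the reverse inclusion.} Suppose $gN$ fixes $Nx$. Then $N(gx) = Nx$, so $gx = nx$ for some $n \in N$. It follows that $n^{-1}g$ fixes $x$, that is, $n^{-1}g \in \Stab(x)$. Since $n^{-1}g$ and $g$ have the same class in $G/N$, namely $gN = n^{-1}gN$ (again by normality), the class $gN$ lies in the image of $\Stab(x)$.

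Put differently, the preimage of $\Stab(Nx)$ in $G$ is exactly $N\Stab(x) = \Stab(x)N$. Its image in $G/N$ is therefore $\Stab(x)N/N$, which is the claimed statement.

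I expect no real obstacle here. The only point requiring care is Step 3, where normality of $N$ is used to move $n$ past $g$, so that the $n$ obtained from $gx = nx$ can be absorbed into the coset $gN$.
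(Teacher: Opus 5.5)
Your proof is correct and follows essentially the same route as the paper's. Both define the induced action by $gN\cdot Nx = N(gx)$ using normality, and both obtain the reverse inclusion by writing $gx = nx$ with $n\in N$ and observing that $n^{-1}g\in\Stab(x)$ has the same class as $g$ modulo $N$. Your verification of well-definedness and your reformulation $\Stab(Nx)=\Stab(x)N/N$ are slightly more explicit than the paper's version.
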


\begin{proof}
Since $gN = Ng$, letting $g \cdot Nx := Ngx$ clearly defines an action of $G$ on $X/N$, and it factors into an action of $G/N$. Moreover, if $gNx = Nx$, then $gx = nx$ for some $n \in N$, hence $n^{-1}g \in \Stab(x)$, which means that $g$ is equivalent to some element of $\Stab(x)$ modulo $N$. The converse inclusion is trivial.
\end{proof}

\begin{proof}[Proof of Proposition~\ref{Stab_density}]
We need to show that, given a quandle $Q$ with a choice of representatives of orbits $(q_i)_{i \in \pi_0(Q)}$, if we take the profinite completion of the peripheral system $\mathcal P(Q) = (G(Q), (\Stab(q_i), \eta(q_i))_i)$, we recover $\widehat{\mathcal P}(\widehat Q) = (\widehat G(\widehat Q), (\Stab(q_i), \eta(q_i))_i)$, where $q_i$ also denotes the image of $q_i$ in $\widehat Q$. We already know that the canonical map from $G(Q)$ to $\widehat G(\widehat Q)$ identifies the later with the profinite completion of $G(Q)$ (Corollary~\ref{profinite_completion_of_env_groups}). 
We also know that the $q_i$ are representatives of profinite orbits for $\widehat Q$. Indeed, we have  $\widehat \pi_0(\widehat Q) \cong \widehat{\pi_0(Q)}$ canonically (Corollary~\ref{prof_triv_of_prof_completion}). Moreover, $\pi_0(Q)$ is finite discrete, since the discrete quandle $Q$ is finitely generated (then $\pi_0(Q)$ is a finitely generated trivial quandle, hence finite). Thus $\widehat \pi_0(\widehat Q) \cong \widehat{\pi_0(Q)} = \pi_0(Q)$. 

We are left with proving that the stabilizer $S_i'$ of $q_i \in \widehat Q$ in $\widehat G(\widehat Q)$ is the closure of the image of the stabilizer $S_i \subseteq G(Q)$ of $q_i \in Q$. In other words, we need to show that the morphism from $S_i$ to $S_i'$ induced by $\iota: G(Q) \rightarrow \widehat{G(Q)} = \varprojlim(G(Q)/N)$ has dense image. This is equivalent to $S_i'$ and $S_i$ having the same image in every finite quotient $G(Q)/N$ of $G(Q)$. One inclusion is clear, so let us show that the image of $S_i'$ is contained in that of $S_i$.

Let us fix a finite-index normal subgroup $N$ of $G(Q)$. On the one hand, the image of $S_i$ in $G(Q)/N$ is the stabilizer of $Nq_i \in Q/N$ (Lemma~\ref{Stab(Nx)}). On the other hand, by construction of the action of $\widehat G(\widehat Q)$ on $\widehat Q$ (see Proposition~\ref{profinite_adj_action}), an element $g \in \widehat{G(Q)} \cong \widehat G(\widehat Q)$ fixes $q_i \in \widehat Q$ if and only if its image in each $\Aut(Q/\mathcal R)$ fixes the class of $q_i$ modulo $\mathcal R$ (for $\mathcal R$ a cofinite congruence on $Q$). In particular, its image in $\Aut(Q/N)$ fixes $Nq_i$, which means that its image in $G(Q)/N$ is in $\Stab (Nq_i)$, whence the conclusion.
\end{proof}

\begin{cor}\label{equiv_btw_prof_comp_of_periph}
Let $Q$ and $R$ be finitely generated (discrete) quandles. Their profinite completions $\widehat Q$ and $\widehat R$ are (continuously) isomorphic if and only if the profinite completion of their associated peripheral systems are isomorphic.
\end{cor}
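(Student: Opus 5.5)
The plan is to combine Corollary~\ref{equiv_btw_prof_periph} with Proposition~\ref{Stab_density}, so that the statement is reduced to the profinite reconstruction result. First fix choices of orbit representatives $(q_i)_{i \in \pi_0(Q)}$ for $Q$ and $(r_j)_{j \in \pi_0(R)}$ for $R$. Since $Q$ and $R$ are finitely generated, $\pi_0(Q)$ and $\pi_0(R)$ are finite, so this is possible. This makes $Q$ and $R$ into objects of $\mathcal Qdl_s$, and $\mathcal P(Q)$, $\mathcal P(R)$ are defined.

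Next, transport these choices to the completions. The images of the $q_i$ in $\widehat Q$ are representatives of profinite orbits, because $\widehat \pi_0(\widehat Q) \cong \widehat{\pi_0(Q)} = \pi_0(Q)$ is finite discrete (Corollary~\ref{prof_triv_of_prof_completion}, as in the proof of Proposition~\ref{Stab_density}). The same holds for $R$. Hence $\widehat Q, \widehat R \in pr\mathcal Qdl_s$, and Proposition~\ref{Stab_density} gives canonical isomorphisms
\[\widehat{\mathcal P}(\widehat Q) \cong \widehat{\mathcal P(Q)}, \qquad \widehat{\mathcal P}(\widehat R) \cong \widehat{\mathcal P(R)}\]
in $pr\mathcal Pph$.

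Now apply Corollary~\ref{equiv_btw_prof_periph} to $\widehat Q$ and $\widehat R$. They are topologically finitely generated, since the image of a finite generating set of $Q$ is dense in $\widehat Q$ (the map $\iota$ has dense image). The corollary says that $\widehat Q \cong \widehat R$ if and only if $\widehat{\mathcal P}(\widehat Q) \cong \widehat{\mathcal P}(\widehat R)$, for any choice of representatives, in particular for the images of the $q_i$ and $r_j$. Composing with the two isomorphisms above, this is equivalent to $\widehat{\mathcal P(Q)} \cong \widehat{\mathcal P(R)}$.

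The only delicate point is the meaning of ``the profinite completion of their associated peripheral systems'', since $\mathcal P(Q)$ depends on a choice of orbit representatives. I would add that a change of representatives yields an isomorphic peripheral system, because $\mathcal P$ is a functor on $\mathcal Qdl_s$ and the identity of $Q$ with two different sections is an isomorphism there. Completion is a functor, so it preserves this isomorphism, and the statement is therefore independent of the choices. I expect no real obstacle beyond this bookkeeping; the substantive work was done in Proposition~\ref{Stab_density}.
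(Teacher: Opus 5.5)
Your proposal is correct and follows the paper's own proof. The paper likewise applies Corollary~\ref{equiv_btw_prof_periph} to the topologically finitely generated completions and then uses the canonical isomorphisms of Proposition~\ref{Stab_density}, with Corollaries~\ref{finite_triv_of_fg_prof} and~\ref{prof_triv_of_prof_completion} ensuring that orbit representatives of $Q$ give profinite orbit representatives of $\widehat Q$. Your remark on independence of the choice of representatives makes explicit what the paper leaves implicit in its phrase ``for any choices''.
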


\begin{proof}
Notice that profinite completions of finitely generated quandles are clearly topologically finitely generated. Then Corollary~\ref{equiv_btw_prof_periph} says that $\widehat Q$ and $\widehat R$ are isomorphic if and only $\widehat{\mathcal P}(\widehat Q)$ and $\widehat{\mathcal P}(\widehat R)$ are isomorphic (for any choices of representatives of profinite orbits). By Proposition~\ref{Stab_density}, these are canonically isomorphic to the profinite completions of $\mathcal P(Q)$ and $\mathcal P(R)$ (for any choices of representatives of orbits, which also give representatives of profinite orbits for the profinite completions, by Corollary~\ref{finite_triv_of_fg_prof} and Corollary~\ref{prof_triv_of_prof_completion}), whence the conclusion.
\end{proof}

Finally, combining Corollary~\ref{equiv_btw_prof_comp_of_periph} with Corollary~\ref{cor_col_and_prof}, we get our main theorem:

\begin{theo}\label{main_thm}
Two links $K$ and $L$  have the same number of colouring by every finite quandle if and only if the profinite completion of their associated peripheral systems are equivalent.
\end{theo}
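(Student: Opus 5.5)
The plan is to chain the two equivalences already established, passing through the profinite completion of the fundamental quandle as an intermediate invariant. Let $Q_K$ and $Q_L$ be the fundamental quandles of $K$ and $L$. By the Wirtinger presentation they are finitely generated (indeed finitely presented) discrete quandles. Colourings of a diagram of a link by a finite quandle $X$ are in bijection with quandle morphisms from its fundamental quandle to $X$. So the first condition reads $\#\Hom(Q_K, X) = \#\Hom(Q_L, X)$ for every finite quandle $X$.

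First, Corollary~\ref{cor_col_and_prof}, which is Theorem~\ref{theo_col_and_prof} applied to quandles, turns this condition into the existence of a continuous isomorphism $\widehat Q_K \cong \widehat Q_L$. Second, Corollary~\ref{equiv_btw_prof_comp_of_periph} applies, since $Q_K$ and $Q_L$ are finitely generated. It shows that $\widehat Q_K \cong \widehat Q_L$ holds if and only if the profinite completions $\widehat{\mathcal P(Q_K)}$ and $\widehat{\mathcal P(Q_L)}$ are isomorphic in $pr\mathcal Pph$. This works for any choice of orbit representatives, which also serve as representatives of profinite orbits by Corollary~\ref{finite_triv_of_fg_prof} and Corollary~\ref{prof_triv_of_prof_completion}.

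The remaining step, which I expect to be the only real point of care, is to identify $\widehat{\mathcal P(Q_L)}$ with the profinite completion $\widehat P_L$ of the classical peripheral system, and isomorphism in $pr\mathcal Pph$ with equivalence. I would invoke the discrete comparison recalled after Proposition~\ref{QP=1}, citing \cite[\S~7.6]{Darne7}. Choosing one meridian $m_i$ per component as orbit representative $q_i$, we get $G(Q_L) \cong G_L$ with $\eta(q_i) \mapsto m_i$. Under this identification the stabilizer $\Stab(q_i)$ corresponds to the peripheral subgroup $P_i$, or to the relevant weak version of it. Thus $\mathcal P(Q_L)$ is the (weak) peripheral system of $L$, and isomorphisms of peripheral systems are exactly equivalences.

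Applying the functor $P \mapsto \widehat P$ to both sides then identifies $\widehat{\mathcal P(Q_L)}$ with $\widehat P_L$. It also carries the notion of isomorphism to the profinite notion of equivalence: the images of the meridians are preserved, and the closures of the images of the $P_i$ are taken. Chaining the three equivalences gives the theorem.

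The subtle point is the stabilizer, which may differ from $P_i$ for non-prime links. In that case I would simply define the equivalence of profinite peripheral systems through $\mathcal P(Q_L)$, as the paper's conventions do.
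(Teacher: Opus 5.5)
Your proposal is correct and is exactly the paper's argument: chain Corollary~\ref{cor_col_and_prof} with Corollary~\ref{equiv_btw_prof_comp_of_periph}, then identify $\mathcal P(Q_L)$ with the (weak) peripheral system of $L$ and isomorphisms in $pr\mathcal Pph$ with equivalences, citing \cite[\S~7.6]{Darne7}. The hedge in your last paragraph is unnecessary, because by Joyce's description $Q_L \cong \bigsqcup_i G_L/P_i$ with the adjoint action of $G(Q_L) \cong G_L$ becoming left multiplication, so $\Stab(q_i) = P_i$ for every link (primeness only governs whether $Q_L \to G_L$ is injective, by Ryder's result), and no redefinition of equivalence is needed.
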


\begin{rmq}
The term ``equivalent" is the one usually used in the literature, to mean a kind of isomorphism \emph{up to conjugation}, which is exactly our notion of an isomorphism between peripheral systems. 
\end{rmq}

\begin{rmq}
Our notion of the peripheral system associated to a link is a weak one, since it does not include a choice of \emph{preferred longitudes}. See the discussion in~\cite[\S~7.6]{Darne7} for more details on this point.
\end{rmq}

\section{Inner automorphisms}\label{sec_inner_aut}

The rest of the paper is devoted to some additional profinite quandle theory, culminating in the proof of our criterion for a topological quandle to be profinite (Theorem~\ref{profiniteness_criterion}). The main character in this story is the group $\Inn(Q)$ of inner automorphisms of a quandle $Q$, and its closure $\overline{\Inn}(Q)$ in $\Homeo(Q)$ when $Q$ is a topological quandle. 

This section is devoted to the introduction and study of the inner automorphism group and its closure. We notably establish its functoriality properties with respect to quotients of quandles. We do this first in the (classical) discrete setting, then in the more general topological setting, and finally in the profinite setting.

\subsection{The discrete case}

Let us first recall the definition and basic properties of inner automorphisms in (discrete) quandle theory.

\begin{defi}
The \emph{group of inner automorphisms} of a quandle $Q$, denoted by $\Inn(Q)$, is the subgroup of $\Aut(Q)$ generated by the $\sigma_x = x \qd(-)$ (for $x \in Q$).
\end{defi}

\begin{fact}\label{sigma_morphisme}
The canonical map $\sigma: x \mapsto \sigma_x$ from $Q$ to $\Inn(Q)$ is a quandle morphism.
\end{fact}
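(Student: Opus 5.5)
We must show that $\sigma : Q \rightarrow c(\Inn(Q))$ respects the quandle operations, that is, $\sigma_{x \qd y} = \sigma_x \sigma_y \sigma_x^{-1}$ for all $x, y \in Q$. We also want $\sigma_{x \qdn{-1} y} = \sigma_x^{-1} \sigma_y \sigma_x$, although this will follow formally from the first identity.

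We evaluate both sides on an arbitrary $z \in Q$. Write $z = x \qd (x \qdn{-1} z)$. Self-distributivity (the first axiom of Definition~\ref{def_qdl}) then gives
\[(x \qd y) \qd z = (x \qd y) \qd \bigl(x \qd (x \qdn{-1} z)\bigr) = x \qd \bigl(y \qd (x \qdn{-1} z)\bigr).\]
The right-hand side is exactly $\sigma_x \sigma_y \sigma_x^{-1}(z)$. This is the same computation as the one displayed in Example~\ref{adjoint_action}, so we may simply invoke it: it says precisely that $\sigma$ is a morphism to $c(\Aut(Q))$, and its image lies in $\Inn(Q)$ by definition.

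For the inverse operation, we apply the first identity to $x$ and $x \qdn{-1} y$. This gives $\sigma_y = \sigma_x \sigma_{x \qdn{-1} y} \sigma_x^{-1}$, and conjugating by $\sigma_x$ yields the claim. Since a map preserving $\qd$ and $\qdn{-1}$ is a quandle morphism, we are done.

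None of these steps is a real obstacle. The only care needed is the bookkeeping of the inverse, namely rewriting $z$ as $x \qd (x \qdn{-1} z)$ before applying self-distributivity.
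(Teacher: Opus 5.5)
Your proof is correct and is essentially the paper's argument: the paper likewise reads off $\sigma_x\sigma_y\sigma_x^{-1} = \sigma_{x \qd y}$ directly from self-distributivity, via the same computation as in Example~\ref{adjoint_action}. Your extra verification for $\qdn{-1}$ is also correct, though redundant, since any $\qd$-preserving map between quandles automatically preserves $\qdn{-1}$.
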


\begin{proof}
Self-distributivity gives $x \qd (y \qd (x^{-1} \qd (-))) = (x \qd y) \qd (-)$, which means exactly that $\sigma_x \sigma_y \sigma_x^{-1} = \sigma_{x \qd y}$, i.e.~$\sigma(x) \qd \sigma(y) = \sigma(x \qd y)$.
\end{proof}

This gives an alternative point of view on inner automorphisms: the above quandle morphism induces a unique group morphism $G(Q) \rightarrow \Inn(Q)$, sending $\eta(x)$ to $x \qd(-)$. It equals the adjoint action (because it does so when restricted to a set of generators), and it is surjective (because its image contains generators of $\Inn(Q)$). As a consequence, $\Inn(Q)$ is also the image of the adjoint action $G(Q) \rightarrow \Aut(Q)$.

\begin{rmq}
In Lemma~\ref{quotient_by_group}, the condition means that $\alpha(G)$ is normalized by $\Inn(Q)$. This holds for instance if $Q$ is a normal subgroup of $Q$. In particular, for $G = \Inn(Q)$, we recover the trivialization $\pi_0(Q) = Q/\Inn(Q) = Q/G(Q)$.
\end{rmq}

The properties of a quandle are closely linked to the properties of its inner automorphism group. Here is an easy example of this phenomenon:
\begin{lem}\label{finite_Inn}
Let $Q$ be a finitely generated quandle whose inner automorphism group is finite. Then $Q$ is finite.
\end{lem}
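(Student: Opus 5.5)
The plan is to show that every element of $Q$ is obtained by applying an element of $\Inn(Q)$ to one of the finitely many generators. This gives the explicit bound $\#Q \leq s \cdot \#\Inn(Q)$, where $s$ is the number of generators. Let $x_1, \ldots, x_s$ generate $Q$, and set
\[S := \{\varphi(x_j) \mid \varphi \in \Inn(Q),\ 1 \leq j \leq s\} = \bigcup_j \Inn(Q)\cdot x_j.\]
This is a finite union of orbits of the finite group $\Inn(Q)$, so $S$ is finite. Since $\mathrm{id} \in \Inn(Q)$, it contains all the $x_j$. It therefore suffices to show that $S$ is a subquandle of $Q$, because then $S \supseteq \langle x_1, \ldots, x_s\rangle = Q$.

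The key step is closure of $S$ under $\qdn{\pm 1}$. Take $a = \varphi(x_j)$ and $b = \psi(x_k)$ in $S$. Then $a \qd b = \sigma_a(b) = (\sigma_a \circ \psi)(x_k)$. Here $\sigma_a$ is one of the generators of $\Inn(Q)$, so $\sigma_a \circ \psi \in \Inn(Q)$ and hence $a \qd b \in S$. Similarly, $a \qdn{-1} b = (\sigma_a^{-1}\circ\psi)(x_k) \in S$.

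I use the description of the generated subquandle as the set of elements obtained by iterating the operations $\qdn{\pm 1}$ on generators, as recalled in \cref{subsec_finite_nb_of_cong} via Example~\ref{qdls_as_T-alg}. Alternatively, one can argue by induction on the length of a term in $x_1, \ldots, x_s$: every such term lies in $S$ by the computation above.

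There is no real obstacle here. The only point needing care is to include $\qdn{-1}$ as an operation when speaking of the generated subquandle, which is exactly the convention of Example~\ref{qdls_as_T-alg}.
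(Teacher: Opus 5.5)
Your proof is correct and follows essentially the same route as the paper: the union of the $\Inn(Q)$-orbits of the finitely many generators is finite, and it is closed under the quandle operations because $\sigma_a\circ\psi \in \Inn(Q)$, so it must be all of $Q$. The only cosmetic differences are that the paper checks stability under $x \qd (-)$ for every $x \in Q$ (so the set is even an ideal), while you explicitly verify closure under $\qdn{-1}$ as well, which the paper leaves implicit.
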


\begin{proof}
Let $S$ be a finite set of generators for $Q$. Let $Q'$ be the subset $\bigcup_{\varphi \in \Inn(Q)} \varphi(S)$ of $Q$, which is finite, by hypothesis. If $x \in Q$ and $y = \varphi(s) \in Q'$ (with $\varphi \in \Inn(Q)$ and $s \in S$), then $x \qd y = x \qd \varphi(s) = (\sigma_x \circ \varphi)(s) \in Q'$. In particular, $Q'$ is a subquandle of~$Q$; since it contains $S$, we have $Q' = Q$, so $Q$ is finite. 
\end{proof}

Let $\mathcal R$ be a congruence on a quandle $Q$, and let $x \in Q$. Then, by definition of the law on $Q/\mathcal R$, the inner automorphism $\sigma_{\overline x} = \overline x \qd (-)$ is induced by $\sigma_x = x \qd (-)$. The fact that this law is well-defined implies that every $\sigma_x$ factors through the quotient by $\mathcal R$. By composing such maps, we see that this is true of every inner automorphism of $Q$. In other words, every $\varphi \in \Inn(Q)$ factors uniquely through the quotient:
\[\begin{tikzcd}[column sep=45]
Q \ar[r, "\varphi"] \ar[d, two heads, "p"] 
&Q \ar[d, two heads, "p"] \\
Q/\mathcal R \ar[r, dashed, "\exists !\ p_*(\varphi)"]
&Q/\mathcal R.
\end{tikzcd}\]
Since every surjection can be seen as a quotient by a congruence, we recover: 
\begin{prop}\textup{\cite[Prop.~2.31]{Eisermann}}\label{Inn(surj)}
Let $p: Q \twoheadrightarrow R$ be a surjective quandle morphism. Then $p$ induces a unique group morphism $p_*: \Inn(Q) \rightarrow \Inn(Q')$ sending $\sigma_x$ to $\sigma_{p(x)}$, and $p_*$ is surjective.
\end{prop}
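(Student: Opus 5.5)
Since every surjection $p : Q \twoheadrightarrow R$ identifies with the projection $Q \twoheadrightarrow Q/\mathcal R_p$ (via the canonical isomorphism $\ima(p) \cong Q/\mathcal R_p$), the plan is to reduce to the case $R = Q/\mathcal R$. There I will use the factorisation property established just above: every $\varphi \in \Inn(Q)$ induces a unique map $p_*(\varphi) : R \to R$ with $p_*(\varphi) \circ p = p \circ \varphi$. (In the statement, $\Inn(Q')$ should be read as $\Inn(R)$.)

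First, uniqueness of $p_*(\varphi)$ follows from the surjectivity of $p$: any two maps $\psi, \psi'$ with $\psi p = p\varphi = \psi' p$ coincide. Uniqueness then yields functoriality directly. Both $p_*(\varphi)p_*(\varphi')$ and $p_*(\varphi\varphi')$ satisfy the defining equation for $\varphi\varphi'$, so they are equal, and likewise $p_*(\mathrm{id}) = \mathrm{id}$. Thus $p_*$ is a monoid map, hence a group morphism $\Inn(Q) \to \mathrm{Bij}(R)$. For a generator $\sigma_x$, the quandle morphism property gives $p(x \qd y) = p(x) \qd p(y)$, i.e. $p \circ \sigma_x = \sigma_{p(x)} \circ p$, so $p_*(\sigma_x) = \sigma_{p(x)}$. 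Since $\Inn(Q)$ is generated by the $\sigma_x$, the image of $p_*$ is the subgroup generated by the $\sigma_{p(x)}$, and this lies inside $\Inn(R)$. Uniqueness of $p_*$ as a group morphism with $\sigma_x \mapsto \sigma_{p(x)}$ is then immediate from the fact that the $\sigma_x$ generate $\Inn(Q)$.

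Surjectivity also comes from the surjectivity of $p$. The group $\Inn(R)$ is generated by the $\sigma_r$ for $r \in R$, and each $r$ can be written $r = p(x)$, so each generator $\sigma_r = p_*(\sigma_x)$ lies in the image of $p_*$. The image is a subgroup, so it is all of $\Inn(R)$.

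No step is a real obstacle. The only point requiring care is that $p_*$ is well defined: an element of $\Inn(Q)$ may be written as a word in the $\sigma_x^{\pm1}$ in several ways. This is exactly why I define $p_*(\varphi)$ intrinsically through the equation $p_*(\varphi)\,p = p\,\varphi$, rather than letter by letter on words.
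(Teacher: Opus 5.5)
Your proof is correct and follows essentially the same route as the paper: $p_*(\varphi)$ is defined as the unique factorisation of $\varphi$ through the surjection $p$, preservation of composition follows from that uniqueness, and both the uniqueness of $p_*$ and its surjectivity onto $\Inn(R)$ come from the generators $\sigma_x$ of $\Inn(Q)$ and $\sigma_{p(x)}$ of $\Inn(R)$. You also rightly read the target $\Inn(Q')$ as a typo for $\Inn(R)$.
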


\begin{proof}
The map $p_*$ defined before the statement is easily seen to be a morphism: it preserves composition. The $\sigma_x$ generate $\Inn(Q)$, hence $p_*$ is uniquely determined by $p_*(\sigma_x) = \sigma_{p(x)}$. Finally, the $\sigma_{p(x)}$ generate $\Inn(R)$, so $p_*$ is surjective.
\end{proof}

\begin{rmq}
This construction defines a functor from the category of surjections between quandles  to the category of surjections between groups.
\end{rmq}

It is often useful to see the following counterpart of Lemma~\ref{equivariance} as an \emph{equivariance property} (of $p$ with respect to actions of inner automorphisms).
\begin{lem}\label{equivariance_Inn(Q)}
Let $p: Q \twoheadrightarrow R$ be a surjective quandle morphism. We have:
\[\forall \varphi \in \Inn(Q),\ \forall x \in Q,\  p(\varphi (x)) = p_*(\varphi)(p(x)).\]
\end{lem}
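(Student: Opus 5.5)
The plan is to reduce the statement to the generators $\sigma_y$ of $\Inn(Q)$, using the defining commutative square of $p_*$ recalled just before Proposition~\ref{Inn(surj)}. Here $p_*(\varphi)$ is, by construction, the unique map $R \to R$ satisfying $p \circ \varphi = p_*(\varphi) \circ p$. So the claimed identity $p(\varphi(x)) = p_*(\varphi)(p(x))$ for all $x$ is exactly the commutativity of that square. Its content lies in checking that the map so defined agrees with the group morphism of Proposition~\ref{Inn(surj)}.

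First, for a generator $\varphi = \sigma_y$ (with $y \in Q$), $p$ being a quandle morphism gives
\[p(\sigma_y(x)) = p(y \qd x) = p(y) \qd p(x) = \sigma_{p(y)}(p(x)) = p_*(\sigma_y)(p(x)).\]
The same computation with $\qdn{-1}$ in place of $\qd$ handles $\sigma_y^{-1}$, since $p$ also preserves $\qdn{-1}$ and $p_*(\sigma_y^{-1}) = \sigma_{p(y)}^{-1}$ by the morphism property.

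Next, I will show that the set of $\varphi \in \Inn(Q)$ satisfying $p \circ \varphi = p_*(\varphi) \circ p$ is closed under composition. If $\varphi$ and $\psi$ satisfy it, then, using that $p_*$ is a group morphism,
\[p \circ \varphi\psi = p_*(\varphi) \circ p \circ \psi = p_*(\varphi)p_*(\psi) \circ p = p_*(\varphi\psi) \circ p.\]
Since every element of $\Inn(Q)$ is a finite product of the $\sigma_y^{\pm 1}$, this closure property extends the identity from the generators to all of $\Inn(Q)$.

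There is no real obstacle. The only point needing care is that the two descriptions of $p_*$ coincide: the one via factorisation through the quotient, and the one via generators in Proposition~\ref{Inn(surj)}. The generator-by-generator argument above sidesteps this issue, because it uses only the properties $p_*(\sigma_y) = \sigma_{p(y)}$ and multiplicativity of $p_*$.
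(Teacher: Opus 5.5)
Your proof is correct. Your first paragraph is in fact the paper's entire proof: $p_*(\varphi)$ is \emph{defined} as the map induced by $\varphi$ on $R \cong Q/\mathcal R_p$, so the identity is just the commutativity of the defining square. Proposition~\ref{Inn(surj)} then only records that this map is a group morphism determined by $\sigma_x \mapsto \sigma_{p(x)}$. So in the paper there are not two descriptions of $p_*$ to reconcile.

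Your generator-by-generator induction is a different, more axiomatic route, in the style of the paper's proof of Lemma~\ref{equivariance} for $G(Q)$. It shows that \emph{any} group morphism $\Inn(Q) \rightarrow \Inn(R)$ with $\sigma_y \mapsto \sigma_{p(y)}$ makes the square commute, and hence, by surjectivity of $p$, coincides with the factorisation. This is valid, including your treatment of $\sigma_y^{-1}$ and the closure under composition. However, it does not really sidestep the construction. A prescription on generators need not extend to a group morphism, and the existence of the morphism $p_*$ you invoke from Proposition~\ref{Inn(surj)} is established precisely by the factorisation through the quotient. The paper's one-line argument is therefore both shorter and logically prior to yours.
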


\begin{proof}
This holds by construction of $p_*(\varphi)$ as the factorisation of $\varphi$ through the quotient.
\end{proof}

\subsection{The topological case}

If $Q$ is a topological quandle, automorphisms of $Q$  are self-homeomorphisms of $Q$ that are quandle automorphisms. They form a subgroup $\Aut_{\mathcal C}(Q)$ of $\Homeo(Q)$. The latter is endowed with the compact-open topology (see~\cref{Homeo(X)}), which is not in general a group topology. Recall that this topology makes the evaluation maps $\ev_q : \Homeo(Q) \rightarrow Q$ sending $f$ to $f(q)$ continuous, for all $q \in Q$, since $\ev_q^{-1}(U) = \mathcal O_{x,U}$, for every open subset $U$ of $Q$.

\begin{lem}
If $Q$ is Hausdorff, then $\Aut_{\mathcal C}(Q)$ is closed in $\Homeo(Q)$.
\end{lem}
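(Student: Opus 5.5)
The plan is to write $\Aut_{\mathcal C}(Q)$ as an intersection of closed subsets of $\Homeo(Q)$, each cut out by a single instance of the quandle axiom. Since $\Aut_{\mathcal C}(Q)$ consists of the self-homeomorphisms $f$ satisfying $f(x \qd y) = f(x) \qd f(y)$ for all $x,y \in Q$, we have
\[\Aut_{\mathcal C}(Q) = \bigcap_{x,y \in Q} C_{x,y}, \qquad C_{x,y} := \{ f \in \Homeo(Q) \mid f(x \qd y) = f(x) \qd f(y)\}.\]
It suffices to check compatibility with $\qd$: a bijection preserving $\qd$ automatically preserves $\qdn{-1}$. Indeed, $f(x \qdn{-1} y)$ is the unique element $z$ with $f(x) \qd z = f(y)$, because $f(x) \qd f(x \qdn{-1} y) = f(x \qd (x \qdn{-1} y)) = f(y)$.

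The first step is to show each $C_{x,y}$ is closed. It is the preimage of the diagonal $\Delta_Q \subseteq Q \times Q$ under the map
\[\Phi_{x,y} : f \longmapsto \bigl(\ev_{x \qd y}(f),\ \ev_x(f) \qd \ev_y(f)\bigr).\]
The evaluation maps $\ev_q$ are continuous for the compact-open topology, as recalled just above. The second component is the composite of $(\ev_x, \ev_y) : \Homeo(Q) \to Q^2$ with $\qd : Q^2 \to Q$, which is continuous by definition of a topological quandle. So $\Phi_{x,y}$ is continuous into $Q \times Q$ with the product topology.

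The second step is to use the hypothesis: since $Q$ is Hausdorff, $\Delta_Q$ is closed in $Q \times Q$. Hence $C_{x,y} = \Phi_{x,y}^{-1}(\Delta_Q)$ is closed, and the intersection $\Aut_{\mathcal C}(Q)$ is closed in $\Homeo(Q)$.

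There is no real obstacle here. The only points needing care are the reduction from the two operations $\qdn{\pm 1}$ to $\qd$ alone, and the fact that the argument only ever uses continuity of $\qd$ on the product $Q^2$ composed with evaluations. In particular, we never need joint continuity of evaluation $\Homeo(Q) \times Q \to Q$, which could fail without local compactness.
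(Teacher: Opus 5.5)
Your proof is correct and is the same argument as the paper's: $\Aut_{\mathcal C}(Q)$ is written as the intersection of the preimages $\Phi_{x,y}^{-1}(\Delta_Q)$ under the continuous maps $f \mapsto (f(x \qd y), f(x) \qd f(y))$, and $\Delta_Q$ is closed because $Q$ is Hausdorff. Your two extra remarks make explicit what the paper leaves implicit: preserving $\qd$ already forces preservation of $\qdn{-1}$, and continuity of $\Phi_{x,y}$ needs only continuity of the evaluations $\ev_q$ and of $\qd$ on $Q^2$.
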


\begin{proof}
If $Q$ is Hausdorff, then the diagonal $\Delta_Q$ is closed in $Q^2$. Hence $\Aut_{\mathcal C}(Q)$ is closed in $\Homeo(Q)$: it
is the intersection of the $\Phi_{x,y}^{-1}(\Delta_Q)$ (for $x,y \in Q$), where $\Phi_{x,y} : \Homeo(Q) \rightarrow Q^2$ is the continuous map given by $f \mapsto (f(x \qd y), f(x) \qd f(y))$.  
\end{proof}

\begin{rmq}\label{rk_Singh_main_mistake}
Then same argument may be used to show that $\End_{\mathcal C}(Q)$ is closed in $\mathcal C(Q,Q)$, but $\Aut_{\mathcal C}(Q)$ is not closed in $\mathcal C(Q,Q)$ in general, since $\Homeo(Q)$ is usually not closed in $\mathcal C(Q,Q)$. Overlooking this essential distinction was the main source of errors in~\cite[\S~5]{Singh} (see Rk.~\ref{rk_Singh_counter-ex})
\end{rmq}

If $Q$ is a topological quandle, then the $\sigma_x = x \qd (-)$ are self-homeomorphisms of $Q$, so $\Inn(Q)$ is a subgroup of $\Homeo(Q)$, which is endowed with the compact-open topology. We can consider the closure $\overline\Inn(Q)$ of $\Inn(Q)$ in $\Homeo(Q)$ (hence in $\Aut_{\mathcal C}(Q)$ if $Q$ is Hausdorff). When $\Homeo(Q)$ is a topological group (in particular when $Q$ is compact Hausdorff -- see Thm.~\ref{Arens}), $\overline\Inn(Q)$ is a subgroup thereof.

\begin{prop}\label{closed_Inn(surj)}
Let $p : Q \twoheadrightarrow R$ be a surjective continuous morphism between compact Hausdorff topological quandle. Then it induces a continuous group morphism $\overline p_*: \overline\Inn(Q) \rightarrow \overline \Inn(R)$. It is the unique continuous group morphism sending $\sigma_x$ to $\sigma_{p(x)}$ (for all $x \in Q$), and its image is dense.
\end{prop}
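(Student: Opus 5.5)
The plan is to extend the discrete morphism $p_*$ of Proposition~\ref{Inn(surj)} by continuity. The key observation is that $p_*$ is characterised by the equivariance relation $p\circ\varphi = p_*(\varphi)\circ p$ of Lemma~\ref{equivariance_Inn(Q)}, and that this relation passes to closures. Concretely, consider
\[\Gamma := \{(\varphi,\psi) \in \Homeo(Q)\times\Homeo(R) \mid p\circ\varphi = \psi\circ p\}.\]
Since $Q$ and $R$ are compact Hausdorff, $\Homeo(Q)$ and $\Homeo(R)$ are topological groups (Theorem~\ref{Arens}), and $\Gamma$ is a subgroup. It is also closed: for each $q\in Q$, the maps $(\varphi,\psi)\mapsto p(\varphi(q))$ and $(\varphi,\psi)\mapsto\psi(p(q))$ are continuous, because evaluations are continuous for the compact-open topology. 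Since $R$ is Hausdorff, the set where these two maps agree is closed, and $\Gamma$ is the intersection of these sets over all $q$. Because $p$ is surjective, for each $\varphi$ there is at most one $\psi$ with $(\varphi,\psi)\in\Gamma$, so $\Gamma$ is the graph of a partial map. By Lemma~\ref{equivariance_Inn(Q)}, $\Gamma$ contains the graph of $p_*$ on $\Inn(Q)$.

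The plan then proceeds in three steps.

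\textbf{Step 1.} Show that for every $\varphi\in\overline\Inn(Q)$ there is some $\psi\in\overline\Inn(R)$ with $(\varphi,\psi)\in\Gamma$. The idea is to use compactness. Take a net $\varphi_\alpha\to\varphi$ in $\Inn(Q)$ and look at the net $p_*(\varphi_\alpha)$ in $\Inn(R)$. To extract a convergent subnet, I would first try to prove that $\overline\Inn(R)$ is compact. If that fails, I would avoid subnets altogether: define $\psi(p(q)) := p(\varphi(q))$ directly. This is well defined, because $p(q)=p(q')$ implies $p\varphi_\alpha(q) = p_*(\varphi_\alpha)p(q) = p\varphi_\alpha(q')$, and passing to the limit (with $R$ Hausdorff) gives $p\varphi(q) = p\varphi(q')$. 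Continuity of $\psi$ follows since $p$ is a closed surjection between compact Hausdorff spaces, hence a quotient map. The same construction applied to $\varphi^{-1}$ produces the inverse of $\psi$, so $\psi\in\Homeo(R)$.

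\textbf{Step 2.} Show that $\psi_\alpha := p_*(\varphi_\alpha)\to\psi$ in the compact-open topology. On a compact Hausdorff space this topology coincides with the topology of uniform convergence for the unique uniformity. One then uses that $\varphi_\alpha\to\varphi$ uniformly and that $p$ is uniformly continuous. For every entourage $V$ of $R$, choose an entourage $U$ of $Q$ with $(p\times p)(U)\subseteq V$. Eventually $(\varphi_\alpha(q),\varphi(q))\in U$ for all $q$, hence $(\psi_\alpha(p(q)),\psi(p(q)))\in V$ for all $q$. Since $p$ is surjective, this gives uniform convergence on all of $R$. It follows that $\psi\in\overline\Inn(R)$.

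\textbf{Step 3.} Define $\overline p_*(\varphi) := \psi$. This map is a group morphism, since $\Gamma$ is a subgroup, and it extends $p_*$, so $\overline p_*(\sigma_x) = \sigma_{p(x)}$. Continuity follows from the same uniform argument as in Step 2, applied to nets $\varphi_\alpha\to\varphi$ in $\overline\Inn(Q)$ itself. Uniqueness holds because $\Inn(Q)$ is dense in $\overline\Inn(Q)$ and $\Homeo(R)$ is Hausdorff. Finally, the image contains $p_*(\Inn(Q)) = \Inn(R)$ by Proposition~\ref{Inn(surj)}, and $\Inn(R)$ is dense in $\overline\Inn(R)$, so the image is dense.

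I expect Step 2 and the continuity in Step 3 to be the main technical obstacle. Both rely on identifying the compact-open topology on $\Homeo$ of a compact Hausdorff space with uniform convergence, which I would take from the appendix on $\Homeo(X)$, or reprove quickly from the definition of the subbasic open sets $\mathcal O_{K,U}$.
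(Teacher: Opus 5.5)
Your proposal is correct and follows essentially the paper's route. Your Step~1 amounts to the paper's observation that $\overline\Inn(Q)$ lies in the closed subgroup $\Homeo(Q,\mathcal R_p)$ of homeomorphisms preserving the kernel congruence of $p$, which the paper proves by the same pointwise-limit argument, so that $\overline\Inn(Q)$ acts on $R \cong Q/\mathcal R_p$. Your Steps~2--3 amount to its lemma that the induced map $\Homeo(Q,\mathcal R_p)\to\Homeo(R)$ is continuous, after which uniqueness and density are argued identically. The only difference is that the paper gets continuity directly from the identity $p_*^{-1}(\mathcal O_{K,U})=\mathcal O_{p^{-1}(K),\,p^{-1}(U)}$, where $p^{-1}(K)$ is compact because $K$ is closed in the Hausdorff space $R$. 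This avoids uniform structures altogether and makes your Step~2 an immediate consequence of the continuity in Step~3.
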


\begin{proof}
Let $\mathcal R_p$ be the kernel of $p$ (Definition~\ref{def_ker}). The map from $Q/\mathcal R_p$ to $R$ induced by $p$ is a continuous bijection from a compact space to a Hausdorff one, so it is a homeomorphism $Q/\mathcal R_p \cong R$. Let us consider the group $\Homeo(Q, \mathcal R_p)$ from~\cref{Homeo(X)}. Since $\mathcal R_p$ is a congruence, it is preserved by any inner automorphism (see the argument preceding Proposition~\ref{Inn(surj)}); in other words, $\Inn(Q) \subseteq \Homeo(Q, \mathcal R_p)$. Notice that $\mathcal R_p = (p \times p)^{-1}(\Delta_R)$ is closed since $R$ is Hausdorff. Hence $\overline\Inn(Q) \subseteq \Homeo(Q, \mathcal R_p)$ by Lemma~\ref{Homeo(X,R)_closed}. The canonical map $\Homeo(Q, \mathcal R_p) \rightarrow \Homeo(R)$ is continuous (Lemma~\ref{Homeo(X,R)_to_Homeo(X/R)}), and it sends $\Inn(Q)$ to $\Inn(R)$ (Proposition~\ref{Inn(surj)}), so it sends $\overline\Inn(Q)$  to $\overline\Inn(R)$. This shows that $\overline p_*$ is well-defined; moreover, the unicity statement follows from the unicity of its restriction $p_*$ to the dense subset $\Inn(Q)$ of $\overline\Inn(Q)$ (see Proposition~\ref{Inn(surj)}). Finally, $p_*$ is surjective, so $\ima(\overline p_*)$ contains $\ima(p_*) = \Inn(R)$, which is dense in its closure.  
\end{proof}

\begin{rmq}
The compactness condition can be relaxed, if we \emph{assume} that $p$ is the quotient by a congruence; see Remarks~\ref{rk_gen1} and~\ref{rk_gen2}. Notice that such a generalization can lead to the homeomorphism spaces not being topological groups; then $\overline\Inn(Q)$ need not be a group, and $\overline p_*$ is not a morphism, but the unique map restricting to $p_*$ (from Proposition~\ref{Inn(surj)}) on $\Inn(Q)$.
\end{rmq}

If $Q$ is any topological quandle (not necessarily compact), one can replace the use of Lemma~\ref{Homeo(X,R)_to_Homeo(X/R)} in the previous proof by Lemma~\ref{p*_continuous} to get, with exactly the same reasoning (noticing that $\overline \Inn(R) = \Inn(R)$ in the discrete space $\Homeo(R)$):
\begin{prop}\label{closed_Inn(surj_to_finite)}
Let $p : Q \twoheadrightarrow R$ be a surjective continuous morphism from a topological quandle $Q$ to a finite discrete quandle $R$. Then the induced morphism $p_*: \Inn(Q) \twoheadrightarrow \Inn(R)$ from Proposition~\ref{Inn(surj)} is continuous, and it extends (uniquely) to a continuous map $\overline p_*: \overline\Inn(Q) \twoheadrightarrow \Inn(R)$.
\end{prop}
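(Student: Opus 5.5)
We follow the proof of Proposition~\ref{closed_Inn(surj)} step by step. The only change is that Lemma~\ref{Homeo(X,R)_to_Homeo(X/R)}, which needs compactness of $Q$, is replaced by Lemma~\ref{p*_continuous}. Let $\mathcal R_p$ be the kernel of $p$. Since $R$ is finite discrete and $p$ is continuous, the classes of $\mathcal R_p$ are the fibres $p^{-1}(r)$, which are clopen. Hence $\mathcal R_p$ is a cofinite congruence and $Q/\mathcal R_p \cong R$ as topological quandles, with both sides discrete.

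\textbf{Step 1: the inner automorphisms preserve $\mathcal R_p$, and so does their closure.} By the argument preceding Proposition~\ref{Inn(surj)}, every inner automorphism of $Q$ preserves $\mathcal R_p$, so $\Inn(Q) \subseteq \Homeo(Q, \mathcal R_p)$. The relation $\mathcal R_p = (p\times p)^{-1}(\Delta_R)$ is closed, even clopen, in $Q^2$. Lemma~\ref{Homeo(X,R)_closed} then gives $\overline\Inn(Q) \subseteq \Homeo(Q,\mathcal R_p)$. If that lemma turns out to need Hausdorffness of $Q$, we can argue directly instead. For $f$ to preserve $\mathcal R_p$ means that $p(f(x)) = p(f(y))$ whenever $p(x)=p(y)$. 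For fixed $x,y$ this condition is closed in $f$: the maps $\ev_x$ and $\ev_y$ are continuous, $p$ is continuous, and $R$ is discrete. So the set of such $f$ is an intersection of closed sets.

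\textbf{Step 2: continuity of the induced map.} Lemma~\ref{p*_continuous} provides a continuous map $\Homeo(Q,\mathcal R_p) \to \Homeo(R)$, $f \mapsto \bar f$ with $\bar f \circ p = p\circ f$. Again we can check this directly. The space $\Homeo(R)$ is finite and discrete. The map $f\mapsto \bar f$ is determined by the finitely many values $\bar f(r) = p(f(x_r))$, for chosen lifts $x_r \in p^{-1}(r)$. Each value is a continuous function of $f$ through $\ev_{x_r}$, so the map is locally constant, hence continuous. Restricted to $\Inn(Q)$, this map is exactly $p_*$ from Proposition~\ref{Inn(surj)}. So $p_*$ is continuous and has image $\Inn(R)$.

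\textbf{Step 3: the extension and its uniqueness.} The map from Step 2 sends $\Inn(Q)$ into $\Inn(R)$. Since $\Homeo(R)$ is discrete, $\Inn(R)$ is closed there, so by continuity the map sends $\overline\Inn(Q)$ into $\overline\Inn(R) = \Inn(R)$. This gives $\overline p_*$. It is surjective because its restriction $p_*$ already is. It is unique because $\Inn(R)$ is Hausdorff and $\Inn(Q)$ is dense in $\overline\Inn(Q)$.

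\textbf{Main obstacle.} The delicate point is Step 2 without compactness: the general quotient lemma is unavailable, so continuity must come from the finiteness and discreteness of $R$, as above. We do not claim that $\overline p_*$ is a group morphism, since $\overline\Inn(Q)$ need not be a group here.
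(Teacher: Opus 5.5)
Your proposal is correct and is exactly the paper's argument: rerun the proof of Proposition~\ref{closed_Inn(surj)} with Lemma~\ref{p*_continuous} in place of Lemma~\ref{Homeo(X,R)_to_Homeo(X/R)}, and note that $\overline\Inn(R)=\Inn(R)$ because $\Homeo(R)$ is discrete. Your direct check in Step~1 only gives $(f\times f)(\mathcal R_p)\subseteq\mathcal R_p$, but that is enough here: since $f$ is surjective and $R$ is finite, the induced map $\bar f$ is automatically a bijection of $R$.
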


\begin{rmq}
Without further hypothesis, $\overline\Inn(Q)$ need not be a group. If $\Homeo(X)$ is a topological group for the compact-open topology, then it is, and $\overline p_*$ is a morphism.
\end{rmq}

We conclude this investigation of topological inner automorphisms with the following adaptation of the equivariance Lemma~\ref{equivariance}:
\begin{lem}\label{top_equivariance}
Let $p: Q \twoheadrightarrow R$ be a surjective continuous quandle morphism. Assume either that $Q$ and $R$ are compact Hausdorff or that $R$ is finite discrete. Then we have:
\[\forall \varphi \in \overline\Inn(Q),\ \forall x \in Q,\  p(\varphi (x)) = \overline p_*(\varphi)(p(x)).\]
\end{lem}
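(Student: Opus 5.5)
The plan is a density argument: prove the identity on $\Inn(Q)$ using Lemma~\ref{equivariance_Inn(Q)}, then pass to the closure $\overline\Inn(Q)$ by continuity. Fix $x \in Q$ and consider the two maps
\[\alpha_x : \overline\Inn(Q) \rightarrow R,\ \varphi \mapsto p(\varphi(x)), \qquad \beta_x : \overline\Inn(Q) \rightarrow R,\ \varphi \mapsto \overline p_*(\varphi)(p(x)).\]
For $\varphi \in \Inn(Q)$ we have $\overline p_*(\varphi) = p_*(\varphi)$, since $\overline p_*$ extends $p_*$ (Propositions~\ref{closed_Inn(surj)} and~\ref{closed_Inn(surj_to_finite)}). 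So Lemma~\ref{equivariance_Inn(Q)} gives $\alpha_x = \beta_x$ on the dense subset $\Inn(Q)$.

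The key steps are therefore continuity of both maps and Hausdorffness of $R$. For $\alpha_x$, the map is the composite of the evaluation $\ev_x : \Homeo(Q) \rightarrow Q$, which is continuous for the compact-open topology, with $p$. For $\beta_x$, the map is the composite of $\overline p_*$ (continuous by Proposition~\ref{closed_Inn(surj)}, resp.\ Proposition~\ref{closed_Inn(surj_to_finite)}) with the evaluation $\ev_{p(x)} : \Homeo(R) \rightarrow R$. In the finite discrete case, the target of $\overline p_*$ is $\Inn(R) \subseteq \Homeo(R)$, and $\Homeo(R)$ is discrete, so this evaluation is trivially continuous. In both cases $R$ is Hausdorff: it is compact Hausdorff in the first case and discrete in the second.

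Two continuous maps into a Hausdorff space agree on a closed set. Since they agree on the dense set $\Inn(Q)$, they agree on all of $\overline\Inn(Q)$, which gives the claimed identity for every $x$.

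The only point needing care is to use $\overline\Inn(Q)$ merely as a topological space, namely the closure of $\Inn(Q)$ in $\Homeo(Q)$. In the finite discrete case it need not be a group, but the argument never uses a group structure, only the continuity of the map $\overline p_*$ and of the evaluations. I expect no real obstacle beyond checking that the continuity statements of the two earlier propositions refer to the compact-open topologies used here.
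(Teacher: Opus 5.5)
Your proposal is correct and matches the paper's proof: you establish the identity on $\Inn(Q)$ via the discrete equivariance lemma, then extend it to $\overline\Inn(Q)$ because $p\circ\ev_x$ and $\ev_{p(x)}\circ\overline p_*$ are continuous maps into the Hausdorff space $R$ that agree on a dense subset. Your citation of Lemma~\ref{equivariance_Inn(Q)}, rather than the paper's reference to Lemma~\ref{equivariance}, is if anything the more fitting one.
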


\begin{proof}
For $\varphi \in \Inn(Q)$, we have $\overline p_*(\varphi) = p_*(\varphi)$, so the equality holds by Lemma~\ref{equivariance}. But this equality is between the continuous functions $p \circ \ev_x$ and $\ev_{p(x)} \circ \overline p_*$ of $\varphi \in \overline\Inn(Q)$, which take values in the Hausdorff space $R$. Since they coincide on the dense subset $\Inn(Q)$, we conclude that they coincide everywhere.
\end{proof} 

So far in the current section, we have not used explicitly the following crucial fact, which is worth mentioning:
\begin{fact}\label{sigma_morphisme_continu}
If $Q$ is a topological quandle, then the morphism $\sigma: Q \rightarrow \Inn(Q)$ from Fact~\ref{sigma_morphisme} is continuous.
\end{fact}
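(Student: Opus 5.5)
The statement is that $\sigma : Q \rightarrow \Inn(Q)$, $x \mapsto \sigma_x$, is continuous when $\Inn(Q)$ carries the subspace topology from $\Homeo(Q)$ with the compact-open topology. Since $\Inn(Q)$ has the subspace topology, it suffices to show that $x \mapsto \sigma_x$ is continuous as a map $Q \rightarrow \Homeo(Q)$. The plan is to reduce this to continuity into $\mathcal C(Q,Q)$ and then to the continuity of the law $\qd : Q^2 \rightarrow Q$, which holds by the definition of a topological quandle.

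The first step concerns the topology on $\Homeo(Q)$. It is the compact-open topology, that is, the subspace topology from $\mathcal C(Q,Q)$, generated by the sets $\mathcal O_{K,U} = \{f \mid f(K) \subseteq U\}$ with $K$ compact and $U$ open. Continuity of $x \mapsto \sigma_x$ into $\Homeo(Q)$ is therefore the same as continuity into $\mathcal C(Q,Q)$, because the image lies in $\Homeo(Q)$. If the paper's compact-open topology on $\Homeo(Q)$ (see~\cref{Homeo(X)}) is the Arens-type topology, which also includes the sets $\{f \mid f^{-1}(K) \subseteq U\}$, I will also need continuity of $x \mapsto \sigma_x^{-1} = x \qdn{-1} (-)$. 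This follows by the same argument applied to the continuous map $\qdn{-1}$, which is exactly why both $\qdn{\pm 1}$ are required to be continuous.

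The second step is the exponential law. It is precisely what the earlier proof of Fact~\ref{continuous_action_of_G(Q)} used, namely Proposition~\ref{prop_adj_direct}: a continuous map $F : Q \times Q \rightarrow Q$ has a continuous adjoint $Q \rightarrow \mathcal C(Q,Q)$. I apply it to $F = \qd$, whose adjoint is $\sigma$, and then to $\qdn{-1}$, whose adjoint is $x \mapsto \sigma_x^{-1}$.

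In case a direct argument is wanted, it runs as follows. Fix $x_0$ with $\sigma_{x_0}(K) \subseteq U$. Then $\{x_0\} \times K \subseteq \qd^{-1}(U)$, which is open, and the tube lemma ($K$ compact) gives an open $V \ni x_0$ with $V \times K \subseteq \qd^{-1}(U)$, so $\sigma(V) \subseteq \mathcal O_{K,U}$. No hypothesis on $Q$ is needed for this direction.

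The only point needing care is which topology $\Homeo(Q)$ carries, so that inverses are handled as well. Using $\qdn{-1}$ as above resolves this, so I expect no real obstacle.
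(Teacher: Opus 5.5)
Your proposal is correct and is essentially the paper's proof: the paper observes that $\sigma : Q \rightarrow \mathcal C(Q,Q)$ is adjoint to the continuous law $\qd : Q^2 \rightarrow Q$ and invokes Proposition~\ref{prop_adj_direct}, whose proof is exactly your tube-lemma argument. The paper gives $\Homeo(Q)$ the plain compact-open topology generated by the sets $\mathcal O_{K,U}$, i.e.\ the subspace topology from $\mathcal C(Q,Q)$, so your extra step handling $x \mapsto \sigma_x^{-1}$ via the continuity of $\qdn{-1}$ is harmless but not needed.
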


\begin{proof}
The map $\sigma: Q \rightarrow \mathcal C(Q,Q)$ is adjoint to $\qd : Q^2 \rightarrow Q$, which is continuous, so the result follows from Proposition~\ref{prop_adj_direct}.
\end{proof}

\subsection{The profinite case}

We now show that the group of inner automorphisms of a profinite quandle is relatively compact in $\Homeo(X)$. We will show later that this property characterizes profinite quandles among those whose underlying space is profinite (Theorem~\ref{profiniteness_criterion}). 
\begin{prop}\label{Inn(prof)_is_prof}
If a topological quandle $Q$ is profinite, then $\overline{\Inn}(Q)$ is a profinite group. Precisely:
\[\overline{\Inn}(Q) \cong \varprojlim \Inn(Q/\mathcal R),\]
where the limit is taken over all cofinite congruences on $Q$ (that is, over all finite discrete quotients of $Q$).
\end{prop}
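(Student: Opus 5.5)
We plan to realize $\overline{\Inn}(Q)$ as a closed subgroup of a product of finite groups. Write $Q = \varprojlim Q/\mathcal R$ over cofinite congruences $\mathcal R$; this is legitimate by Proposition~\ref{profinite_iff_cpct_and_res_finite}, since $Q$ is compact and residually finite. For each $\mathcal R$ with projection $p_\mathcal R : Q \twoheadrightarrow Q/\mathcal R$, Proposition~\ref{closed_Inn(surj_to_finite)} gives a continuous map $\overline{(p_\mathcal R)}_* : \overline\Inn(Q) \rightarrow \Inn(Q/\mathcal R)$. Since $Q$ is compact Hausdorff, $\Homeo(Q)$ is a topological group (Thm.~\ref{Arens}), so $\overline\Inn(Q)$ is a group and these maps are morphisms.

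Compatibility with the transition maps $Q/\mathcal R \twoheadrightarrow Q/\mathcal R'$ for $\mathcal R \subseteq \mathcal R'$ holds on the dense subgroup $\Inn(Q)$ by the functoriality in Proposition~\ref{Inn(surj)}. It then extends to all of $\overline\Inn(Q)$ because the targets are discrete, hence Hausdorff. We thus obtain a continuous group morphism
\[\Psi : \overline\Inn(Q) \longrightarrow \varprojlim \Inn(Q/\mathcal R).\]

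\textbf{Injectivity.} Suppose $\varphi \in \overline\Inn(Q)$ and $\Psi(\varphi) = id$. By Lemma~\ref{top_equivariance}, for every $x$ and every $\mathcal R$ we have $p_\mathcal R(\varphi(x)) = p_\mathcal R(x)$. Residual finiteness then gives $\varphi(x) = x$, so $\varphi = id$.

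\textbf{Surjectivity.} The image of $\Psi$ contains the image of $\Inn(Q)$. This image is dense, because each $p_{\mathcal R*} : \Inn(Q) \rightarrow \Inn(Q/\mathcal R)$ is surjective (Proposition~\ref{Inn(surj)}) and the limit topology is generated by preimages of points. So it suffices to show that the image of $\Psi$ is closed, and for that it suffices to show that $\overline\Inn(Q)$ is compact. Then $\Psi$ is a continuous bijection from a compact space to a Hausdorff one, hence a homeomorphism and a topological isomorphism, and the target is profinite by construction.

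\textbf{Compactness of $\overline\Inn(Q)$ (the main obstacle).} I would try to prove directly that the set
\[K = \{ f \in \Homeo(Q) \mid p_\mathcal R \circ f \text{ factors through } p_\mathcal R \text{ via a bijection, for every cofinite } \mathcal R \}\]
is compact, closed, and contains $\Inn(Q)$. That $K$ contains $\Inn(Q)$ is the factorisation argument preceding Proposition~\ref{Inn(surj)}.

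To get compactness, consider the map $K \rightarrow \varprojlim \mathrm{Sym}(Q/\mathcal R)$. It is injective by residual finiteness. It is also surjective: a compatible family of bijections $(f_\mathcal R)$ defines a map $f = \varprojlim f_\mathcal R$ of $Q = \varprojlim Q/\mathcal R$, which is a homeomorphism with inverse $\varprojlim f_\mathcal R^{-1}$.

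The key point is to compare topologies, which I expect to be the hardest part. The compact-open topology restricted to $K$ should coincide with the limit topology. On the one hand, the projections $K \rightarrow \mathrm{Sym}(Q/\mathcal R)$ are continuous, since they are determined by finitely many evaluations into clopen classes. On the other hand, a compact-open subbasic set $\mathcal O_{C,U}$ with $C$ compact and $U$ open can be refined using cofinite congruences. Indeed, $C$ is covered by finitely many classes, and $f(C) \subseteq U$ then forces $f$ to map certain classes of a fine enough $\mathcal R$ into classes contained in $U$. Here I would use that the clopen sets $p_\mathcal R^{-1}(t)$ form a basis of $Q$ and that uniform continuity is automatic on compacta.

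It follows that $K$ is homeomorphic to a profinite set, hence compact. Being compact in the Hausdorff space $\Homeo(Q)$, it is closed. Therefore $\overline\Inn(Q) \subseteq K$ is compact, which completes the proof.
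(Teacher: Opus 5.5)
Your proof is correct and is essentially the paper's: both map $\overline{\Inn}(Q)$ into $\varprojlim \Inn(Q/\mathcal R)$ with dense image, and both hinge on the fact that a compatible family of maps on the finite quotients reassembles into a homeomorphism of $Q=\varprojlim Q/\mathcal R$, continuously for the compact-open topology. The paper applies this reassembly map $\xi$ directly to $\varprojlim \Inn(Q/\mathcal R)$, with continuity via Proposition~\ref{prop_adj_direct} and injectivity of $\rho$ coming for free from $\xi\circ\rho=\mathrm{incl}$; you instead apply it to all compatible families of permutations and conclude through compactness of the larger group $K$ (these families should be defined as a closed subset of $\prod_{\mathcal R}\mathrm{Sym}(Q/\mathcal R)$, since the symmetric groups form no projective system).
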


\begin{proof}
For every inclusion $\mathcal R \subseteq \mathcal R'$ of cofinite congruences on $Q$, we get a canonical projection $p^{\mathcal R, \mathcal R'}: Q/\mathcal R \twoheadrightarrow Q/\mathcal R'$, whence a surjective morphism $p^{\mathcal R, \mathcal R'}_*$ between the corresponding inner automorphisms groups, using Proposition~\ref{Inn(surj)}. It then makes sense to consider the projective limit of the statement. We can apply Proposition~\ref{closed_Inn(surj)} to get canonical continuous group morphisms $p^{\mathcal R}_* : \overline{\Inn}(Q) \twoheadrightarrow \Inn(Q/\mathcal R)$. Notice that $\Inn(Q/\mathcal R)$ is equal to its closure in the discrete space $\Homeo(Q/\mathcal R)$, and that the density of the image of $p^{\mathcal R}_*$ implies its surjectivity. If $\mathcal R \subseteq \mathcal R'$, the morphism $p^{\mathcal R, \mathcal R'}_* \circ p^{\mathcal R}_*$ is continuous and sends $\sigma_q$ to $\sigma_{q\!\mod \mathcal R'}$, so it equals $p^{\mathcal R'}$, by unicity in Proposition~\ref{closed_Inn(surj)}. As a consequence, the $p^{\mathcal R}$ induce a canonical continuous morphism $\rho$ from $\overline{\Inn}(Q)$ to $\mathcal I := \varprojlim \Inn(Q/\mathcal R)$. Moreover, since the $p^{\mathcal R}$ are surjective, the image of $\rho$ is dense in~$\mathcal I$.

An element $\varphi = (\varphi_{\mathcal R})_{\mathcal R} \in \mathcal I$ defines an automorphism of the projective system  $(Q/\mathcal R)_{\mathcal R}$.  If $Q$ is profinite, we have $Q = \varprojlim (Q/\mathcal R)$, so such a $\varphi$ defines a continuous automorphism of $Q$. This construction gives a morphism $\xi$ from $\mathcal I$ to $\Aut_{\mathcal C}(Q) \subseteq \Homeo(Q)$, which is continuous. Thanks to Proposition~\ref{prop_adj_direct}, one way to prove its continuity is to show the continuity of its adjoint $\mathcal I \times Q \rightarrow Q$, which is none other than the projective limit of the evaluation maps $\Inn(Q/\mathcal R) \times Q/\mathcal R \rightarrow Q/\mathcal R$. The latter are maps between discrete spaces, so they are continuous, whence the continuity of $\xi$. Moreover, $\xi$ is injective, because each $\varphi_{\mathcal R}$ can be recovered from $\xi(\varphi)$ as the induced automorphism of the quotient $Q/\mathcal R$. 

It is easy to check that $\xi \circ \rho$ is just the inclusion of $\overline\Inn(Q)$ into $\Homeo(Q)$. Now, the density of the image of $\rho$ implies that $\overline \Inn(Q)$ is dense in $\xi(\mathcal I)$. But $\overline \Inn(Q)$ is closed, hence $\overline \Inn(Q) = \xi(\mathcal I)$. As a consequence, $\xi$ (corestricted to its image) is a continuous inverse of $\rho$, which is thus a homeomorphism. 
\end{proof}

This can be used to recover the fact that for a profinite quandle $Q$, the adjoint action extends (necessarily uniquely) to a continuous action of $\widehat G(Q)$ (see Proposition~\ref{profinite_adj_action}):

\begin{cor}\label{action_of_prof_G(Q)}
If a topological quandle $Q$ is profinite, then the canonical continuous action of $G(Q)$ on $Q$ factors into an action of its profinite completion $\widehat G(Q)$.
\end{cor}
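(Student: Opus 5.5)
The plan is to factor the adjoint action through $\overline{\Inn}(Q)$, which is a profinite group by Proposition~\ref{Inn(prof)_is_prof}. The canonical action of $G(Q)$ on $Q$ is the group morphism $\alpha : G(Q) \rightarrow \Inn(Q) \subseteq \overline{\Inn}(Q)$ sending $\eta(x)$ to $\sigma_x$. Its image lies in $\Inn(Q)$, since this is also the image of the adjoint action.

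The first step is to check that $\alpha$ is continuous as a map $G(Q) \rightarrow \overline{\Inn}(Q)$. By the universal property of $G(Q)$ (Proposition~\ref{adj_c_G}), it suffices that the quandle morphism $\sigma : Q \rightarrow c(\overline{\Inn}(Q))$ be continuous. This is Fact~\ref{sigma_morphisme_continu}, since $\overline{\Inn}(Q)$ carries the subspace topology from $\Homeo(Q)$. Here $\overline{\Inn}(Q)$ is a topological group because $Q$ is compact Hausdorff (Theorem~\ref{Arens}). Alternatively, Fact~\ref{continuous_action_of_G(Q)} gives continuity of $G(Q) \rightarrow \Aut_{\mathcal C}(Q)$ directly.

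The second step uses that $\overline{\Inn}(Q)$ is profinite. The universal property of the profinite completion of groups (Corollary~\ref{univ_prop_completion}, applied to the theory of groups) then yields a unique continuous group morphism $\widehat\alpha : \widehat G(Q) \rightarrow \overline{\Inn}(Q)$ with $\widehat\alpha \circ \iota = \alpha$. Composing with the inclusion $\overline{\Inn}(Q) \subseteq \Homeo(Q)$ gives an action of $\widehat G(Q)$ on $Q$ by homeomorphisms that extends the action of $G(Q)$. These homeomorphisms are in fact quandle automorphisms, because $\overline{\Inn}(Q) \subseteq \Aut_{\mathcal C}(Q)$, which is closed since $Q$ is Hausdorff.

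The last point is continuity of the action map $\widehat G(Q) \times Q \rightarrow Q$. It is the composite of $\widehat\alpha \times \mathrm{id}$ with the evaluation $\overline{\Inn}(Q) \times Q \rightarrow Q$. The evaluation is continuous because $Q$ is compact, hence locally compact (Proposition~\ref{prop_adj_converse}); equivalently, one can invoke Remark~\ref{adj(profinite_adj_action)}. Uniqueness of the extension follows from the density of $\iota(G(Q))$ in $\widehat G(Q)$.

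The only delicate point is making sure that the topology on $\overline{\Inn}(Q)$ used in Proposition~\ref{Inn(prof)_is_prof}, namely the compact-open one, is the one for which $\sigma$ is continuous. These are the same topology by construction, so I do not expect a real obstacle. The main content is Proposition~\ref{Inn(prof)_is_prof}, which is already established.
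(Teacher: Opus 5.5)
Your proposal is correct and matches the paper's proof. You factor the continuous morphism $G(Q)\to\Aut_{\mathcal C}(Q)$ through $\Inn(Q)\subseteq\overline{\Inn}(Q)$, which is profinite by Proposition~\ref{Inn(prof)_is_prof}, and then apply the universal property of the profinite completion; your extra checks (quandle automorphisms, joint continuity via local compactness, uniqueness by density) are valid details the paper leaves implicit.
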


\begin{proof}
The map $G(Q) \rightarrow \Aut_{\mathcal C}(Q)$, which is continuous (Fact~\ref{continuous_action_of_G(Q)}), takes its values in the subgroup $\Inn(Q)$ of $\overline{\Inn}(Q)$, which is profinite by Proposition~\ref{Inn(prof)_is_prof}, so it induces a unique continuous morphism from the profinite completion $\widehat G(Q)$ of $G(Q)$ to $\overline{\Inn}(Q) \subseteq \Aut_{\mathcal C}(Q)$.
\end{proof}

\begin{rmq}\label{prof_G(Q)_vs_Inn(Q)}
Conversely, we could have used Proposition~\ref{profinite_adj_action} to recover Proposition~\ref{Inn(prof)_is_prof}, or at least the fact that $\overline{\Inn}(Q)$ is profinite when $Q$ is. Indeed, if $Q$ is profinite, the continuous map $\widehat G(Q) \rightarrow \Aut_{\mathcal C}(Q)$ given by Proposition~\ref{profinite_adj_action} has compact image, and its image contains $\Inn(Q)$, which is the image of the map $G(Q) \rightarrow \Aut_{\mathcal C}(Q)$. This shows that $\Inn(Q)$ is relatively compact in $\Aut_{\mathcal C}(Q) \subseteq \Homeo(Q)$, so $\overline{\Inn}(Q)$ is a compact subgroup of $\Homeo(Q)$ (which is a topological group by Theorem~\ref{Arens}). The latter is totally disconnected (Proposition~\ref{Homeo(X)_tot_disc}), hence $\overline{\Inn}(Q)$ is a profinite group, by Corollary~\ref{profiniteness_criterion_groups}.

In fact, the projective system that appears in the proof of Proposition~\ref{Inn(prof)_is_prof} is a quotient of the one appearing in the proof of Proposition~\ref{profinite_adj_action}.
\end{rmq}

\subsection{Relation to previous work}

\paragraph{Relation to Singh's work : $\Inn(Q)$ is not profinite}\label{rk_Singh_counter-ex}
There are some mistakes in~\cite[\S~5]{Singh}. Namely, the statements about endomorphisms are true, but the proofs do not carry over to automorphisms. This is because of the distinction pointed out in Remark~\ref{rk_Singh_main_mistake}. For automorphisms, the main claim is still true, but with a different proof. For inner automorphisms, the main claim is false~: if $Q$ is profinite, $\Inn(Q)$ is not, even if $Q$ is (topologically) finitely generated. 

An infinitely generated profinite quandle $Q$ such that $\Inn(Q)\neq \overline{\Inn}(Q)$ was already described in~\cite[Ex.~4.3]{Yetter_et_al}. We now describe a finitely generated one. 

\begin{ex}[Counter-example to {\cite[Th.~5.2]{Singh}} for inner automorphisms]
Let us consider the profinite quandle $Q = \{q_0\} \sqcup \widehat \Z$ from Examples~\ref{ex_comp_of_Z} and~\ref{ex_comp_of_Z_2}. It is topologically generated by the two elements $q_0$ and $0 \in \widehat \Z$, since $q_0 \qdn{n} 0 = n$, so $\langle q_0, 0 \rangle$ contains the dense subset $\{q_0\} \sqcup \Z$.  Moreover, $\Inn(Q)$ is generated by $q_0 \qd (-)$, which is the translation by $1$ on $\widehat \Z$, of infinite order, so $\Inn(Q) \cong \Z$. But it is the copy of $\Z$ generated by $1$ in the group of translations of $\widehat \Z$, which canonically identifies with $\widehat \Z$. The latter is a compact subgroup of $\Homeo(Q)$ in which $\Inn(Q)$ is dense, hence equal to $\overline{\Inn}(Q)$. Thus, $\overline{\Inn}(Q) \cong \widehat \Z$ is much large than $\Inn(Q) \cong \Z$. In particular, $\Inn(Q)$ is not closed, hence not compact, hence not profinite.
\end{ex}

As for profinite quandles $Q$ such that $\Aut_{\mathcal C}(Q)$ is not closed in $\mathcal C(Q,Q)$ (and hence not compact), trivial ones give infinitely generated examples: then $\Aut_{\mathcal C}(Q) = \Homeo(Q)$ is not closed in $\mathcal C(Q,Q)$ in general. For instance, if $Q = \{0,1\}^{\mathbb N}$, and $f_n$ permutes cyclically the $n$ first coordinates, then $(f_n)$ converges to the shift $(x_1, x_2, ...) \mapsto (x_2, x_3, ...)$, which is clearly not injective. This shows that there is no equivalent of~\cite[Prop.~5.6]{Singh} for automorphisms, which prevents the reasoning made in~\cite[\S~5]{Singh} for endomorphisms to carry over to automorphisms. However, the part of~\cite[Th.~5.12]{Singh} about automorphism groups does hold. We give a simple proof which by-passes any consideration of $\mathcal C(Q,Q)$ (except for the definition of the topology on its subspaces), and also recovers the case of endomorphisms.

Let us first recall that a set $I$ of cofinite congruences on a topological quandle $Q$ is a \emph{fundamental system of cofinite congruences} if, for every cofinite congruence $\beta$, then there is a congruence $\alpha \in I$ which is finer than $\beta$. Which means exactly that $I$ is cofinal in the directed set of cofinite congruences on $Q$. For instance, if $Q$ is profinite and topologically finitely generated, then the $\mathcal R_n$ from~\cref{par_prof_comp_fg} form a fundamental system of cofinite congruences, and these are \emph{totally invariant} (that is, invariant under all endomorphisms of $Q$).
\begin{prop}
Let $Q$ be a profinite quandle endowed with a fundamental system $I$ of cofinite congruences which are totally invariant. Then $\End_{\mathcal C}(Q)$ and $\Aut_{\mathcal C}(Q)$ are profinite. Precisely:
\[\End_{\mathcal C}(Q) \cong \varprojlim_{\alpha \in I} \left( \End(Q/\alpha) \right)\ \ \text{ \rm{and} }\ \ \Aut_{\mathcal C}(Q) \cong \varprojlim_{\alpha \in I} \left( \Aut(Q/\alpha) \right).\]
\end{prop}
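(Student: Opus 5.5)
We follow the strategy of the proof of Proposition~\ref{Inn(prof)_is_prof}, but avoid any appeal to closure in $\mathcal C(Q,Q)$. Since every $\alpha \in I$ is totally invariant, every $f \in \End_{\mathcal C}(Q)$ preserves $\alpha$ (that is, $x \alpha y \Rightarrow f(x) \alpha f(y)$). Hence $f$ induces a well-defined endomorphism $f_\alpha$ of the finite quandle $Q/\alpha$. If $\alpha \subseteq \beta$ in $I$, the projection $Q/\alpha \twoheadrightarrow Q/\beta$ intertwines $f_\alpha$ and $f_\beta$. We thus get a map $\rho : \End_{\mathcal C}(Q) \to \varprojlim_{\alpha \in I} \End(Q/\alpha)$, and it is a monoid morphism.

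Conversely, a compatible family $(\varphi_\alpha)$ defines an endomorphism of the projective system $(Q/\alpha)_{\alpha \in I}$. Since $I$ is cofinal among cofinite congruences and $Q$ is profinite, $Q \cong \varprojlim_{\alpha \in I} Q/\alpha$ (Proposition~\ref{profinite_iff_cpct_and_res_finite}). Taking the limit therefore gives a continuous endomorphism $\xi(\varphi)$ of $Q$. The two constructions are mutually inverse: $\xi\rho(f)$ agrees with $f$ modulo every $\alpha$, hence equals $f$ by residual finiteness, and $\rho\xi = \mathrm{id}$ by construction. So $\rho$ is a bijection.

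It remains to check that $\rho$ is a homeomorphism for the compact-open topology on $\End_{\mathcal C}(Q) \subseteq \mathcal C(Q,Q)$. \emph{Continuity of $\xi$.} By Proposition~\ref{prop_adj_direct}, it suffices that the adjoint map $\varprojlim \End(Q/\alpha) \times Q \to Q$ is continuous. This map is the projective limit of the evaluation maps $\End(Q/\alpha) \times Q/\alpha \to Q/\alpha$, which are maps between discrete spaces. \emph{Continuity of $\rho$.} We must show that each $f \mapsto f_\alpha$ is locally constant. Fix $f$ and pick representatives $x_1, \dots, x_k$ of the $\alpha$-classes. The set of $g$ with $g(x_j) \in f(x_j)\alpha$ for all $j$ is open, being a finite intersection of sets $\mathcal O_{\{x_j\}, U}$ with $U$ a clopen $\alpha$-class. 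On this open set, $g_\alpha = f_\alpha$.

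With both directions continuous, $\End_{\mathcal C}(Q)$ is homeomorphic to a projective limit of finite discrete monoids, hence profinite (in particular compact). For automorphisms, restrict $\rho$. If $f$ is a continuous automorphism, then $f^{-1}$ is also an endomorphism, so $f_\alpha$ is invertible with inverse $(f^{-1})_\alpha$. Conversely, a compatible family of automorphisms $(\varphi_\alpha)$ has a compatible family of inverses, so $\xi(\varphi)$ is a continuous bijection of the compact Hausdorff space $Q$. It is therefore a homeomorphism, lying in $\Aut_{\mathcal C}(Q)$.

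This gives an algebraic bijection $\Aut_{\mathcal C}(Q) \cong \varprojlim \Aut(Q/\alpha)$, continuous in both directions for the topology induced from $\mathcal C(Q,Q)$. That topology agrees with the topology induced from $\Homeo(Q)$, since both are compact-open topologies. The main subtlety, and the point where \cite{Singh} went wrong, is that $\Aut_{\mathcal C}(Q)$ is not closed in $\mathcal C(Q,Q)$ in general. Our argument sidesteps this: we never take closures in $\mathcal C(Q,Q)$, but identify $\Aut_{\mathcal C}(Q)$ directly, as a topological space, with the compact space $\varprojlim \Aut(Q/\alpha)$. Total invariance of the $\alpha$ is exactly what makes inverses, and hence automorphisms, descend to each finite level.
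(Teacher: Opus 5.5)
Your proof is correct and follows essentially the same route as the paper. It uses the same maps $\rho$ and $\xi$ (the paper's $\Phi$ and $\Psi$), shows they are mutually inverse via $Q\cong\varprojlim_{\alpha\in I}Q/\alpha$, proves $\xi$ continuous through its adjoint as a projective limit of evaluation maps, and gets automorphisms by restricting to invertible elements (the paper phrases this as taking groups of units of isomorphic topological monoids). Your local-constancy argument for the continuity of $\rho$ also supplies a direction the paper only asserts, which could alternatively be obtained from the compactness of $\varprojlim\End(Q/\alpha)$ and the Hausdorffness of $\mathcal C(Q,Q)$.
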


\begin{proof}
The total invariance of the $\alpha \in I$ imply that the canonical maps $\Phi_\alpha : \End_{\mathcal C}(Q) \rightarrow \End(Q/\alpha)$ are well-defined and induce a map $\Phi$ from $\End_{\mathcal C}(Q)$ to $\mathcal E := \varprojlim_{\alpha \in I} \left( \End(Q/\alpha) \right)$. Since $Q$ is profinite and $I$ is a cofinal in the directed set of cofinite congruences on $Q$, we have $Q \cong \varprojlim_{\alpha \in I} (Q/\alpha)$, canonically; we identify them in the sequel. Thus, if $u = (u_\alpha)_{\alpha \in I} \in \mathcal E$, then $(x_\alpha)_\alpha \mapsto (u_\alpha(x_\alpha))_\alpha$ defines a continuous endomorphism $\Psi(u)$ of $\varprojlim_{\alpha \in I} (Q/\alpha) = Q$. The map $\Psi : \mathcal E \rightarrow \End_{\mathcal C}(Q)$ so defined is clearly a set-theoretic inverse to $\Psi$. Moreover, it takes values in $\mathcal C(Q,Q)$, hence its continuity is equivalent to the continuity of its adjoint $\mathcal E \times Q \rightarrow Q$. But the latter is none other than the projective limit of the canonical maps $\End(Q/\alpha) \times Q/\alpha \rightarrow Q/\alpha$, which are continuous, since they are maps between (finite) discrete sets. Thus, we conclude that $\Psi$ and $\Phi$ are mutually inverse homeomorphisms. 

Moreover, it is clear that $\Phi$ and $\Psi$ are isomorphisms of topological monoids, so they induce mutually inverse isomorphisms between the topological groups of units of these monoids, which are respectively $\Aut_{\mathcal C}(Q)$ and $\varprojlim_{\alpha \in I} \left( \Aut(Q/\alpha) \right)$.
\end{proof}

\begin{rmq}
Our proof says in particular that the distinction between $\varprojlim\left( \Aut(Q/\alpha) \right)$ and $\varprojlim \Phi_\alpha\! \left( \Aut(Q/\alpha) \right)$ from~\cite{Singh} (where $\Phi_\alpha$ is denoted by $\Omega_\alpha$) is irrelevant. Indeed, any element $u = (u_\alpha)_\alpha$ of $\varprojlim\left( \Aut(Q/\alpha) \right)$ equals $\Phi(\Psi(u))$, which means that $u_\alpha = \Phi_\alpha(\Psi(u))$ for every $\alpha$, so it is in fact an element of $\varprojlim \Phi_\alpha\! \left( \Aut(Q/\alpha) \right)$.
\end{rmq}

\begin{rmq}
This argument does not carry over to inner automorphisms, for which, as we have seen, the statement fails. This comes from the fact that in an element $u = (u_\alpha)_\alpha$, $\Psi(u)$ may not be an inner automorphism even if every $u_\alpha$ is. Precisely, the existence of a $x_\alpha$ such that $u_\alpha = x_\alpha \qd(-)$ for each individual $\alpha$ does not imply the existence of a coherent family of such $x_\alpha$'s (which would give $x \in Q$ such that $\Phi(u) = x \qd(-)$).
\end{rmq}

\paragraph{Relation to \cite{Yetter_et_al}: completions of $\Inn(Q)$}\label{rk_Yetter_insight}
One of the main insights of \cite{Yetter_et_al} was the idea that in profinite quandle theory, the relevant inner automorphism group was not $\Inn(Q)$, but some completion of it. They only failed to recognize that their $\widehat{\Inn}_D(Q)$ was actually independent of the choice of diagram $D$, and that it identified with the closure $\overline{\Inn}(Q)$ of $\Inn(Q)$ in $\Homeo(Q)$. The fact that it does not depend of $D$ is actually rather easy to see: up to removing isomorphic copies of quotients of $Q$, their $D$ must be a subsystem of the projective system of all finite quotients of $Q$, necessarily cofinal (else the projective limit would not be $\widehat Q$). But then $\Inn(D)$ must be a cofinal subsystem of the projective system defining $\mathcal I$ in the above proof of Proposition~\ref{Inn(prof)_is_prof}. Hence $\widehat{\Inn}_D(Q) \cong \mathcal I$, canonically. As for the identification with $\overline{\Inn}(Q)$, it correspond to the construction of the canonical action of $\widehat{\Inn}_D(Q)$ on $Q$ (that we can see as a map to $\overline{\Inn}(Q) \subseteq \Homeo(Q)$), which they did, and it is essentially the same as the construction of the isomorphism $\mathcal I \cong \overline{\Inn}(Q)$ in our proof.

They also consider the profinite completion $\widehat{\Inn(Q)}$ of $\Inn(Q)$, which is less relevant to the theory than $\overline{\Inn}(Q)$, since it bears less relation to $Q$. 

\begin{ex}
let $Q$ be $\{q_0\} \sqcup \Z_p$, defined as in Examples~\ref{ex_comp_of_Z} but with $\Z_p$ (or any profinite group generated by one element) instead of $\widehat \Z$. Then $\overline{\Inn}(Q) \cong \Z_p$ (as in Remark~\ref{rk_Singh_counter-ex}), whereas $\Inn(Q) \cong \Z$ always has $\widehat \Z$ as its profinite completion.
\end{ex}

Notice that $\overline{\Inn}(Q)$ is always the image of $\widehat{\Inn(Q)}$ by the canonical map $\widehat{\Inn(Q)} \rightarrow \Homeo(Q)$ corresponding to the action of $Q$ that they constructed. The latter which can also be seen as induced by the usual map $\Inn(Q) \hookrightarrow \Homeo(Q)$, once we know that its image is contained in a profinite group (namely, $\overline{\Inn}(Q)$). In the previous example, the map $\widehat{\Inn(Q)} \twoheadrightarrow \overline{\Inn}(Q)$ identifies with the canonical projection $\widehat \Z \twoheadrightarrow \Z_p$.

\section{A criterion for profiniteness}\label{sec_criterion}

The goal of the present section is to show a criterion for a topological quandle to be profinite. Namely, a topological quandle $Q$ is profinite if and only if it is compact and totally disconnected, and $\Inn(Q)$ is relatively compact in $\Homeo(Q)$ (Theorem~\ref{profiniteness_criterion}).

\subsection{Introduction and statement of the criterion}

For groups, profiniteness is a purely topological property, since profinite groups are the ones that are \emph{profinite spaces}, that is, compact and totally disconnected (see~\cref{par_profinite_groups_and_spaces}). For quandles, this is no longer true, as the following example shows: 
\begin{ex}
Let $Q$ be, as a topological space, the disjoint union of a singleton $\{t\}$ and the Cantor set $K = \{0,1\}^{\mathbb N}$ (endowed with the product topology), whose elements are to be written as infinite words. Let $f$ be the self-homeomorphism of $K$ defined by $f(0w) = 00w$, $f(10w) = 01w$ and $f(11w) = 1w$ (for any $w \in K$). Let the quandle law be given by $t \qd w = f^{-1}(w)$ if $w \in K$, and $x \qd y = y$ if $x \neq t$ or $y \notin K$. This defines a topological quandle, that is clearly a profinite space. However, if $\mathcal R$ is a cofinite congruence on $K$, then the class $C_0$ of the word $w_0 = 000 \cdots$, which is open, must contain a neighbourhood $0^NK$ of $w_0$ (where $0^N$ means $N$ times the letter $0$). Since $\mathcal R$ is a congruence, $C_0$ is stable by $t \qd (-) = f^{-1}$. But by construction of $f$, for any word $w$ different from $w_1 = 111 \cdots$, $f^k(w)$ must land in $0^NK$ for $k$ big enough. In other words, $C_0 \supseteq \bigcup f^{-k}(0^NK)$ = $K - \{w_1\}$. Since $C_0$ is also closed, it contains $w_1$ too. Finally, $\mathcal R$ has at most two classes ($\{t\}$ and $K$), and $Q$ is clearly not residually finite.
\end{ex}

The previous example has been inspired by~\cite{Schneider}. A similar example is described in \cite[Ex.~3.17]{Yetter_et_al}. The problem is in fact that the family $\{f^k\}$ of inner automorphisms is not equicontinuous at $w_0$ (in \cite[Ex.~3.17]{Yetter_et_al}, equicontinuity at $\infty$ was at fault). Since inner automorphisms are translations (in the sense of \cite{Schneider}), and an algebra $X$ is profinite if and only if it is a profinite space and its translation monoid is equicontinuous \cite[Thm.~4.7]{Schneider}, such a quandle could not be profinite. Recall that the Arzel\`a–Ascoli theorem \cite[\S~2.5,~Cor.~3]{Bourbaki2} implies that the equicontinuity of the translation monoid of $X$ is equivalent to the compactness of its closure in the space $\mathcal C(X,X)$ of continuous functions from $X$ to itself. For quandles, we can get a different characterisation of profiniteness by considering only inner automorphisms, instead of all translations:
\begin{theo}\label{profiniteness_criterion}
A topological quandle $Q$ is profinite if and only if it is a profinite space and $\overline\Inn(Q)$ is compact.
\end{theo}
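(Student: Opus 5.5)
The forward direction is already in place. If $Q$ is profinite, then it is a profinite space by Corollary~\ref{qdl_prof_cpct_and_discontinuous}, and $\overline\Inn(Q)$ is a profinite group, hence compact, by Proposition~\ref{Inn(prof)_is_prof}. So the work is the converse. Assume $Q$ is compact and totally disconnected and that $G:=\overline\Inn(Q)$ is compact. Since $Q$ is compact Hausdorff, $\Homeo(Q)$ is a topological group (Theorem~\ref{Arens}) and is totally disconnected (Proposition~\ref{Homeo(X)_tot_disc}). Hence $G$ is a compact totally disconnected group, that is, a profinite group by Corollary~\ref{profiniteness_criterion_groups}. By Proposition~\ref{qdl_prof_iff_cpct_and_res_finite}, it then suffices to show that $Q$ is residually finite: for $x\neq y$ we must produce a cofinite congruence separating them.

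The key step is to build cofinite congruences from clopen partitions that are $G$-invariant up to an open subgroup. The action map $G\times Q\to Q$ is continuous, since $Q$ is compact (Proposition~\ref{prop_adj_converse}). Take a clopen set $U$ containing $x$ but not $y$. By compactness of $G\times Q$ and a tube-lemma argument, there is an open normal subgroup $N$ of $G$ (these form a basis of neighbourhoods of $1$ by Proposition~\ref{prop_Bourbaki}) such that $N\cdot U=U$. Concretely, $\{(g,q): gq\in U \Leftrightarrow q\in U\}$ is a clopen set containing $\{1\}\times Q$, so it contains $V\times Q$ for some neighbourhood $V$ of $1$. Since $G/N$ is finite, the translates $gU$ for $g\in G$ are finitely many clopen sets. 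Let $\mathcal P$ be the finite clopen partition they generate. It is $G$-invariant, and every block is $N$-stable.

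Next I would refine $\mathcal P$ using $\sigma$ in order to get compatibility with the law. The map $\sigma:Q\to G$ is continuous (Fact~\ref{sigma_morphisme_continu}, with values in $\Inn(Q)\subseteq G$), so $\sigma^{-1}(gN)$ gives a finite clopen partition of $Q$. Let $\mathcal R$ be the equivalence relation whose classes are the blocks of the common refinement of $\mathcal P$ and $\{\sigma^{-1}(gN)\}$. We need this refinement to be $G$-invariant again. Since $\sigma(hq)=h\sigma(q)h^{-1}$ (by Fact~\ref{sigma_morphisme} and density, or directly for $h\in\Inn(Q)$ and then by continuity) and $N$ is normal, $h$ permutes the cosets fibres, so the refinement is $G$-invariant. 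Now check the congruence property. Suppose $x\,\mathcal R\,x'$ and $y\,\mathcal R\,y'$. Then $\sigma_{x'}=\sigma_x n$ with $n\in N$, so $x'\qd y'=\sigma_x(n y')$. Here $ny'$ lies in the block of $y'$ for the partition $\mathcal P$, but we also need it to lie in the right $\sigma$-block. That holds because $\sigma(ny')=n\sigma(y')n^{-1}\in n\sigma(y)N n^{-1}=\sigma(y)N$, using normality. Applying the invariant partition under $\sigma_x$ then gives $x\qd y\,\mathcal R\,x'\qd y'$. The same argument works for $\qdn{-1}$, using $\sigma_x^{-1}$. So $\mathcal R$ is a cofinite congruence, and it separates $x$ from $y$.

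The main obstacle I expect is the precise invariance bookkeeping. We must make sure that refining by the $\sigma$-fibres keeps the partition both $G$-invariant and $N$-stable. Also, the first use of compactness must produce an $N$ that works simultaneously for all translates $gU$. Normality of $N$ handles the latter: $N gU = gNU = gU$. The step identity $\sigma(hq)=h\sigma(q)h^{-1}$ for $h\in\overline\Inn(Q)$ needs a density argument, using continuity of both sides in $h$ inside the topological group $\Homeo(Q)$.
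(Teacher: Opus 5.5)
Your proof is correct, and it takes a genuinely different route from the paper's.

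The paper argues in two stages. First it chooses an open normal subgroup $N$ of $\mathcal I=\overline\Inn(Q)$ with $y\notin Nx$ and passes to the orbit quandle $Q/N$. It uses Lemma~\ref{quotient_by_group}, Lemma~\ref{quotient_of_top_algebras} and Proposition~\ref{Quotient_of_Stone_space} to see that $Q/N$ is a topological quandle on a profinite space with finite inner automorphism group. It then treats this finite-$\Inn$ case separately in Proposition~\ref{profiniteness_criterion_for_finite_Inn}. There it splits into cases according to orbits, and builds a morphism $Q\to V\sqcup G\to O\sqcup G$ from an ideal $V=\bigsqcup_s V_s$ via the crossed-set construction of Lemma~\ref{morphism_to_Q_sqcup_G}.

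You instead write down the separating cofinite congruence on $Q$ in one step. It is a finite clopen partition that is $\overline\Inn(Q)$-invariant, $N$-stable, and refined by the fibres of $\sigma$ modulo $N$. Compatibility with $\qdn{\pm 1}$ then reduces to $\sigma_{x'}=\sigma_x n$ with $n\in N$. This is shorter and more elementary: you never leave $Q$, never need a non-discrete quotient, and avoid ideals and crossed sets entirely.

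The density step you flag as an obstacle can in fact be skipped. The verification only uses invariance under $\sigma_x^{\pm1}\in\Inn(Q)$ and stability under $n^{\pm1}$, where $n=\sigma_x^{-1}\sigma_{x'}\in N\cap\Inn(Q)$. For elements $h$ of $\Inn(Q)$, the identity $\sigma(hq)=h\sigma(q)h^{-1}$ is just Fact~\ref{sigma_morphisme}.

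What the paper's decomposition buys is a standalone statement for the finite-$\Inn$ case, written so as to transfer to crossed spaces (Theorem~\ref{profiniteness_criterion_XSet}). Your construction transfers there just as easily, giving the finite quotient $(G/N, X/\mathcal R, \overline\partial)$ directly.
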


\begin{rmq}
This characterisation is not \emph{a priori} stronger than the one obtained by applying the result of~\cite{Schneider}: one cannot be deduced directly from the other. Indeed, we considers a smaller set of functions, but we ask for it to be relatively compact in $\Homeo(X)$, which is a stronger requirement than its relative compactness in $\mathcal C(X,X)$, since $\Homeo(X)$ is in general not closed in $\mathcal C(X,X)$. However, our statement is much more adapted to the theory of quandles, where the inner automorphism group is a fundamental object, contrary to the monoid of translations.
\end{rmq}

The easiest half of Theorem~\ref{profiniteness_criterion} has already been proved, as Corollary~\ref{qdl_prof_cpct_and_discontinuous} and Proposition~\ref{Inn(prof)_is_prof} (see also Remark~\ref{prof_G(Q)_vs_Inn(Q)}). The rest of the present section is devoted to a proof of the other half: if a topological quandle $Q$ is a profinite space and if $\overline\Inn(Q)$ is compact, we are going to show that $Q$ is profinite. This will require the construction of enough finite quotients of $Q$, using a construction of a quandle from a crossed set that we investigate first.

\subsection{Quandles from crossed sets}\label{par_crossed_sets}

\begin{defi}\label{def_crossed_sets}
Let $G$ be a group. A \emph{crossed $G$-set} $(X, \partial)$ is a set $X$ endowed with an action of $G$ and with a map $\partial : X \rightarrow G$ which is $G$-equivariant with respect to the conjugation action of $G$ on itself:
\[\forall g \in G,\ \forall x \in X,\ \partial(g \cdot x) = g \partial (x) g^{-1}.\]
A \emph{crossed set} $(G,X, \partial)$ is the data of a group $G$ and a crossed $G$-set $(X, \partial)$.
\end{defi}

Let $X$ be a crossed $G$-set. Let $G$ act on $X \sqcup G$ by the corresponding action on $X$, and by conjugation on itself (that is, $g \cdot h := g h g^{-1}$). Let us define a binary law $\qd$ on $X \sqcup G$ by posing:
\[\forall g, \in G,\ \forall x \in X,\ 
\begin{cases}
g \qd (-) = g \cdot (-) ; \\
x \qd (-) = \partial (x) \cdot (-).
\end{cases}\]

\begin{lem}\label{qd_from_crossed_set}
The above formulas defines a rack law on $X \sqcup G$, which is a quandle law if $\partial (x) \cdot x = x$, for all $x \in X$.
\end{lem}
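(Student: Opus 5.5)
We check the rack axioms directly, writing $\sigma_a := a \qd (-)$ for $a \in X \sqcup G$. Each $\sigma_a$ is the action of some group element: $\sigma_g$ is the action of $g$ and $\sigma_x$ is the action of $\partial(x)$. In both cases $\sigma_a$ is a bijection of $X \sqcup G$, with inverse the action of $g^{-1}$, resp.\ $\partial(x)^{-1}$. This gives the second axiom. For uniformity, we introduce the map $\hat\partial : X \sqcup G \rightarrow G$ given by $\hat\partial|_G = id$ and $\hat\partial|_X = \partial$, so that $\sigma_a = \hat\partial(a) \cdot (-)$ for every $a$.

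The key observation is that $\hat\partial$ is $G$-equivariant for the action of $G$ on $X \sqcup G$ and conjugation on $G$. On $G$ this is tautological, since the action there is conjugation. On $X$ it is exactly the crossed set condition $\partial(g \cdot x) = g\partial(x)g^{-1}$. Self-distributivity then follows in one line. Since $G$ acts by the given action, each $h \in G$ acts compatibly with $\sigma_b$, in the sense that $h \cdot (b \qd c) = (h \cdot b) \qd (h \cdot c)$. Indeed,
\[h \cdot (\hat\partial(b) \cdot c) = (h\hat\partial(b)h^{-1}) \cdot (h \cdot c) = \hat\partial(h \cdot b) \cdot (h \cdot c).\]
Applying this with $h = \hat\partial(a)$ gives $a \qd (b \qd c) = (a \qd b) \qd (a \qd c)$, which is the first axiom. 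Thus we have a rack.

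For the idempotency axiom, we need $a \qd a = a$ for all $a$. For $g \in G$ this holds automatically, since $g \qd g = ggg^{-1} = g$. For $x \in X$ we have $x \qd x = \partial(x) \cdot x$, so idempotency holds precisely when $\partial(x)\cdot x = x$ for all $x \in X$, which is the stated hypothesis.

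The only step needing care is the equivariance of $\hat\partial$ on the $X$-part, and this is exactly the defining property of a crossed set. No real obstacle is expected; the argument is routine verification.
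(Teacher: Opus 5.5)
Your proof is correct, and it organizes the verification differently from the paper. The paper just calls the lemma an easy verification by cases, checking each axiom according to whether each variable lies in $X$ or in $G$ (up to eight cases for self-distributivity). You instead extend $\partial$ to $\hat\partial : X \sqcup G \rightarrow G$ and note that $(G, X \sqcup G, \hat\partial)$ is itself a crossed $G$-set. Then $a \qd b = \hat\partial(a)\cdot b$, and self-distributivity follows in one line from equivariance. The case distinction is pushed into checking that $\hat\partial$ is equivariant, and there it is trivial (tautological on $G$, the crossed-set axiom on $X$). What this buys is a case-free argument that shows the structure behind the lemma. The rack $X \sqcup G$ is exactly $\mathcal U(G, X \sqcup G, \hat\partial)$ for the crossed-set-to-rack construction $\mathcal U$ that the paper introduces later. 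Your identity $h\cdot(b \qd c) = (h\cdot b)\qd(h\cdot c)$ also shows directly that $G$ acts on $X \sqcup G$ by rack automorphisms. In addition, you get the converse of the idempotency criterion. The paper's direct check needs no reorganization, but it hides this structure.
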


\begin{proof}
This is just a matter of easy verification: one needs to write down the quandle axioms, distinguishing, for each variable, the cases where it belongs to $X$ and the cases where it belongs to $G$. 
\end{proof}

Crossed sets form a category, morphisms from $(G,X, \partial_X)$ to $(H,Y, \partial_Y)$ being the obvious ones: couples $(\varphi, f)$ where $\varphi: G \rightarrow H$ is a group morphism and $f : X \rightarrow Y$ is a $\varphi$-equivariant map (for all $g \in G$ and $x \in X$, $f(g \cdot x) = \varphi(g) \cdot f(x)$), such that $\varphi \circ \partial_X = \partial_Y \circ f$. The above construction $(G,X, \partial_X) \mapsto X \sqcup G$ is easily seen to be a functor from this category to the category of racks and rack morphisms. 

\medskip

Let $Q$ be a quandle. The canonical action of $\Inn(Q)$ on $Q$, together with the canonical map $\sigma: Q \rightarrow \Inn(Q)$, define a crossed set satisfying the condition of Lemma~\ref{qd_from_crossed_set}. Indeed, the equivariance condition can be checked for generators $\sigma(x) = \sigma_x = x \qd(-)$ of $\Inn(Q)$ (for $x \in Q$), and: 
\[\sigma(\sigma_x \cdot y) = \sigma(x \qd y) = \sigma(x) \qd \sigma(y) = \sigma_x \sigma(y) \sigma_x^{-1}.\]
The other condition means that $x$ is a fixed point of $\sigma(x)$, which it is. Thus, we can apply the Lemma to construct a quandle $Q \sqcup \Inn(Q)$.   

\begin{defi}\label{def_ideal}
A subset $I$ of a quandle $Q$ is an \emph{ideal} if it is stabilised by $\Inn(Q)$.
\end{defi}
In other words, an ideal is a subset stable under all the $q \qdn{\pm 1}(-)$. In particular, it is a subquandle. Notice that the inverse image of an ideal by a quandle morphism is an ideal.
\begin{ex}
In the above construction, the subset $X$ of $X \sqcup G$ is an ideal. In particular, $Q$ is an ideal of $Q \sqcup \Inn(Q)$. More generally, for any subset $Y$ of $X$ stable under the action of $G$ (that is, for any $G$-crossed subset of $X$) and any normal subgroup $H$ of $G$, $Y \sqcup H$ is an ideal of $X \sqcup G$.  
\end{ex}

Let $I$ be an ideal of $Q$. Let us consider the map $f_I: Q \rightarrow Q \sqcup G$ (where $G = \Inn(Q)$) defined by:
\[f_I(q) = \begin{cases}
q \in Q         &\text{if } q \in I; \\
\sigma(q) \in G &\text{else.}
\end{cases}\]

\begin{lem}\label{morphism_to_Q_sqcup_G}
The above map is a quandle morphism.
\end{lem}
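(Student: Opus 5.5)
We verify directly that $f_I(x \qd y) = f_I(x) \qd f_I(y)$ for all $x, y \in Q$. The same argument then works for $\qdn{-1}$, since $\sigma_x^{-1}$ satisfies the analogous identities. First we record how the law on $Q \sqcup G$ acts. For $z = f_I(x)$ we have $z \qd (-) = \sigma_x \cdot (-)$ in both cases. If $x \in I$, then $z = x$ and $x \qd(-) = \partial(x)\cdot(-) = \sigma(x)\cdot(-)$. If $x \notin I$, then $z = \sigma(x)$ and $\sigma(x) \qd (-) = \sigma(x) \cdot (-)$. So the left factor behaves the same way whichever branch of $f_I$ applies. The whole verification therefore reduces to checking that $f_I$ is equivariant with respect to $\sigma_x$:
\[
f_I(\sigma_x(y)) = \sigma_x \cdot f_I(y).
\]

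We split into cases according to $y$.

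If $y \in I$, then $\sigma_x(y) = x \qd y \in I$, because $I$ is an ideal, i.e.\ stable under $\Inn(Q)$. Hence
\[
f_I(x \qd y) = x \qd y = \sigma_x \cdot y = \sigma_x \cdot f_I(y).
\]

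If $y \notin I$, then $x \qd y \notin I$ as well. Otherwise $y = x \qdn{-1}(x \qd y)$ would lie in $I$, again by stability of $I$ under $\sigma_x^{-1}$. Hence
\[
f_I(x \qd y) = \sigma(x \qd y) = \sigma_x \sigma(y) \sigma_x^{-1} = \sigma_x \cdot f_I(y),
\]
using Fact~\ref{sigma_morphisme} and the fact that $G$ acts on itself by conjugation.

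The only genuinely needed input is that $I$ and its complement are both stable under $\Inn(Q)$; this is why the ideal hypothesis appears. I do not expect any real obstacle here. The one point requiring care is to note that the complement of an ideal is also $\Inn(Q)$-stable, which follows because $\Inn(Q)$ is a group and so $I$ is stable under the inverses $\sigma_x^{-1}$ too.
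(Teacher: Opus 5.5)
Your proof is correct and is essentially the paper's direct verification, resting on the same two inputs: the stability of $I$ and of its complement under $\Inn(Q)$, and the identity $\sigma(x \qd y) = \sigma_x \sigma(y) \sigma_x^{-1}$ (Fact~\ref{sigma_morphisme}). The only difference is organizational: the paper checks four cases according to whether $x$ and $y$ lie in $I$, whereas your observation that $f_I(x) \qd (-) = \sigma_x \cdot (-)$ holds in either case reduces the check to the two cases on $y$.
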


\begin{proof}
Again, this is a matter of easy verification. Namely, that $f_I(x \qd y) = f_I(x) \qd f_I(y)$ is clear when $x$ and $y$ are both in $I$ or both not in $I$. Now, on the one hand, if $x \in I$ and $y \notin I$, then $x \qd y \notin I$ (else $y = x \qdn{-1} (x \qd y)$ would belong to $I$), hence:
\[f_I(x \qd y) = \sigma(x \qd y) = \sigma(x) \sigma(y) \sigma(x)^{-1} =  x \qd \sigma(y) = f_I(x) \qd f_I(y),\] by definition of $Q \sqcup G$. On the other hand, if $x \notin I$ and $y \in I$, then  $x \qd y \in I$, and $f_I(x \qd y) = x \qd y = \sigma_x(y) = f_I(x) \qd f(y)$. Whence the result.
\end{proof}

\begin{rmq}
The morphism $f_I$ takes values in the ideal $I \sqcup G$ of $Q \sqcup G$.
\end{rmq}

\subsection{Proof of the criterion}\label{sec_proof_criterion}

The proof of Theorem~\ref{profiniteness_criterion} will be a reduction to the following particular case:

\begin{prop}\label{profiniteness_criterion_for_finite_Inn}
Let $Q$ be a topological quandle which is a profinite space, and such that $\Inn(Q)$ is \emph{finite}. Then $Q$ is residually finite (hence profinite by Lemma~\ref{qdl_prof_iff_cpct_and_res_finite}).
\end{prop}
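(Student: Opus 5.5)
We must show that any two distinct points $x \neq y$ of $Q$ are separated by a continuous morphism to a finite discrete quandle. We will use the crossed-set construction of \cref{par_crossed_sets}, with $G = \Inn(Q)$ finite: clopen ideals of $Q$ will play the role of finite quotients. The key observation is that, since $G$ is finite and acts by homeomorphisms, every clopen subset $U$ of $Q$ is contained in a clopen ideal, namely $I = \bigcup_{\varphi \in G} \varphi(U)$, a finite union of clopen sets. Likewise, $\bigcap_{\varphi \in G}\varphi(U)$ is a clopen ideal contained in $U$. Moreover, the map $\sigma : Q \rightarrow G$ is continuous for the discrete topology on the finite group $G$. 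Indeed, by Fact~\ref{sigma_morphisme_continu} it is continuous into $\Homeo(Q)$, and a finite subset of the Hausdorff space $\Homeo(Q)$ is discrete. Hence its fibres are clopen.

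The plan is as follows. First, if $\sigma(x) \neq \sigma(y)$, the morphism $\sigma : Q \rightarrow \Inn(Q)$ (Fact~\ref{sigma_morphisme}) into the finite discrete conjugation quandle already separates $x$ and $y$. So assume $\sigma(x) = \sigma(y)$. Since $Q$ is a profinite space, it is Hausdorff, and the finitely many points $\varphi(y)$, $\varphi \in G$, are all distinct from $x$ or not. If $y \notin Gx$, choose a clopen $U \ni x$ avoiding the finite orbit $Gy$; its saturation $I = GU$ is then a clopen ideal containing $x$ but not $y$. If $y \in Gx$, a single ideal cannot separate them, and we will need a refinement, described below.

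For the first sub-case, consider $f_I : Q \rightarrow I \sqcup G$ from Lemma~\ref{morphism_to_Q_sqcup_G}. Its image is not finite, so we compose it with a further quotient. The idea is to collapse $I$ along a finite clopen partition compatible with the $G$-action. Choose a finite $G$-invariant clopen partition $\mathcal U$ of $I$ that refines the fibres of $\sigma|_I$; such a partition exists by intersecting translates, using the finiteness of $G$. The crossed $G$-set $I$ then maps to the finite crossed $G$-set $I/\mathcal U$, with $\partial$ induced by $\sigma$; this is well defined because $\mathcal U$ refines the fibres of $\sigma$. By functoriality of the crossed-set construction, we obtain a rack morphism $I \sqcup G \rightarrow I/\mathcal U \sqcup G$, and the target is a quandle: a block $B$ satisfies $\sigma(b)\cdot B = B$ because $\sigma(b)(b) = b$ and $\mathcal U$ is $G$-invariant. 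The composite is a continuous morphism to a finite discrete quandle. It sends $x$ into $I/\mathcal U$ and $y$ to $\sigma(y) \in G$, so it separates them. Since $x \mapsto$ its block is locally constant, continuity is clear.

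The main obstacle is the case $y = \varphi(x) \neq x$ with $\sigma(x) = \sigma(y)$. Here I would take $I = Q$ and use the same quotient $Q \rightarrow Q/\mathcal U \sqcup G$, for a $G$-invariant clopen partition $\mathcal U$ refining the fibres of $\sigma$ and also separating $x$ from $y$. To build $\mathcal U$, start from a clopen $U \ni x$ with $y \notin U$, inside the fibre of $\sigma(x)$. Then take the atoms of the finite Boolean algebra generated by the sets $\varphi(U)$ and $\sigma^{-1}(g)$, for $\varphi, g \in G$. This algebra is $G$-invariant, because $\psi(\sigma^{-1}(g)) = \sigma^{-1}(\psi g \psi^{-1})$ by equivariance of $\sigma$. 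So its atoms form a $G$-invariant partition, refining the fibres of $\sigma$ and separating $x$ from $y$. With this choice the general construction handles all cases at once, and the case distinctions above become unnecessary. The point to check carefully is that the quandle axiom $\partial(B)\cdot B = B$ holds for the atoms, and that the map $Q \rightarrow Q/\mathcal U$ is an equivariant crossed-set morphism.
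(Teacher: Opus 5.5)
Your argument is correct. In its final form it takes a genuinely different and shorter route than the paper.

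\textbf{The paper's route.} It splits according to whether $Gx \neq Gy$, handled by a two-class clopen partition of the trivial quotient $Q/G$, or $x,y$ lie in a common orbit $O$. In the second case it builds disjoint clopen neighbourhoods $V_s \subseteq \sigma^{-1}(\sigma(s))$ of the points $s \in O$ with $gV_s = V_{gs}$. It then forms the ideal $V = \bigsqcup_s V_s$, applies $f_V : Q \to V \sqcup G$ from Lemma~\ref{morphism_to_Q_sqcup_G}, and collapses each $V_s$ to $s$, landing in the finite quandle $O \sqcup G$.

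\textbf{Your route.} You take the atoms of the finite Boolean algebra generated by the $\varphi(U)$ and the $\sigma^{-1}(g)$. This algebra is $G$-stable because $\psi(\sigma^{-1}(g)) = \sigma^{-1}(\psi g \psi^{-1})$. The atoms therefore form a $G$-invariant clopen partition that refines the fibres of $\sigma$ and separates $x$ from $y$, with no case distinction.

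Once $I = Q$, the summand $G$ and the crossed-set functoriality play no role: $f_Q$ is just the inclusion, and the composite lands in $Q/\mathcal U$. What you are really using is that the relation $\mathcal R$ whose classes are the atoms is a congruence, which is immediate:
\begin{itemize}
\item if $a \mathcal R a'$ then $\sigma(a) = \sigma(a')$, since atoms refine the fibres of $\sigma$;
\item $\sigma(a)^{\pm 1} \in G$ permutes the atoms;
\item so if also $b \mathcal R b'$, then $a \qdn{\pm 1} b = \sigma(a)^{\pm 1}(b)$ and $a' \qdn{\pm 1} b' = \sigma(a)^{\pm 1}(b')$ lie in the same atom.
\end{itemize}
Since the atoms are clopen, $Q/\mathcal R$ is a finite discrete quandle and the projection is continuous.

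\textbf{Comparison.} Your route dispenses entirely with the $V_s$ lemma, with ideals, and with Lemma~\ref{morphism_to_Q_sqcup_G}. Your preliminary cases, via $\sigma$ and via $f_I$ with a $G$-invariant partition of $I$, are also correct, just superfluous. The paper's construction buys an explicit small target $O \sqcup G$ and showcases the ideal/crossed-set machinery it reuses for Theorem~\ref{profiniteness_criterion_XSet}. However, your partition argument works verbatim there too: quotient $X$ by a $G$-invariant clopen partition refining the fibres of $\partial$.
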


\begin{proof}
Notice that $\Homeo(Q)$ is totally disconnected (and in particular Hausdorff), so $G = \Inn(Q)$ must be discrete and closed in $\Homeo(X)$ (in particular, $\overline\Inn(Q) = \Inn(Q)$ in this case). 

Let $x, y \in Q$ such that $x \neq y$. We want to construct a finite discrete quotient of $Q$ where $x$ and $y$ are sent to distinct elements. The simplest case is the case where $Gx \neq Gy$. Then the quotient $Q/G$ is a trivial quandle (which is automatically a topological quandle, even without using Lemma~\ref{quotient_of_top_algebras}), and it is also a profinite space (Proposition~\ref{Quotient_of_Stone_space}). Thus, we can take a partition $Q/G = U \sqcup V$ of $Q/G$ into two clopen sets such that $Gx \in U$ and $Gy \in V$, and consider the equivalence relation whose classes are $U$ and $V$. Since $Q$ is a trivial quandle, it is a congruence, and the quotient $R$ is the discrete set with two elements, seen as a trivial quandles. Since $x$ and $y$ are sent to the two different elements of $R$, we have reached our goal in this case.

Let us now suppose that $x$ and $y$ are in the same orbit $O$ modulo the finite group $G = \Inn(Q)$. 
\begin{lem}
There exist a family $(V_s)_{s \in O}$ of subsets of $Q$ satisfying: 
\begin{itemize}[itemsep = -3pt, topsep= 3pt]
\item for every $s \in O$, $V_s$ is a clopen neighbourhood of $s$;
\item the $V_s$ are disjoint from each other;
\item $\forall q \in V_s,\ q \qd(-) = s \qd(-)$;
\item $\forall s \in O,\ \forall g \in G,\ g \cdot V_s = V_{gs}$. 
\end{itemize}
\end{lem}

\begin{proof}[Proof of the Lemma]
The canonical map $\sigma : Q \rightarrow \Inn(Q) = G$ is continuous, and $G$ is discrete, so the classes $\sigma^{-1}(\sigma(q))$ of elements acting in the same way as some fixed $q \in Q$ are clopen. Since $Q$ is a profinite space, we can choose, for every pair $(s,t) \in O^2$ with $s \neq t$, a clopen set $U_{s,t}$ containing $s$ and not $t$. Then the finite intersection $U_s := \sigma^{-1}(\sigma(s)) \cap \bigcap_{t \in O - \{s\}} U_{s,t}$ is a clopen set whose intersection with $O$ is only $\{s\}$. Now, let: 
\[V_s := \{q \in Q\ |\ \forall g \in G,\ \forall t \in O,\ g q \in U_t \Leftrightarrow t = gs \}.\]
In other words, $V_s$ is the intersection of the $g^{-1}U_{gs}$ with the  $(g^{-1}U_t)^c$ for $g \in G$ and $t \in O - \{gs\}$ (in particular, $V_s \subseteq U_s$). Since $G$ and $O = Gx$ are both finite, this is a finite intersection of clopen sets, so it is a clopen set. By construction, $V_s \cap O = \{s\}$, $V_s \cap V_t = \varnothing$ for $s \neq t$, and if $g \in G$, then $gV_s = V_{gs}$. Moreover, from $V_s \subseteq U_s \subseteq \sigma(\sigma^{-1}(s))$, we deduce that for all $q \in V_s$, we have $q \qd (-) = s \qd (-)$.
\end{proof}

We are going to use this Lemma to define a continuous morphism from $Q$ to a finite discrete quandle sending $x$ and $y$ to distinct elements. Precisely, notice first that Since $V := \bigsqcup V_s$ is stable under the action of $G = \Inn(Q)$ (that is, $V$ is an \emph{ideal} of $Q$), it is a subquandle of $Q$. Moreover, sending all of $V_s$ to $s$ (for each $s \in O$) defines a morphism $\pi$ from $V = \bigsqcup V_s$ to the finite quandle $O$, which is clearly a continuous surjection. Indeed, using $s \qd(-) \in G$, we see that $V_{s \qd t} = s \qd V_t$. As a consequence, $V_{s \qd t} = V_s \qd V_t$, and $\pi$ is a morphism, as claimed.

We now want to construct a finite quotient of $Q$ from this finite quotient of $V$. We deal with the complement $V^c$ of $V$ by using the construction from~\cref{par_crossed_sets}. Precisely, $V$ is an ideal of $Q$, so we can construct the morphism $f_V: Q \rightarrow V \sqcup G$ from Lemma~\ref{morphism_to_Q_sqcup_G}. Notice that $V \sqcup G$ is canonically a topological quandle, 
and that $f_V$ is continuous, being continuous on the two open sets $V$ and $V^c$.
Now, consider the quotient map $\pi: V \twoheadrightarrow O$. We have seen that it is continuous and $G$-equivariant with respect to the canonical action of $G$.
We have also seen that for $q \in V$, $\sigma(q) = q \qd (-)$ depends only on the image of $q$ in $O$; in other words, the canonical map $\sigma : V \rightarrow G$ factorizes through $\pi$ (into its restriction to $O$). All this means that $(id_G, \pi)$ defines a surjection of (topological) crossed sets from $(G, V, \sigma)$ to $(G, O, \sigma)$, which induces a (continuous) quandle epimorphism $\pi \sqcup id : V \sqcup G \twoheadrightarrow O \sqcup G$. 

Finally, $(\pi \sqcup id) \circ f_V$ is a continuous quandle morphism from $Q$ to the finite quandle $O \sqcup G$, sending $O$ bijectively onto itself, whence sending $x$ and $y$ to distinct elements. This finishes the proof of the residual finiteness of $Q$, and of its profiniteness. 
\end{proof}

\begin{proof}[Proof of Theorem~\ref{profiniteness_criterion}]
One implication follows directly from Corollary~\ref{qdl_prof_cpct_and_discontinuous} and Proposition~\ref{Inn(prof)_is_prof}. Let us show the converse. Let $Q$ be a topological quandle. Suppose that it is a profinite space and that $\overline\Inn(Q)$ is compact. Since $Q$ is compact, we want to show that it is residually finite, and then it will be profinite (Lemma~\ref{qdl_prof_iff_cpct_and_res_finite}). In other words, given $x,y \in Q$ such that $x \neq y$, we want to find a (discrete) finite quotient $R$ of $Q$ where the classes of $x$ and $y$ are different. 

We first look for an intermediate quotient whose inner automorphism group is finite. Let us first remark that $\mathcal I := \overline\Inn(Q)$ is totally disconnected, since $\Homeo(X)$ is (Lemma~\ref{Homeo(X)_tot_disc}). Since it is assumed compact, it is a profinite group.  Recall that open normal subgroups form a basis of neighbourhood of $1$ in profinite groups (Proposition~\ref{prop_Bourbaki}). Hence, by continuity of the action of $\mathcal I \subseteq \Homeo(Q)$ on $Q$, there exists an open subgroup $N$ of $\mathcal I$ such that $Nx \subseteq Q - \{y\}$ (notice that $Q - \{y\}$ is open since $Q$ is Hausdorff). This means that $x$ and $y$ are not in the same orbit modulo $N$. In other words, their classes in $Q/N$ are distinct. 

Let us examine the structure of $Q/N$. Notice first that it is a well-defined quandle, since $N$ is normalized by $\Inn(Q) \subseteq \mathcal I$ (Lemma~\ref{quotient_by_group}). Since $Q$ is compact, it is in fact a well-defined topological quandle, for the quotient topology (Lemma~\ref{quotient_of_top_algebras}).
By Proposition~\ref{Quotient_of_Stone_space}, it is a profinite space. Moreover, the quotient map $p : Q \twoheadrightarrow Q/N$ induces a surjection $p_* : \Inn(Q) \twoheadrightarrow \Inn(Q/N)$ (Proposition~\ref{Inn(surj)}), whose kernel contains $N \cap \Inn(Q)$, so  $\Inn(Q/N)$ is a quotient of the subgroup $\Inn(Q)/(N \cap \Inn(Q))$ of $\mathcal I/N$, which is finite. 

Thus, we can apply Proposition~\ref{profiniteness_criterion_for_finite_Inn} to $Q/N$, and conclude that it is residually finite. The images of $x$ and $y$ are distinct in $Q/N$, hence they are still distinct in some finite discrete quotient $R$ of $Q/N$, which is also a finite discrete quotient of $Q$, and the Theorem is proved.
\end{proof}

\subsection{About profinite crossed spaces}

The alert reader may have noticed that the previous proof is really about crossed sets, or about \emph{crossed spaces}, as we shall call them in a topological context. 

\begin{defi}
Let $G$ be a topological group. A \emph{crossed $G$-space} is a crossed set $(X, \partial)$ endowed with a topology making the action $G \times X \rightarrow X$ and the map $\delta : X \rightarrow G$. A \emph{crossed space} $(G,X, \partial)$ is the data of a topological group $G$ and a crossed $G$-space $(X, \partial)$. Crossed spaces form a category $\mathcal TX\mathcal Set$, whose morphisms are the obvious ones.
\end{defi}

The construction of $f_I$ can be adapted to crossed sets. Namely, given a crossed set $T = (G,X, \partial)$, if $I$ is a subset of $X$ which is stable under the action of $G$, then we have a crossed set $T_I := (G, I \sqcup G, \partial')$, where $G$ acts on $I \sqcup G$ in the obvious way, and $\partial'$ sends $x \in I$ to $\partial(x)$ and $g \in G$ to itself. This crossed set is endowed with a morphism $(id_G, f_I): T \rightarrow T_I$, where $f_I : X \rightarrow I \sqcup G$ is defined by $f_I(x) = x$ if $x \in I$, and $f_I(x) = \partial(x)$ if $x \notin I$. The reader can check that this morphism is well-defined. 

Crossed sets are algebras on a multi-sorted Lawvere theory, with two sorts. Let us explain briefly what this means. Such a theory $\mathbb T$ consists, for every $(m,n)$, of two sets $\mathbb T_{m,n}^1$ and $\mathbb T_{m,n}^2$ of $(m,n)$-ary operations, and of a set of equations between them. Then a $\mathbb T$-algebra $X$ is a couple $(X_1, X_2)$ of sets, endowed with a map $\omega_X : X_1^m \times X_2^n \rightarrow X_i$ for each $\omega \in \mathbb T_{m,n}^i$, such that the equations are satisfied. It is clear that crossed sets can be described in this way, with a finite set of generating operations and relations saying that an algebra $(X_1, X_2)$ should be a group $X_1$ with an action $X_1 \times X_2 \rightarrow X_2$ and a map $X_2 \rightarrow X_1$ having the right properties.

All the things that we have said for algebras over a Lawvere theory extend easily to multicoloured Lawvere theories. In particular, the notion of profiniteness of a topological $\mathbb T$-algebra extends easily, and a topological $\mathbb T$-algebra is profinite if and only if it is residually finite and all it components are compact and totally disconnected (that is, they are profinite spaces). 

With these tool, we can adapt the proof of Theorem~\ref{profiniteness_criterion} to show:

\begin{theo}\label{profiniteness_criterion_XSet}
A crossed set $(G,X)$ is profinite if and only if $G$ and $X$ are profinite spaces.
\end{theo}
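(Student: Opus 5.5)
The ``only if'' direction is immediate: a profinite crossed set is a projective limit of finite discrete crossed sets. Each of its two sorts is then a projective limit of finite discrete sets, hence a profinite space. For the converse, let $(G,X,\partial)$ be a topological crossed set with $G$ and $X$ profinite spaces. By the multi-sorted version of Proposition~\ref{profinite_iff_cpct_and_res_finite}, it suffices to prove residual finiteness: continuous morphisms to finite discrete crossed sets separate points in each sort. Points of $G$ are easy. $G$ is a compact totally disconnected group, hence profinite (Corollary~\ref{profiniteness_criterion_groups}). Given $g\neq h$ in $G$, take an open normal subgroup $N$ with $g^{-1}h\notin N$, and map $(G,X)\to (G/N,\varnothing)$. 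Here I expect to need a target with nonempty $X$-sort if $\varnothing$ is not allowed as a quotient; in that case I would use the construction of the next step instead, with $I=\varnothing$, which is the crossed set $(G/N, G/N)$ described below.

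For points $x\neq y$ of $X$, I follow the proof of Theorem~\ref{profiniteness_criterion}. The role of $\overline\Inn(Q)$ is now played by $G$ itself, and its compactness is a hypothesis rather than something to be proved. The action $G\times X\to X$ is continuous and $G$ is profinite. So there is an open normal subgroup $N$ with $Nx\not\ni y$: by compactness of $G$ and continuity of the action at $\{1\}\times\{x\}$, some basic neighbourhood $N$ of $1$ maps $x$ into $X-\{y\}$. Then $(G/N, X/N, \bar\partial)$ is a crossed set. The map $\bar\partial$ is not well defined on $X/N$ in general, since $\partial(nx)=n\partial(x)n^{-1}$ only agrees with $\partial(x)$ modulo $N$; composing with $G\to G/N$ gives $\partial(nx)\equiv\partial(x)$ because $N$ is normal, so $\bar\partial: X/N\to G/N$ is well defined. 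Moreover $X/N$ is a profinite space by Proposition~\ref{Quotient_of_Stone_space}, and $x$, $y$ remain distinct in it. We have thus reduced to the case where $G$ is finite discrete.

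In that case we mimic Proposition~\ref{profiniteness_criterion_for_finite_Inn}. If $Gx\neq Gy$, we separate the orbits by a clopen $G$-invariant partition of $X/G$ (a profinite space), giving a map to a finite crossed set with two $G$-fixed points. Over this I must also make $\partial$ compatible, so I would rather refine the partition by the fibres of $\partial$, which are clopen because $G$ is discrete. If $Gx=Gy=O$, the Lemma inside the proof of Proposition~\ref{profiniteness_criterion_for_finite_Inn} carries over verbatim, using the clopen fibres of $\partial$ in place of those of $\sigma$. It provides disjoint clopen neighbourhoods $V_s$ of the points $s\in O$, permuted by $G$ according to $gV_s=V_{gs}$, with $\partial$ constant equal to $\partial(s)$ on $V_s$. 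The union $V=\bigsqcup V_s$ is $G$-stable, so the crossed-set version of $f_I$ gives a continuous morphism $(\mathrm{id}_G,f_V):(G,X)\to(G,V\sqcup G)$. Composing with the collapse $V\to O$, $V_s\mapsto s$, yields a morphism of crossed sets to the finite discrete crossed set $(G,O\sqcup G)$ that separates $x$ and $y$. Continuity holds because $V$ and its complement are clopen.

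The main obstacle is making sure the topological subtleties carry over to the two-sorted setting. Two points need checking: that the quotient $(G/N,X/N)$ is a topological crossed set, which follows from Lemma~\ref{quotient_of_top_algebras} by compactness, and that the multi-sorted analogue of Proposition~\ref{profinite_iff_cpct_and_res_finite} holds with morphisms allowed to have arbitrary finite sorts. The algebra itself is lighter than in the quandle case, since the group sort is already profinite.
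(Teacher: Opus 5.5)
Your argument is correct and follows the paper's sketch in its main line. You reduce, via an open normal subgroup $N$ with $y\notin Nx$, to $(G/N,X/N,\bar\partial)$ with $G/N$ finite discrete. Then, when $Gx=Gy$, you build the clopen sets $V_s$ from the fibres of $\partial$ and compose the crossed-set version of $f_V$ with the collapse $V\sqcup G\to O\sqcup G$.

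You differ from the paper only in the two easy steps.

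\emph{Separating points of $G$.} Drop the option $(G/N,\varnothing)$ entirely. There is no map from a nonempty $X$ to $\varnothing$, so the question is not what is ``allowed''. Your fallback $(\pi,\pi\circ\partial)\colon(G,X,\partial)\to(G/N,G/N,\mathrm{id})$, with $G/N$ acting on itself by conjugation, is a valid continuous morphism. It is actually more careful than the paper's target $(F,*,1)$. That target only receives a morphism when $\partial(X)$ lies in the kernel of $G\to F$, because the compatibility condition $\varphi\circ\partial_X=\partial_Y\circ f$ forces $\varphi(\partial(x))=1$ for every $x$. So commit to your construction.

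\emph{The case $Gx\neq Gy$.} The paper kills the group: it maps to $(1,X/G,1)$ and then to a two-point crossed set over the trivial group. That way no compatibility with $\partial$ has to be arranged. Your version keeps $\mathrm{id}_G$ and also works once you refine the $G$-invariant clopen partition by the (clopen) fibres of $\partial$. But then $G$ permutes the resulting classes, sending the class with $\partial$-value $h$ to the one with value $ghg^{-1}$. It does not fix two points, so the phrase ``two $G$-fixed points'' should be replaced by this description of the target.
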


\begin{proof}[Sketch of proof]
Given the above observations, we only need to show that $(G,X)$ is residually finite if $G$ and $X$ are profinite spaces. On the one hand, if $g, h \in G$ satisfy $h \neq g$, then, since $G$ is profinite, the images of $g$ and $h$ are distinct in a finite quotient $F$ of $G$, and $(F, *, 1)$ is a finite quotient of $(G,X, \partial)$ where the images of $g$ and $h$ are distinct. On the other hand, if $x,y \in X$ satisfy $x \neq y$, then, as in the proof of Theorem~\ref{profiniteness_criterion}. (with $G$ instead of $\mathcal I$), we find an open subgroup $N$ of $G$ such that their classes are distinct in $(G/N, X/N, \overline \partial)$, so we reduce to the case where $G$ is finite discrete. This case is treated exactly as in the quandle case, using the intermediate quotient $(1, X/G, 1)$ when $Gx \neq Gy$, and using the intermediate quotient $(G, V \sqcup G, \partial')$ given by the adaptation of the construction $f_V$ detailed above, in the case where $Gx = Gy$.
\end{proof}

The criterion for quandles (Theorem~\ref{profiniteness_criterion}), or at least the implication shown in this section, can actually be deduced from the one for (idempotent) crossed sets (Theorem~\ref{profiniteness_criterion_XSet}), using the usual construction of a rack from a crossed set, defined by:
\[\mathcal U(G,X, \partial) := (X, \qd) \text{ where for all }x,y \in X,\ x \qd y = \partial(x) \cdot y.\]
The law $\qd$ is a quandle law if and only if the crossed set is \emph{idempotent}, \emph{i.e.}~if $\partial(x) \cdot x = x$ for all $x \in X$. This is easily seen to behave well in the topological setting, defining a functor $\mathcal U$ from the category of crossed spaces to the category of topological racks, restricting to a functor from idempotent crossed spaces to topological quandles.

Projective limits of crossed spaces are computed component-wise, in the sense that:
\[\varprojlim (G_i, X_i,\partial_i) = (\varprojlim G_i, \varprojlim X_i, \varprojlim \partial_i).\]
This implies that $\mathcal U$ commutes with projective limits. Since it also sends finite discrete crossed sets to finite discrete racks, it sends profinite crossed sets to profinite racks.

\begin{proof}[Deduction of Theorem~\ref{profiniteness_criterion} from Theorem~\ref{profiniteness_criterion_XSet}]
If $Q$ is a quandle which is profinite as a space and such that $\mathcal I = \overline\Inn(Q)$ is compact (hence profinite by Lemma~\ref{Homeo(X)_tot_disc} and Corollary~\ref{profiniteness_criterion_groups}), then we have $Q = \mathcal U(\mathcal I, Q, \sigma)$. Notice that the canonical action $\mathcal I \times Q \rightarrow Q$ is continuous by Proposition~\ref{prop_adj_direct}, since $Q$ is compact and it is adjoint to the inclusion of $\mathcal I$ into $\mathcal C(Q,Q)$. Then Theorem~\ref{profiniteness_criterion_XSet} says that $(\mathcal I, Q, \sigma)$ is profinite as a crossed space, so $Q$ is a profinite quandle.
\end{proof}

\begin{rmq}
All this works verbatim without the idempotence hypothesis, showing that our criterion is in fact valid for \emph{racks}. 
\end{rmq}

\begin{rmq}\label{rk_loc_cpct_XSet}
The set-theoretic version of $\mathcal U$ has a left adjoint, given by $\mathcal F : (Q,\qd) \mapsto (G(Q), Q, \eta)$ (see for instance~\cite[\S~4.2]{Darne7}). This can be generalized to the topological setting, with some care. Namely, for $\mathcal F$ to be well-defined, we need the canonical action $G(Q) \times Q \rightarrow Q$ to be continuous, when the universal property of our topological enveloping group $G(Q)$ only gives a continuous map $G(Q) \rightarrow \mathcal C(Q,Q)$. By Proposition~\ref{prop_adj_converse}, this works for locally compact topological quandles (see also Fact~\ref{continuous_action_of_G(Q)}), so we get an adjunction between the category of locally compact topological quandles and the category of crossed spaces $(G,X, \partial)$ whose $X$ is locally compact. An extension of this adjunction to compactly generated quandles is discussed in~\cref{Alg_cartesian_closed}.
\end{rmq}

\appendix

\section{Topological spaces}\label{sec_topo_spaces}

In this appendix, we first recall some classical facts about locally compact topological spaces which play an important role throughout the paper. We then discuss the wider compactly generated setting, which is not relevant for the study of profinite spaces, (which are compact Hausdorff), but could be relevant to further work on topological quandles.

\subsection{Locally compact topological spaces}\label{par_lc_topo_spaces}

A lot of the algebraic structures appearing in our work involve maps of the form $X \times Y \rightarrow Z$, which should be continuous in the topological setting. The continuity of such maps is much easier to deal with when working with locally compact spaces, thanks to the usual adjunction that we now recall.

If $X$ and $Y$ are topological spaces, we endow the set $\mathcal C(X,Y)$ of continuous functions from $X$ to $Y$ with the compact-open topology, generated by the $\mathcal O_{K,U} = \{f \in \mathcal C(X,Y)\ |\ f(K) \subseteq U\}$, for $K$ compact in $X$ and $U$ open in $Y$. Recall the classical:

\begin{prop}\label{prop_adj_direct}
Let $X$, $Y$ and $Z$ be topological spaces. If $\varphi : X \times Y \rightarrow Z$ is continuous, then its \emph{adjoint} $\widetilde \varphi : x \mapsto \varphi(x,-)$ is a continuous map from $X$ to $\mathcal C(Y,Z)$.
\end{prop}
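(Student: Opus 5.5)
The plan is to check continuity of $\widetilde\varphi$ on the subbasic open sets of the compact-open topology. Since the $\mathcal O_{K,U}$ generate that topology on $\mathcal C(Y,Z)$, it is enough to show that $\widetilde\varphi^{-1}(\mathcal O_{K,U})$ is open in $X$ for every compact $K \subseteq Y$ and every open $U \subseteq Z$. First, $\widetilde\varphi$ really takes values in $\mathcal C(Y,Z)$: each $\varphi(x,-)$ is the composite of the continuous inclusion $y \mapsto (x,y)$ with $\varphi$.

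So fix $K$ compact, $U$ open, and $x_0 \in X$ with $\varphi(\{x_0\} \times K) \subseteq U$. We must find an open neighbourhood $W$ of $x_0$ with $\varphi(W \times K) \subseteq U$. The set $\varphi^{-1}(U)$ is open in $X \times Y$ and contains $\{x_0\} \times K$.

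This is exactly the setting of the tube lemma, whose usual proof does not need Hausdorffness, and that is the main step. For each $k \in K$, choose open sets $W_k \ni x_0$ and $V_k \ni k$ with $W_k \times V_k \subseteq \varphi^{-1}(U)$, using the definition of the product topology. The $V_k$ cover $K$, so by compactness finitely many of them, $V_{k_1}, \dots, V_{k_n}$, already cover $K$. Then set $W := W_{k_1} \cap \cdots \cap W_{k_n}$, which is an open neighbourhood of $x_0$.

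For $x \in W$ and $y \in K$, pick $i$ with $y \in V_{k_i}$. Then $(x,y) \in W_{k_i} \times V_{k_i} \subseteq \varphi^{-1}(U)$, so $\varphi(W \times K) \subseteq U$. This means $W \subseteq \widetilde\varphi^{-1}(\mathcal O_{K,U})$, which gives openness. No local compactness is needed in this direction; the only mild care is to use compactness of $K$ purely as a covering property, so that no separation axiom enters.
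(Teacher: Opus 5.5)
Your proof is correct and is essentially the paper's own argument. Both check the subbasic sets $\mathcal O_{K,U}$, cover $K$ by finitely many product neighbourhoods contained in $\varphi^{-1}(U)$, and intersect the corresponding neighbourhoods of $x_0$. Your extra remark that each $\varphi(x,-)$ is continuous, so that $\widetilde\varphi$ really lands in $\mathcal C(Y,Z)$, is a small point the paper leaves implicit.
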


\begin{proof}
If $K$ is compact in $Y$ and $U$ is open in $Z$, we need to show that $\widetilde \varphi^{-1}(\mathcal O_{K,U})$ is open. Let $x \in \varphi^{-1}(\mathcal O_{K,U})$. For each $y \in K$, we have $\varphi(x,y) \in U$, so there is an open neighbourhood $V_y \times W_y$ of $(x,y)$ such that $\varphi(V_y \times W_y) \subseteq U$. By compacity of $K$, we can choose $y_1, ..., y_N \in K$ such that the $W_{y_i}$ cover $K$. Then $V := V_{y_1} \cap \cdots \cap V_{y_N}$ is an open neighbourhood of $x$, and if $W :=  W_{y_1} \cup \cdots \cup W_{y_N}$, we have $\varphi(V \times K) \subseteq \varphi(V \times W) \subseteq U$, since every element of $V \times W$ belongs to some $V_{y_i} \times W_{y_i}$. As a consequence, $V \subseteq \widetilde \varphi^{-1}(\mathcal O_{K,U})$, which is thus a neighbourhood of its points, hence open.
\end{proof}

It is well-known that the converse holds when $Y$ is locally compact. 

\begin{prop}\label{prop_adj_converse}
If $Y$ is locally compact, then the evaluation map $\ev_{Y,Z} :\mathcal C(Y,Z) \times Y \rightarrow Z$ is continuous, hence the converse of \ref{prop_adj_direct} holds.
\end{prop}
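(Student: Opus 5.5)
We must show that for $Y$ locally compact, the evaluation map $\ev_{Y,Z}$ is continuous at every point $(f,y) \in \mathcal C(Y,Z) \times Y$. We then derive the converse of Proposition~\ref{prop_adj_direct}: if $\widetilde\varphi : X \to \mathcal C(Y,Z)$ is continuous, then $\varphi$ is continuous.

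For continuity at $(f,y)$, fix an open set $U \subseteq Z$ with $f(y) \in U$. Since $f$ is continuous, $f^{-1}(U)$ is an open neighbourhood of $y$. Local compactness of $Y$ gives a compact neighbourhood $K$ of $y$ with $K \subseteq f^{-1}(U)$. Some care is needed here if "locally compact" only means that every point has one compact neighbourhood, since we need compact neighbourhoods inside a given open set. I will use the convention that $Y$ has a basis of compact neighbourhoods at each point. This holds automatically when $Y$ is Hausdorff, and it is the case relevant for the paper, where spaces are compact Hausdorff. If needed I would state that convention explicitly; this is the only delicate step.

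Then $\mathcal O_{K,U}$ is an open neighbourhood of $f$ in the compact-open topology, because $f(K) \subseteq U$. Let $W$ be the interior of $K$, which is an open neighbourhood of $y$. For $(g,y') \in \mathcal O_{K,U} \times W$ we get $g(y') \in g(K) \subseteq U$. Hence $\ev^{-1}(U)$ contains a product neighbourhood of $(f,y)$, so it is open, and $\ev_{Y,Z}$ is continuous.

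For the converse, given $\varphi : X \times Y \to Z$ with $\widetilde\varphi$ continuous, write
\[\varphi = \ev_{Y,Z} \circ (\widetilde\varphi \times \mathrm{id}_Y).\]
This is a composite of continuous maps, so $\varphi$ is continuous.
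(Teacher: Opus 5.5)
Your proof is correct and follows the paper's argument: you take a compact neighbourhood $K$ of $y$ inside $f^{-1}(U)$, observe that $\mathcal O_{K,U}$ times a neighbourhood of $y$ contained in $K$ lies in $\ev_{Y,Z}^{-1}(U)$, and then factor $\varphi = \ev_{Y,Z}\circ(\widetilde\varphi\times \mathrm{id}_Y)$. Your remark that one needs compact neighbourhoods inside arbitrary open neighbourhoods (automatic for Hausdorff $Y$) makes explicit an assumption the paper uses tacitly.
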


\begin{proof}
Let $U$ be open in $Z$, and let $(f,y) \in \ev_{Y,Z}^{-1}(U)$, which means that $f(y) \in U$. Since $f$ is continuous and $Y$ locally compact, there exists a compact neighbourhood $K$ of $y$ such that $f(K) \subseteq U$. Then $\mathcal O_{K,U} \times K$ is a neighbourhood of $(f,y)$ contained in $\ev_{Y,Z}^{-1}(U)$. This proved the continuity of $\ev_{Y,Z}$, and the rest follows from the equality $\varphi = \ev_{Y,Z} \circ (\widetilde \varphi \times id_Y)$.
\end{proof}

\begin{rmq}
For $y \in Y$, the map $\ev_y : \mathcal C(Y,Z) \rightarrow Z$ sending $f$ to $f(y)$ is always continuous, for any space $Y$. Indeed, $\ev_y^{-1}(U) = \mathcal O_{y,U}$ is open, for every open subset $U$ of $Z$.
\end{rmq}

\begin{rmq}
Let us mention that spaces for which this converse holds are called~\emph{exponentiable}. It is known that a topological space is exponentiable if and only if it is~\emph{core-compact}; this is a theorem of Day and Kelly \cite{Day-Kelly} (see also \cite{Isbell, Escardo} for a modern account). In particular, locally compact spaces are core-compact. Moreover, a Hausdorff space is core-compact if and only if it is locally compact.  
\end{rmq}

Recall that a continuous surjection $p : Y \twoheadrightarrow Z$ is called \emph{a quotient} is the induced map from $Y/\mathcal R_p$ to $Z$ is a homeomorphism (where $\mathcal R_p$ is the equivalence relation on $Y$ given by $y\mathcal R_p y'$ if and only if $p(y) = p(y')$).
\begin{prop}\label{prod_of_quotient_by_loc_cpct}
If $X$ is locally compact and $p : Y \twoheadrightarrow Z$ is a quotient, then the map $id_X \times p : X \times Y \twoheadrightarrow X \times Z$ is too.
\end{prop}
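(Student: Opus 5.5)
We have to show that if $X$ is locally compact and $p : Y \twoheadrightarrow Z$ is a quotient map, then $id_X \times p$ is a quotient map. The plan is to run the classical Whitehead argument directly: $id_X \times p$ is a continuous surjection, so it suffices to take $W \subseteq X \times Z$ with $(id_X \times p)^{-1}(W)$ open in $X \times Y$ and prove that $W$ is open. It is convenient to set $\widetilde W := (id_X \times p)^{-1}(W)$. Fix $(x_0, z_0) \in W$ and choose some $y_0 \in p^{-1}(z_0)$, so that $(x_0, y_0) \in \widetilde W$.

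\textbf{Step 1: a compact neighbourhood in $X$.} Since $\widetilde W$ is open and $X$ is locally compact, we find a compact neighbourhood $K$ of $x_0$ with $K \times \{y_0\} \subseteq \widetilde W$. Here we use local compactness in the form ``every neighbourhood contains a compact neighbourhood'' (Hausdorffness of the relevant spaces can be assumed if necessary, or that definition of local compactness adopted). Concretely, first take an open box $V \times U \ni (x_0,y_0)$ inside $\widetilde W$, and then a compact neighbourhood $K \subseteq V$ of $x_0$.

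\textbf{Step 2: a saturated open set in $Y$.} Define
\[ U_K := \{ y \in Y \mid K \times \{y\} \subseteq \widetilde W \}. \]
This set is $p$-saturated: $\widetilde W$ is a union of fibres of $id_X \times p$, so membership of $(k,y)$ in $\widetilde W$ depends only on $p(y)$. It is also open, by the tube lemma: $K$ is compact and $\widetilde W$ is open, so for each $y \in U_K$ there is an open $O \ni y$ with $K \times O \subseteq \widetilde W$. Equivalently, $U_K$ is the preimage of the open set $\mathcal O_{K,\widetilde W}$ under the map $Y \to \mathcal C(X, X\times Y)$, $y \mapsto (x \mapsto (x,y))$, which is continuous by Proposition~\ref{prop_adj_direct}.

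\textbf{Step 3: conclusion.} Since $U_K$ is open and saturated and $p$ is a quotient map, $p(U_K)$ is open in $Z$. Then $\mathrm{int}(K) \times p(U_K)$ is an open neighbourhood of $(x_0,z_0)$, and it is contained in $W$ because $K \times U_K \subseteq \widetilde W$. Hence $W$ is open. The only delicate point is Step 1, namely the choice of the compact neighbourhood $K$ inside an open set; everything else is formal. The statement required by Lemma~\ref{quotient_of_top_algebras}, that $(X/\mathcal R)^n$ is a quotient of $X^n$, then follows by factoring $p \times p = (p \times id)\circ(id \times p)$ and inducting on $n$, using the local compactness of $X$ and $X/\mathcal R$.
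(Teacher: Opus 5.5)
Your proof is correct. It is the classical Whitehead argument, and each step checks out: $U_K$ is saturated because $\widetilde W$ is a union of fibres of $id_X \times p$, it is open by the tube lemma, and $\mathrm{int}(K) \times p(U_K)$ is an open neighbourhood of $(x_0,z_0)$ contained in $W$.

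The paper argues at the level of universal properties rather than open sets. It takes an arbitrary continuous $f : X \times Y \to T$ that is constant on the fibres of $id_X \times p$, and curries it into $\widetilde f : Y \to \mathcal C(X,T)$, which is continuous by Proposition~\ref{prop_adj_direct}. It then factors $\widetilde f$ through the quotient $p$ to get $g : Z \to \mathcal C(X,T)$. Finally it uncurries $g$; the result is continuous because evaluation is continuous for locally compact $X$ (Proposition~\ref{prop_adj_converse}).

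Your argument is essentially this one specialised to $T$ the Sierpi\'nski space and then unpacked. Your set $U_K$ is the preimage of $\mathcal O_{K,\widetilde W}$, as you observe. Your choice of a compact neighbourhood $K$ inside a box is exactly the step that makes evaluation continuous.

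What the paper's route buys is conceptual economy. It presents the result as a formal consequence of the exponential law, i.e.\ of $X \times (-)$ having a right adjoint, which is the form that generalises to exponentiable spaces (cf.\ the remark after the proposition). It also reuses the two adjunction propositions already established.

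What your route buys is self-containedness. It needs no function-space topology, and it shows precisely where compactness (the tube lemma) and local compactness (choosing $K$ inside $V$) enter.

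On your caveat in Step~1: the paper uses the same convention that every neighbourhood contains a compact neighbourhood (see the proofs of Proposition~\ref{prop_adj_converse} and Theorem~\ref{Arens}). So neither proof needs a Hausdorff hypothesis.
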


\begin{proof}
Let us show that $id \times p$ satisfies the universal property of the quotient of $X \times Y$ by $\mathcal R_{id \times p}$. Let $T$ be a topological space, and let $f : X \times Y \rightarrow T$ be a continuous map. Since $X$ is locally compact, $f$ factors uniquely through a map $\overline f : X \times Z \rightarrow T$ if and only if its adjoint $\widetilde f : Y \rightarrow \mathcal C(X, T)$ factors uniquely through a map $g: Z \rightarrow \mathcal C(X, T)$ (then $\overline f$ is the adjoint of $g$). But one easily checks that $\mathcal R_f$ contains $\mathcal R_{id \times p}$ if and only if $\widetilde f$ factors through $Y/\mathcal R_p \cong Z$, giving such a $g$, whence the conclusion.
\end{proof}

\begin{rmq}
In fact, the preservation of quotients by $X \times (-)$ is another characterisation of exponentiable spaces~\cite{Isbell}.
\end{rmq}

We are mostly interested in:

\begin{cor}\label{prod_of_quotients}
Let $X$ and $Y$ be locally compact spaces. Let $\mathcal R$ (resp.~$\mathcal S$) be an equivalence relation on $X$ (resp.~on $Y$). If $Y$ and $X/\mathcal R$ are locally compact (the latter being true for instance if $X$ is compact or if $X/\mathcal R$ is discrete), then the surjection $X \times Y \twoheadrightarrow X/\mathcal R \times Y/\mathcal S$ is a quotient.
\end{cor}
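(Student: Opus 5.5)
The plan is to reduce the statement to Proposition~\ref{prod_of_quotient_by_loc_cpct} by factoring the surjection $X \times Y \twoheadrightarrow X/\mathcal R \times Y/\mathcal S$ as a composite of two maps:
\[X \times Y \xrightarrow{\ p \times id_Y\ } X/\mathcal R \times Y \xrightarrow{\ id_{X/\mathcal R} \times q\ } X/\mathcal R \times Y/\mathcal S,\]
where $p : X \twoheadrightarrow X/\mathcal R$ and $q : Y \twoheadrightarrow Y/\mathcal S$ are the canonical projections. Both are quotients by definition of the quotient topology. A composite of quotient maps is a quotient map: a subset of the target is open if and only if its preimage under the composite is open. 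So it suffices to check that each factor is a quotient.

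For the first factor, $p \times id_Y$ is isomorphic, via the swap homeomorphism, to $id_Y \times p : Y \times X \twoheadrightarrow Y \times X/\mathcal R$. Since $Y$ is locally compact by hypothesis, Proposition~\ref{prod_of_quotient_by_loc_cpct} applies and shows this is a quotient. For the second factor, $id_{X/\mathcal R} \times q$ is a quotient by Proposition~\ref{prod_of_quotient_by_loc_cpct} again, this time using that $X/\mathcal R$ is locally compact. Composing gives the claim. Notice that local compactness of $X$ itself is not even used in this argument; it only enters through the parenthetical criteria.

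It remains to justify those criteria. If $X/\mathcal R$ is discrete, it is trivially locally compact, since singletons are compact neighbourhoods. If $X$ is compact, $X/\mathcal R$ is compact as a continuous image. This is where I expect the only real subtlety: the paper's convention for ``locally compact'' matters. If the convention requires only that every point have a compact neighbourhood, then compactness of $X/\mathcal R$ suffices at once. If it requires a basis of compact neighbourhoods, the claim needs some Hausdorffness of $X/\mathcal R$. That holds in every application of the paper, since there the quotient is taken by a closed congruence or by a compact group action (Proposition~\ref{Quotient_of_Stone_space}). Moreover, in the proof of Proposition~\ref{prod_of_quotient_by_loc_cpct} only exponentiability of the locally compact factor is used (via Proposition~\ref{prop_adj_converse}), and for that the weak form of local compactness should suffice. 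I would therefore state the parenthetical with that interpretation, or add a Hausdorff proviso.
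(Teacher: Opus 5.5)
Your argument is the paper's proof. It uses the same factorisation through $X/\mathcal R \times Y$, then Proposition~\ref{prod_of_quotient_by_loc_cpct} with the locally compact factors $Y$ and then $X/\mathcal R$, and finally the fact that composites of quotients are quotients. Your order of composition is the type-correct one; the paper's display lists the two factors in diagrammatic order. You are also right that local compactness of $X$ is never used.

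One claim in your last paragraph is false: the weak form of local compactness does not give exponentiability. The proof of Proposition~\ref{prop_adj_converse} needs a compact neighbourhood of $y$ inside the prescribed open set $f^{-1}(U)$, i.e.\ it uses a basis of compact neighbourhoods. Compact spaces need not be exponentiable. Take $W = \mathbb Q \sqcup \{\infty\}$, whose open sets are the open sets of $\mathbb Q$ together with $W$ itself. It is compact, but it has the non-locally-compact space $\mathbb Q$ as an open subspace. Open subspaces of core-compact spaces are core-compact, so $W$ is not core-compact.

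In fact the parenthetical ``if $X$ is compact'' genuinely fails without Hausdorffness. First, $W$ is a quotient of a compact Hausdorff space $X$: one-point compactify a locally compact Hausdorff space mapping quotiently onto $\mathbb Q$ with non-compact fibres (e.g.\ the disjoint union of all convergent sequences of $\mathbb Q$), and send the new point to $\infty$. Now take $Y = \mathbb R$ and $q \colon \mathbb R \twoheadrightarrow \mathbb R/{\sim}$ collapsing $\mathbb Z$ to a point. If $p \times q = (id_W \times q) \circ (p \times id_{\mathbb R})$ were a quotient, then so would be $id_W \times q$. Hence so would its restriction over the open set $\mathbb Q \times (\mathbb R/{\sim})$, which is the classical non-quotient $id_{\mathbb Q} \times q$.

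So of your two options only the second works: add the proviso that $X/\mathcal R$ be Hausdorff. The paper's one-line ``$X/\mathcal R$ is compact, so it is locally compact'' has exactly this gap. It is harmless in the paper's applications, where the quotients are discrete or profinite (Proposition~\ref{Quotient_of_Stone_space}).
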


\begin{proof}
Notice that if $X$ is compact, then $X/\mathcal R$ is too, so it is locally compact. The result then follows from Proposition~\ref{prod_of_quotient_by_loc_cpct} and from the fact that the composition of quotients is a quotient, by writing $p \times q = (p \times id_Y) \circ (id_{X/\mathcal R} \times q)$, where $p : X \twoheadrightarrow X/\mathcal R$ and $q : Y \twoheadrightarrow Y/\mathcal S$ are the canonical projection.
\end{proof}

\subsection{Working in a convenient category of topological spaces}

Not every topological space is exponentiable: the converse of Proposition~\ref{prop_adj_direct} does not hold in general. In particular, if we want to show that a an action $G \times X \rightarrow X$ of a topological group $G$ on a space $X$ is continuous, it is not enough to show that the corresponding map $G \rightarrow \Homeo(X)$ is continuous. This is a problem for actions defined using some universal property of $G$, such as the action of its enveloping group on a quandle. There are two main ways of fixing this:
\begin{itemize}[itemsep = -3pt, topsep= 3pt]
\item Focus on locally compact spaces~;
\item Work in a convenient category of topological spaces, such as the category of compactly generated spaces.
\end{itemize}
Since our focus is on profinite spaces, we can even assume that all our spaces are compact (even compact Hausdorff), so we have chosen the first point of view. However, some tools developed in the paper pertain to the general theory of topological quandles, so we briefly discuss the second approach, which would certainly be preferable for developing further the theory of topological quandles.

\begin{rmq}
The subtleties discussed at the end of~\cref{sec_congruences} are partly solved by this change of perspective: in the category of compactly generated spaces, if $X$ is an internal $\mathbb T$-algebra and $\mathcal R$ is a congruence on it, $(X/\mathcal R)^n$ is always a quotient of $X^n$ (see for instance \cite[Prop.~2.1 and Prop.~2.20]{Strickland}), so a quotient of an internal $\mathbb T$-algebra is always an internal $\mathbb T$-algebra.
\end{rmq}

\subsubsection{Algebras in cartesian-closed categories}\label{Alg_cartesian_closed}

Some of the main constructions of quandle theory are instances of the construction of adjoints to algebraic functors. These constructions work well in the general context of \emph{cartesian-closed categories}. The latter do not include the category of topological spaces, but they include convenient categories of topological spaces, such as the category of \emph{compactly generated spaces}.

If $\mathcal C$ is a category with finite products, and $\mathbb T$ is a Lawvere theory, then we can consider $\mathcal C$-valued $\mathbb T$-algebras, which are also called \emph{internal $\mathbb T$-algebras} in $\mathcal C$. Then, if $\varphi : \mathbb S \rightarrow \mathbb T$ is a morphism of Lawvere theories (in the obvious sense), we get an induced functor $\varphi^*$ from internal $\mathbb T$-algebras to internal $\mathbb S$-algebras, endowing a $\mathbb T$-algebra $X$ with the $\mathbb S$-algebra structure given by $\omega_X = \varphi(\omega)_X$, for $\omega \in \mathbb S$. A functor of the form $\varphi^*$ is called an \emph{algebraic functor}.

If $\mathbb T$ is interpreted as a category in the usual way, internal $\mathbb T$-algebras in $\mathcal C$ identify with product-preserving functors from $\mathbb T$ to $\mathcal C$. Then a morphism of Lawvere theories $\varphi : \mathbb S \rightarrow \mathbb T$ is just a product-preserving functor, and $\varphi^*$ identifies with precomposition by $\varphi$.

Algebraic functors often have left adjoints. For instance, we have the:
\begin{theo}
If $\mathcal C$ is cartesian-closed and cocomplete, then for every morphism $\varphi : \mathbb S \rightarrow \mathbb T$ of Lawvere theories, $\varphi^*$ has a left adjoint, given by left Kan extension along $\varphi$.
\end{theo}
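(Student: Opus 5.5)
The plan is the classical construction of left adjoints to algebraic functors as pointwise left Kan extensions, carried out inside $\mathcal C$. We view internal $\mathbb T$-algebras as finite-product-preserving functors $\mathbb T \rightarrow \mathcal C$; then $\varphi^*$ is precomposition with $\varphi$. Since $\mathcal C$ is cocomplete and $\mathbb S$, $\mathbb T$ are small, any functor $X : \mathbb S \rightarrow \mathcal C$ has a pointwise left Kan extension
\[(\mathrm{Lan}_\varphi X)(t) = \int^{s \in \mathbb S} \mathbb T(\varphi(s), t) \cdot X(s),\]
where $\cdot$ denotes the copower (coproduct of copies) in $\mathcal C$. On all functor categories, $\mathrm{Lan}_\varphi$ is left adjoint to $\varphi^*$; this is formal and only uses cocompleteness.

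The whole content is therefore to check that $\mathrm{Lan}_\varphi X$ preserves finite products whenever $X$ does. Granting this, the adjunction restricts to the full subcategories of product-preserving functors, that is, of internal algebras, and we are done. To prove it, we write the objects of a Lawvere theory as natural numbers $n$ with $n = 1^n$. We must show that the canonical map $(\mathrm{Lan}_\varphi X)(m+n) \rightarrow (\mathrm{Lan}_\varphi X)(m) \times (\mathrm{Lan}_\varphi X)(n)$ is an isomorphism.

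Here cartesian-closedness is the key ingredient: since $A \times (-)$ is a left adjoint, it preserves colimits. Hence the product of two coends is the coend over $\mathbb S \times \mathbb S$ of the products, namely
\[\int^{(s,s')} \bigl(\mathbb T(\varphi s, m) \times \mathbb T(\varphi s', n)\bigr) \cdot \bigl(X(s) \times X(s')\bigr) \cong \int^{(s,s')} \mathbb T(\varphi(s+s'), m+n) \cdot X(s+s').\]
This uses that $\varphi$ and $X$ preserve products and that $\mathbb T(k, m+n) = \mathbb T(k,m) \times \mathbb T(k,n)$. It then remains to identify this coend over $\mathbb S \times \mathbb S$ with the coend over $\mathbb S$ defining $(\mathrm{Lan}_\varphi X)(m+n)$. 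To do so, we use that the sum functor $+ : \mathbb S \times \mathbb S \rightarrow \mathbb S$ is final in the appropriate sense: it has the diagonal $s \mapsto (s,s)$ as a right adjoint, since $\mathbb S(s+s', u) \cong \mathbb S(s,u) \times \mathbb S(s',u)$, because $+$ is the product in $\mathbb S^{op}$ (maps $k \rightarrow m$ being $m$-tuples of $k$-ary operations). Coends are invariant under reindexing along such adjoints, which gives the isomorphism. The nullary case, $(\mathrm{Lan}_\varphi X)(0) \cong 1$, is handled similarly: the coend reduces to evaluation at $0$, which is initial in $\mathbb S^{op}$, so the coend equals $X(0) = 1$.

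I expect the main obstacle to be this cofinality and bookkeeping step, in particular getting the variances of the coend right and checking that the comparison map obtained is indeed the canonical one. An alternative that avoids coends is to describe $\mathrm{Lan}_\varphi X$ as a reflexive coequalizer of free $\mathbb T$-algebras $F_{\mathbb T}(U_{\mathbb S}X)$. This requires the existence of free algebras, which follows because $\coprod_k \mathbb T(k,1) \cdot Y^k$ exists and is compatible with products by cartesian-closedness, together with the fact that finite products commute with reflexive coequalizers in a cartesian-closed category. I would try the coend argument first.
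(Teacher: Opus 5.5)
Your overall plan is the right one and is the paper's: distribute the product over the two coends using that $A\times(-)$ preserves colimits, then run a cofinality argument built on the diagonal. But the step you flagged as the main obstacle is wrong as written, in two places.

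First, the displayed isomorphism already fails at the level of weights. The identity $\mathbb T(k,m+n)\cong\mathbb T(k,m)\times\mathbb T(k,n)$ only splits the target. So $\mathbb T(\varphi(s+s'),m+n)\cong\mathbb T(\varphi s+\varphi s',m)\times\mathbb T(\varphi s+\varphi s',n)$, which is much larger than $\mathbb T(\varphi s,m)\times\mathbb T(\varphi s',n)$.

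Second, the variance of your adjunction is reversed. By your own description of morphisms, $+$ is the product in $\mathbb S$, so $\mathbb S(u,s+s')\cong\mathbb S(u,s)\times\mathbb S(u,s')$, i.e.\ $\Delta\dashv +$, not $+\dashv\Delta$. In any case, finality only lets you reindex colimits of covariant diagrams; it does not make coends with mixed-variance integrands invariant.

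A concrete check: let $\mathbb S=\mathbb T$ be the theory of sets, $\varphi=\mathrm{id}$, $\mathcal C=\mathrm{Set}$, $X$ constant at a point, $m=1$, $n=0$. Your left-hand coend is a point, as is $(\mathrm{Lan}_\varphi X)(1)=X(1)$. However, $\mathbb S(s+s',1)\cong\mathbb S(s,1)\sqcup\mathbb S(s',1)$ naturally in $(s,s')$, so your right-hand coend over $\mathbb S\times\mathbb S$ has two points. Both the displayed isomorphism and the reindexing step are therefore false, even though the composite identification you are after is true.

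The fix is to keep the weight $W(s,s')=\mathbb T(\varphi s,m)\times\mathbb T(\varphi s',n)$ and push the reindexing onto it:
\begin{itemize}
\item By co-Yoneda, $X(s+s')\cong\int^u\mathbb S(u,s+s')\cdot X(u)$.
\item By Fubini, your left-hand side becomes $\int^u\bigl(\int^{(s,s')}\mathbb S(u,s+s')\times W(s,s')\bigr)\cdot X(u)$.
\item Using $\mathbb S(u,s+s')\cong\mathbb S(u,s)\times\mathbb S(u,s')$ and co-Yoneda in each variable, the inner coend becomes $\mathbb T(\varphi u,m)\times\mathbb T(\varphi u,n)\cong\mathbb T(\varphi u,m+n)$.
\end{itemize}
This yields exactly $(\mathrm{Lan}_\varphi X)(m+n)$.

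This is the coend version of the paper's argument (after Menni--Tholen). There, one writes $(\mathrm{Lan}_\varphi X)(t)$ as a colimit over the comma category $\varphi/t$. One then shows that $(\varphi/m)\times(\varphi/n)\to\varphi/(m+n)$, $((s,f),(s',f'))\mapsto(s+s',f\times f')$, is final. The reason is that for an object $(u,(g_1,g_2))$ of $\varphi/(m+n)$, the pair $((u,g_1),(u,g_2))$ together with the diagonal $u\to u+u$ is initial in the relevant comma category. So the diagonal is indeed the key, but it enters through the comma categories (equivalently, the weights), not through an adjunction on $\mathbb S$ alone. Your nullary case is fine.
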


\begin{proof}[Sketch of proof]
This classical result in Lawvere theory can be deduced for instance from the general machinery of~\cite{Lack-Rosicky}. We sketch a rather direct argument from~\cite{Menni-Tholen}.
It is enough to show that left Kan extension along the product-preserving functor $\varphi$ sends product preserving functors to product preserving functors. Writing $\mathrm{Lan}_\varphi(x)$ as a colimit over the comma category $\varphi/x$, and using that $X \times (-)$, being a left adjoint, preserves colimits in $\mathcal C$, we see that it boils down to showing that the canonical functor $(\varphi/x) \times (\varphi/y) \rightarrow \varphi/(x \times y)$ is \emph{final} for every $x,y \in \mathbb T$. For a short explicit proof of this latter fact, see~\cite[Lem.~3.4]{Menni-Tholen}.
\end{proof}

\begin{ex}
Taking the associated conjugation quandle gives a functor $c$ from the category of internal groups in $\mathcal C$ to the category of internal quandles in $\mathcal C$, and this is an algebraic functor. It thus has a left adjoint $G(-)$, whenever $\mathcal C$ is cartesian-closed and cocomplete.
\end{ex}

\begin{ex}\label{generalized_pi0}
The functor $\mathrm{triv}$ from $\mathcal C$ to internal quandles which endows an object of $\mathcal C$ with the trivial (internal) quandle structure is algebraic, induced by the morphism from the theory of quandle to the trivial Lawvere theory sending $\qd$ to the projection on the second factor. It thus has a left adjoint $\pi_0(-)$, whenever $\mathcal C$ is cartesian-closed and cocomplete.
\end{ex}

\begin{rmq}\label{G(Q)^ab_generalized}
The inclusion $\iota$ of internal abelian groups into internal groups and the forgetful functor $\omega$ from internal abelian groups to $\mathcal C$ are both algebraic functors, whose left adjoints are denoted, respectively, by $(-)^{\mathrm{ab}}$ and $\Z[-]$. Taking the left adjoint of the relation $\mathrm{triv} \circ \omega = c \circ \iota$ (true on set-theoretic algebra, hence at the level of morphisms of Lawvere theories) gives a natural isomorphism $G(-)^{\mathrm{ab}} \cong \Z[\pi_0(-)]$.
\end{rmq}

In this context, we can also speak of the adjoint action of $G(Q)$ on $Q$, for an internal quandle $Q$, which is obtained by internalizing the usual construction. Namely, for an internal quandle $Q$, the adjoint of $\qd : Q^2 \rightarrow Q$ gives a map $\sigma : Q \rightarrow [Q,Q]$. Composition makes $[Q,Q]$ into an internal monoid, and $\sigma$ factors through its group $G$ of invertible elements, defined via the usual pullback construction (generalizing taking the first projection of the set of pairs $(x,y)$ such that $xy = yx = 1$). The corresponding map $Q \rightarrow G$ is an internal quandle morphism to $c(G)$, so its extends uniquely to an internal group morphism $G(Q) \rightarrow G \subseteq [Q,Q]$. The adjoint of the latter map gives an internal group action $G(Q) \times Q \rightarrow Q$.

\begin{rmq}
In particular, this extends Fact~\ref{continuous_action_of_G(Q)} to the compactly generated setting.
\end{rmq}

\begin{rmq}
If there is a good notion of the image of an internal group morphism in the category $\mathcal C$, then the group of inner automorphisms of $Q$ can be defined as the image of the above morphism $G(Q) \rightarrow G$. 
\end{rmq}

A closely related construction is the classical adjunction between racks (resp.~quandles) and crossed sets (resp.~idempotent crossed sets), which also extends to this general setting. In fact, the above construction of left adjoints to algebraic functors also works for multi-sorted Lawvere theories, so it applies to the forgetful functor $\mathcal U$ from internal (idempotent) crossed-sets in $\mathcal C$ to internal quandles. This shows that it has a left adjoint $\mathcal F$. With a bit more work, one can show that $\mathcal F(Q) = (G(Q), Q, \eta)$, where the (internal) action of $G(Q)$ on $Q$ is the one described above.

\begin{ex}
The category $\mathcal A(\mathcal C)$ of internal group actions (that is, of pairs $(G,X)$, where $G$ is an internal group, $X$ is an object of $\mathcal C$, with a given action of $G$ on $X$) is also a category of algebras on a $2$-sorted Lawvere theory. On the one hand, there is an algebraic functor from $\mathcal A(\mathcal C)$ to $\mathcal C$ sending $(G,X)$ to $G \times X$, endowed with the quandle law described by the formula $(g,x) \qd (h,y) = (ghg^{-1}, g \cdot y)$. One can show that its left adjoint is given by sending a quandle to its adjoint action. On the other hand, the functor $\mathcal C \rightarrow \mathcal A(\mathcal C)$ endowing an object with the trivial action of the trivial group is clearly algebraic, so it has a left adjoint. For an object $(G,X)$ of $\mathcal A(\mathcal C)$, we can \emph{define} $X/G$ as its image by this adjoint. Combining this with the adjunction from Example~\ref{generalized_pi0}, a reasoning similar to the one of Remark~\ref{G(Q)^ab_generalized} gives, for any internal quandle $Q$, a canonical isomorphism $\pi_0(Q) \cong Q/G(Q)$, natural in $Q$.
\end{ex}

\subsubsection{Topological quandles and topological peripheral systems}\label{sec_top_Pph}

We now examine how the set-theoretic constructions of~\cite[\S~4.2]{Darne7} fare in a topological context. These include the link between crossed sets and peripheral systems, which is somewhat different in nature than the previous adjunctions, since peripheral systems are not algebras over some Lawvere theory. We thus restrict our attention to a nice category of topological spaces, such as the category $\mathcal T$ of compactly generated spaces. In particular, $\mathcal T$ is cartesian-closed, so the constructions from \cref{Alg_cartesian_closed} apply.

In this topological context, we get more explicit descriptions of certain constructions. For instance, if $Q$ is an internal quandle in $\mathcal T$, then its enveloping group $G(Q)$ is obtained by endowing the set-theoretic $G(Q)$ by the topology coming from its presentation as a quotient of the free monoid on $Q \cup Q^{-1}$ (seen as a coproduct of finite products of copies of $Q$ and $Q^{-1}$, where $Q^{-1}$ is a copy of $Q$, in the category $CG$).

\begin{rmq}
This construction of the enveloping group is not available in the category of all topological spaces, since a quotient of a topological monoid is not necessarily a topological monoid. However, in $\mathcal T$, this does define an internal group $G(Q)$ having the required universal property.
\end{rmq}

The following definitions are obvious lifts of their set-theoretic counterparts to the topological context (here we call $\mathcal T$-space an object of $\mathcal T$, and $\mathcal T$-structure an internal structure in $\mathcal T$):
\begin{defi}\label{def_Ob(TPph)}
A \emph{$\mathcal T$-peripheral system} $\left(G, (H_i, z_i)_{i \in I}\right)$ is the data of a $\mathcal T$-group $G$, a $\mathcal T$-space $I$, a continuous map $i \mapsto z_i$ from $I$ to $G$ and, for each $i \in I$, a subgroup $H_i$ of $G$ containing $z_i$ in its center.
\end{defi}

\begin{defi}\label{def_Mor(TPph)}
A \emph{morphism} $(\varphi, u, [g]) : \left(G, (H_i, z_i)_{i \in I}\right) \rightarrow \left(G', (H_j', z_j')_{j \in J}\right)$ of $\mathcal T$-peripheral system is the data of a continuous group morphism $\varphi : G \rightarrow G'$, a continuous map $u: I \rightarrow J$, and the class $[g]$ of a continuous map $g: I \rightarrow G'$, under the following equivalence relation: two continuous maps $g,k : I \rightarrow G'$ are equivalent if $k_i^{-1}g_i \in H_{u(i)}'$ for every $i \in I$. This data has to satisfy conditions~\eqref{condition_morphisms_Pph}.

Composition of morphisms is still given by formula~\eqref{composition_morphisms_Pph}. This defines a category $\mathcal{TP}ph$ of $\mathcal T$-peripheral systems.
\end{defi}

Given a $\mathcal T$-peripheral system $P = \left(G, (H_i,z_i)_{i \in I} \right)$, we can construct a $\mathcal T$-crossed set (that is a crossed set internal to $\mathcal T$) $\mathcal X(P) = (G, X, \partial)$ by taking $X = G \times I/{\sim}$, where the equivalence relation is defined by:
\[(gh,i) \sim (g,i) \text{ if } h \in H_i.\]
The action of $g \in G$ on $X$ is given by the formula $g \cdot (g',i) = (gg',i)$, and the map $\partial$ is given by $\partial(g,i) = g z_i g^{-1}$. Crucially, this construction relies on the fact that a quotient of a compactly generated space is again compactly generated, and that $G \times (-)$ preserves such quotients (see for instance \cite[Prop.~2.1 and 2.20]{Strickland}), so that  $G \times X$ is a quotient of $G \times (G \times I)$, hence the above formula defines a continuous group action $G \times X \rightarrow X$.

The underlying set-theoretic construction is exactly the functor $\mathcal X$ from~\cite[\S~4.2]{Darne7}. Notice that, as in the set-theoretic version, the functor $\mathcal X$ produces $\mathcal T$-crossed sets having a continuous section, since $i \mapsto (1,i)$ is a continuous section of the projection $G \times I/{\sim} \twoheadrightarrow I$ given by quotienting by $G$. In the other way around, the construction $\mathcal S$ from~\cite[\S~4.2]{Darne7} is easily adapted to $\mathcal T$-crossed sets having a continuous section. 

The constructions $\mathcal X$ and $\mathcal S$ are functorial, as is easily deduced from the fact that the underlying set-theoretic construction are. Moreover, $\mathcal X$ is left adjoint to $\mathcal S$, since the set-theoretic unit and the counit morphisms are easily seen to be continuous. Furthermore, the unit is an isomorphism $\mathrm{Id} \cong \mathcal S \circ \mathcal X$ since, given a $\mathcal T$-peripheral system as above, each $H_i$ is clearly the stabilizer of $(1,i)$ in the action of $G$ on $G \times I/{\sim}$. However, the counit is not an isomorphism: given a $\mathcal T$-crossed set with a section $(G,X,\delta,\sigma)$, the canonical map $(G \times (X/G))/{\sim} \rightarrow X$ induced by the section $\sigma : X/G \rightarrow X$ is only a continuous bijection is general, not a homeomorphism.

Hence, even if the rest of the constructions from~\cite[\S~4.2]{Darne7} can be easily adapted to a topological context (restricting to quandles endowed with a continuous section of $Q \twoheadrightarrow Q/G(Q)$, and to crossed sets $(G, X, \partial)$ where $G$ is generated by $\partial(X)$), we cannot in general recover a $\mathcal T$-quandle with a continuous section from the associated $\mathcal T$-peripheral system.

However, the construction of quandles from peripheral systems is still possible in certain variants of this situation, as we have seen in the profinite case in~\cref{sec_ProPph}. There, the relevant continuous bijection was a homeomorphism because of a classical compactness argument (see the proof of Proposition~\ref{G(-)_adj_to_c(-)}). In fact, the constructions of~\cref{sec_ProPph} could certainly be fitted in a more categorical framework similar to the one from~\cite[\S~4.2]{Darne7}, using profinite crossed set as an intermediate category, endowed with the relevant profinite completion functors.

\section{Actions by homeomorphisms}

For the convenience of the reader, we gather here some classical facts about actions of topological groups on profinite spaces.

\subsection{Groups of homeomorphisms}\label{Homeo(X)}

Let $X$ be a compact space, and let $\Homeo(X)$ be the set of its homeomorphisms, endowed with the usual compact-open topology, which is the topology generated by the open sets 
\[\mathcal O_{K, U} := \{f \in \Homeo(X)\ |\ f(K) \subseteq U\},\]
for $U$ open and $K$ compact in $X$. If $K = \{x\}$, we denote $\mathcal O_{K, U}$ by $\mathcal O_{x, U}$.

\begin{theo}[{\cite[Thm.~3]{Arens}}]\label{Arens}
If $X$ is compact Hausdorff, then $\Homeo(X)$ is a topological group, and the action of $\Homeo(X)$ on $X$ is continuous. 
\end{theo}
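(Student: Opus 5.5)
I will prove the two claims of Arens' theorem separately, using only the compact-open topology and the fact that a compact Hausdorff space $X$ is normal; in particular, every point has a basis of closed neighbourhoods, and any compact subset contained in an open set $U$ admits an open neighbourhood whose closure still lies in $U$.

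First, continuity of the action, i.e.\ of the evaluation map $\Homeo(X) \times X \to X$. This is immediate from Proposition~\ref{prop_adj_converse}, since $X$ is compact Hausdorff, hence locally compact, and $\Homeo(X)$ carries the subspace topology of $\mathcal C(X,X)$.

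Second, continuity of composition $(f,g) \mapsto f \circ g$. Let $f\circ g \in \mathcal O_{K,U}$. Then $g(K)$ is a compact subset of the open set $f^{-1}(U)$, so by normality there is an open $W$ with $g(K) \subseteq W \subseteq \overline W \subseteq f^{-1}(U)$. Here $\overline W$ is compact and $f(\overline W) \subseteq U$. Hence $\mathcal O_{\overline W, U} \times \mathcal O_{K, W}$ is a neighbourhood of $(f,g)$ whose composites all lie in $\mathcal O_{K,U}$.

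Third, continuity of inversion, which I expect to be the only real obstacle. The key observation is that in a compact Hausdorff space, compact sets are exactly the closed sets. Thus $f^{-1}(K) \subseteq U$ is equivalent to $f(X - U) \subseteq X - K$, i.e.\ to $f \in \mathcal O_{X-U,\,X-K}$. Here $X-U$ is closed, hence compact, and $X-K$ is open. So the preimage of the subbasic set $\mathcal O_{K,U}$ under inversion is again subbasic, hence open, and inversion is continuous. This is where bijectivity of the homeomorphisms and compactness of $X$ are both used.

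Combining the three steps, multiplication and inversion are continuous, so $\Homeo(X)$ is a topological group, and its action on $X$ is continuous, as required.
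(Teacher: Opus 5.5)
Your proof is correct and follows essentially the same route as the paper's. Composition uses the same neighbourhood $\mathcal O_{\overline W, U}\times\mathcal O_{K,W}$ built from a compact set nested between $g(K)$ and $f^{-1}(U)$, and inversion uses the same identity $\mathcal O_{K,U}^{-1}=\mathcal O_{X-U,\,X-K}$. The only difference is cosmetic: you obtain continuity of the action by citing Proposition~\ref{prop_adj_converse}, whose proof is exactly the compact-neighbourhood argument the paper repeats inline.
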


\begin{proof}
Let us denote by $G$ the group $\Homeo(X)$ endowed with the compact-open topology. We first want to show that it is a topological group. Let us show that $\circ : G^2 \rightarrow G$ is continuous. It suffices to show that for $K$ compact and $U$ open in $X$, the subset $\circ^{-1}(\mathcal O_{K,U})$ is open in $G$. Let $g, f \in G$ such that $g \circ f \in \mathcal O_{K,U}$, which means that $f(K) \subseteq g^{-1}(U)$. In other words, the compact $f(K)$ is disjoint from the complement $g^{-1}(U)^c$ of $g^{-1}(U)$. Since $X$ is compact Hausdorff, the closed subset $g^{-1}(U)^c$ is compact, and the disjoint compact subsets $f(K)$ and $g^{-1}(U)^c$ can be separated by open subsets: there exist disjoint open sets $V$ and $W$ such that  $f(K) \subseteq V$ and $g^{-1}(U)^c \subseteq W$. Since $W^c$ is closed, $f(K) \subseteq V \subseteq \overline V \subseteq W^c \subseteq g^{-1}(U)$. But the closure $\overline V$ is closed in the compact Hausdorff space $X$, so it is compact. Finally, $\mathcal O_{\overline V, U} \times \mathcal O_{K, V}$ is a neighbourhood of $(g, f)$ contained in $\circ^{-1}(\mathcal O_{K,U})$, which shows that the latter is open, and that the composition is continuous. 

As for the continuity of the inverse map $(-)^{-1} : G \rightarrow G$, it follows directly from $\mathcal O_{K,U}^{-1} = \mathcal O_{U^c, K^c}$, noticing that $U^c$ is closed in $X$ compact, so it is compact, and $K^c$ is open.

Finally, let us show that the action $\mu: (f,x) \mapsto f(x)$ from $\Homeo(X) \times X$ to $X$ is continuous. Let $U$ be an open set of $X$, and $(f,x)$ such that $\mu(f,x) \in U$. The set $f^{-1}(U)$ is an open neighbourhood of $x$, that has to contain a compact neighbourhood $K$ of $x$. Then $ \mathcal O_{K,U} \times K$ is a neighbourhood of $(f,x)$ contained in $\mu^{-1}(U)$, whence the conclusion.
\end{proof}

\begin{lem}\label{Homeo(X)_tot_disc}
If $X$ is totally disconnected, then so is $\Homeo(X)$. 
\end{lem}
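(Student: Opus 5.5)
Here $X$ is compact, as fixed at the start of the section, so $\Homeo(X)$ carries the compact-open topology described above. The plan is to separate any two distinct homeomorphisms by a clopen subset of $\Homeo(X)$. Proving that $\Homeo(X)$ is Hausdorff alone would not be enough: Hausdorff spaces need not be totally disconnected, so we really need the clopen set.

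Let $f \neq g$ in $\Homeo(X)$, and pick $x \in X$ with $f(x) \neq g(x)$. Since $X$ is totally disconnected, there is a partition $X = U \sqcup V$ into two disjoint open sets with $f(x) \in U$ and $g(x) \in V$.

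Now look at the evaluation map $\ev_x : \Homeo(X) \rightarrow X$, $h \mapsto h(x)$. It is continuous, since $\ev_x^{-1}(W) = \mathcal O_{x,W}$ is a subbasic open set for every open $W$. Hence
\[\Homeo(X) = \ev_x^{-1}(U) \sqcup \ev_x^{-1}(V) = \mathcal O_{x,U} \sqcup \mathcal O_{x,V}\]
is a partition of $\Homeo(X)$ into two disjoint open subsets. We have $f \in \mathcal O_{x,U}$ and $g \in \mathcal O_{x,V}$, so $f$ and $g$ are separated by a clopen partition. This gives total disconnectedness, in the equivalent form recalled before Fact~\ref{res_finite_are_tot_disconnected}.

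No step is a real obstacle. The only point needing care is that we use the characterisation of total disconnectedness by clopen partitions; this is the definition adopted in the paper. In fact the argument shows slightly more: $\Homeo(X)$ embeds continuously into $X^X$ with the product topology via the evaluations, and the clopen sets we produced are pulled back from that embedding.
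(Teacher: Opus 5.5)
Your argument is correct and is essentially the paper's own proof: pick $x$ with $f(x)\neq g(x)$, separate $f(x)$ and $g(x)$ by a partition $X=U\sqcup V$ into open sets, and note that $\Homeo(X)=\mathcal O_{x,U}\sqcup\mathcal O_{x,V}$ then separates $f$ from $g$. Compactness of $X$ plays no role in this step, since singletons are always compact. Like the paper, you rely on the clopen-separation characterisation of total disconnectedness recalled in the paper.
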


\begin{proof}
Suppose that $X$ is totally disconnected. Let $f, g \in \Homeo(X)$ such that $f \neq g$. Then there is some $x \in X$ satisfying $f(x) \neq g(x)$. Let $X = U \sqcup V$ be a partition of $X$ into disjoint open sets such that $f(x) \in U$ and $g(x) \in V$. Then $\Homeo(X) = \mathcal O_{x, U} \sqcup \mathcal O_{x, V}$ is a decomposition of $\Homeo(X)$ into two disjoint open sets, and $f \in \mathcal O_{x, U}$, whereas $g \in \mathcal O_{x, V}$. 
\end{proof}

\begin{rmq}
The same argument shows that $\Homeo(X)$ (and in fact $\mathcal C(T,X)$, for any space $T$) is Hausdorff whenever $X$ is.
\end{rmq}

Let $\mathcal R \subseteq X^2$ be an equivalence relation on a \emph{compact Hausdorff} space $X$. Let $\Homeo(X, \mathcal R)$ be the subgroup of $\Homeo(X)$ whose elements are the homeomorphisms $f$ such that both $f \times f$ and its inverse preserve $\mathcal R$.

\begin{lem}\label{Homeo(X,R)_closed}
If $\mathcal R$ is closed in $X^2$, then $\Homeo(X, \mathcal R)$ is closed in $\Homeo(X)$.
\end{lem}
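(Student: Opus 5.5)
The plan is to show that the complement of $\Homeo(X, \mathcal R)$ is open. Equivalently, the condition defining it is an intersection of closed conditions. Write $\Homeo(X, \mathcal R) = A \cap A^{-1}$, where
\[A := \{f \in \Homeo(X) \mid (f \times f)(\mathcal R) \subseteq \mathcal R\}.\]
By Theorem~\ref{Arens}, $\Homeo(X)$ is a topological group, so inversion is a homeomorphism. It therefore suffices to show that $A$ is closed, since $A^{-1}$ is then closed too and so is the intersection.

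For each pair $(x,y) \in \mathcal R$, consider the map $\Phi_{x,y} : \Homeo(X) \rightarrow X^2$ given by $f \mapsto (f(x), f(y))$. It is continuous, because each evaluation map $\ev_x$ is continuous for the compact-open topology: $\ev_x^{-1}(U) = \mathcal O_{x,U}$. Then
\[A = \bigcap_{(x,y) \in \mathcal R} \Phi_{x,y}^{-1}(\mathcal R).\]
Since $\mathcal R$ is closed in $X^2$, each $\Phi_{x,y}^{-1}(\mathcal R)$ is closed, and so is their intersection. This is the same argument as the one used earlier to show that $\Aut_{\mathcal C}(Q)$ is closed in $\Homeo(Q)$.

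There is no real obstacle; the only point needing care is that the condition on $f^{-1}$ has to be handled, and this is where continuity of inversion (Arens' theorem, which uses the compact Hausdorff hypothesis) enters. Alternatively, one can avoid Arens' theorem by a direct argument. If $f_\lambda \to f$ with $f_\lambda \in A^{-1}$ and $(x,y) \in \mathcal R$, then we want $(f^{-1}(x), f^{-1}(y)) \in \mathcal R$. Suppose not. Closedness of $\mathcal R$ gives a product neighbourhood $U \times V$ of $(f^{-1}(x), f^{-1}(y))$ disjoint from $\mathcal R$. One would then shrink this to compact neighbourhoods and use the open sets $\mathcal O_{K,W}$ to derive a contradiction. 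Using Theorem~\ref{Arens} is cleaner, so I would take that route.
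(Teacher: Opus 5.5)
Your argument is the paper's: $\Homeo(X,\mathcal R)$ is cut out by the closed conditions $\Phi_{x,y}^{-1}(\mathcal R)$ for $(x,y)\in\mathcal R$, using continuity of evaluation and closedness of $\mathcal R$. The paper's one-line proof writes $\Homeo(X,\mathcal R)=\bigcap_{(x,y)\in\mathcal R}\Phi_{x,y}^{-1}(\mathcal R)$, but that intersection is only your set $A$ and silently drops the condition on $f^{-1}$ (and $A\neq A\cap A^{-1}$ in general), so your use of Theorem~\ref{Arens} to show $A^{-1}$ is closed supplies a step the paper omits.
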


\begin{proof}
For $x, y \in X$, the map $\Phi_{x,y} : f \mapsto (f(x), f(y))$ is continuous from $\Homeo(X)$ to $X^2$. As a consequence, the result follows from $\Homeo(X, \mathcal R) = \bigcap_{(x,y) \in \mathcal R} \Phi_{x,y}^{-1}(\mathcal R)$.
\end{proof}

\begin{rmq}\label{rk_gen1}
Although we use it only when $X$ and $X/\mathcal R$ are both compact Hausdorff, Lemma~\ref{Homeo(X,R)_closed} holds in general, even when $\Homeo(X)$ and $\Homeo(X/\mathcal R)$ are not topological groups.
\end{rmq}

Let $p : X \twoheadrightarrow X/\mathcal R$ denote the quotient map. Let us \emph{assume that $X/\mathcal R$ is Hausdorff}. Notice that this implies that $\mathcal R$ is closed in $X^2$, as the inverse image by $p \times p$ of the diagonal of $X/\mathcal R$.
\begin{lem}\label{Homeo(X,R)_to_Homeo(X/R)}
Let $\mathcal R$ be an equivalence relation on a compact Hausdorff space $X$. If the quotient $X/\mathcal R$ is Hausdorff, then the canonical map $p_* :\Homeo(X, \mathcal R) \rightarrow \Homeo(X/\mathcal R)$ is continuous. 
\end{lem}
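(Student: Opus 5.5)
We have to show that $p_* : \Homeo(X, \mathcal R) \rightarrow \Homeo(X/\mathcal R)$ is continuous for the compact-open topologies. The map $p_*(f)$ is the unique homeomorphism $\bar f$ of $X/\mathcal R$ with $\bar f \circ p = p \circ f$. It exists because $f$ and $f^{-1}$ both preserve $\mathcal R$, so they descend to mutually inverse continuous maps of the quotient. The plan is to check continuity on subbasic open sets. Fix $L$ compact in $X/\mathcal R$ and $W$ open in $X/\mathcal R$, and let $f \in \Homeo(X,\mathcal R)$ with $\bar f(L) \subseteq W$. We want a neighbourhood of $f$ in $\Homeo(X,\mathcal R)$ all of whose images under $p_*$ send $L$ into $W$.

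\textbf{Step 1: lift the compact set.} Since $X/\mathcal R$ is Hausdorff, $L$ is closed. Hence $K := p^{-1}(L)$ is closed in the compact space $X$, and so it is compact. Surjectivity of $p$ gives $p(K) = L$.

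\textbf{Step 2: pull back the open set.} Put $U := p^{-1}(W)$, which is open in $X$. For $g \in \Homeo(X,\mathcal R)$ we have $\bar g(L) = \bar g(p(K)) = p(g(K))$. Therefore $\bar g(L) \subseteq W$ if and only if $g(K) \subseteq U$. In other words,
\[p_*^{-1}(\mathcal O_{L,W}) = \mathcal O_{K,U} \cap \Homeo(X,\mathcal R),\]
which is open in $\Homeo(X,\mathcal R)$. Since the $\mathcal O_{L,W}$ generate the topology of $\Homeo(X/\mathcal R)$, continuity follows.

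There is no real obstacle here. The only point needing care is Step 1, which needs compact subsets of the quotient to lift to compact subsets of $X$. This is where the hypotheses are used: Hausdorffness of $X/\mathcal R$ makes $L$ closed, and compactness of $X$ then makes its preimage compact. The exact identity in Step 2 then does the rest, so no neighbourhood-shrinking argument is needed.
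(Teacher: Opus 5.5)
Your proof is correct and is essentially the paper's argument: lift $L$ to the compact set $p^{-1}(L)$ using Hausdorffness of $X/\mathcal R$ and compactness of $X$, then observe that $p_*^{-1}(\mathcal O_{L,W}) = \mathcal O_{p^{-1}(L),\,p^{-1}(W)}$ (intersected with $\Homeo(X,\mathcal R)$). The only difference is that you spell out the identity $\bar g(L) = p(g(p^{-1}(L)))$, which the paper leaves implicit.
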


\begin{proof}
Let $K, U \subseteq X/\mathcal R$ with $K$ compact and $U$ open. Since $X/\mathcal R$ is Hausdorff, $K$ is closed, hence $p^{-1}(K)$ is closed in $X$ compact, so it is compact. Moreover, $p^{-1}(U)$ is open. Clearly, $p_*^{-1}\left(\mathcal O_{K,U}\right) =  \mathcal O_{p^{-1}(K), p^{-1}(U)}$, whence the result.
\end{proof}

\begin{rmq}\label{rk_gen2}
Lemma~\ref{Homeo(X,R)_to_Homeo(X/R)} also holds in a more general setting. Namely, it holds if $p$ is \emph{proper}, in the sense that the inverse image of a compact is compact. Then $p_*$ is a continuous morphism, even if not necessarily between topological groups (the compact-open topology is not compatible with the group structure in general).
\end{rmq}

The next statement deals with the case of a cofinite equivalence relation $\mathcal R$ on any topological space (recall that \emph{cofinite} means that the quotient $X/\mathcal R$ is finite discrete).
\begin{lem}\label{p*_continuous}
Let $p: X \twoheadrightarrow X/\mathcal R$ denote the quotient of a topological space $X$ by a cofinite congruence $\mathcal R$. Then the canonical map $p_* :\Homeo(X, \mathcal R) \rightarrow \Homeo(X/\mathcal R)$ is continuous. 
\end{lem}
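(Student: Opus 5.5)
The plan is to check continuity directly on the subbasic open sets $\mathcal O_{K,U}$ of $\Homeo(X/\mathcal R)$. Since $X/\mathcal R$ is finite and discrete, every subset of it is compact and open. So the compact-open topology on $\Homeo(X/\mathcal R)$ is generated by the sets $\mathcal O_{K,U}$ with $K, U$ arbitrary subsets, and these are finite intersections of sets $\mathcal O_{a,U}$ with $a$ a point. In fact $\Homeo(X/\mathcal R)$ is a finite set of bijections, and the topology is discrete, since $\{f\} = \bigcap_{a} \mathcal O_{a,\{f(a)\}}$. It therefore suffices to show that for each class $a \in X/\mathcal R$ and each $b \in X/\mathcal R$, the preimage $p_*^{-1}(\mathcal O_{a,\{b\}})$ is open in $\Homeo(X,\mathcal R)$.

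To describe this preimage, pick a representative $x \in X$ with $p(x) = a$. For $f \in \Homeo(X,\mathcal R)$, the induced map satisfies $p_*(f)(a) = p(f(x))$, independently of the representative. Hence
\[p_*^{-1}(\mathcal O_{a,\{b\}}) = \{f \in \Homeo(X,\mathcal R)\ |\ f(x) \in p^{-1}(b)\} = \mathcal O_{x, p^{-1}(b)} \cap \Homeo(X,\mathcal R).\]
Because $\mathcal R$ is cofinite, the class $p^{-1}(b)$ is open in $X$. The singleton $\{x\}$ is compact, so $\mathcal O_{x,p^{-1}(b)}$ is open for the compact-open topology. This is the continuity of the evaluation map $\ev_x$ already noted in the paper. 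The preimage is therefore open in the subspace $\Homeo(X,\mathcal R)$, and $p_*$ is continuous.

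The only point needing care, and the place I expect any subtlety, is that no compactness of $p^{-1}(K)$ is needed, unlike in Lemma~\ref{Homeo(X,R)_to_Homeo(X/R)}. This is handled by reducing to point-evaluations through a single representative, which uses only that $p_*(f)$ is well defined on classes. A second small check is that $p_*(f)$ really is a homeomorphism of $X/\mathcal R$. This holds because $f$ and $f^{-1}$ both preserve $\mathcal R$, and any bijection of a finite discrete space is a homeomorphism.
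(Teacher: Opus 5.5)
Your proof is correct and is essentially the paper's argument: both reduce to evaluation at chosen representatives of the classes, and both use only that the classes $p^{-1}(b)$ are open. The paper computes the preimage of a singleton $\{\sigma\}$ as the finite intersection $\bigcap_{y} \mathcal O_{x_y, p^{-1}(\sigma(y))}$, whereas you compute preimages of the subbasic sets $\mathcal O_{a,\{b\}}$ one at a time; this difference is cosmetic.
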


\begin{rmq}
Not every surjection of topological spaces is the quotient by a congruence (see for instance~\cref{par_lc_topo_spaces}), but surjections onto discrete spaces are. In particular, the previous statement deals with any surjection onto a finite discrete space.
\end{rmq}

\begin{proof}[Proof of Lemma~\ref{p*_continuous}]
Since $X/\mathcal R$ is finite discrete, the compact-open topology on the finite group $\Homeo(X/\mathcal R)$ is the discrete topology, so we need to show that $p_*$ is locally constant. For every $y \in X/\mathcal R$, let us choose a lift $x_y \in p^{-1}(y)$. Let $\sigma \in \Homeo(X/\mathcal R)$. By definition of $p_*$, if $\varphi \in \Homeo(X, \mathcal R)$, we have $p_*(\varphi) = \sigma$ if and only if $\varphi(x_y) \in p^{-1}(\sigma(y))$ for every $y \in X/\mathcal R$. As a consequence, $p_*^{-1}(\sigma)$ is exactly the (finite) intersection of the open sets $\mathcal O_{x_y, p^{-1}(\sigma(y))}$ (for $y \in X/\mathcal R$), which is open, as claimed.
\end{proof}

\subsection{Actions and quotients}

In all this subsection, $G$ is a compact group acting continuously on a profinite space $X$. Our goal here is the following:
\begin{prop}\label{Quotient_of_Stone_space}
Let a compact group $G$ act continuously on a profinite space $X$. Then $X/G$ is a profinite space.
\end{prop}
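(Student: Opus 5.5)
By Corollary~\ref{char_profinite_spaces}, it suffices to show that $X/G$ is compact and totally disconnected, and we also aim to prove it is Hausdorff. Write $p : X \twoheadrightarrow X/G$ for the quotient map. Compactness is immediate, since $X/G = p(X)$ is a continuous image of a compact space.

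The first step is to show that $p$ is open and that the orbit relation is closed. For $U$ open in $X$, we have $p^{-1}(p(U)) = \bigcup_{g \in G} gU$, a union of open sets, since each $g$ acts by a homeomorphism; so $p$ is open. Next consider the orbit relation $\mathcal R = \{(x, gx)\} \subseteq X^2$. It is the image of the compact space $G \times X$ under the continuous map $(g,x) \mapsto (x, gx)$. Since $X^2$ is Hausdorff, $\mathcal R$ is compact, hence closed. Because $p$ is open, $p \times p$ is open as well, hence a quotient map, and it follows that $X/G$ is Hausdorff: the complement of the diagonal is $(p\times p)(X^2 - \mathcal R)$, which is open.

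The main step, and the expected obstacle, is total disconnectedness. Given distinct orbits $Gx \neq Gy$, we want a $G$-invariant clopen subset $W$ of $X$ containing $Gx$ and missing $Gy$. Then $p(W)$ and $p(X - W)$ partition $X/G$ into open sets, using the openness of $p$. We build $W$ in two steps.

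First, $Gx$ and $Gy$ are disjoint compact sets. Since $X$ has a basis of clopen sets (it is profinite), a standard compactness argument gives a clopen $C$ with $Gx \subseteq C$ and $C \cap Gy = \varnothing$. Concretely, for each pair of points in $Gx \times Gy$ choose a clopen set separating them, then take finite unions over $Gx$ and finite intersections over $Gy$.

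Second, we make $C$ invariant. Set $W = \bigcap_{g \in G} gC$. This set is closed and $G$-invariant, it contains $Gx$, and it misses $Gy$. It remains to show that $W$ is open. Let $w \in W$. Since $C$ is clopen, the set $\{(g, z) : g^{-1}z \in C\}$ is clopen in $G \times X$ and contains $G \times \{w\}$. By the tube lemma, using compactness of $G$, there is an open neighbourhood $V$ of $w$ with $G \times V$ contained in this set, and so $V \subseteq W$. Thus $W$ is clopen, which completes the separation of $Gx$ from $Gy$ and proves the proposition.
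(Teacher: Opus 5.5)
Your proof is correct and follows the paper's route: you separate the compact orbits $Gx$ and $Gy$ by a clopen set, then make it $G$-invariant so that it descends to a clopen partition of $X/G$. The differences are minor. The paper saturates by the union $G\cdot U$, which is open trivially and closed as the continuous image of the compact set $G\times U$. You use the intersection $\bigcap_{g} gC$, which is the complement of $G\cdot(X\setminus C)$, and get openness from the tube lemma. Your separate Hausdorffness argument is redundant, since separating points by clopen sets already implies it.
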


\begin{lem}\label{saturating_clopen}
Let $G$ be a compact group acting on a compact Hausdorff set $X$. If a subset $Y$ of $X$ is open (resp.~closed), then $G \cdot Y$ is too.
\end{lem}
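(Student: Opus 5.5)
For the open case, the plan is to write the saturation as a union of translates,
\[G \cdot Y = \bigcup_{g \in G} g \cdot Y.\]
For each fixed $g \in G$, the map $x \mapsto g \cdot x$ is a homeomorphism of $X$, with continuous inverse $x \mapsto g^{-1}\cdot x$; this follows from the continuity of the action $G \times X \to X$, restricted to $\{g\}\times X$. So every $g\cdot Y$ is open, and a union of open sets is open. This case uses neither compactness of $G$ nor any separation property of $X$.

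For the closed case, I would realise $G\cdot Y$ as a continuous image of a compact set. Since $Y$ is closed in the compact space $X$, it is compact. Then $G \times Y$ is compact, as a product of compacts (Tychonoff for two factors). Its image under the continuous action map $\mu : G \times X \to X$ is exactly $G \cdot Y$, which is therefore compact. A compact subset of the Hausdorff space $X$ is closed, which gives the claim.

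An alternative for the closed case is to use the properness of the projection $G \times X \to X$ for $G$ compact. One would observe that the complement of $G\cdot Y$ is the set of $x$ with $\{g^{-1}x\}\cap Y=\varnothing$ for all $g$, i.e.\ the complement of the projection of the closed set $\mu^{-1}(Y)$ in $G \times X$. The image-of-a-compact argument is simpler, so I would present that one.

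There is no real obstacle here. The only point to watch is that the closed case genuinely needs both hypotheses: compactness of $G$, so that $G\times Y$ is compact, and Hausdorffness of $X$, so that compact sets are closed. The open case needs only continuity of each individual translation. The intended use is with $Y$ clopen: both parts together then show that $G\cdot Y$ is clopen, presumably as the step toward Proposition~\ref{Quotient_of_Stone_space}.
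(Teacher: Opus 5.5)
Your proof is correct and is essentially the paper's own argument: in the open case, $G\cdot Y$ is the union of the open translates $g\cdot Y$; in the closed case, $G\cdot Y$ is the image of the compact set $G\times Y$ under the continuous action map, so it is compact in the Hausdorff space $X$ and hence closed. Your remark that each translation $x\mapsto g\cdot x$ is a homeomorphism (restrict the continuous action to $\{g\}\times X$) makes explicit a step the paper leaves implicit.
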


\begin{proof}
If $Y$ is open, then $GY = \bigcup_{g \in G} g \cdot Y$ is a union of open sets, so it is open. If $Y$ is closed in $X$ compact, then $Y$ is compact. Then $G \cdot Y$ is the image of the compact set $G \times Y$ by the continuous map $G \times X \rightarrow X$ sending $(g,x)$ to $g \cdot x$, so it is a compact subset of the Hausdorff space $X$, hence it is closed.
\end{proof}

\begin{lem}\label{disconnecting_compact_subsets}
Let $X$ be a totally disconnected space, and let $K$ and $L$ be disjoints compact subsets of $X$. Then there is a clopen subset $U$ of $X$ containing $K$ and disjoint from~$L$.
\end{lem}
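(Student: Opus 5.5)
We have to prove Lemma~\ref{disconnecting_compact_subsets}: in a totally disconnected space $X$, two disjoint compact subsets $K$ and $L$ are separated by a clopen set. The plan is a double compactness argument. First we separate a point from a compact set, then a compact set from a compact set. The only input is the definition of total disconnectedness recalled in \cref{par_res_finite_alg}: two distinct points are separated by a partition $X = U \sqcup V$ into open sets. Such $U$ and $V$ are clopen, being complements of each other.

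\emph{Step 1: a point against a compact set.} Fix $x \in K$. For each $y \in L$ we have $y \neq x$, since $K \cap L = \varnothing$. So there is a clopen set $U_{x,y}$ containing $x$ and not $y$. Its complement $U_{x,y}^c$ is a clopen neighbourhood of $y$. These complements, for $y \in L$, cover the compact set $L$, so finitely many $U_{x,y_1}^c, \ldots, U_{x,y_m}^c$ suffice. Set $W_x := U_{x,y_1} \cap \cdots \cap U_{x,y_m}$. As a finite intersection of clopen sets it is clopen, it contains $x$, and it is disjoint from $L$, because each point of $L$ lies in some $U_{x,y_j}^c$.

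\emph{Step 2: a compact set against a compact set.} The clopen sets $W_x$, for $x \in K$, are open and cover $K$. By compactness of $K$, finitely many of them, say $W_{x_1}, \ldots, W_{x_n}$, already cover $K$. Take $U := W_{x_1} \cup \cdots \cup W_{x_n}$. A finite union of clopen sets is clopen, $U$ contains $K$, and $U$ is disjoint from $L$, since each $W_{x_i}$ is. The degenerate cases are harmless: if $K = \varnothing$ take $U = \varnothing$, and if $L = \varnothing$ take $U = X$.

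There is no real obstacle. The one point requiring care is that at each stage the number of clopen sets combined must be finite, so that the result stays closed. This is exactly what the two uses of compactness provide. No Hausdorff hypothesis is needed beyond what total disconnectedness already gives.
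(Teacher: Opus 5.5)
Your proof is correct and is essentially the paper's argument. Both proceed in two compactness steps: first separate a point $x$ from the compact set $L$, then cover $K$ by finitely many of the resulting clopen sets and take their union. The only difference is cosmetic: the paper takes a finite union of clopen neighbourhoods of points of $L$ avoiding $x$ and passes to its complement, whereas you intersect the complementary clopen neighbourhoods of $x$ directly, which gives the same kind of set by De Morgan.
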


\begin{proof}
We first suppose that $K = \{x\}$. Then for all $y \in L$, there exists some clopen set $V_y$ containing $y$ but not $x$. The $V_y$ cover $L$, so we can extract a finite cover, whose union is a clopen set $V$ containing $L$ but not $x$, as announced (its complement $V^c$ is a clopen containing $x$ and disjoint from $L$). Let us now consider the general situation. Given what we have just proved, for each $x \in K$, there is a clopen set $U_x$ containing $x$ and disjoint from $L$. The $U_x$ cover $K$, so we can extract a finite cover, whose reunion is a clopen set containing $K$ and disjoint from $L$.
\end{proof}

\begin{proof}[Proof of Proposition~\ref{Quotient_of_Stone_space}]
Let $x,y \in X$ such that $Gx \neq Gy$. Finding a clopen set in the quotient containing $Gx$ and not $Gy$ is equivalent to finding a clopen set $V$ of $X$ which is saturated (that is, $G \cdot V = V$) and contains $Gx$ but not $Gy$. Since $G$ is compact, so are the orbits in $X$ ($Gt$ is the image of $G$ by the continuous map $g \mapsto g \cdot t$). Hence, $Gx$ and $Gy$ are disjoint compact subsets of the profinite space $X$, so by Lemma~\ref{disconnecting_compact_subsets}, there is a clopen subset $U$ containing $Gx$ and disjoint from $Gy$. Then by Lemma~\ref{saturating_clopen}, the saturated set $V := G \cdot U$ is still clopen. Of course, it contains $Gx$ and it stays disjoint from $Gy$, whence the conclusion.
\end{proof}

\bibliographystyle{alpha}
\bibliography{Ref_profinite_qdles}

\end{document}